\numberwithin{equation}{section} %% Comment out for sequentially-numbered
\numberwithin{figure}{section} %% Comment out for sequentially-numbered
\theoremstyle{plain}
\theoremstyle{plain}
\newtheorem{thm}{Theorem}
  \theoremstyle{plain}
  \newtheorem{conjecture}[thm]{Conjecture}
  \theoremstyle{plain}
  \newtheorem{prop}[thm]{Proposition}
  \theoremstyle{plain}
  \newtheorem{fact}[thm]{Fact}
  \theoremstyle{remark}
  \newtheorem*{acknowledgement*}{Acknowledgement}
  \theoremstyle{remark}
  \newtheorem{notation}[thm]{Notation}
  \theoremstyle{definition}
  \newtheorem{defn}[thm]{Definition}
  \theoremstyle{remark}
  \newtheorem{rem}[thm]{Remark}
  \theoremstyle{plain}
  \newtheorem{lem}[thm]{Lemma}
  \theoremstyle{plain}
  \newtheorem{cor}[thm]{Corollary}
  \theoremstyle{remark}
  \newtheorem*{rem*}{Remark}
\newcommand{\nwc}{\newcommand}
\nwc{\nwt}{\newtheorem}
\nwc{\mf}{\mathbf} %Latex (as in \bf not tilted math letters)
\nwc{\blds}{\boldsymbol} %Latex 
\nwc{\ml}{\mathcal} %Latex
\nwc{\lam}{\lambda}
\nwc{\del}{\delta}
\nwc{\Del}{\Delta}
\nwc{\Lam}{\Lambda}
\nwc{\elll}{\ell}
\nwc{\IA}{\mathbb{A}} %algebraic
\nwc{\IB}{\mathbb{B}} %ball
\nwc{\IC}{\mathbb{C}} %complex
\nwc{\ID}{\mathbb{D}} %Dedekind
\nwc{\IE}{\mathbb{E}} %Euklides
\nwc{\IF}{\mathbb{F}} %finite field
\nwc{\IG}{\mathbb{G}} %Gauss
\nwc{\IH}{\mathbb{H}} %Hilbert\N-subgroup
\nwc{\IN}{\mathbb{N}} %natural
\nwc{\IP}{\mathbb{P}} %prime
\nwc{\IQ}{\mathbb{Q}} %rational
\nwc{\IR}{\mathbb{R}} %real
\nwc{\IS}{\mathbb{S}} %sphere
\nwc{\IT}{\mathbb{T}} %torus
\nwc{\IZ}{\mathbb{Z}} %integers
\def\bbbone{{\mathchoice {1\mskip-4mu {\rm{l}}} {1\mskip-4mu {\rm{l}}}
{ 1\mskip-4.5mu {\rm{l}}} { 1\mskip-5mu {\rm{l}}}}}
\def\bbleft{{\mathchoice {[\mskip-3mu {[}} {[\mskip-3mu {[}}{[\mskip-4mu {[}}{[\mskip-5mu {[}}}}
\def\bbright{{\mathchoice {]\mskip-3mu {]}} {]\mskip-3mu {]}}{]\mskip-4mu {]}}{]\mskip-5mu {]}}}}
\nwc{\setK}{\bbleft 1,K \bbright}
\nwc{\setN}{\bbleft 1,\cN \bbright}
\nwc{\va}{{\bf a}}
\nwc{\vb}{{\bf b}}
\nwc{\vc}{{\bf c}}
\nwc{\vd}{{\bf d}}
\nwc{\ve}{{\bf e}}
\nwc{\vf}{{\bf f}}
\nwc{\vg}{{\bf g}}
\nwc{\vh}{{\bf h}}
\nwc{\vi}{{\bf i}}
\nwc{\vI}{{\bf I}}
\nwc{\vj}{{\bf j}}
\nwc{\vk}{{\bf k}}
\nwc{\vl}{{\bf l}}
\nwc{\vm}{{\bf m}}
\nwc{\vM}{{\bf M}}
\nwc{\vn}{{\bf n}}
\nwc{\vo}{{\it o}}
\nwc{\vp}{{\bf p}}
\nwc{\vq}{{\bf q}}
\nwc{\vr}{{\bf r}}
\nwc{\vs}{{\bf s}}
\nwc{\vt}{{\bf t}}
\nwc{\vu}{{\bf u}}
\nwc{\vv}{{\bf v}}
\nwc{\vw}{{\bf w}}
\nwc{\vx}{{\bf x}}
\nwc{\vy}{{\bf y}}
\nwc{\vz}{{\bf z}}
\nwc{\balpha}{\blds{\alpha}}
\nwc{\bal}{\blds{\alpha}}
\nwc{\bbeta}{\blds{\beta}}
\nwc{\bgamma}{\blds{\gamma}}
\nwc{\bep}{\blds{\epsilon}}
\nwc{\barbep}{\overline{\blds{\epsilon}}}
\nwc{\bnu}{\blds{\nu}}
\nwc{\bmu}{\blds{\mu}}
\nwc{\bk}{\blds{k}}
\nwc{\bm}{\blds{m}}
\nwc{\bM}{\blds{M}}
\nwc{\bp}{\blds{p}}
\nwc{\bq}{\blds{q}}
\nwc{\bn}{\blds{n}}
\nwc{\bv}{\blds{v}}
\nwc{\bw}{\blds{w}}
\nwc{\bx}{\blds{x}}
\nwc{\bxi}{\blds{\xi}}
\nwc{\by}{\blds{y}}
\nwc{\bz}{\blds{z}}
\nwc{\cA}{\ml{A}}
\nwc{\cB}{\ml{B}}
\nwc{\cC}{\ml{C}}
\nwc{\cD}{\ml{D}}
\nwc{\cE}{\ml{E}}
\nwc{\cF}{\ml{F}}
\nwc{\cG}{\ml{G}}
\nwc{\cH}{\ml{H}}
\nwc{\cI}{\ml{I}}
\nwc{\cJ}{\ml{J}}
\nwc{\cK}{\ml{K}}
\nwc{\cL}{\ml{L}}
\nwc{\cM}{\ml{M}}
\nwc{\cN}{\ml{N}}
\nwc{\cO}{\ml{O}}
\nwc{\cP}{\ml{P}}
\nwc{\cQ}{\ml{Q}}
\nwc{\cR}{\ml{R}}
\nwc{\cS}{\ml{S}}
\nwc{\cT}{\ml{T}}
\nwc{\cU}{\ml{U}}
\nwc{\cV}{\ml{V}}
\nwc{\cW}{\ml{W}}
\nwc{\cX}{\ml{X}}
\nwc{\cY}{\ml{Y}}
\nwc{\cZ}{\ml{Z}}
\nwc{\tA}{\widetilde{A}}
\nwc{\tB}{\widetilde{B}}
\nwc{\tE}{E^{\vareps}}
\nwc{\tk}{\tilde k}
\nwc{\tN}{\tilde N}
\nwc{\tP}{\widetilde{P}}
\nwc{\tQ}{\widetilde{Q}}
\nwc{\tR}{\widetilde{R}}
\nwc{\tV}{\widetilde{V}}
\nwc{\tW}{\widetilde{W}}
\nwc{\ty}{\tilde y}
\nwc{\teta}{\tilde \eta}
\nwc{\tdelta}{\tilde \delta}
\nwc{\tlambda}{\tilde \lambda}
\nwc{\ttheta}{\tilde \theta}
\nwc{\tvartheta}{\tilde \vartheta}
\nwc{\tPhi}{\widetilde \Phi}
\nwc{\tpsi}{\tilde \psi}
\nwc{\To}{\longrightarrow} %limits
\nwc{\ad}{\rm ad}
\nwc{\eps}{\epsilon}
\nwc{\ep}{\epsilon}
\nwc{\vareps}{\varepsilon}
\nwc{\err}{\epsilon}
\def\ep{\epsilon}
\def\sq2{\sqrt{2}}
\def\t2{{\mathbb T}^2}
\def\s2{{\mathbb S}^2}
\def\hn{\mathcal{H}_{N}}
\def\T{\mathbb{T}}
\def\R{\mathbb{R}}
\def\Z{\mathbb{Z}}
\nwc{\lap}{\bigtriangleup}
\nwc{\rest}{\restriction}
\nwc{\Diff}{\operatorname{Diff}}
\nwc{\dist}{\operatorname{dist}}
\nwc{\diam}{\operatorname{diam}}
\nwc{\Res}{\operatorname{Res}}
\nwc{\Spec}{\operatorname{Spec}}
\nwc{\Vol}{\operatorname{Vol}}
\nwc{\Op}{\operatorname{Op}}
\nwc{\OpN}{\operatorname{Op}_N}
\nwc{\Oph}{\operatorname{Op}_\hbar}
\nwc{\supp}{\operatorname{supp}}
\nwc{\Span}{\operatorname{span}}
\nwc{\neigh}{\operatorname{neigh}}
\nwc{\dia}{\varepsilon}
\nwc{\cut}{f}
\nwc{\qm}{u_\hbar}
\nwc{\cPr}{\mathcal{P}r}
\def\hto0{\xrightarrow{\hbar\to 0}}
\def\rto0{\xrightarrow{r\to 0}}
\providecommand{\abs}[1]{\lvert#1\rvert}
\providecommand{\norm}[1]{\lVert#1\rVert}
\providecommand{\set}[1]{\left\{#1\right\}}
\nwc{\la}{\langle}
\nwc{\ra}{\rangle}
\nwc{\lp}{\left(}
\nwc{\rp}{\right)}
\nwc{\bequ}{\begin{equation}}
\nwc{\be}{\begin{equation}}
\nwc{\ben}{\begin{equation*}}
\nwc{\bea}{\begin{eqnarray}}
\nwc{\bean}{\begin{eqnarray*}}
\nwc{\bit}{\begin{itemize}}
\nwc{\bver}{\begin{verbatim}}
\nwc{\eequ}{\end{equation}}
\nwc{\ee}{\end{equation}}
\nwc{\een}{\end{equation*}}
\nwc{\eea}{\end{eqnarray}}
\nwc{\eean}{\end{eqnarray*}}
\nwc{\eit}{\end{itemize}}
\nwc{\ever}{\end{verbatim}}
\newcommand{\defeq}{\stackrel{\rm{def}}{=}}
\begin{document}

\title{Entropy of chaotic eigenstates}
\thanks{Notes of the minicourse given at the Centre de Recherches Math\'ematiques, Montr\'eal, during the workshop "Spectrum and Dynamics", April 2008. }

\author{Stéphane Nonnenmacher}

\address{Institut de Physique Théorique, CEA-Saclay, 91191 Gif-sur-Yvette,
France}

\email{snonnenmacher@cea.fr}

\maketitle

\section{Overview and Statement of results}

\subsection{Introduction}

These lectures present a recent approach, mainly developed by Nalini Anantharaman and the author, aimed at studying the high-frequency
eigenmodes of the Laplace-Beltrami operator $\Delta=\Delta_{X}$ on
compact riemannian manifolds $(X,g)$ for which the sectional curvature
is everywhere negative. It is well-known that the geodesic flow on
such a manifold (which takes place on the unit cotangent bundle $S^{*}X$)
is strongly chaotic, in the sense that it is uniformly hyperbolic
(Anosov). This flow leaves invariant the natural smooth measure on
$S^{*}X$, namely the Liouville measure $\mu_{L}$ (which is also
the lift of the Lebesgue measure on $X$). Studying the eigenstates
of $\Delta$ is thus a part of {}``quantum chaos''. In these notes
we extend the study to more general Schrödinger-like operators
in the semiclassical limit, such that the corresponding Hamitonian
flow on some compact energy shell $\cE$ has the Anosov property.
We also consider the case of quantized Anosov diffeomorphisms on
the torus, which are popular toy models in the quantum chaos literature.
To set up the problem we first stick to the Laplacian.

For a general riemannian manifold $(X,g)$ of dimension $d\geq2$,
there exist no explicit, not even approximate expression for the eigenmodes
of the Laplacian. One way to ``describe'' these modes consists
in comparing them (as {}``quantum'' invariants) with {}``classical''
invariants, namely probability measures on $S^{*}X$, invariant w.r.to
the geodesic flow. To this aim, starting from the full sequence of
eigenmodes $(\psi_{n})_{n\geq0}$ one can construct a family of invariant
probability measures on $S^{*}X$, called \emph{semiclassical measures}.
Each such measure $\mu_{sc}$ can be associated with a subsequence
of eigenmodes $(\psi_{n_{j}})_{j\geq1}$ which share, in the limit
$j\to\infty$, the same macroscopic localization properties, both
on the manifold $X$ and in the velocity (or momentum) space: these
macroscopic localization properties are {}``represented'' by $\mu_{sc}$.
One (far-reaching) aim would be a complete classification of the semiclassical
measures associated with a given manifold $(X,g)$. 

This goal is much too ambitious, starting from the fact that the set
of invariant measures is itself not always well-understood. We will
thus restrict ourselves to the class of manifolds described above,
namely manifolds $(X,g)$ of negative sectional curvature. One advantage
is that the classical dynamics is at the same time {}``irregular''
(in the sense of {}``chaotic''), and {}``homogeneous''.
The geodesic flow on such a manifold is (semi)conjugated with a suspended
flow over a simple symbolic dynamics (a subshift of finite type over
a finite alphabet), which allows one to explicitly construct many
different invariant measures. For instance, such a flow admits infinitely
many isolated (unstable) periodic orbits $\gamma$, each of which
carries a natural probability invariant measure $\mu_{\gamma}$. The
set of periodic orbits is so large that the measures $\left\{ \mu_{\gamma}\right\} $
form a dense subset (in the weak-{*} topology) of the set of invariant
probability measures. Hence, it would be interesting to know whether
some high-frequency eigenmodes can be asymptotically localized near
certain periodic orbits, leading to semiclassical measures of the
form \begin{equation}
\mu=\sum_{\gamma}p_{\gamma}\mu_{\gamma},\qquad\mbox{with }\:\sum_{\gamma}p_{\gamma}=1,\quad p_{\gamma}\in[0,1].\label{eq:strong-scar}\end{equation}
This possibility was named {}``strong scarring'' by Rudnick-Sarnak
\cite{RudSar94}, in analogy with a weaker form of {}``scarring''
observed by Heller on some numerically computed eigenmodes \cite{Hel84},
namely a {}``nonrandom enhancement'' of the wavefunction in the
vicinity of a certain periodic orbit. In the same paper, Rudnick and
Sarnak conjectured that such semiclassical measures do not exist for
manifolds of negative curvature. More precisely, they formulated the
Quantum Unique Ergodicity conjecture
\begin{conjecture}
{[}Quantum Unique Ergodicity{]} \cite{RudSar94}\label{con:QUE}

Let $(X,g)$ be a compact riemannian manifold of negative curvature.
Then there exist only one semiclassical measure, namely the Liouville
measure $\mu_{L}$.
\end{conjecture}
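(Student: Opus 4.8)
The conjecture as stated is, to date, open; I describe the strategy that gives the strongest unconditional progress and makes clear where the genuine obstruction lies. Recall that by the Shnirelman--Zelditch--Colin de Verdi\`{e}re quantum ergodicity theorem a density-one subsequence of eigenmodes already satisfies $\mu_{sc}=\mu_{L}$, so the content of QUE is to drop the density-one restriction, i.e.\ to show that \emph{no} subsequence produces a different semiclassical measure. The plan has three stages: (i) attach to an arbitrary subsequence an invariant probability measure $\mu_{sc}$ on $S^{*}X$; (ii) show $\mu_{sc}$ must carry a definite amount of Kolmogorov--Sinai entropy; (iii) try to push the entropy bound up until it forces $\mu_{sc}=\mu_{L}$.

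\emph{Step 1 (microlocal lift).} Given a subsequence $(\psi_{n_j})$ with $\Delta\psi_{n_j}=-\lambda_{n_j}^{2}\psi_{n_j}$ and $\lambda_{n_j}\to\infty$, set $\hbar_j=\lambda_{n_j}^{-1}$ and consider the distributions $a\mapsto\langle\Oph(a)\psi_{n_j},\psi_{n_j}\rangle$. The sharp G\aa rding inequality makes any weak-$*$ limit $\mu_{sc}$ a positive measure; ellipticity of $-\hbar^{2}\Delta-1$ off $S^{*}X$ together with the exact eigenvalue equation localizes $\mu_{sc}$ on $S^{*}X$, while $\|\psi_{n_j}\|=1$ and compactness of $X$ give $\mu_{sc}(S^{*}X)=1$; and the Egorov theorem for the half-wave group $e^{-\mi t\sqrt{-\Delta}}$, whose classical flow is the geodesic flow $g^{t}$, makes $\mu_{sc}$ invariant under $g^{t}$. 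The problem is thereby reduced to: which invariant probability measures on $S^{*}X$ arise as semiclassical measures?

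\emph{Step 2 (entropy lower bound).} Beyond invariance, the only usable input is quantitative hyperbolicity of $g^{t}$ combined with the uncertainty principle. I would fix a finite partition $\mathcal{P}$ of $S^{*}X$ into pieces of small diameter with $\mu_{sc}$-negligible boundary, form the refinement $\mathcal{P}^{(n)}=\bigvee_{k=0}^{n-1}g^{-k}\mathcal{P}$, quantize its atoms into a family of almost-orthogonal operators, and bound the operator norm of their products interlaced with the one-step propagator, all the way up to the Ehrenfest time $n\simeq|\log\hbar|/\lambda_{\max}$ --- the stage at which the unstable Jacobian enters and degrades the bound by at most $\tfrac{(d-1)\lambda_{\max}}{2}$. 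Inserting this estimate into an entropic uncertainty principle of Maassen--Uffink type, then letting $j\to\infty$ and refining $\mathcal{P}$, one obtains a bound of the shape
\[ h_{\mathrm{KS}}(\mu_{sc})\ \geq\ \Bigl|\int_{S^{*}X}\log J^{u}\,d\mu_{sc}\Bigr|-\frac{(d-1)\lambda_{\max}}{2}, \]
$J^{u}$ being the unstable Jacobian; in particular $h_{\mathrm{KS}}(\mu_{sc})>0$ always, and $h_{\mathrm{KS}}(\mu_{sc})\geq\tfrac12$ on a constant-curvature surface.

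\emph{Step 3 (consequences, and the main obstacle).} Every finite union of periodic orbits has zero Kolmogorov--Sinai entropy, so Step 2 already excludes the ``strong scarring'' measures \eqref{eq:strong-scar}: no semiclassical measure can be a convex combination of periodic-orbit measures, answering in the negative the question raised just above. Turning this into full QUE would require pushing the entropy up to its maximal value \emph{and}, since in variable negative curvature the measure of maximal entropy is the Bowen--Margulis measure and not $\mu_{L}$, simultaneously pinning down the Lyapunov spectrum of $\mu_{sc}$ --- an entropy estimate alone cannot single out Liouville. \textbf{Supplying this extra rigidity is the main obstacle, and is where the argument currently stops.} The one setting in which it has been achieved is the arithmetic one: on a congruence surface the Hecke operators commute with $\Delta$, a joint Hecke--Laplace eigenbasis produces a Hecke-recurrent $\mu_{sc}$, and Lindenstrauss's measure-classification theorem then forces every positive-entropy such measure to be $\mu_{L}$. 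Removing the arithmetic hypothesis --- equivalently, ruling out partially localized, positive-entropy semiclassical measures --- is precisely the step I do not expect to be able to complete, and it is the open core of the conjecture.
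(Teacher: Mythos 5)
This is a conjecture, not a theorem, and the paper of course offers no proof of it; your response --- identifying it as open, laying out the entropy strategy, and pinpointing the arithmetic case as the one place it has been carried through --- is exactly the right reading, and closely matches how the paper positions the conjecture relative to its own Theorem~\ref{thm:bound-general-AKN} and the surrounding discussion.

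One inaccuracy worth flagging. In Step~2 you assert that the displayed bound
$h_{\mathrm{KS}}(\mu_{sc})\geq \int\log J^{u}\,d\mu_{sc}-\tfrac{(d-1)\lambda_{\max}}{2}$
gives ``$h_{\mathrm{KS}}(\mu_{sc})>0$ always.'' It does not: as the paper notes immediately after Theorem~\ref{thm:bound-general-AKN}, when the curvature varies appreciably $\int\log J^{u}\,d\mu$ can be as small as $\Lambda_{\min}^{u}$, and the right-hand side is then negative (hence vacuous) whenever $\Lambda_{\min}^{u}<\tfrac{(d-1)\lambda_{\max}}{2}$. Unconditional positivity of $h_{\mathrm{KS}}(\mu_{sc})$ comes from the separate, weaker statement of Anantharaman (the ``there exists $c>0$'' theorem), not from the quantitative bound you wrote; you are implicitly conflating the two results. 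Also, the absolute value around $\int\log J^{u}\,d\mu_{sc}$ is superfluous (the integrand is positive for an Anosov flow) and not how the paper states the inequality. Conjecture~\ref{con:optimal?} in the paper --- replacing the subtracted term by a factor $1/2$ --- is precisely the fix aimed at the defect you would otherwise be papering over, and is worth mentioning as the ``right'' intermediate target below full QUE.
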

This conjecture rules out any semiclassical measure of the type \ref{eq:strong-scar}.
It also rules out linear combination of the form \begin{equation}
\mu=\alpha\mu_{L}+(1-\alpha)\sum_{\gamma}p_{\gamma}\mu_{\gamma},\quad\alpha\in[0,1).\label{eq:partial-scar}\end{equation}
The name {}``quantum unique ergodicity'' reminds of a classical
notion: a dynamical system (map or flow) is \emph{uniquely ergodic}
if and only if it admits a unique invariant measure. In the present
case, the classical system is not uniquely ergodic, but the conjecture
is that its quantum analogue conspires to be so. 

This conjecture was formulated several years after the proof of a
general result describing {}``almost all'' the eigenstates $(\psi_{n})$. 
\begin{thm}
{[}Quantum Ergodicity{]} \cite{Schnirel74,Zel87,CdV85}

Let $(X,g)$ be a compact riemannian manifold such that the geodesic
flow is ergodic w.r.to the Liouville measure $\mu_{L}$. Then, there
exists a subsequence $S\subset\IN$ of density $1$, such that the
subsequence $(\psi_{n})_{n\in S}$ is associated with $\mu_{L}$.%
\footnote{A subsequence $S\subset\mathbb{N}$ is said to be of density $1$
iff $\lim_{N\to\infty}\frac{\sharp\left\{ n\in S,\: n\leq N\right\} }{N}=1$.%
}
\end{thm}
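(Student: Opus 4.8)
The plan is to reduce the statement to a \emph{quantum variance estimate} and then extract the subsequence by a soft argument. Fix a self-adjoint zeroth-order pseudodifferential operator $A=\Op(a)$ on $X$, whose principal symbol $a$ is a smooth function on $S^{*}X$ (extended to $T^{*}X\setminus 0$ homogeneously of degree $0$); write $\bar a:=\mu_{L}(S^{*}X)^{-1}\int_{S^{*}X}a\,d\mu_{L}$ and $N(\lambda)=\#\{n:\lambda_{n}\le\lambda\}$, where $\Delta\psi_{n}=\lambda_{n}\psi_{n}$. The target is
\begin{equation}
\frac{1}{N(\lambda)}\sum_{\lambda_{n}\le\lambda}\bigl|\langle\psi_{n},A\psi_{n}\rangle-\bar a\bigr|^{2}\xrightarrow{\lambda\to\infty}0.\label{eq:sketch-var}
\end{equation}
Granting \eqref{eq:sketch-var}, Chebyshev's inequality gives for each such $A$ a density-$1$ set $S_{A}\subset\IN$ along which $\langle\psi_{n},A\psi_{n}\rangle\to\bar a$; picking a countable family $(A_{k})$ whose symbols are dense in $C^{0}(S^{*}X)$ and performing a diagonal extraction produces a single density-$1$ set $S$ working for all $A_{k}$ at once, hence (by density and the uniform bound $|\langle\psi_{n},A\psi_{n}\rangle|\le\|A\|$) for every continuous symbol. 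Thus along $n\in S$ the only semiclassical measure is the normalized Liouville measure, which is the assertion.

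Two microlocal inputs feed into \eqref{eq:sketch-var}. The first is the \textbf{local Weyl law}: for any zeroth-order $B$,
\begin{equation}
\frac{1}{N(\lambda)}\sum_{\lambda_{n}\le\lambda}\langle\psi_{n},B\psi_{n}\rangle\xrightarrow{\lambda\to\infty}\frac{1}{\mu_{L}(S^{*}X)}\int_{S^{*}X}\sigma_{0}(B)\,d\mu_{L},\label{eq:sketch-lwl}
\end{equation}
which follows from Weyl's asymptotics for $N(\lambda)$ combined with the parallel asymptotics for $\sum_{\lambda_{n}\le\lambda}\langle\psi_{n},B\psi_{n}\rangle$ read off from the short-time behaviour of the wave (or heat) propagator. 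The second is \textbf{Egorov's theorem}: writing $U^{t}=e^{it\sqrt{\Delta}}$ for the wave group, for each fixed $t$ the operator $U^{-t}BU^{t}$ is a zeroth-order pseudodifferential operator with principal symbol $\sigma_{0}(B)\circ\Phi^{t}$, where $\Phi^{t}$ is the geodesic flow on $S^{*}X$, with remainder one order lower and controlled uniformly for $t$ in compact intervals.

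Now combine these with ergodicity. Because $U^{t}\psi_{n}=e^{it\sqrt{\lambda_{n}}}\psi_{n}$, one has exactly $\langle\psi_{n},A\psi_{n}\rangle=\langle\psi_{n},U^{-t}AU^{t}\psi_{n}\rangle$ for all $t$, hence $\langle\psi_{n},A\psi_{n}\rangle=\langle\psi_{n},\langle A\rangle_{T}\psi_{n}\rangle$ for the time average $\langle A\rangle_{T}:=\frac1T\int_{0}^{T}U^{-t}AU^{t}\,dt$ and every $T>0$. Replacing $A$ by $A-\bar a\,\mathrm{Id}$ (still zeroth order, with principal symbol $a-\bar a$) we may assume $\bar a=0$. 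By Cauchy--Schwarz and $\|\psi_{n}\|=1$,
\begin{equation}
\bigl|\langle\psi_{n},A\psi_{n}\rangle\bigr|^{2}=\bigl|\langle\psi_{n},\langle A\rangle_{T}\psi_{n}\rangle\bigr|^{2}\le\langle\psi_{n},\langle A\rangle_{T}^{*}\langle A\rangle_{T}\psi_{n}\rangle.
\end{equation}
For fixed $T$, Egorov applied under the integral shows that $\langle A\rangle_{T}^{*}\langle A\rangle_{T}$ is a zeroth-order operator with principal symbol $\bigl|\frac1T\int_{0}^{T}a\circ\Phi^{t}\,dt\bigr|^{2}$, so \eqref{eq:sketch-lwl} yields
\begin{equation}
\limsup_{\lambda\to\infty}\frac{1}{N(\lambda)}\sum_{\lambda_{n}\le\lambda}\bigl|\langle\psi_{n},A\psi_{n}\rangle\bigr|^{2}\le\frac{1}{\mu_{L}(S^{*}X)}\int_{S^{*}X}\Bigl|\frac1T\int_{0}^{T}a\circ\Phi^{t}\,dt\Bigr|^{2}d\mu_{L}.
\end{equation}
Letting $T\to\infty$ and invoking von Neumann's mean ergodic theorem together with the \emph{ergodicity} of the geodesic flow, $\frac1T\int_{0}^{T}a\circ\Phi^{t}\,dt\to\bar a=0$ in $L^{2}(S^{*}X,\mu_{L})$, so the right-hand side tends to $0$. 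This proves \eqref{eq:sketch-var}.

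The delicate point is the \emph{order of the limits}. Egorov's theorem is quantitative only on bounded time intervals (the remainder in general grows with $t$), so one must first send $\lambda\to\infty$ at fixed $T$, and only afterwards let $T\to\infty$; the mean ergodic theorem is then applied on the classical side, where there is no time restriction. Carrying this out rigorously requires verifying that $\langle A\rangle_{T}$ is, for each fixed $T$, a bona fide zeroth-order operator to which \eqref{eq:sketch-lwl} applies, and that the symbol-calculus errors entering $\sigma_{0}(\langle A\rangle_{T}^{*}\langle A\rangle_{T})$ are $O(\lambda^{-1})$ uniformly in $n$ (so that they vanish after dividing by $N(\lambda)$); one also uses the sharp G{\aa}rding inequality to ensure the lower-order terms do not destroy the operator positivity used above. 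Everything after \eqref{eq:sketch-var} --- the Chebyshev step and the diagonal extraction of the density-$1$ subsequence, together with the identification of its semiclassical measure --- is routine.
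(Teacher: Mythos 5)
The paper does not prove this theorem: it is quoted as a classical background result (Schnirelman--Zelditch--Colin de Verdi\`ere), with only a remark that the proof is ``robust'' and has been generalized to other ergodic settings. So there is no in-paper argument to compare against. Your sketch is the standard proof from the cited references, and it is essentially correct: reduce to the quantum variance estimate, insert the time average $\langle A\rangle_{T}$ (exact, since $\psi_{n}$ are eigenfunctions of the wave group), bound $|\langle\psi_{n},\langle A\rangle_{T}\psi_{n}\rangle|^{2}$ by $\langle\psi_{n},\langle A\rangle_{T}^{*}\langle A\rangle_{T}\psi_{n}\rangle$ via Cauchy--Schwarz, apply the local Weyl law and Egorov at fixed $T$, then let $T\to\infty$ via the mean ergodic theorem, and finally extract a density-one subsequence by Chebyshev plus the usual ``eventual agreement'' diagonal lemma for density-one sets. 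You also correctly flag the sole delicate point, namely that $\lambda\to\infty$ must be taken before $T\to\infty$ because Egorov's remainder is controlled only on bounded time intervals.

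Two small remarks, neither of which is a gap. First, the sharp G{\aa}rding inequality is not actually needed here: the operator $\langle A\rangle_{T}^{*}\langle A\rangle_{T}$ is exactly nonnegative, and the Cauchy--Schwarz step yields a one-sided inequality, so lower-order corrections to positivity never enter; G{\aa}rding is needed in Zelditch's \emph{rate} argument, not in the soft QE statement. Second, the diagonal extraction deserves one sentence more care than ``diagonal extraction'' suggests: one cannot simply intersect countably many density-one sets, but must build $S$ so that $S\setminus S_{A_{k}}$ is finite for each $k$; this is the standard lemma and it is what your argument needs.
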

The manifolds encompassed by this theorem include the case of negative
curvature, but also more general ones (like manifolds where the curvature
is negative outside a flat cylindrical part ). The proof of this theorem
is quite {}``robust''. It has been generalized to many different
ergodic systems: Hamiltonian flows ergodic on some compact energy
shell \cite{HMR87}, broken geodesic flows on some Euclidean domains
\cite{GL93,ZZ96}, symplectic diffeomorphism (possibly with discontinuities)
on a compact phase space \cite{BDB96}. This result leaves open the
possibility of \emph{exceptional subsequences} $(\psi_{n_{j}})_{j\geq1}$
(necessarily of density zero) of eigenmodes with different localization
properties. 

The QUE conjecture was motivated by partial results concerning a much
more restricted class of manifolds, namely compact quotients of the
hyperbolic disk $\Gamma\backslash\mathbb{H}$ for which $\Gamma$
is an arithmetic subgroup of $SL_{2}(\IR)$ %
\footnote{More precisely, $\Gamma$ is derived from an Eichler order in a quaternion
algebra.%
}. Such manifolds admit a commutative algebra of selfadjoint Hecke
operators $(T_{k})_{k\geq2}$ which all commute with the Laplacian.
It thus makes sense to preferably consider an eigenbasis $(\psi_{n})_{n\geq0}$
made of joint eigenmodes of the Laplacian and the Hecke operators
(called Hecke eigenmodes), and the associated semiclassical measures
(called Hecke semiclassical measures)%
\footnote{It is widely believed that the spectrum of $\Delta$ on such a manifold
is simple, in which case the restriction to Hecke eigenmodes is not
necessary.%
}. Rudnick-Sarnak proved that the only Hecke semiclassical measure
of the form \ref{eq:partial-scar} is the Liouville measure ($\alpha=1$).
Finally, Lindenstrauss \cite{Linden06} showed that for such manifolds,
the only Hecke semiclassical measure is the Liouville measure, thus
proving an arithmetic form of QUE. He used as an intermediate step
a lower bound for the \emph{Kolmogorov-Sinai (KS) entropy} of Hecke
semiclassical measures, which he proved in a joint work with Bourgain.
\begin{prop}
\cite{BourLin03} Let $X$ be an arithmetic quotient $\Gamma\backslash\mathbb{H}$.
Consider a Hecke semiclassical measure $\mu_{sc}$. Then for any $\rho\in S^{*}X$
any small $\tau,\eps>0$, the measure of the tube $B(\rho,\eps,\tau)$
of diameter $\eps$ around the stretch of trajectory $[\rho,g^{\tau}\rho]$
is bounded by\[
\mu_{sc}\left(B(\rho,\eps,\tau)\right)\leq C_{\tau}\,\eps^{2/9}.\]
As a consequence, for almost any ergodic component $\mu_{erg}$ of
$\mu_{sc}$, one has \begin{equation}
H_{KS}(\mu_{erg})\geq2/9.\label{eq:bound-arithmetic}\end{equation}

\end{prop}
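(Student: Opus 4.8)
The statement contains two layers of very different character: a hard arithmetic estimate on how much mass $\mu_{sc}$ can put on a thin tube, and a soft ergodic-theoretic deduction of the entropy bound from it. I would treat them in that order; essentially all the difficulty --- and the whole reason for restricting to arithmetic $\Gamma$ --- lies in the first. As in the proof of the Quantum Ergodicity theorem quoted above, the opening move is to realize $\mu_{sc}$ as a weak-$*$ limit of matrix elements $a\mapsto\langle\Op(a)\psi_{n_j},\psi_{n_j}\rangle$, microlocally lifted to a probability measure on $S^{*}X\simeq\Gamma\backslash PSL_2(\mathbb{R})$ which is invariant under the geodesic flow $g^{t}$ (invariance coming from the standard Egorov/commutator argument). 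What is special here is that each $\psi_{n_j}$ is a joint eigenfunction of the Hecke operators, $T_{p^{k}}\psi_{n_j}=\lambda_j(p^{k})\psi_{n_j}$; geometrically $T_{p^{k}}$ averages a function over the ``Hecke points'' of a point $z$, that is over the $\asymp p^{k}$ images $\gamma z$ under a fixed set of coset representatives of the integral matrices of determinant $p^{k}$, equivalently over the sphere of radius $k$ in the $(p+1)$-regular Bruhat--Tits tree of $PGL_2(\mathbb{Q}_p)$.

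The arithmetic heart is a non-clustering (``Hecke recurrence'') estimate: the $\asymp p^{k}$ Hecke points of a given point do not pile up. Indeed, if $\gamma_1 z$ and $\gamma_2 z$ lie within $\eps$ of one another, then $\gamma_1^{-1}\gamma_2$ is a rational matrix of determinant a power of $p$ that almost fixes $z$, and since such matrices are essentially integral they are rigidly constrained --- a lattice-point count --- so that only a power-of-$\eps$ proportion of the $\asymp p^{k}$ Hecke points can fall in a single $\eps$-ball. Granting this, suppose $\|\mathbf{1}_T\psi_{n_j}\|^{2}\ge M$ for a tube $T$ of radius $\eps$. Computing the $L^{2}$-norm of both sides of the amplified identity $\mathbf{1}_T\big(\sum_{k\le L}c_k T_{p^{k}}\big)\psi_{n_j}=\big(\sum_{k\le L}c_k\lambda_j(p^{k})\big)\mathbf{1}_T\psi_{n_j}$ transfers the concentration of $\psi_{n_j}$ on $T$ onto the union of the Hecke translates $\gamma^{-1}T$: the left-hand side is bounded, via the non-clustering estimate, by an overlap factor depending on $\eps$ times $\|\psi_{n_j}\|^{2}=1$, while the right-hand side is made large by a suitable choice of amplifier weights $c_k$ (using only the elementary bound $|\lambda_j(p^{k})|\le k+1$ coming from the Hecke recursion, with no appeal to the Ramanujan conjecture). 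Optimizing over the prime $p$ and the length $L$ against $\eps$ then forces $M\le C_\tau\,\eps^{2/9}$, and letting $j\to\infty$ gives $\mu_{sc}(B(\rho,\eps,\tau))\le C_\tau\,\eps^{2/9}$. I would run this argument so as to obtain the form actually needed below: that $\mu_{sc}$ assigns mass $\le C\,\delta^{2/9}$ to the $\delta$-neighborhood of any bounded piece of a weak stable (equivalently, by the velocity-reversal symmetry $\rho\mapsto-\rho$ of $\mu_{sc}$, weak unstable) leaf of the flow --- a ``flattened'' tube, thin only in the contracting direction.

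For the entropy consequence I argue as follows. On $\Gamma\backslash\mathbb{H}$ the curvature is $-1$, so $g^{t}$ is Anosov with Lyapunov exponents $-1,0,1$ (stable, flow, unstable), and in local stable/flow/unstable coordinates $(w^{s},w^{0},w^{u})$ the Bowen ball $B^{g}_{N}(\rho,\eps_0):=\{\rho':\ d(g^{t}\rho,g^{t}\rho')<\eps_0\ \text{for}\ 0\le t\le N\}$ is comparable to $\{|w^{s}|<\eps_0,\ |w^{0}|<\eps_0,\ |w^{u}|<\eps_0 e^{-N}\}$ --- precisely a flattened tube, of size $\asymp\eps_0$ along the weak-stable directions and thin of radius $\asymp\eps_0 e^{-N}$ in the unstable one. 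Hence the strengthened bound of the previous step gives $\mu_{sc}\big(B^{g}_{N}(\rho,\eps_0)\big)\le C_{\eps_0}\,e^{-2N/9}$ with $C_{\eps_0}$ independent of $N$. Feeding this into the Brin--Katok entropy formula --- which for the (possibly non-ergodic) invariant measure $\mu_{sc}$ asserts that for $\mu_{sc}$-a.e.\ $\rho$ the quantity $\lim_{\eps_0\to0}\liminf_{N\to\infty}-\frac{1}{N}\log\mu_{sc}\big(B^{g}_{N}(\rho,\eps_0)\big)$ is well-defined and equals the Kolmogorov--Sinai entropy of the ergodic component of $\mu_{sc}$ containing $\rho$ --- the term $\frac{1}{N}\log C_{\eps_0}$ drops out in the limit, leaving the lower bound $2/9$. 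Therefore $H_{KS}(\mu_{erg})\ge2/9$ for almost every ergodic component $\mu_{erg}$ of $\mu_{sc}$ (one can equally invoke the Shannon--McMillan--Breiman theorem for a partition $\mathcal{P}$ of diameter $<\eps_0$ with $\mu_{sc}$-null boundary, whose time-$N$ refinement near $\rho$ is contained in $B^{g}_{N}(\rho,2\eps_0)$).

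I expect the decisive obstacle to be the non-clustering estimate together with the ensuing optimization: it is here that the integrality of the Hecke correspondences is indispensable --- for a generic, non-arithmetic negatively curved $X$ no such input exists --- and it is the precise balance in that optimization (the combinatorial growth of the Hecke points against their Diophantine non-clustering and against the elementary eigenvalue bound) that produces the exponent $2/9$ rather than something larger; under the Ramanujan conjecture the same scheme would yield a better exponent.
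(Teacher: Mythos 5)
The paper itself does not prove this proposition: it is quoted from Bourgain--Lindenstrauss \cite{BourLin03} purely as motivation, and the survey's own arguments begin only with Theorem~\ref{thm:bound-general-AKN}. So there is nothing in the text to match your proof against; what you have written is a blueprint of the Bourgain--Lindenstrauss argument itself, and as such it is broadly on track. The microlocal lift, the picture of Hecke points as a sphere in the tree attached to $PGL_2(\mathbb{Q}_p)$, the Diophantine non-clustering of those points via a lattice-point count, and the amplification converting concentration of $\psi_{n_j}$ on a tube into a contradiction with non-clustering are indeed the skeleton of their proof, and the exponent $2/9$ does come out of precisely the optimization you indicate.

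Two points deserve flagging. First, the bound $|\lambda_j(p^k)|\le k+1$ is not elementary: it is the Ramanujan bound at $p$. The Iwaniec--Sarnak amplifier used by Bourgain--Lindenstrauss is designed precisely to avoid this, by exploiting the Hecke recursion relating $T_p^2$ and $T_{p^2}$ to force $\lambda_j(p)$ and $\lambda_j(p^2)$ not to be simultaneously small, so a two-term amplifier is bounded below with no individual-eigenvalue estimate whatsoever. Second, and more importantly, your ``flattened tube'' reformulation is load-bearing, not cosmetic. If one feeds the proposition's literal round-tube estimate $\mu_{sc}(B(\rho,\eps,\tau))\leq C_\tau\,\eps^{2/9}$ into Brin--Katok, one must use two-sided Bowen balls (a two-sided Bowen ball at time $T$ is comparable to a round tube of transverse radius $\sim\eps_0 e^{-T}$), and the two-sided normalization carries a factor $\frac{1}{2T}$, yielding only $H_{KS}(\mu_{erg})\geq 1/9$. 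The one-sided Bowen ball that produces the exponent $2/9$ is thin along the unstable direction only, so the flattened estimate is genuinely required for the deduction you wrote down. But your amplification sketch, as described, bounds the mass of sets of small diameter in all transverse directions; it is not obvious that it applies unchanged to a set extended along an entire stable leaf, and this is exactly the place where the argument needs care. You should check in \cite{BourLin03} and its corrigendum whether the estimate they actually prove is of the flattened form, or whether the constant in (\ref{eq:bound-arithmetic}) ought really to read $1/9$; the internal consistency of the two exponents in the proposition hinges on this point.
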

As we will see below, the KS entropy is an affine quantity, therefore
$H_{KS}(\mu_{sc})$ also satisfies the same lower bound.

\subsection{Entropy as a measure of localization\label{sub:Entropy-1}}

In the previous section we already noticed a relationship between
phase space localization and entropy: a uniform lower bound on the
measure of thin tubes implies a positive lower bound on the entropy
of the measure. For this reason, it is meaningful to consider the
KS entropy of a given invariant measure as a {}``quantitative indicator
of localization'' of that measure. In section \ref{sub:KSentropy}
we will give a precise definition of the entropy. For now, let us
only provide a few properties valid in the case of Anosov flows \cite[Chap. 4 ]{KatHas95}.
\begin{enumerate}
\item $H_{KS}(\bullet)$ is a real function defined on the set of invariant
probability measures. It takes values in a finite interval $[0,H_{\max}]$
and is upper semicontinuous. In information-theoretic language, it
measures the average complexity of the flow w.r.to that measure. The
maximum entropy $H_{\max}=H_{top}(S^{*}X)$ is also the \emph{topological
entropy} of the flow on $S^{*}X$, which is a standard measure of
the \emph{complexity} of the flow.
\item a measure $\mu_{\gamma}$ supported on a single periodic orbit has
zero entropy.
\item Since the flow is Anosov, at each point $\rho\in S^{*}X$ the tangent
space splits into $T_{\rho}S^{*}X=E_{\rho}^{u}\oplus E_{\rho}^{s}\oplus E_{\rho}^{0}$,
respectively the unstable, stable subspaces and the flow direction.
Each of these subspaces is flow-invariant. Let us call $J^{u}(\rho)=J_{1}^{u}(\rho)=\left|\det g_{\restriction E^{u}(\rho)}^{1}\right|$
the unstable Jacobian at the point $\rho$. Then, any invariant measure
satisfies \begin{equation}
H_{KS}(\mu)\leq\int\log J^{u}(\rho)\, d\mu(\rho).\qquad\mbox{(Ruelle inequality)}\label{eq:Ruelle}\end{equation}
 The equality is reached iff $\mu=\mu_{L}$. In constant curvature,
one has $H_{KS}(\mu_{L})=H_{\max}$. 
\item the entropy is affine: $H_{KS}\left(\alpha\mu_{1}+(1-\alpha)\mu_{2}\right)=\alpha H_{KS}(\mu_{1})+(1-\alpha)H_{KS}(\mu_{2})$.
\end{enumerate}
Apart from the result of Bourgain-Lindenstrauss (relative to arithmetic
surfaces), the first result on the entropy of semiclassical measures
was obtained by Anantharaman:
\begin{thm}
\cite{Ana08} Let $(X,g)$ be a manifold of negative sectional curvature.
Then, there exists $c>0$ such that any semiclassical measure $\mu_{sc}$
satisfies \[
H_{KS}(\mu_{sc})\geq c.\]
Furthermore, the flow restricted to the support of $\mu_{sc}$ has
a nontrivial complexity: its topological entropy satisfies \[
H_{top}(\supp\mu_{sc})\geq\frac{\Lambda_{\min}^{u}}{2},\]
where $\Lambda_{\min}^{u}=\lim_{t\to\infty}\inf_{\rho}\frac{1}{t}\log J_{t}^{u}(\rho)$
is the minimal volume expanding rate of the unstable manifold.
\end{thm}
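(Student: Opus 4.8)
The plan is to estimate, for each eigenfunction $\psi_\hbar$ in the subsequence defining $\mu_{sc}$ (so $\hbar\to0$), a \emph{quantum entropy} of $\psi_\hbar$ with respect to a phase-space partition refined dynamically up to the Ehrenfest time, and then to let $\hbar\to0$. Fix a finite smooth partition of unity $\sum_{k=1}^{K}\chi_k^{2}\equiv1$ in a neighbourhood of the energy shell $\cE=S^{*}X$, with an extra cutoff $\chi_0$ supported away from $\cE$, set $\pi_k=\Oph(\chi_k)$, and let $U$ be the propagator over unit time. For a word $\balpha=(\alpha_0,\dots,\alpha_{n-1})$ put $\pi_{\balpha}=\pi_{\alpha_{n-1}}(n-1)\cdots\pi_{\alpha_1}(1)\,\pi_{\alpha_0}$, with $\pi_k(j)=U^{-j}\pi_kU^{j}$; up to $O(\hbar)$ and a cutoff to $\cE$, $\{\pi_{\balpha}^{*}\pi_{\balpha}\}$ is a resolution of the identity, so the weights $w_{\balpha}=\|\pi_{\balpha}\psi_\hbar\|^{2}$ form a probability vector on words, and $\pi_{\balpha}$ is microlocalized on the shrinking tube $T_{\balpha}=\bigcap_{j=0}^{n-1}g^{-j}(\supp\chi_{\alpha_j})$, a legitimate $\hbar$-pseudodifferential cutoff as long as $n$ stays below a fixed fraction of the Ehrenfest time $T_{E}=|\log\hbar|/\lambda_{\max}$ ($\lambda_{\max}$ the largest expansion rate).

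Two ingredients carry the argument. The first is an \emph{entropic uncertainty principle} (Maassen--Uffink, and its weighted form due to Rumin): comparing the quantum partition evolved forwards in time with the one evolved backwards, i.e.\ inserting $U^{n}$ between the two families, yields for every normalized $\psi_\hbar$ a lower bound on the sum of the two Shannon entropies of the associated weight vectors, of the form $-2\log c(\hbar,n)$ with $c(\hbar,n)$ the operator norm of the overlap matrix whose entries are $\|\pi_{\balpha}\,U^{n}\,\pi_{\bbeta}\|$. The second, the technical heart, is a \emph{hyperbolic dispersion estimate}: representing $U^{n}$ by a long-time WKB / Fourier-integral parametrix valid up to $n\lesssim T_{E}$, and using uniform hyperbolicity of the geodesic flow — the tube $T_{\balpha}$ is exponentially contracted in the stable directions, so the Lagrangian graphs carrying the phase of $U^{n}$ meet transversally — one shows, uniformly over the exponentially many words $\balpha$,
\[
w_{\balpha}=\|\pi_{\balpha}\psi_\hbar\|^{2}\ \leq\ C_{\eps}\,\hbar^{-\eps}\,\exp\Bigl(-\tfrac12\textstyle\sum_{j=0}^{n-1}\log J^{u}\bigl(g^{j}\rho\bigr)\Bigr),\qquad\rho\in T_{\balpha}.
\]
The exponent $-\tfrac12$, rather than $-1$, is the quantitative form of ``half-delocalization'' and is responsible for the factors $\tfrac12$ below.

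Combining the two, insertion of the dispersion bound into the uncertainty principle makes the quantum entropy $h_{n}(\psi_\hbar)=-\sum_{\balpha}w_{\balpha}\log w_{\balpha}$ at $n\sim T_{E}$ bounded below, essentially, by $\tfrac12\sum_{\balpha}w_{\balpha}\log J^{u}_{n}(\rho_{\balpha})-\eps|\log\hbar|-O(1)$, a quantity whose normalization by $n$ is controlled from below, as $\hbar\to0$ and then $\eps\to0$, in terms of $\int\log J^{u}\,d\mu_{sc}$ and $\lambda_{\max}$ (using flow-invariance of $\mu_{sc}$). To pass from a bound on $h_{T_E}(\psi_\hbar)$ to one on $H_{KS}(\mu_{sc})$ one invokes a quantum sub-additivity inequality $h_{n}(\psi_\hbar)\lesssim\tfrac{n}{n_{0}}h_{n_{0}}(\psi_\hbar)+Cn_{0}$ (for fixed $n_{0}$ and $n\sim T_E$, again a by-product of the long-time calculus), extracts a lower bound on $h_{n_{0}}(\psi_\hbar)$, lets $\hbar\to0$ so that $h_{n_{0}}(\psi_\hbar)\to h_{n_{0}}(\mu_{sc},\mathcal P)$, and finally sends $n_{0}\to\infty$ while refining $\mathcal P$. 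The upshot is an inequality of the shape $H_{KS}(\mu_{sc})\geq\int\log J^{u}\,d\mu_{sc}-\tfrac{d-1}{2}\lambda_{\max}$, which improves Ruelle's inequality \eqref{eq:Ruelle} and equals $\tfrac12H_{KS}(\mu_L)$ in constant curvature. Since this right side can be nonpositive under strongly variable curvature, to obtain an unconditional $c>0$ one reruns the scheme on the ergodic components of $\mu_{sc}$ (using affinity of $H_{KS}$) and bootstraps, exploiting a sub-multiplicativity of the $w_{\balpha}$ under concatenation of words together with Ruelle's inequality. The hard part throughout is the dispersion estimate: controlling the operator norm of a product of exponentially many non-commuting near-projections, uniformly up to $T_E$, which requires a careful iterated stationary-phase analysis and precise control of the small-scale geometry (H\"older regularity of the stable/unstable foliations and of $\log J^{u}$, non-degeneracy of the successive transversalities).

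The topological-entropy statement follows by a counting estimate with the same tools. Asymptotically all of the $L^{2}$-mass of $\psi_\hbar$ lies in those tubes $T_{\balpha}$ meeting $\supp\mu_{sc}$, since $\sum_{T_{\balpha}\cap\supp\mu_{sc}=\emptyset}w_{\balpha}\to\mu_{sc}$ of a set avoiding $\supp\mu_{sc}$, which vanishes for a suitably chosen partition; and by Bowen's description of the topological entropy of the compact flow-invariant set $\supp\mu_{sc}$, the number of length-$n$ words with $T_{\balpha}\cap\supp\mu_{sc}\neq\emptyset$ is at most $e^{n(H_{top}(\supp\mu_{sc})+o(1))}$ once the partition is fine enough. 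The dispersion estimate, on the other hand, gives $w_{\balpha}\leq C_{\eps}\hbar^{-\eps}\exp(-\tfrac12\log J^{u}_{n}(\rho))\leq C_{\eps}\hbar^{-\eps}\,e^{-n(\Lambda_{\min}^{u}-o(1))/2}$. Writing $1-o(1)=\sum_{T_{\balpha}\cap\supp\mu_{sc}\neq\emptyset}w_{\balpha}\leq C_{\eps}\hbar^{-\eps}\,e^{n(H_{top}(\supp\mu_{sc})+o(1)-\Lambda_{\min}^{u}/2)}$, choosing $n$ proportional to $|\log\hbar|$ so that $\hbar^{-\eps}$ is absorbed into the exponent, and letting $\hbar\to0$ and then refining the partition, forces the exponent to be $\geq0$, i.e.\ $H_{top}(\supp\mu_{sc})\geq\Lambda_{\min}^{u}/2$. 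The factor $\tfrac12$ and the slowest rate $\Lambda_{\min}^{u}$ are the footprint of the Planck scale $\hbar^{1/2}$: refinement can only be pushed to the half-Ehrenfest time at which the thinnest tubes shrink to width $\hbar^{1/2}$, and it is the minimal expansion rate that governs when this happens uniformly in $\rho$.
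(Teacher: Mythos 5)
The paper does not prove this theorem: it cites \cite{Ana08} and, at the end of \S\ref{sub:symbolic-measure}, only indicates in one sentence that Anantharaman's argument works \emph{downwards} from the nontrivial ``long-time'' entropy bound $H_{0}^{n-1}(\psi_{\hbar},\cP_{sm,q})\geq n\Lambda_{\min}^{u}-(d-1+c\delta)|\log\hbar|-\log C$ (valid for $n>T_{1}$) to short-time entropies, and then to $H_{KS}$. Your sketch is instead the entropic-uncertainty-principle route of \cite{AnaNo07-2,AnaKoNo06}, which the paper explicitly introduces as ``a different strategy'' from \cite{Ana08}. So you are effectively proving Theorem~\ref{thm:bound-general-AKN}, not the statement asked about; and, as the paper itself points out, the AKN bound $\int\log J^{u}\,d\mu_{sc}-\tfrac{(d-1)\lambda_{\max}}{2}$ can be nonpositive, so it does not imply a universal $c>0$ by itself.

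There are three concrete gaps. First, the dispersive estimate is misstated: Proposition~\ref{pro:hyperb-dispers} gives $\|\Pi_{\bal}\chi^{(m)}(P(\hbar))\|\leq Ce^{m\delta/2}\hbar^{-(d-1+\delta)/2}J_{n}^{u}(\bal)^{-1/2}$, hence $w_{\bal}=\|\Pi_{\bal}\psi_{\hbar}\|^{2}\leq C\hbar^{-(d-1+c\delta)}J_{n}^{u}(\bal)^{-1}$, with exponent $-1$ (not $-1/2$) and an $\hbar^{-(d-1)}$ prefactor that cannot be made $\hbar^{-\eps}$. The factor $1/2$ in the final inequalities is not ``the quantitative form of half-delocalization already visible in the one-cylinder estimate''; in the AKN route it comes from the symmetric forward/backward structure of the EUP (the two pressures each contribute half), and burying the $(d-1)$ inside an ``$\eps$'' makes your counting produce $\Lambda_{\min}^{u}/2$ for the wrong reason. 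Second, the step that is supposed to yield an unconditional $c>0$ — ``rerun the scheme on the ergodic components of $\mu_{sc}$ and bootstrap'' — does not go through: the ergodic components $\mu_{x}$ of $\mu_{sc}$ are not themselves semiclassical measures, so neither the dispersive estimate nor the EUP applies to them, and the alluded-to ``sub-multiplicativity of $w_{\bal}$ $+$ Ruelle'' is never formulated; this missing step is exactly the combinatorial core of \cite{Ana08}. Third, in the $H_{top}$ counting you assert $\sum_{T_{\bal}\cap\supp\mu_{sc}=\emptyset}w_{\bal}\to0$; this is immediate only for $n$ fixed, whereas your argument needs $n\gtrsim 2T_{1}$, a time beyond $2T_{\eps,\hbar}$ at which the $\Pi_{\bal}$ are no longer pseudodifferential and the identification $w_{\bal}\approx\mu_{sc}(\pi_{\bal})$ fails; extending that concentration to such $n$ requires the approximate shift-invariance/compatibility machinery (as in \eqref{eq:compatibility}) that your sketch omits. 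With the correct exponent $-1$ and the correct prefactor, the naive counting in fact requires $n\approx 2T_{1}=\frac{2(d-1)|\log\hbar|}{\Lambda_{\min}^{u}}$, which is strictly larger than $2T_{\eps,\hbar}$, so the time-window mismatch is not cosmetic.
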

The lower bound $c>0$ is not very explicit and is rather {}``small''.
This is to be opposed to the lower bound controlling the complexity
of the flow on $\supp\mu_{sc}$, given in terms of the hyperbolicity
of the flow. The lower bound on the KS entropy was improved by Anantharaman, Koch and the author:
\begin{thm}
\cite{AnaNo07-2,AnaKoNo06} \label{thm:bound-general-AKN}Let $(X,g)$
be a $d$-dimensional manifold of negative sectional curvature. Then,
any semiclassical measure $\mu_{sc}$ satisfies \begin{equation}
H_{KS}(\mu_{sc})\geq\int\log J^{u}(\rho)\, d\mu_{sc}(\rho)-\frac{(d-1)\lambda_{\max}}{2},\label{eq:bound-general-AKN}\end{equation}
where $\lambda_{\max}=\lim_{t\to\infty}\sup_{\rho}\frac{1}{t}\log\left|dg^{t}(\rho)\right|$
is the maximal expansion rate of the flow.

In the particular case where $X$ has constant curvature $-1$, this
bound reads\begin{equation}
H_{KS}(\mu_{sc})\geq\frac{d-1}{2}=\frac{H_{top}(S^{*}X)}{2}.\label{eq:bound--1}\end{equation}

\end{thm}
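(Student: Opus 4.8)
The strategy is the "entropic uncertainty principle" approach of Anantharaman--Koch--Nonnenmacher. Fix a semiclassical measure $\mu_{sc}$, associated with a subsequence of eigenmodes $(\psi_{n_j})$ with $\Delta\psi_{n_j} = \lambda_{n_j}^2 \psi_{n_j}$, $\lambda_{n_j}\to\infty$; set $\hbar = \hbar_j = \lambda_{n_j}^{-1}$ so that $\psi_{n_j}$ is an (approximate) eigenfunction of the semiclassical operator $P_\hbar = -\hbar^2\Delta - 1$ at energy $0$. First I would fix a finite partition of the energy shell $S^*X$ into small pieces $(\Omega_b)_{b\in\mathcal{A}}$ (of diameter $\sim\epsilon$, boundaries of $\mu_{sc}$-measure zero), quantize it into a smooth partition of unity $\sum_b \pi_b^* \pi_b = \mathrm{Id} + O(\hbar^\infty)$ with $\pi_b = \Oph(\chi_b)$, and for a "time" $n\sim \kappa\,|\log\hbar|$ form the refined operators $\pi_{\bal} = \pi_{b_{n-1}}(n-1)\cdots\pi_{b_1}(1)\pi_{b_0}$, where $\pi_b(k) = U^{-k}\pi_b U^k$ and $U = e^{i\hbar\Delta/2}$ is the Schrödinger propagator (or equivalently $e^{-iP_\hbar/(2\hbar)}$). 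The point is that the quantum probabilities $\|\pi_{\bal}\psi_{n_j}\|^2$ converge, as $j\to\infty$, to the $\mu_{sc}$-measures of the corresponding dynamical cylinder sets, so that $-\sum_{\bal}\|\pi_{\bal}\psi\|^2\log\|\pi_{\bal}\psi\|^2$ is asymptotically (a smoothed version of) $n$ times the Kolmogorov--Sinai entropy rate of $\mu_{sc}$ — up to a controlled discretization error that vanishes as $\epsilon\to 0$.

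The heart of the argument is a lower bound on this quantum entropy, coming from an entropic uncertainty principle for the pair of "observables" given by the partition $(\pi_{\bal})$ and its conjugate under the half-time propagator. Concretely: if $\{\tau_\alpha\}$ and $\{\pi_\beta\}$ are two families of operators with $\sum \tau_\alpha^*\tau_\alpha = \sum \pi_\beta^*\pi_\beta = \mathrm{Id}$, then for any normalized $\psi$ one has a bound of the form $h_\tau(\psi) + h_\pi(\psi) \geq -2\log\bigl(\sup_{\alpha,\beta}\|\tau_\alpha U \pi_\beta\|\bigr)$ for the corresponding Shannon entropies, where $U$ is unitary. Applying this with $\tau = \pi_{\bal}$ for forward times and $\pi = \pi_{\bbeta}$ for backward times (both of length $\sim n/2$), and using that $\psi_{n_j}$ is an approximate eigenstate so the two entropies are essentially equal, one reduces the whole problem to estimating the operator norm $\|\pi_{\bal} U^{n} \pi_{\bbeta}\|$ — a long product of pseudodifferential cutoffs interspersed with propagators, along a "broken trajectory" of total time $n\sim 2\kappa|\log\hbar|$.

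The decisive technical input — and the step I expect to be the main obstacle — is the \emph{hyperbolic dispersive estimate} bounding $\|\pi_{\bal} U^{n} \pi_{\bbeta}\|$. Using an Egorov-type / WKB analysis for times up to the Ehrenfest time, one shows that this norm is governed by the instability of the underlying broken geodesic: it decays like the inverse square root of the unstable Jacobian accumulated along the orbit, i.e.\ roughly $\|\pi_{\bal} U^n \pi_{\bbeta}\| \lesssim \hbar^{-C\epsilon}\,\bigl(\prod_k J^u_1(\rho_k)\bigr)^{-1/2}$, with an extra loss controlled by $\lambda_{\max}$ because the "width" of the relevant wavepackets is allowed to grow at the maximal rate $e^{\lambda_{\max} t}$. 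Feeding this into the entropic uncertainty principle yields $2\,H_{KS}(\mu_{sc})\cdot(n/2) \gtrsim n\int\log J^u\,d\mu_{sc} - \frac{(d-1)\lambda_{\max}}{2}\,n + (\text{errors}\to 0)$, and after dividing by $n$, letting $\hbar\to 0$, then $\epsilon\to 0$ (and optimizing $\kappa$), one obtains precisely
\begin{equation}
H_{KS}(\mu_{sc}) \geq \int\log J^u(\rho)\,d\mu_{sc}(\rho) - \frac{(d-1)\lambda_{\max}}{2}.\notag
\end{equation}
In constant curvature $-1$ the unstable Jacobian is identically $e^{d-1}$ and $\lambda_{\max}=1$, so the right-hand side collapses to $(d-1) - (d-1)/2 = (d-1)/2 = H_{top}(S^*X)/2$, giving \eqref{eq:bound--1}. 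The delicate points inside the obstacle step are: controlling the non-commutativity and the $O(\hbar^\infty)$ remainders over the logarithmically long time $n$; justifying that wavepackets remain "admissible" symbols up to time $2\kappa|\log\hbar|$ despite the spreading; and handling the sum over the exponentially many words $\bal,\bbeta$ (a cardinality-vs-smallness bookkeeping, where sub-multiplicativity of $J^u$ and the Ruelle-type pressure estimates keep the total under control).
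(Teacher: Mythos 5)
Your outline reproduces the architecture of the Anantharaman--Koch--Nonnenmacher proof faithfully: quantized smooth partition of unity, refined quasiprojectors $\Pi_{\bal}$ built from forward/backward propagation, the entropic uncertainty principle applied to the two conjugate partitions $\{\Pi_{\bal}\}$ and $\{\Pi_{\bbeta\cdot}^*\}$ (so that the off-diagonal block is $\Pi_{\bal}U^n\Pi_{\bbeta}U^{-n}$), the hyperbolic dispersive estimate $\lVert\Pi_{\bal}U^n\Pi_{\bbeta}\rVert\lesssim\hbar^{-(d-1)/2}J_n^u(\bal)^{-1/2}J_n^u(\bbeta)^{-1/2}$ as the technical heart, and working with pressures (weights $w_{\bal}=J_n^u(\bal)^{1/2}$) rather than bare entropies so that the non-uniform Jacobians do not ruin the cone-norm bound. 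These are all the right ingredients, and the computation in constant curvature is also correct.

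There is one genuine gap, hidden in the phrase \emph{``after dividing by $n$, letting $\hbar\to0$''}. Here $n=\lfloor 2T_{\eps,\hbar}\rfloor\sim|\log\hbar|/\lambda_{\max}$ is \emph{not} a free parameter independent of $\hbar$, so one cannot ``let $\hbar\to0$ at fixed $n$'': the Egorov-type convergence $\lVert\Pi_{\bal}\psi_{\hbar}\rVert^2\to\mu_{sc}(\pi_{\bal})$ only holds for $\bal$ of bounded length, and a sequence of quantum pressures $p_0^{n-1}(\mu_\hbar,w)/n$ with $n\to\infty$ along with $\hbar\to0$ has no a priori relation to the classical pressure of $\mu_{sc}$. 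The step you are missing is the \emph{approximate subadditivity of the quantum pressure}: one must show that the symbolic measure $\mu_\hbar([\cdot\bal])=\lVert\Pi_{\bal}\psi_\hbar\rVert^2$ is approximately shift-invariant (up to $\cO(\hbar^{\eps/2})$), which rests on yet another round of commutator estimates in the exotic class $\Psi_\nu^{-\infty,0}$, performed term by term to avoid the losses from the exponentially many words. Once that is in hand, one writes $n=qn_0+r$ and derives
\[
p_0^{n-1}(\mu_\hbar,w)\;\leq\; q\,p_0^{n_0-1}(\mu_\hbar,w)+p_0^{r-1}(\mu_\hbar,w)+\cO_{n_0}(q\hbar^{\eps/3}),
\]
so the Ehrenfest-time bound from the EUP transfers to a bound on $p_0^{n_0-1}(\mu_\hbar,w)/n_0$ for a \emph{fixed} $n_0$; only then can one let $\hbar\to0$ (recovering the classical pressure of $\mu_{sc}$ at time $n_0$), and afterwards $n_0\to\infty$, then $\eps\to0$. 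Without this subadditivity/fixed-time reduction, the bound you have proved lives at the Ehrenfest scale and never reaches $H_{KS}(\mu_{sc})$. A smaller imprecision: the factor $\lambda_{\max}$ in the final bound does not come from an ``extra loss'' in the dispersive estimate, but from trading the $\hbar^{-(d-1)/2}$ factor for $e^{n(d-1)\lambda_{\max}/2}$ via the \emph{choice} $n\approx|\log\hbar|/\lambda_{\max}$, which is forced by the breakdown of the Egorov/symbol calculus past half the Ehrenfest time.
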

In the constant curvature case, the above bound roughly means that
high-frequency eigenmodes of the Laplacian are \emph{at least half-delocalized}.
Still, the bound (\ref{eq:bound-general-AKN}) is not very satisfactory
when the curvature varies much across $X$; since $\int\log J^{u}d\mu$
may be as small as $\Lambda_{\min}$, the right hand side in (\ref{eq:bound-general-AKN})
can become negative (therefore trivial) in case $\Lambda_{\min}<\frac{(d-1)\lambda_{\max}}{2}$.
The following lower bound seems more natural:
\begin{conjecture}
\label{con:optimal?}Let $(X,g)$ be a manifold of negative sectional
curvature. Then, any semiclassical measure $\mu_{sc}$ satisfies \begin{equation}
H_{KS}(\mu_{sc})\geq\frac{1}{2}\int\log J^{u}(\rho)\, d\mu_{sc}(\rho).\label{eq:bound-optimal}\end{equation}

\end{conjecture}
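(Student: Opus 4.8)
\emph{Proof strategy.} The statement (\ref{eq:bound-optimal}) is conjectural, so what follows is a proof programme rather than a proof: the most promising route is a refinement of the entropic-uncertainty method behind Theorem~\ref{thm:bound-general-AKN}. Note first that in constant curvature $-1$ one has $\log J^{u}\equiv d-1$, so (\ref{eq:bound-optimal}) is just the known bound (\ref{eq:bound--1}); the substance lies in the variable-curvature case, where the deficit $\tfrac{(d-1)\lambda_{\max}}{2}$ in (\ref{eq:bound-general-AKN}) must be removed. The plan is to fix a finite smooth partition of unity $\sum_{k}\chi_{k}^{2}\equiv 1$ near the energy shell, to quantize it as $\pi_{k}=\Oph(\chi_{k})$, and, writing $U$ for the time-one quantum propagator and $\pi_{k}(j)=U^{-j}\pi_{k}U^{j}$, to form for each word $\balpha=(\alpha_{0},\dots,\alpha_{n-1})$ the refined operator $\pi_{\balpha}=\pi_{\alpha_{n-1}}(n-1)\cdots\pi_{\alpha_{0}}(0)$. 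A Maassen--Uffink-type entropic uncertainty principle --- in the operator-valued, and if needed weighted, form of \cite{Ana08,AnaNo07-2,AnaKoNo06} --- applied to an eigenstate $\psi_{\hbar}$ then bounds a (possibly weighted) Shannon entropy of the distribution $\{\|\pi_{\balpha}\psi_{\hbar}\|^{2}\}_{|\balpha|=n}$ from below in terms of $-\log$ of the operator norms of the products of propagated cutoffs along the coded orbit segments.

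The first ingredient is a \emph{hyperbolic dispersive estimate}: by means of a WKB / Fourier-integral parametrix for $U^{n}$ together with the Anosov structure --- a Lagrangian leaf tangent to the unstable foliation is dilated by the factor $J_{j}^{u}$ after time $j$, so an $L^{2}$-normalized state carried by it has its amplitude divided by $\sqrt{J_{j}^{u}}$ --- one expects, for a string of cutoffs along the orbit segment coded by $\balpha$,
\[
\bigl\|\pi_{\alpha_{n-1}}(n-1)\cdots\pi_{\alpha_{0}}(0)\bigr\|\ \leq\ C\,\hbar^{-\kappa}\exp\Bigl(-\tfrac12\textstyle\sum_{j=0}^{n-1}\log J^{u}(\rho_{j})\Bigr),
\]
the $\rho_{j}$ running along that orbit segment; this is exactly the mechanism producing the factor $\tfrac12$. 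The second ingredient is sub-additivity together with the Kolmogorov--Sinai theorem: feeding the dispersive estimate into the uncertainty principle (the weights being what makes the prefactor $\hbar^{-\kappa}$ harmless once $n\asymp|\log\hbar|$), dividing by $n$, letting $\hbar\to0$ so that $\psi_{\hbar}$ generates $\mu_{sc}$, and finally refining the partition, the left-hand side tends to $H_{KS}(\mu_{sc})$ while the right-hand side should tend to $\tfrac12\int\log J^{u}\,d\mu_{sc}$ --- provided the length $n$ one is allowed to use is large enough for the empirical orbit distributions weighting $\{\|\pi_{\balpha}\psi_{\hbar}\|^{2}\}$ to have equidistributed to $\mu_{sc}$.

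The difficulty --- and the reason the published bound (\ref{eq:bound-general-AKN}) retains the surplus $\tfrac{(d-1)\lambda_{\max}}{2}$ --- lies in that last proviso. The WKB parametrix for $U^{n}$, hence the dispersive estimate above, is currently controlled only up to the \emph{uniform} Ehrenfest time $T_{E}\simeq\frac{|\log\hbar|}{\lambda_{\max}}$, because its remainder is driven by derivatives of $g^{n}$ that may grow like $e^{n\lambda_{\max}}$. On orbit segments that spend most of their time in weakly hyperbolic regions, $\sum_{j<n}\log J^{u}(\rho_{j})$ can be as small as $n\,\Lambda_{\min}$, so with $n\simeq T_{E}$ one is forced to trade a uniform prefactor against only $e^{-n\Lambda_{\min}/2}$, which yields (\ref{eq:bound-general-AKN}) instead of (\ref{eq:bound-optimal}). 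Closing the gap would require propagating, region by region in phase space, up to the Ehrenfest time attached to the \emph{local} expansion rate there, $n(\rho)\simeq\frac{|\log\hbar|}{\lambda(\rho)}$ --- that is, a genuinely position-dependent, essentially second-microlocal long-time propagation estimate --- and then recombining these pieces, evolved over different time spans and hence along an inhomogeneously refined partition, without losing phase coherence or letting the remainders accumulate. Establishing such a non-uniform Ehrenfest-time estimate is, in my view, the crux of the problem, and it is the step that remains to be carried out.
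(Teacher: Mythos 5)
You correctly recognize that the statement is an open conjecture, not a theorem: the paper itself offers no proof, and your ``proof programme'' is a diagnosis of the obstruction rather than an argument, which is the honest thing to present here. Your diagnosis is accurate and matches the paper's own framing. The mechanism you describe --- a Maassen--Uffink entropic uncertainty principle fed by a hyperbolic dispersive estimate $\lVert \pi_{\alpha_{n-1}}(n-1)\cdots\pi_{\alpha_0}\rVert\lesssim \hbar^{-\kappa}\,J_n^u(\balpha)^{-1/2}$, followed by subadditivity and the Kolmogorov--Sinai theorem --- is precisely the architecture of \S\ref{sec:Entropies}--\ref{sec:Hyperbolic dispersive estimate}, and your identification of the source of the deficit $\tfrac{(d-1)\lambda_{\max}}{2}$ in (\ref{eq:bound-general-AKN}) is correct: the long-time Egorov / WKB control degrades at the \emph{uniform} Ehrenfest scale $T_{\eps,\hbar}\simeq\tfrac{|\log\hbar|}{2\lambda_{\max}}$, so when the orbit codes through weakly hyperbolic regions one trades $\hbar^{-(d-1)/2}$ against too small a Jacobian gain. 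Your proposed remedy, a position-dependent Ehrenfest time $n(\rho)\simeq|\log\hbar|/\lambda(\rho)$ with a recombination of inhomogeneously-refined words, is exactly the direction taken in the literature: the paper records that Rivi\`ere proved (\ref{eq:bound-optimal}) for surfaces of nonpositive curvature \cite{Riv08,Riv09} and Gutkin for a class of quantized interval maps \cite{Gutkin08}, both by nontrivial extensions of the methods of \cite{AnaKoNo06} that implement a word-length (or time) allocation adapted to the local expansion rate. So your programme is the right one; what would remain, as you say, is to establish the non-uniform long-time propagation estimate in full generality (higher dimension, genuinely negative variable curvature) and to control the bookkeeping when words of different lengths are recombined in the uncertainty principle --- this is open, and your proposal correctly stops short of claiming it.
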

This bound is identical with (\ref{eq:bound--1}) in the case of curvature
$-1$. Using a nontrivial extension of the methods developed in \cite{AnaKoNo06},
it has been recently proved by G.Rivière in the case of surfaces ($d=2$)
of \emph{nonpositive curvature} \cite{Riv08,Riv09}, and also by B.Gutkin
for a certain class of quantized interval maps \cite{Gutkin08}.

\subsection{Generalization to Anosov Hamiltonian flows and symplectic maps}

The conjecture \ref{con:optimal?} is weaker than the QUE conjecture
\ref{con:QUE}. We expect the bound (\ref{eq:bound-optimal}) to apply
as well to more general classes of quantized chaotic dynamical systems,
like Anosov Hamiltonian flows or symplectic diffeomorphisms on a compact
phase space. In these notes we will extend the bound (\ref{eq:bound-general-AKN})
to these more general Anosov systems (the first instance of this entropic
bound actually appeared when studying the Walsh-quantized baker's
map \cite{AnaNo07-1}). The central result of these notes is the following
theorem.
\begin{thm}
i)\label{thm:bound-general2} Let $p(x,\xi;\hbar)$ be a Hamilton
function on some phase space $T^{*}X$ with principal symbol $p_{0}$,
such that the energy shell $\cE=p_{0}^{-1}(0)$ is compact, and the
Hamiltonian flow $g^{t}=e^{tH_{p_{0}}}$ on $\cE$ is Anosov (see
$\S$\ref{sub:Hamiltonians}). Let $P(\hbar)=\Oph(p)$ be the $\hbar$-quantization
of $p$. Then, any semiclassical measure $\mu_{sc}$ associated with
a sequence of null eigenmodes $\left(\psi_{\hbar}\right)_{\hbar\to0}$
of $P(\hbar)$ satisfies the following entropic bound: \begin{equation}
H_{KS}(\mu_{sc})\geq\int_{\cE}\log J^{u}(\rho)\, d\mu_{sc}(\rho)-\frac{(d-1)\lambda_{\max}}{2}.\label{eq:bound-general2}\end{equation}

ii) Let $\cE=\IT^{2d}$ be the $2d$-dimensional torus, equipped with
its standard symplectic structure. Let $\kappa:\cE\to\cE$ be an Anosov
diffeomorphism, which can be quantized into a family of unitary propagators
$\left(U_{\hbar}(\kappa)\right)_{\hbar\to0}$ defined on (finite dimensional)
quantum Hilbert spaces $\left(\cH_{\hbar}\right)$. Then, any semiclassical
measure $\mu_{sc}$ associated with a sequence of eigenmodes $(\psi_{\hbar}\in\cH_{\hbar})_{\hbar\to0}$
of $U_{\hbar}(\kappa)$ satisfies the entropic bound\begin{equation}
H_{KS}(\mu_{sc})\geq\int\log J^{u}(\rho)\, d\mu_{sc}(\rho)-\frac{d\lambda_{\max}}{2}.\label{eq:bound-maps}\end{equation}

\end{thm}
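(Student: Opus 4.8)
The plan is to adapt the entropic-uncertainty-principle approach of \cite{AnaKoNo06} to the two settings simultaneously, treating the Hamiltonian flow and the symplectic map on the same footing once one has set up a good microlocal calculus. First I would fix a small parameter $\eps>0$ and a smooth partition of unity $(P_i)_{i\in I}$ on the energy shell $\cE$ (resp. on $\IT^{2d}$), with each $P_i$ supported in a ball of diameter $\sim\eps$, and quantize it into a family of operators $(\Pi_i)$ with $\sum_i \Pi_i^*\Pi_i = \mathrm{Id}$ microlocally near $\cE$. Iterating under the quantum propagator $U = U_\hbar(\kappa)$ (resp.\ the time-one flow $e^{-\i P/\hbar}$ suitably truncated in energy) produces, for each word $\alpha = (\alpha_0,\dots,\alpha_{n-1})$, a ``refined'' operator $\Pi_\alpha = \Pi_{\alpha_{n-1}}(n-1)\cdots\Pi_{\alpha_1}(1)\Pi_{\alpha_0}$ where $\Pi_i(k) = U^{-k}\Pi_i U^k$. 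The quantity one wants to control is the pressure-like sum $\sum_\alpha \|\Pi_\alpha \psi_\hbar\|^2 \log\|\Pi_\alpha\psi_\hbar\|^2$, whose limit (after dividing by $n$ and taking $\hbar\to0$ then $n\to\infty$ and $\eps\to0$) is $-H_{KS}(\mu_{sc})$ by the standard Kolmogorov--Sinai characterization together with the fact that $\mu_{sc}$ is the semiclassical limit of the ``diagonal'' measures $\alpha\mapsto\|\Pi_\alpha\psi_\hbar\|^2$.

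The heart of the argument is the entropic uncertainty principle (Maassen--Uffink type, in the refined form used in \cite{AnaKoNo06}): for any two orthonormal-like families $(\Pi_\alpha)$ and $(\Pi_{\beta}^{\mathrm{op}})$ related by a unitary $V$ and any normalized $\psi$, the sum of the two Shannon entropies of $(\|\Pi_\alpha\psi\|^2)$ and $(\|\Pi_\beta^{\mathrm{op}}V\psi\|^2)$ is bounded below by $-2\log\max_{\alpha,\beta}\|\Pi_\alpha V (\Pi_\beta^{\mathrm{op}})^*\|$. One applies this with $V = U^n$ (the propagator iterated over the whole time window), $(\Pi_\alpha)$ the forward-refined family and $(\Pi_\beta^{\mathrm{op}})$ the backward-refined family, over a time window of length $n \sim 2|\log\hbar|/\lambda_{\max}$ — the Ehrenfest time. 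The key operator-norm estimate is that $\|\Pi_\alpha U^n (\Pi_\beta^{\mathrm{op}})^*\| \lesssim \hbar^{-(d-1)/2}\,\prod \|{\text{stuff}}\|$, more precisely it is controlled by the square root of an unstable Jacobian $e^{-\frac12\sum_{k}\log J^u(g^k\rho_\alpha)}$ along the trajectory, with an extra loss $\hbar^{-(d-1)/2}$ (resp.\ $\hbar^{-d/2}$ in the map case, reflecting the full $2d$-dimensional phase space rather than the $(2d-1)$-dimensional energy shell). This gives, after summing and taking limits, $2 H_{KS}(\mu_{sc}) \geq 2\int\log J^u\,d\mu_{sc} - (d-1)\lambda_{\max}$ (resp.\ $-d\lambda_{\max}$), which is the claim.

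The two genuinely delicate points are as follows. First, the hyperbolic dispersive estimate on $\|\Pi_\alpha U^n (\Pi_\beta^{\mathrm{op}})^*\|$: proving that the propagated, doubly-refined symbol stays a nice (if $\hbar$-dependent, anisotropic) Lagrangian state up to the Ehrenfest time requires a careful WKB / Fourier-integral-operator analysis controlling the spreading of wave packets under the Anosov flow, and this is exactly where the factor $\hbar^{-(d-1)/2}$ (resp.\ $\hbar^{-d/2}$) enters — one must show no worse loss occurs, which uses that the stable/unstable foliations, though only Hölder, are smooth enough along individual orbits. Second, the passage from the discrete-time, finite-$n$ entropy inequality to a statement about $H_{KS}(\mu_{sc})$ requires the subadditivity of the entropy and a careful treatment of the ``boundary'' pieces of the partition (the $\Pi_i$ whose symbols are not sharp cutoffs), handled by a separate estimate showing the mass sitting near partition boundaries is negligible; in the Hamiltonian case there is the additional technical nuisance of localizing in energy to a thin shell $|p_0|\leq c\hbar$ and controlling the non-unitarity of the truncated propagator, but this is routine once the map case is in hand. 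I expect the dispersive norm estimate (the first point) to be the main obstacle: it is the place where the geometry of the Anosov flow and the $\hbar$-microlocal calculus have to be married, and where the sharp power of $\hbar$ — hence the precise constant $(d-1)\lambda_{\max}/2$ in the bound — is won or lost.
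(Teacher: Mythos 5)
Your proposal follows the same architecture as the paper's proof: smooth microlocal partition, refinement under the propagator, an entropic uncertainty principle applied to forward and backward refinements over a window of Ehrenfest size, a hyperbolic dispersive estimate controlling the off-diagonal operator norms, and a subadditivity argument to transfer the Ehrenfest-time bound down to finite-time entropies and then to $H_{KS}(\mu_{sc})$. The technical points you flag (sharp energy cutoffs in the flow case, Egorov up to the Ehrenfest time, the difference $d-1$ vs.\ $d$ reflecting one fewer transverse direction on an energy shell, boundary effects controlled by smoothing) are all present. Your heuristic for the $\hbar^{-(d-1)/2}$ loss in the dispersive estimate is also the right one: the paper realizes it by decomposing the localized state into Lagrangian pieces with transverse flat Lagrangians, evolving each by a WKB expansion whose symbol is controlled uniformly up to logarithmic time, and paying $\hbar^{-(d-1)/2}$ (or $\hbar^{-d/2}$ for maps) from a Cauchy--Schwarz estimate on the Fourier decomposition.

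There is, however, a genuine gap in the passage from the uncertainty principle to the stated inequality. You apply the Maassen--Uffink bound with plain Shannon entropies, $H(\psi,\rho)+H(\psi,\tau)\geq -2\log\max_{\alpha,\beta}\lVert\Pi_{\alpha}V(\Pi_{\beta}^{\mathrm{op}})^{*}\rVert$. But the dispersive bound $\lVert\cdots\rVert\lesssim\hbar^{-(d-1)/2}J_{n}^{u}(\alpha)^{-1/2}J_{n}^{u}(\beta)^{-1/2}$ depends strongly on $\alpha,\beta$, and the maximum over words is achieved on the trajectories of \emph{slowest} expansion. Feeding that worst-case value into the EUP and taking limits produces $\Lambda_{\min}^{u}$ on the right, not $\int\log J^{u}\,d\mu_{sc}$; the inequality you would actually prove this way is the weaker $H_{KS}(\mu_{sc})\geq\Lambda_{\min}^{u}-\frac{(d-1)\lambda_{\max}}{2}$. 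The step that converts the worst case into a $\mu_{sc}$-average is the replacement of entropies by \emph{pressures}: one attaches weights $w_{\alpha}=J_{n}^{u}(\alpha)^{1/2}$ to the partition, upgrades the uncertainty principle to a weighted version, so that the weighted cone norm $\max_{\alpha,\beta}w_{\alpha}w_{\beta}\lVert\cdots\rVert$ is bounded by $\hbar^{-(d-1+c\delta)/2}$ \emph{uniformly} in $\alpha,\beta$. The weighted left-hand side then becomes the pressure $p(\mu_{\hbar},\cP,w)=H(\mu_{\hbar},\cP)-\sum\mu_{\hbar}([\cdot\alpha])\log J_{n}^{u}(\alpha)$, and the second term converges to $n\sum_{k}\mu_{sc}(E_{k})\log J^{u}(k)$, giving the Jacobian average after refining $\cP$. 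Without this reweighting the stated bound does not follow from the sketch as written, even though the final formula you announced is the correct one.
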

In §\ref{sec:Anosov-maps} we will state more precisely what we meant
by a {}``quantized torus diffeomorphism''. Let us mention that the
same proof could apply as well to Anosov symplectic maps on more general
symplectic manifolds admitting some form of quantization. We restricted
the statement to the $2d$-torus because simple Anosov diffeomorphisms
on $\IT^{2d}$ can be constructed, and their quantization is by now
rather standard. As we explain below, their study has revealed interesting
features regarding the QUE conjecture.

\subsection{Counterexamples to QUE for Anosov diffeomorphisms\label{sub:Counterexamples}}

For the simplest Anosov diffeos on $\IT^{2}$, namely the hyperbolic
symplectomorphisms of the torus (colloquially known as {}``quantum
Arnold's cat map''), the QUE conjecture is known to fail. Indeed,
in \cite{FNDB03} counterexamples to QUE for {}``cat maps'' on the
$2$-dimensional torus were exhibited, in the form of explicit semiclassical
sequences of eigenstates of the quantized map, associated with semiclassical
measures of type (\ref{eq:partial-scar}) with $\alpha=1/2$. In \cite{FNDB04} we also
showed that, for this particular map, semiclassical measures of the
form (\ref{eq:partial-scar}) necessarily satisfy $\alpha\geq1/2$. 

In the case of toral symplectomorphisms on higher dimensional tori
$\mathbb{T}^{2d}$, Kelmer \cite{Kelm07,Kelm10} has exhibited semiclassical
measures in the form of the Lebesgue measure on certain co-isotropic
affine subspaces $\Lambda\subset\mathbb{T}^{2d}$ of the torus invariant
through the map:\[
\mu_{sc}=\mu_{L\restriction\Lambda}.\]
For another example of a chaotic map (the baker's map quantized \emph{à
la Walsh}), we were able to construct semiclassical measures of purely
fractal (self-similar) nature.
\begin{fact}
The above counterexamples to QUE all satisfy the entropy bound (\ref{eq:bound-optimal}),
and some of them (like the measure (\ref{eq:partial-scar}) with $\alpha=1/2$)
saturate that bound. 
\end{fact}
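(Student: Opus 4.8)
The plan is to check (\ref{eq:bound-optimal}) by hand for each of the three families of counterexamples, in every case by computing $H_{KS}(\mu_{sc})$ and $\int\log J^{u}\,d\mu_{sc}$ explicitly. Apart from the properties of $H_{KS}$ recalled in $\S$\ref{sub:Entropy-1} (affinity, vanishing on periodic orbits, Ruelle's inequality), the only extra input I will need is that Ruelle's inequality becomes an equality when a linear hyperbolic toral automorphism acts on its Haar measure (Pesin's entropy formula), which applies because every invariant subtorus occurring below carries such a smooth measure.

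For the cat map I would first note that a linear hyperbolic $M\in SL_{2}(\IZ)$ with expanding eigenvalue $\lambda>1$ has constant unstable Jacobian $\log J^{u}\equiv\log\lambda$, so the right-hand side of (\ref{eq:bound-optimal}) equals $\tfrac12\log\lambda$ for every invariant measure. For a semiclassical measure of type (\ref{eq:partial-scar}), $\mu_{sc}=\alpha\mu_{L}+(1-\alpha)\sum_{\gamma}p_{\gamma}\mu_{\gamma}$, affinity together with $H_{KS}(\mu_{\gamma})=0$ and $H_{KS}(\mu_{L})=\log\lambda$ gives $H_{KS}(\mu_{sc})=\alpha\log\lambda$; the inequality $\alpha\geq1/2$ proved in \cite{FNDB04} is then precisely (\ref{eq:bound-optimal}), and the eigenstates constructed in \cite{FNDB03}, for which $\alpha=1/2$, saturate it.

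The substantial case is Kelmer's, where $\mu_{sc}=\mu_{L\restriction\Lambda}$ with $\Lambda\subset\IT^{2d}$ a rational co-isotropic affine subtorus of dimension $2d-k$, $0\leq k\leq d$, invariant under the hyperbolic toral symplectomorphism $M$. I would pass to the $M$-invariant flag $\Lambda^{\omega}\subset\Lambda\subset\IT^{2d}$, $\Lambda^{\omega}$ being the ($k$-dimensional, isotropic) symplectic orthogonal of $\Lambda$, and use symplecticity to see that $M$ acts on the graded pieces $\Lambda^{\omega}$, $\Lambda/\Lambda^{\omega}$ (symplectic, dimension $2(d-k)$) and $\IT^{2d}/\Lambda\cong(\Lambda^{\omega})^{*}$ with eigenvalues $\{\mu_{i}\}_{i=1}^{k}$, $\{\nu_{j},\nu_{j}^{-1}\}_{j=1}^{d-k}$ and $\{\mu_{i}^{-1}\}_{i=1}^{k}$ respectively, all off the unit circle by the Anosov hypothesis. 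Putting $\alpha_{+}=\sum_{|\mu_{i}|>1}\log|\mu_{i}|$, $\alpha_{-}=\sum_{|\mu_{i}|<1}\log|\mu_{i}|^{-1}$ and $\beta=\sum_{|\nu_{j}|>1}\log|\nu_{j}|$, a direct count of the eigenvalues of $M$ gives $\int\log J^{u}\,d\mu_{sc}=\log J^{u}(M)=\alpha_{+}+\alpha_{-}+\beta$, while Pesin's formula applied to the hyperbolic automorphism $M|_{\Lambda}$ of the subtorus $\Lambda$ (whose Haar measure is $\mu_{L\restriction\Lambda}$) gives $H_{KS}(\mu_{sc})=\alpha_{+}+\beta$. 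The crux is then that $M|_{\Lambda}$ is an automorphism of the lattice $\vec{\Lambda}\cap\IZ^{2d}$, hence of determinant $\pm1$; since $\prod_{j}|\nu_{j}|\,|\nu_{j}^{-1}|=1$, this forces $\prod_{i}|\mu_{i}|=1$, i.e. $\alpha_{+}=\alpha_{-}$. Hence $\int\log J^{u}\,d\mu_{sc}=2\alpha_{+}+\beta$, and (\ref{eq:bound-optimal}) reduces to $\alpha_{+}+\beta\geq\alpha_{+}+\tfrac{\beta}{2}$, which holds, with equality exactly when $\beta=0$, i.e. when $\Lambda$ is Lagrangian (so the Lagrangian instances of Kelmer's construction also saturate the bound). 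Finally, the Walsh-quantized $D$-baker's map has classical limit the $D$-baker on $\IT^{2}$, whose unstable Jacobian is the constant $\log D$, so there (\ref{eq:bound-optimal}) is just the bound $H_{KS}(\mu_{sc})\geq\tfrac12\log D$ established for all of its semiclassical measures in \cite{AnaNo07-1}, and the self-similar measures of entropy $\tfrac12\log D$ constructed there saturate it.

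The hard part is the relation $\alpha_{+}=\alpha_{-}$ in Kelmer's case: a priori (\ref{eq:bound-optimal}) could fail for a co-isotropic $\Lambda$ along which $M$ contracts more than it expands, and what rules this out is not a dynamical but an arithmetic fact — the integer automorphism $M|_{\Lambda}$ has determinant $\pm1$, so its net expansion rate vanishes. A secondary point to handle carefully is that $\mu_{L\restriction\Lambda}$ is indeed the SRB measure of $M|_{\Lambda}$ (so that Ruelle's inequality is saturated along $\Lambda$), which holds because for a linear hyperbolic toral automorphism the Haar measure is simultaneously the SRB measure and the measure of maximal entropy.
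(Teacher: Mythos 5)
The paper states this as a bare \emph{Fact} with no proof, so there is nothing to compare your argument against; what I can do is check it, and it is correct. The cat-map and Walsh-baker cases are straightforward bookkeeping (constant $\log J^{u}$, affinity of $H_{KS}$, and the entropy estimates of \cite{FNDB04} and \cite{AnaNo07-1}). The Kelmer case is the only one with genuine content, and you have identified its crux accurately: writing the eigenvalues of $M$ along the flag $\Lambda^{\omega}\subset\Lambda\subset\IT^{2d}$ as $\{\mu_i\}$, $\{\nu_j,\nu_j^{-1}\}$, $\{\mu_i^{-1}\}$, the bound (\ref{eq:bound-optimal}) reduces (after Pesin on $\Lambda$) to $\alpha_{+}+\beta\geq\tfrac12(\alpha_{+}+\alpha_{-}+\beta)$, and what saves it is not hyperbolicity but the arithmetic fact that $M$ restricts to an automorphism of the lattice $\vec{\Lambda}\cap\IZ^{2d}$ (the rationality of $\Lambda$ is what Kelmer's construction guarantees), whence $\lvert\det M\restriction\Lambda\rvert=\lvert\prod\mu_i\rvert=1$ and $\alpha_{+}=\alpha_{-}$. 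The minor implicit steps are all sound: $\Lambda^{\omega}$ is rational because $\omega$ and $\vec\Lambda$ are; $M\restriction\Lambda$ is automatically hyperbolic because its spectrum is a subset of that of $M$; the moduli $\lvert\mu_i\rvert$, $\lvert\nu_j\rvert$ are well defined even when the eigenvalues are complex or the action is not semisimple, which is all the determinant/Lyapunov bookkeeping needs; and Haar on an invariant subtorus is simultaneously SRB and maximal-entropy for a linear hyperbolic automorphism, so Pesin applies. Equality iff $\beta=0$ correctly characterizes the Lagrangian case.
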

It is worth mentioning an interesting result obtained by S.Brooks
\cite{Broo08} in the case of the {}``quantum Arnold's cat map''.
Brooks takes into accout the possibility to split any invariant measure
$\mu$ into ergodic components: \[
\mu=\int_{\t2}\mu_{x}\, d\mu(x),\]
where the probability measure $\mu_{x}$, defined for $\mu$-almost
every point $x$, is ergodic. The affineness of the KS entropy ensures
that \begin{equation}
H_{KS}(\mu)=\int_{\t2}H_{KS}(\mu_{x})\, d\mu(x),\label{eq:ergodic-decompo}\end{equation}
so to get a lower bound on $H_{KS}(\mu)$ it is sufficient to show
that {}``high-entropy'' components have a positive weight in $\mu$.
Brooks's result reads as follows:
\begin{thm}
\cite{Broo08}\label{thm:Brooks}Let $\kappa:\t2\circlearrowleft$
be a linear hyperbolic symplectomorphism, with positive Lyapunov exponent
$\lambda$ ($\lambda$ is also equal to the topological entropy of $\kappa$ on $\IT^2$).
Fix any $0<H_{0}<\frac{\lambda}{2}$, and consider any associated
semiclassical mesure $\mu_{sc}$. Then the following inequality holds:\[
\mu_{sc}\left\{ x\,:\, H_{KS}(\mu_{x})<H_{0}\right\} \leq\mu_{sc}\left\{ x\,:\, H_{KS}(\mu_{x})>\lambda-H_{0}\right\} .\]

\end{thm}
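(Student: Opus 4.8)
The plan is to exploit the structure revealed by Theorem \ref{thm:bound-general-AKN} (equivalently its map version \eqref{eq:bound-maps}) more carefully, tracking not just the average of $H_{KS}(\mu_x)$ over ergodic components but the full distribution of the values $H_{KS}(\mu_x)$. The key observation is that the entropic estimate \eqref{eq:bound-maps} does not merely produce a single number: the underlying pressure/sub-multiplicativity estimate that goes into its proof (the so-called entropic uncertainty principle applied to a microlocal partition refined up to the Ehrenfest time) yields, for the quantum Arnold cat map, a bound of the form $H_{KS}(\mu_{sc}) \geq \lambda - H_{KS}(\mu_{sc})$, i.e.\ $H_{KS}(\mu_{sc})\geq \lambda/2$. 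The real content here is that the quantity appearing on the right of the uncertainty inequality is \emph{itself} an entropy-like quantity computed against the \emph{same} semiclassical measure, but with the roles of fine and coarse partitions (equivalently of the map $\kappa$ and $\kappa^{-1}$) interchanged. For a linear symplectomorphism of $\t2$ the stable and unstable directions are exchanged by time-reversal and the Jacobian is constant ($\log J^u \equiv \lambda$), so the two quantities are genuinely symmetric.

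First I would set up the ergodic decomposition $\mu_{sc} = \int_{\t2}\mu_x\,d\mu_{sc}(x)$ and recall that by affineness \eqref{eq:ergodic-decompo} holds, and that the same additivity holds for the \og dual\fg\ entropy functional coming from the other end of the uncertainty inequality. Second, I would revisit the entropic uncertainty principle step in \cite{AnaNo07-2} in the cat-map setting and extract, rather than the integrated inequality, a \emph{pointwise-in-$x$} inequality relating $H_{KS}(\mu_x)$ to the dual entropy of $\mu_x$; crucially one shows the dual entropy of an ergodic component $\mu_x$ equals $\lambda - H_{KS}(\mu_x')$ where $x\mapsto x'$ is a measure-preserving involution on the space of ergodic components induced by time reversal (or, more robustly, one argues directly at the level of the measure $\mu_{sc}$ decomposed simultaneously for $\kappa$ and $\kappa^{-1}$, which for the cat map have the same semiclassical measure). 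Third, I would feed into the uncertainty inequality a test observable that is the indicator of the set $\{x : H_{KS}(\mu_x) < H_0\}$, localizing the global inequality to that sub-ensemble; the uncertainty principle then forces the \og conjugate\fg\ portion of the spectral weight to live where the dual entropy is small, i.e.\ where $\lambda - H_{KS}(\mu_{x'}) \leq H_0$, equivalently $H_{KS}(\mu_{x'}) \geq \lambda - H_0$. Matching the measures of these two sets via the involution gives exactly the claimed inequality $\mu_{sc}\{H_{KS}(\mu_x)<H_0\}\leq\mu_{sc}\{H_{KS}(\mu_x)>\lambda-H_0\}$.

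The main obstacle I anticipate is making the \og localization of the uncertainty inequality to a sub-ensemble\fg\ step rigorous: the entropic uncertainty principle is naturally a statement about a single vector (or a single invariant measure), and one must carefully introduce a spectral cutoff (a quantized indicator of a dynamically defined region of $\t2$, built from the ergodic-decomposition data via Egorov up to the Ehrenfest time) without destroying the sub-multiplicativity estimates or the almost-invariance needed to control the error terms. A secondary, more structural difficulty is identifying the dual entropy functional with $\lambda - H_{KS}$ on ergodic components: this requires that the partition refined under $\kappa^{-1}$ up to the Ehrenfest time sees the \og same\fg\ semiclassical measure, which uses the specific feature of toral automorphisms that the eigenstates of $U_\hbar(\kappa)$ and $U_\hbar(\kappa)^{-1}=U_\hbar(\kappa^{-1})$ coincide and hence define matching semiclassical measures; for a general Anosov map this symmetry fails, which is exactly why Brooks's refined statement is special to the linear cat map. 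Once these two points are in place, the remaining arguments (affineness, the bound $\log J^u\equiv\lambda$, weak-$*$ limits) are routine.
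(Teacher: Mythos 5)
The paper does not actually prove this theorem: it quotes it from \cite{Broo08} and only records two of its consequences ($H_{KS}(\mu_{sc})\geq\lambda/2$, and the comparison of atomic versus Lebesgue weights). Your proposal therefore has to be judged against Brooks's own argument, not against text in the paper.

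There are two genuine gaps, and both concern the same step: upgrading the integrated bound $H_{KS}(\mu_{sc})\geq\lambda/2$ to a distributional constraint on the entropies $H_{KS}(\mu_x)$ of the ergodic components. First, the measure-preserving involution $x\mapsto x'$ ``induced by time reversal,'' together with the claimed identity relating the dual entropy of $\mu_x$ to $\lambda-H_{KS}(\mu_{x'})$, does not exist. A probability measure on $\IT^2$ is $\kappa$-invariant iff it is $\kappa^{-1}$-invariant, and ergodic for $\kappa$ iff ergodic for $\kappa^{-1}$, so the ergodic decompositions of $\mu_{sc}$ for $\kappa$ and for $\kappa^{-1}$ are the \emph{same} decomposition; moreover $H_{KS}(\nu,\kappa)=H_{KS}(\nu,\kappa^{-1})$ for every invariant $\nu$, so the ``dual entropy'' is just $H_{KS}$ again. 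No nontrivial complementary pairing of low- and high-entropy components can be produced this way, and your fallback (``decompose simultaneously for $\kappa$ and $\kappa^{-1}$'') only reconfirms that the two decompositions coincide. Second, there is no pointwise-in-$x$ uncertainty principle of the kind you invoke: if there were, applying it to each ergodic component would yield $H_{KS}(\mu_x)\geq\lambda/2$ for $\mu_{sc}$-almost every $x$, which is false --- the counterexamples of \cite{FNDB03} produce $\mu_{sc}=\frac12\mu_L+\frac12\mu_\gamma$ with an ergodic atomic component of zero entropy. Quantizing the indicator of $\{x: H_{KS}(\mu_x)<H_0\}$ cannot rescue this: that set is defined only measurably through the ergodic decomposition map, is not a symbol in any class for which Egorov up to the Ehrenfest time or the hyperbolic dispersion estimate applies, and the ``localization of the uncertainty inequality to a sub-ensemble'' you flag as the main obstacle is in fact unavailable along these lines. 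Brooks's argument works entirely on the quantum side, analysing the distribution of the cylinder weights $\lVert\Pi_{\bal}\psi_{\hbar}\rVert^2$ for forward and backward refinements and extracting the distributional constraint on $H_{KS}(\mu_x)$ without ever attempting to quantize an entropy-level set; your correct instinct that $\log J^u\equiv\lambda$ is what produces the symmetry $H\leftrightarrow\lambda-H$ enters through the pressure term, not through a nonexistent time-reversal involution on components.
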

This result directly implies (through (\ref{eq:ergodic-decompo}))
the bound $H_{KS}(\mu_{sc})\geq\frac{\lambda}{2}$, but it also implies
(by sending $H_{0}\to0$) the above-mentioned fact that the weight
of atomic components of $\mu_{sc}$ is smaller or equal to the weight
of its Lebesgue component.

\subsection{Plan of the paper}

These lectures reproduce most of the proofs of \cite{AnaNo07-2,AnaKoNo06}
dealing with eigenstates of the Laplacian on manifolds of negative
curvature. Yet, we extend the proofs in order to deal with more general
Hamiltonian flows of Anosov type (for instance, adding some small
potential to the free motion on $X$). This can be done at the price
of using more general, {}``microlocal'' partitions of unity, as
opposed to the {}``local'' partition of unity used in \cite{AnaNo07-2,AnaKoNo06}
(which was given in terms of functions $\pi_{k}(x)$ only depending
on the position variable). This microlocal setting is somehow
more natural, since it does not depend on the way unstable manifolds
project down to the manifold $X$. It is also more natural in the
case of Anosov maps.

In §\ref{sec:Preliminaries} we recall the semiclassical tools we
will need, starting with the $\hbar$-pseudodifferential calculus
on a compact manifold, and including some exotic classes of
symbols. We also define the main object of study, namely the semiclassical
measures associated with sequences of null eigenstates $(\psi_{\hbar})_{\hbar\to0}$
of a family of Hamiltonians $(P(\hbar))_{\hbar\to0}$. In the central
section §\ref{sec:Entropies} we provide the proof of Thm. \ref{thm:bound-general2},
\emph{i)}, that is in the case of an Anosov Hamiltonian flow on a
compact manifold $X$. We first recall the definition of entropies
and pressures associated with invariant measures. We then introduce
microlocal quantum partitions in §\ref{sub:quantum-partition},
and their refinements used to define quantum entropies and pressures
associated with the eigenstates $\psi_{\hbar}$. We try to provide
some geometric intuition on the operators $\Pi_{\bal}$ defining
these partitions. We then state the central \emph{hyperbolic dispersive
estimate} on the norms of these operators, deferring the proof to
§\ref{sec:Hyperbolic dispersive estimate}. We then introduce several
versions of {}``entropic uncertainty principles'', from the simplest
to the most complex, microlocal form (Prop. \ref{pro:EUP,microlocal})
. We then apply this microlocal EUP in order to bound from below quantum
and classical pressures associated with our eigenmodes. §\ref{sec:Hyperbolic dispersive estimate}
is devoted to the proof of the hyperbolic dispersive estimate. Here
we adapt the proof of \cite{NoZw09}, which is valid in more general
situations than the case of geodesic flows. Finally, in §\ref{sec:Anosov-maps}
we briefly recall the framework of quantized maps on the torus, and
provide the details necessary to obtain Thm.\ref{thm:bound-general2},
\emph{ii).}
\begin{acknowledgement*}
I am grateful to D.Jakobson and I.Polterovich who invited me to give
this minicourse in Montréal and to write these notes. Most of the
material of these notes were obtained through from collaborations
with N.Anantharaman, H.Koch and M.Zworski. I have also been partially
supported by the project ANR-05-JCJC-0107091 of the Agence Nationale
de la Recherche.
\end{acknowledgement*}

\section{Preliminaries and problematics\label{sec:Preliminaries}}

\subsection{Semiclassical calculus on $X$}

The original application of the methods presented below concern the
Laplace-Beltrami operator on a smooth compact manifold $X$ of negative
sectional curvature. To deal with this problem, one needs to define
a certain number of auxiliary operators on $L^{2}(X)$, which are
$\hbar$-pseudodifferential operators on $X$ ($\Psi$DOs), or $\hbar$-Fourier
integral operators on $X$. We will only recall the definition and
construction of the former class.

The Hamiltonians mentioned in Theorem \ref{thm:bound-general2} also
belong to some class of $\hbar$-pseudodifferential operators, but
the manifold $X$ on which they are defined is not necessarily compact
any more. In this setting, the smooth manifold $X$ can be taken as
the Euclidean space $X=\IR^{d}$, or be Euclidean near infinity, that
is $X=X_{0}\sqcup\left(\IR^{d}\setminus B(0,R_{0})\right)$, where
$B(0,R_{0})$ is the ball of radius $R_{0}$ in $\IR^{d}$, and $X_{0}$
is a compact manifold, the boundary of which is smoothly glued to
$\partial B(R_{0})$.

\subsubsection{Symbol classes on $T^{*}X$ and $\hbar$-pseudodifferential calculus}

Let us construct an $\hbar$-quantization procedure on a Riemannian
manifold $X$. To a certain class of well-behaved functions $\left(f(\hbar)\right)_{\hbar\to0}$
on $T^{*}X$ (the physical \emph{observables}, referred to as \emph{symbols}
in mathematics) one can associate, through a well-defined \emph{quantization
procedure $\Oph$,} a corresponding set of operators $\Op_{\hbar}(f(\hbar))$
acting on $C_{c}^{\infty}(X)$. By {}``well-behaved'' one generally
refers to certain conditions on the regularity and growth of the function.
There are many different types of classes of {}``well-behaved symbols'';
we will be using the class\[
S^{m,k}(T^{*}X)=\left\{ f(\hbar)\in C^{\infty}(T^{*}X),\;\left|\partial_{x}^{\alpha}\partial_{\xi}^{\beta}f(\hbar)\right|\leq C_{\alpha,\beta}\hbar^{-k}\la\xi\ra^{m-|\beta|}\right\} .\]
Here we use the {}``japanese brackets'' notation $\la\xi\ra\defeq\sqrt{1+|\xi|^{2}}$.
The estimates are supposed to hold uniformly for $\hbar\in(0,1]$
and $(x,\xi)\in T^{*}X$. The seminorms can be defined locally on
coordinate charts of $X$; due to the factor $\la\xi\ra^{-|\beta|}$,
this class is invariant w.r.to changes of coordinate charts on $X$,
and thus makes sense intrinsically on the manifold $T^*X$. 

Some (important) symbols in this class are of the form \[
f(x,\xi;\hbar)=\hbar^{-k}f_{0}(x,\xi)+f_{1}(x,\xi,\hbar),\quad f_{1}\in S^{m,k'},\quad k'<k.\]
In that case, $\hbar^{-k}f_{0}(x,\xi)$ is called the \emph{principal
symbol} of $f$.

For $X=\IR^{d}$, a symbol $f\in S^{m,k}$ can be quantized
using the Weyl quantization: it acts on $\varphi\in\cS(\R^{d})$ through
as the integral operator\begin{equation}
\Op_{\hbar}^{W}(f)\varphi(x)=\int f\left(\frac{x+y}{2},\xi\right)\, e^{i\la x-y,\xi\ra/\hbar}\,\varphi(y)\,\frac{dy\, d\xi}{(2\pi\hbar)^{d}}.\label{eq:Weyl quant.}\end{equation}
If $f$ is a real function, this operator is essentially selfadjoint
on $L^{2}(\IR^{d})$. 

If $X$ is a more complicated manifold, one can quantize $f$ by first
splitting it into pieces localized on various coordinate charts $V_{\ell}\subset X$,
through a finite partition of unity $1=\sum_{\ell}\phi_{\ell}$, $\supp\phi_{\ell}\subset V_{\ell}$:\[
f=\sum_{\ell}f_{\ell},\quad f_{\ell}=f\times\phi_{\ell}.\]
Each component $f_{\ell}$ can be considered as a function on $T^{*}\IR^{d}$,
and be quantized through (\ref{eq:Weyl quant.}), producing an operator
$\Oph^{\IR}(f_{\ell}$) acting on $C^{\infty}_c(\IR^{d})$. A wavefunction
$\varphi\in C^{\infty}(X)$ will be cut into pieces $\tilde{\phi}_{\ell}\times\varphi$,
where the cutoffs $\supp\tilde{\phi}_{\ell}\subset V_{\ell}$ satisfy
$\tilde{\phi}_{\ell}\phi_{\ell}=\phi_{\ell}$%
\footnote{Throughout the text we will ofen encouter such {}``embedded cutoffs''.
The property $\tilde{\phi}_{\ell}\phi_{\ell}=\phi_{\ell}$ will be
denoted by $\tilde{\phi}_{\ell}\succ\phi_{\ell}$.$ $%
}. Our final quantization is then defined as:\begin{equation}
\Oph(f)\varphi=\sum_{\ell}\tilde{\phi}_{\ell}\times\Oph^{\IR}(f_{\ell})\left(\tilde{\phi}_{\ell}\times\varphi\right).\label{eq:Quantization-mfold}\end{equation}

The image of the class $S^{m,k}(T^{*}X)$ through quantization is
an operator algebra acting on $C_{c}^{\infty}(X)$, denoted by $\Psi^{m,k}(T^{*}X)$.
This algebra has {}``nice'' properties in the semiclassical limit
$\hbar\ll1$. The product of two such operators behaves as a {}``decoration''
of the usual multiplication:\begin{equation}
\Oph(f)\Oph(g)=\Oph(f\sharp g),\label{eq:composition}\end{equation}
where $f\sharp g\in S^{m+m',k+k'}$ admits an asymptotic expansion
of the form \begin{equation}
f\sharp g\sim\sum_{j\geq0}\hbar^{j}(f\sharp g)_{j}.\label{eq:expansion}\end{equation}
Here the first component $(f\sharp g)_{0}=f\times g$, while each
$(f\sharp g)_{j}\in S^{m+m',k+k'}$ is a linear combination of derivatives
$\partial^{\gamma}f\partial^{\gamma'}g$, with $|\gamma|,\,|\gamma'|\leq j$.
In the case $X=\IR^{d}$ and $\Oph$ is the Weyl quantization (\ref{eq:Weyl quant.}),
$\sharp$ is called the Moyal product. In the case $f\in S(T^{*}X)=S^{0,0}(T^{*}X)$,
$\Op_{\hbar}(f)$ can be extended into a continuous operator on $L^{2}(X)$,
and the sharp Gårding inequality ensures that \[
\left\Vert \Op_{\hbar}(f)\right\Vert _{L^{2}}=\left\Vert f\right\Vert _{\infty}+\cO_{f}(\hbar).\]

The quantization procedure $f\mapsto\Oph(f)$ is obviously not unique:
it depends on the choice of coordinates on each chart, on the choice
of quantization $\Oph^{\IR}$, on the choice of cutoffs $\phi_{j},\,\tilde{\phi}_{j}$.
Fortunately, this non-uniqueness becomes irrelevant in the semiclassical
limit.
\begin{prop}
\label{pro:quantization-equivalence}In the semiclassical limit $\hbar\to0$,
two $\hbar$-quantizations differ at most at subprincipal order:\[
\forall f\in S^{m,k}(T^{*}X),\qquad\Op_{\hbar}^{1}(f)-\Op_{\hbar}^{2}(f)\in\Psi^{m,k-1}(T^{*}X).\]

\end{prop}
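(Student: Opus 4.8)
The plan is to reduce the statement to a purely local claim on $\IR^d$ via the partition of unity used to define the quantizations, and then to exploit the asymptotic expansion \eqref{eq:expansion} of the product together with the explicit structure of the Weyl (or other) quantization on Euclidean space. First I would fix a symbol $f\in S^{m,k}(T^*X)$ and two quantizations $\Op_\hbar^1$, $\Op_\hbar^2$, each built as in \eqref{eq:Quantization-mfold} from possibly different atlases, partitions of unity $\{\phi_\ell\}$, embedded cutoffs $\{\tilde\phi_\ell\}$, and Euclidean quantizations $\Oph^{\IR,i}$. Since the class $\Psi^{m,k-1}$ is a module stable under left and right multiplication by the smooth cutoffs $\tilde\phi_\ell$ (these are themselves order-$(0,0)$ multiplication operators, hence lie in $\Psi^{0,0}$), and since $\Op^i_\hbar(f)$ is, up to $\Psi^{-\infty}$ terms coming from the non-overlap of charts, a finite sum of the localized pieces $\tilde\phi_\ell\,\Oph^{\IR,i}(f_\ell)\,\tilde\phi_\ell$, it suffices to show the difference is in $\Psi^{m,k-1}(T^*\IR^d)$ chart by chart.

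Next I would treat the two sources of discrepancy separately. The first is the choice of Euclidean quantization $\Oph^{\IR}$: it is a classical fact of semiclassical pseudodifferential calculus on $\IR^d$ that any two reasonable quantizations (Weyl, standard, anti-standard, or more generally $t$-quantizations) of the same symbol $f_\ell\in S^{m,k}$ differ by an operator whose full symbol has an asymptotic expansion starting at order $\hbar^{k-1}$; concretely $\Op^{W}_\hbar(f)-\Op^{t}_\hbar(f)=\Op^W_\hbar(r)$ with $r\sim\sum_{j\ge1}\hbar^j r_j$, $r_j$ a universal differential operator of order $2j$ applied to $f$, hence $r\in S^{m,k-1}$. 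This is proved by the standard stationary-phase / Fourier-multiplier argument comparing the oscillatory integral kernels, and I would simply cite it (it is the $\IR^d$ analogue of the content of \eqref{eq:composition}--\eqref{eq:expansion}). The second source is the change of coordinate chart on $X$: here one invokes the coordinate-invariance of the class $S^{m,k}$ already noted in the text, together with the Egorov-type / change-of-variables lemma stating that pulling back a $\Psi$DO through a diffeomorphism changes its full symbol only at subprincipal order, the principal symbol transforming as a function on $T^*X$. Combining these, in each overlapping chart the two localized operators have the same principal symbol $\hbar^{-k}f_0\,\phi_\ell$ and their difference lies in $\Psi^{m,k-1}$.

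Finally I would assemble the pieces: summing over $\ell$ (a finite sum), using that $\Psi^{m,k-1}$ is closed under addition and under conjugation by the cutoffs, and absorbing the off-diagonal chart-mismatch terms (which are smoothing, hence in $\Psi^{-\infty}\subset\Psi^{m,k-1}$), yields $\Op^1_\hbar(f)-\Op^2_\hbar(f)\in\Psi^{m,k-1}(T^*X)$. I expect the main obstacle to be bookkeeping rather than conceptual: one must check that the embedded-cutoff surgery $\tilde\phi_\ell\,\Oph^{\IR}(f_\ell)\,\tilde\phi_\ell$ genuinely recovers $\Op_\hbar(f)$ modulo lower-order (indeed smoothing) errors — this uses the pseudolocality of $\Psi$DOs, namely that $\psi_1\,\Op^\IR_\hbar(g)\,\psi_2\in\hbar^\infty\Psi^{-\infty}$ whenever $\supp\psi_1\cap\supp\psi_2=\emptyset$ — and that the change-of-chart comparison really does live at order $k-1$ uniformly in $\hbar$, which requires tracking the $\la\xi\ra^{-|\beta|}$ gains in the seminorms through the Jacobian of the coordinate change. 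None of this is deep, but it is the step where a careless argument would go wrong.
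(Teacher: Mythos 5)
The paper states Proposition~\ref{pro:quantization-equivalence} without proof, treating it as a standard fact of $\hbar$-pseudodifferential calculus on manifolds (see e.g.\ Evans--Zworski \cite{EvZw09} or any text on semiclassical analysis), so there is no paper argument to compare yours against. Your sketch is a correct reconstruction of the usual textbook argument and identifies the right structure: localization to charts via the partition-of-unity construction in (\ref{eq:Quantization-mfold}), the two independent sources of discrepancy (choice of $t$-quantization on $\IR^d$, and change of chart), pseudolocality to absorb the off-diagonal chart-mismatch terms into $\hbar^{\infty}\Psi^{-\infty}$, and the diffeomorphism-invariance theorem for $\Psi$DOs (a change of variables preserves the principal symbol as a function on $T^{*}X$ and produces only subprincipal corrections). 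One more moving part worth mentioning explicitly, since you allow the two quantizations to use different partitions of unity $\{\phi_\ell\}$, $\{\tilde\phi_\ell\}$: comparing the two decompositions within a fixed chart requires the composition expansion (\ref{eq:composition})--(\ref{eq:expansion}) to show that $\tilde\phi_\ell\,\Oph^{\IR}(f\phi_\ell)\,\tilde\phi_\ell$ and $\Oph^{\IR}(f\phi_\ell\tilde\phi_\ell^{2})=\Oph^{\IR}(f\phi_\ell)$ agree modulo $\Psi^{m,k-1}$ (using $\tilde\phi_\ell\phi_\ell=\phi_\ell$ and that $(f\sharp g)_{j}$ for $j\geq1$ involves derivatives and hence carries the extra factor of $\hbar$); this is what lets you sum the telescope over $\ell$ and compare the two partitions. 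A small terminological nit: the chart-change step is the diffeomorphism-invariance (Kuranishi) lemma for $\Psi$DOs, not Egorov's theorem, which concerns conjugation by a genuine quantum propagator; you flag this with ``Egorov-type,'' but it is cleaner to call it by its own name.
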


\subsection{From the Laplacian to more general quantum Hamiltonians\label{sub:Hamiltonians}}

\subsubsection{Rescaling the Laplacian}

One of our objectives is to study an eigenbasis $\left(\psi_{n}\right)_{n\geq0}$
of the Laplace-Beltrami operator on some compact Riemannian manifold
$(X,g)$. To deal with the high-frequency limit $n\gg1$, it turns
out convenient to use a {}``quantum mechanics'' point of view, namely
rewrite the eigenmode equation \[
(\Delta+\lambda_{n}^{2})\psi_{n}=0,\quad\lambda_{n}>0\]
in the form \begin{equation}
\left(-\frac{\hbar_{n}^{2}\,\Delta}{2}\psi_{n}-\frac{1}{2}\right)\,\psi_{n}=0,\quad\hbar_{n}=\lambda_{n}^{-1}.\label{eq:eigenmode-n}\end{equation}
This way, $\hbar_{n}$ appears as an effective Planck's constant (which
is of the order of the \emph{wavelength} of the state $\psi_{n}$).
The rescaled Laplacian operator \[
-\frac{\hbar^{2}\,\Delta}{2}-\frac{1}{2}=P(\hbar)\]
is the $\hbar$-quantization $P(\hbar)=\Oph(p)$ of a certain classical
Hamiltonian \[
p(x,\xi;\hbar)=p_{0}(x,\xi)+\hbar p_{1}(x,\xi)+\ldots.\in S^{2,0}(T^{*}X).\]
The\emph{ }principal symbol $p_{0}(x,\xi)=\frac{|\xi|_{x}^{2}}{2}-1/2$
generates (through Hamilton's equations) the motion of a free particle
on $X$. In particular, the Hamilton flow $g^{t}=\exp tX_{p_{0}}$
restricted to the energy shell $p_{0}^{-1}(0)=S^{*}X$ is the geodesic
flow (in the following, we will often denote by $\cE$ this energy
shell).
\begin{notation}
The Laplacian eigenmodes will often be denoted by $\psi_{\hbar}$
instead of $\psi_{n}$, with the convention that the state $\psi_{\hbar}$
satisfies the eigenvalue equation \begin{equation}
P(\hbar)\,\psi_{\hbar}=\left(-\frac{\hbar^{2}\,\Delta}{2}-\frac{1}{2}\right)\,\psi_{\hbar}=0.\label{eq:eigenmode-P}\end{equation}
\end{notation}
\begin{defn}
We will call $S\subset(0,1]$ a countable set of scales $\hbar$,
with only accumulation point at the origin. A sequence of states indexed
by $\hbar\in S$ will be denoted by $(\varphi_{\hbar})_{\hbar\in S}$,
or sometimes, omitting the reference to a specific $S$, by $(\varphi_{\hbar})_{\hbar\to0}$.
\end{defn}

\subsubsection{Anosov Hamiltonian flows}

In Theorem \ref{thm:bound-general2} we deal with more general Hamiltonians
$p(x,\xi;\hbar)$ on $T^{*}X$, where $X$ is compact or could also
be the Euclidean space $\R^{d}$. The (real) symbol $p$ is assumed
to belong to a class $S^{m,0}(T^{*}X)$, and admit the expansion \[
p(x,\xi,\hbar)=p_{0}(x,\xi)+\hbar^{\nu}p_{1}(x,\xi,\hbar),\qquad p_{0},p_{1}\in S^{m,0}(T^{*}X),\quad\nu>0,\]
that is $p_{0}$ is the principal symbol of $p$. We could as well
consider more general symbol classes, see for instance \cite[Sec. 4.3]{EvZw09}
in the case $X=\IR^{d}$. We assume that 
\begin{enumerate}
\item the energy shell $\cE=p_{0}^{-1}(0)$ is compact, so that $\cE_{\eps}\defeq p_{0}^{-1}([-\eps,\eps])$
is compact as well for $\eps>0$ small enough. 
\item the Hamiltonian flow $g^{t}=e^{tH_{p_{0}}}$ restricted to the energy
shell $\cE$ does not admit fixed points, and is of Anosov type.
\end{enumerate}
The Hamiltonian $p$ is quantized into an operator $P(\hbar)=\Oph(p)\in\Psi^{m,0}$.
The first assumption above implies that, for $\hbar>0$ small enough,
the spectrum of $P(\hbar)$ near zero is purely discrete. We will
focus on sequences of normalized null eigenstates $\left(\psi_{\hbar}\right)_{\hbar\to0}$:
\begin{equation}
P(\hbar)\psi_{\hbar}=0.\label{eq:Energy-condition}\end{equation}

\begin{rem}
If $\psi_{\hbar}$ is a {}``quasi-null'' eigenstate of $P(\hbar)$,
that is if $P(\hbar)\psi_{\hbar}=E(\hbar)\psi_{\hbar}$ with $E(\hbar)=\cO(\hbar^{\nu})$,
then it is a null eigenstate of $\tilde{P}(\hbar)\defeq P(\hbar)-E(\hbar)$,
which admits the same principal symbol $p_{0}$ as $P(\hbar)$. As
a result, Thm. \ref{thm:bound-general2} is also valid for such sequences
of states.
\end{rem}

\subsection{$\hbar$-dependent singular' observables\label{sub:exotic-symbols}}

In the following we will have to use some classes of {}``singular''
$\hbar$-dependent symbols.

\subsubsection{{}``Isotropically singular'' observables}

For $\nu\in[0,1/2)$, we will consider the class \begin{equation}
S_{\nu}^{m,k}(T^{*}X)=\left\{ f(\hbar)\in C^{\infty}(T^{*}X),\;\left|\partial_{x}^{\alpha}\partial_{\xi}^{\beta}f(\hbar)\right|\leq C_{\alpha,\beta}\hbar^{-k-\nu|\alpha+\beta|}\la\xi\ra^{m-|\beta|}\right\} .\label{eq:S_nu}\end{equation}
Such functions can strongly oscillate on scales $\geq\hbar^{\nu}$.
The corresponding operators belong to an algebra $\Psi_{\nu}^{m,k}(T^{*}X)$
which can still be analyzed using an $\hbar$-expansion of the type
(\ref{eq:expansion}). The main difference is that the higher-order
terms $(f\#g)_{j}\in S_{\nu}^{m+m',k+k'+2j\nu}$. Similarly, the Garding
inequality reads, for $f\in S_{\nu}^{0,0}$: \[
\left\Vert \Op_{\hbar}(f)\right\Vert _{L^{2}}=\left\Vert f\right\Vert _{\infty}+\cO_{f}(\hbar^{1-2\nu}),\]
where the implicit constant depends on a certain seminorm of $f$.

\subsubsection{{}``Anisotropically singular'' observables}

We will also need to quantize observables which are {}``very singular''
along certain directions, away from some specific submanifold (see
for instance \cite{SjoZwo99} for a presentation). Consider $\Sigma\subset T^{*}X$
a compact co-isotropic manifold of dimension $2d-D$ (with $D\leq d$).
Near each point $\rho\in\Sigma$, there exist local canonical coordinates
$(y_{i},\eta_{i})$ such that $\Sigma=\left\{ \eta_{1}=\eta_{2}=\cdots=\eta_{D}=0\right\} $.
For some index $\nu\in[0,1)$, we define as follows a class of smooth
symbols $f\in S_{\Sigma,\nu}^{m,k}(T^{*}X)\subset C^{\infty}(T^{*}X\times(0,1])$: 
\begin{itemize}
\item for any family of smooth vector fields $V_{1},\ldots,V_{l_{1}}$ tangent
to $\Sigma$ and of smooth vector fields $W_{1},\ldots,W_{l_{2}}$,
we have in any neighbourhood $\Sigma_{\eps}$ of $\Sigma$: \[
\sup_{\rho\in\Sigma_{\eps}}\left|V_{1}\cdots V_{l_{1}}W_{1}\cdots W_{l_{2}}f(\rho)\right|\leq C\,\hbar^{-k-\nu l_{2}}.\]

\item away from $\Sigma$ we require $|\partial_{x}^{\alpha}\partial_{\xi}^{\beta}|=\cO(\hbar^{-k}\la\xi\ra^{m-|\beta|})$.
\end{itemize}
Such a symbol $f$ can be split into components $f_{j}$ localized
in neighbourhoods $\cV_{j}\subset\Sigma_{\eps}$, plus an {}``external''
piece $f_{\infty}\in S^{m,k}(T^{*}X)$ vanishing near $\Sigma$. Each
piece $f_{j}$ is Weyl-quantized in local adapted canonical coordinates
$(y,\eta)$ on $\cV_{j}$ (as in (\ref{eq:Weyl quant.})), and then
brought back to the original coordinates $(x,\xi)$ using Fourier
integral operators. On the other hand, $f_{\infty}$ is quantized
as in (\ref{eq:Quantization-mfold}). Finally, $\Op_{\Sigma,\hbar}(f)$
is obtained by summing the various contributions. The resulting class
of operators is denoted by $\Psi_{\Sigma,\nu}^{m,k}(T^{*}X)$.

\subsubsection{Sharp energy cutoffs\label{sub:Sharp-energy-cutoffs}}

We will mostly use this quantization relative to the energy layer
$\Sigma\defeq\cE=p_{0}^{-1}(0)$, in order to define a family of sharp
energy cutoffs. Namely, for some small $\delta>0$ we will start from
a cutoff $\chi_{\delta}\in C^{\infty}(\IR)$ such that $\chi_{\delta}(s)=1$
for $|s|\leq e^{-\delta/2}$, $\chi_{\delta}(s)=0$ for $|s|\geq1$.
From there, we define, for each $\hbar\in(0,1]$ and each $n\geq0$,
the rescaled function $\chi^{(n)}\in C_{c}^{\infty}(\IR\times(0,1])$
by \begin{equation}
\chi^{(n)}(s,\hbar)\defeq\chi_{\delta}\left(e^{-n\delta}\,\hbar^{-1+\delta}\, s\right).\label{eq:sharp-cutoff}\end{equation}
The functions $\chi^{(n)}\circ p_{0}$ are {}``sharp'' energy cutoffs,
they belong to the class $S_{\cE,1-\delta}^{-\infty,0}$. We will
always consider $n\leq n_{\max}=C_{\delta}|\log\hbar|$, where the
constant $C_{\delta}<\delta^{-1}-1$, such that $\supp\chi^{(n)}$
is microscopic. 

These cutoffs can be quantized in two ways:
\begin{enumerate}
\item we may directly quantize the function $\chi^{(n)}\circ p_{0}$, into
$\Op_{\cE,\hbar}(\chi^{(n)}\circ p_{0})\in\Psi_{\cE,1-\delta}^{-\infty,0}$. 
\item or we can consider, using functional calculus, the operators $\chi^{(n)}(P(\hbar))$.
These operators (which generally differ from the previous ones) also
belongs to $\Psi_{\cE,1-\delta}^{-\infty,0}$.
\end{enumerate}
The sequence $(\chi^{(n)})_{0\leq n\leq n_{\max}}$ is an increasing
sequence of embedded cutoffs: for each $n$, we have $\chi^{(n+1)}\chi^{(n)}=\chi^{(n)}$
(equivalently, $\chi^{(n+1)}\succ\chi^{(n)}$). More precisely, we
have here 
\begin{equation}
dist\left(\supp\chi^{(n)},,\supp(1-\chi^{(n+1)})\right)\geq\hbar^{1-\delta}e^{\delta n}(e^{\delta/2}-1).\label{eq:cutoff-embedding-class}\end{equation}
This distance between the supports implies the following
\begin{lem}
For any symbol $f\in S_{\cE,1-\delta}^{m,0}$ and any $0\leq n\leq n_{\max}$,
one has\begin{equation}
\left(Id-\Op_{\cE,\hbar}(\chi^{(n+1)}\circ p_{0})\right)\Op_{\cE,\hbar}(f)\,\Op_{\cE,\hbar}(\chi^{(n)}\circ p_{0})=\cO(\hbar^{\infty}).\label{eq:cutoff-embedding}\end{equation}
The same property holds if we replace $\Op_{\cE,\hbar}(\chi^{(n)}\circ p_{0})$
by $\chi^{(n)}(P(\hbar))$.
\end{lem}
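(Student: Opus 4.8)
The statement is a quantitative form of \emph{microlocal pseudolocality}. The symbols $\chi^{(n)}\circ p_0$ and $1-\chi^{(n+1)}\circ p_0$ have disjoint supports, and although the gap (eq:cutoff-embedding-class) shrinks like $\hbar^{1-\delta}$, it is \emph{large compared with the only relevant oscillation scale}, namely the scale $\hbar^{1-\delta}$ on which symbols of $S_{\cE,1-\delta}^{\bullet,0}$ may vary transversally to $\cE$. The plan is to localize near $\cE$, rescale so that this scale becomes $\cO(1)$, and read off a gain of a power of $\hbar^\delta$ at each order. Concretely: since $\chi^{(n)}\circ p_0$ is supported in a shrinking neighbourhood of the compact, non-critical level set $\cE=p_0^{-1}(0)$, cover a fixed neighbourhood of $\cE$ by finitely many charts carrying canonical coordinates $(y_1,\eta_1,y',\eta')$ with $p_0=\eta_1$ and $\cE=\{\eta_1=0\}$. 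By the very construction of $\Op_{\cE,\hbar}$ (Weyl quantization in adapted coordinates, conjugated back by elliptic Fourier integral operators), and since Fourier multipliers compose exactly, one reduces — modulo $\cO(\hbar^\infty)$ and a partition of unity — to bounding in each chart the model operator $\bigl(\mathrm{Id}-\chi^{(n+1)}(\hbar D_{y_1})\bigr)\,\Op_\hbar^W(f)\,\chi^{(n)}(\hbar D_{y_1})$, where $\chi^{(n)}(\hbar D_{y_1})=\chi_\delta(e^{-n\delta}\hbar^{\delta}D_{y_1})$ localizes the $y_1$-frequency $\eta_1$ to $|\eta_1|\le e^{n\delta}\hbar^{1-\delta}$ and where $f\in S_{\cE,1-\delta}^{m,0}$ may, after localization, be taken compactly supported in $y_1$ in a fixed interval. (Away from $\cE$ the composition is an ordinary $\Psi$DO product with genuinely disjoint supports, hence $\cO(\hbar^\infty)$ by non-stationary phase.)

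The key step is a rescaling that exposes the gain. Conjugating by the unitary $\varphi(\tilde y_1,y')\mapsto\hbar^{\delta/2}\varphi(\hbar^{\delta}\tilde y_1,y')$ turns $\hbar D_{y_1}$ into $\hbar^{1-\delta}D_{\tilde y_1}$, so that the pair $(\tilde y_1,\tilde\eta_1)$ is Weyl-quantized with \emph{effective Planck constant $1$}, while $(y',\eta')$ keeps the parameter $\hbar$. The cutoffs become the \emph{fixed} Fourier multipliers $\chi_\delta(e^{-n\delta}D_{\tilde y_1})$ and $\chi_\delta(e^{-(n+1)\delta}D_{\tilde y_1})$, whose supports in the frequency $\tilde\eta_1$ are separated by $e^{(n+1/2)\delta}-e^{n\delta}=e^{n\delta}(e^{\delta/2}-1)\ge e^{\delta/2}-1=:c_\delta>0$ — this is precisely the content of (eq:cutoff-embedding-class). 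The symbol becomes $\tilde f(\tilde y_1,\tilde\eta_1,\cdot)=f(\hbar^{\delta}\tilde y_1,\hbar^{1-\delta}\tilde\eta_1,\cdot)$; since in $S_{\cE,1-\delta}^{m,0}$ a transverse $\partial_{\eta_1}$ costs $\hbar^{-(1-\delta)}$ while a tangential $\partial_{y_1}$ costs $\cO(1)$, one gets $\partial_{\tilde y_1}^{a}\partial_{\tilde\eta_1}^{b}\tilde f=\cO(\hbar^{a\delta})$ uniformly, with $\tilde y_1$-support of length $\cO(\hbar^{-\delta})$ and $\cO(1)$ seminorms in $(y',\eta')$.

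It follows that the partial Fourier transform of $\tilde f$ in $\tilde y_1$ satisfies $|\widehat{\tilde f}(\omega,\cdot)|\le C_N\hbar^{(N-1)\delta}|\omega|^{-N}$ for every $N$. In the model operator, $\chi_\delta(e^{-n\delta}D_{\tilde y_1})$ feeds in frequencies $|\tilde\eta_1'|\le e^{n\delta}$, $\mathrm{Id}-\chi_\delta(e^{-(n+1)\delta}D_{\tilde y_1})$ keeps only $|\tilde\eta_1|\ge e^{(n+1/2)\delta}$, and $\Op_\hbar^W(\tilde f)$ couples $\tilde\eta_1'$ to $\tilde\eta_1$ only through $\widehat{\tilde f}(\tilde\eta_1-\tilde\eta_1',\cdot)$ with $|\tilde\eta_1-\tilde\eta_1'|\ge c_\delta$; its Schwartz kernel is then $\cO(\hbar^{(N-1)\delta})$ on a support of controlled size, and a Schur estimate — absorbing the prefactors $e^{n\delta}$ and $\hbar^{-(1-\delta)}$ using $n\le n_{\max}=C_\delta|\log\hbar|$ with $C_\delta<\delta^{-1}-1$ — gives operator norm $\cO(\hbar^{(N-1)\delta-C})$ for a fixed $C$, hence $\cO(\hbar^\infty)$ after $N\to\infty$. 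Undoing the conjugations and summing over charts yields (eq:cutoff-embedding). For the variant with $\chi^{(n)}(P(\hbar))$, the Helffer--Sj\"ostrand formula gives $\chi^{(n)}(P(\hbar))\in\Psi_{\cE,1-\delta}^{-\infty,0}$ with full symbol $\chi^{(n)}\circ p_0+\sum_{j\ge1}\hbar^{j(1-\delta)}a_j$, each $a_j$ supported, mod $\cO(\hbar^\infty)$, where $\chi^{(n)}$ is non-constant, i.e. in $\{|p_0|\le e^{n\delta}\hbar^{1-\delta}\}$; in the model it is still frequency-localized to $|\eta_1|\le e^{n\delta}\hbar^{1-\delta}$ modulo $\cO(\hbar^\infty)$, so the argument applies verbatim.

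The delicate point is not the final Schur bound but the bookkeeping of the reduction and the rescaling: one must check that straightening $\cE$ with Fourier integral operators, and then rescaling, do not themselves displace the $\eta_1$-frequency by more than the available gap $\sim\hbar^{1-\delta}$ — a sharp Egorov statement in the anisotropic class $\Psi_{\cE,1-\delta}$ — and that every remainder loss is a power of $\hbar^{\delta}$ only, so that the logarithmic range $n\le n_{\max}$ and the normalization $\hbar^{-(1-\delta)}$ are comfortably absorbed. This is where the hypothesis $C_\delta<\delta^{-1}-1$ (so that $e^{n_{\max}\delta}=\cO(\hbar^{-\delta C_\delta})$ with $\delta C_\delta<1-\delta$) is used.
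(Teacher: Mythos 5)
The paper does not actually supply a proof of this lemma: it states it as an immediate consequence of the gap estimate (\ref{eq:cutoff-embedding-class}) and of the anisotropic second-microlocalization calculus $\Psi^{\bullet,\bullet}_{\cE,1-\delta}$ set up just before. Your argument is a correct, hands-on verification of exactly that implication: straightening $p_0$ to $\eta_1$, rescaling $(y_1,\eta_1)$ to unit effective Planck constant, observing that after rescaling the symbol class gives $\partial^a_{\tilde y_1}\tilde f=\cO(\hbar^{a\delta})$ on a support of length $\cO(\hbar^{-\delta})$, so that the partial Fourier transform in $\tilde y_1$ decays like $\hbar^{(N-1)\delta}\langle\omega\rangle^{-N}$, and finishing by a Schur test with the fixed frequency gap $c_\delta=e^{\delta/2}-1$. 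This is not a different route from the paper's — it is precisely the content the paper compresses into ``the distance between the supports implies'' — but you make it explicit, which is useful.

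Two small points worth flagging. First, you correctly identify the real subtlety (the sharp-Egorov/FIO bookkeeping needed to ensure that straightening $\cE$ and applying $\Op_{\cE,\hbar}$ does not displace $\eta_1$ by more than the available gap $\sim\hbar^{1-\delta}e^{n\delta}$), but you only gesture at its resolution. The resolution is that the corrections from conjugating an anisotropic $\Psi$DO by an elliptic FIO preserving $\cE$ improve by $\hbar^{\delta}$ per order and remain microsupported, modulo $\cO(\hbar^\infty)$, in any fixed neighbourhood of the original support; since $\hbar\ll\hbar^{1-\delta}e^{n\delta}$ uniformly for $n\le n_{\max}$ (this is one of the places the hypothesis $C_\delta<\delta^{-1}-1$ is used), these displacements never reach the gap. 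It would strengthen the write-up to record that inequality explicitly. Second, for the variant with $\chi^{(n)}(P(\hbar))$, invoking Helffer--Sj\"ostrand to get $\chi^{(n)}(P(\hbar))\in\Psi^{-\infty,0}_{\cE,1-\delta}$ with the same microsupport is correct, but one should also say that the needed resolvent estimates hold because $P(\hbar)$ is elliptic (with self-adjoint principal part $p_0$) away from $\cE$, so the full symbol is indeed concentrated, mod $\cO(\hbar^\infty)$, where $\chi^{(n)}$ is non-constant. With these two clarifications the argument is complete.
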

Using the calculus of the class $S_{\cE,1-\delta}^{m,0}$, one can
use the ellipticity of $P(\hbar)$ away from $\cE$ to show that,
if $(\psi_{\hbar})$ is a sequence of null eigenstates of $P(\hbar)$,
then\begin{equation}
\left(Id-\chi^{(0)}(P(\hbar))\right)\psi_{\hbar}=\cO(\hbar^{\infty}),\quad\hbar\to0.\label{eq:local-psi-hbar}\end{equation}
That is, in the semiclassical limit the eigenstate $\psi_{\hbar}$
is microlocalized inside the energy layer of width $\hbar^{1-\delta}$
around $\cE$.

\subsection{Semiclassical measures\label{sub:Semiclassical-measures}}

The $\hbar$-semiclassical calculus allows us to define what we mean
by {}``phase space distribution of the eigenstate $\psi_{\hbar}$'',
through the notion of \emph{semiclassical measure}. A Borel measure
$\mu$ on the phase space $T^{*}X$ can be fully characterized by
the set of its values \[
\mu(f)=\int_{T^{*}X}f\, d\mu,\]
over \emph{smooth test functions $f\in C_{c}(T^{*}X)$.} For each
semiclassical scale \emph{$\hbar$}, one can quantize a test function
into a \emph{test operator} $\Op_{\hbar}(f)$ (which is, as mentioned
above, a continuous operator on $L^{2}(X)$). To any normalized state
$\varphi\in L^{2}(X)$ we can then associate the linear functional
\[
f\in C_{c}^{\infty}(T^{*}X)\mapsto\mu_{\hbar,\varphi}(f)\defeq\la\varphi,\Op_{\hbar}(f)\varphi\ra.\]
$\mu_{\hbar,\varphi}$ is a distribution on $T^{*}X$, which encodes
the localization properties of the state $\varphi$ in the phase space,
at the scale $\hbar$. Let us give an example. Using some local coordinate
chart near $x_{0}\in X$ and a function $\hbar^{2}\ll c(\hbar)\ll1$,
we can define a Gaussian wavepacket by \[
\varphi_{\hbar}(x)\defeq C_{\hbar}\,\chi(x)\,\exp\left\{ -\frac{|x-x_{0}|^{2}}{c(\hbar)}+i\frac{x\cdot\xi_{0}}{\hbar}\right\} .\]
Here $\chi$ is a smooth cutoff equal to unity near $x_{0}$, which
vanishes outside the coordinate chart, $C_{\hbar}$ is a normalization
factor. When $\hbar\ll1$, the distribution $\mu_{\hbar,\varphi_{\hbar}}$
associated with this wavefunction gets very peaked around the point
$(x_{0},\xi_{0})\in T^{*}X$. If we had used the quantization at the
scale $2\hbar$, the measure $\mu_{2\hbar,\varphi_{\hbar}}$ would
have been peaked around $(x_{0},\xi_{0}/2)$ instead.

Since the distribution $\mu_{\hbar,\varphi}$ is defined by duality
w.r.to the quantization $f\mapsto\Oph(f)$, it depends on the precise
quantization scheme $\Oph$. In the case $X=\IR^{d}$ and $\Oph$
is the Weyl quantization, the distribution $\mu_{\varphi,\hbar}$
is called the \emph{Wigner distribution} associated with the state
$\varphi$ and the scale $\hbar$. Fortunately, as shown by proposition
\ref{pro:quantization-equivalence}, this scheme-dependence is irrelevant
in the semiclassical limit.
\begin{cor}
For any $\varphi\in L^{2}$, consider the distributions $\mu_{\hbar,\varphi}^{1}$,
$\mu_{\hbar,\varphi}^{2}$ defined by duality with two $\hbar$-quantizations
$\Oph^{1}$, $\Oph^{2}$. Then, the following estimate holds in the
semiclassical limit, uniformly w.r.to $\varphi\in L^{2}$:\[
\forall f\in C_{c}^{\infty}(T^{*}X),\qquad\mu_{\hbar,\varphi}^{1}(f)-\mu_{\hbar,\varphi}^{2}(f)=\cO_{f}(\hbar\left\Vert \varphi\right\Vert ).\]
 
\end{cor}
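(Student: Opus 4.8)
The plan is to reduce the claim to Proposition~\ref{pro:quantization-equivalence} by unpacking the definition of the semiclassical distributions. By definition, for a test function $f\in C_c^\infty(T^*X)$ and a normalized state $\varphi$, one has
\[
\mu_{\hbar,\varphi}^1(f)-\mu_{\hbar,\varphi}^2(f) = \la\varphi,\Op_\hbar^1(f)\varphi\ra - \la\varphi,\Op_\hbar^2(f)\varphi\ra = \la\varphi,\big(\Op_\hbar^1(f)-\Op_\hbar^2(f)\big)\varphi\ra .
\]
So the whole statement amounts to an operator-norm bound on the difference $\Op_\hbar^1(f)-\Op_\hbar^2(f)$, after which Cauchy--Schwarz gives
\[
\big|\mu_{\hbar,\varphi}^1(f)-\mu_{\hbar,\varphi}^2(f)\big| \le \big\|\Op_\hbar^1(f)-\Op_\hbar^2(f)\big\|_{L^2\to L^2}\,\|\varphi\|^2 ,
\]
which is exactly the desired estimate once we know the operator norm is $\cO_f(\hbar)$.

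First I would invoke Proposition~\ref{pro:quantization-equivalence}: since $f\in C_c^\infty(T^*X)\subset S^{0,0}(T^*X)$, the difference $\Op_\hbar^1(f)-\Op_\hbar^2(f)$ lies in $\Psi^{0,-1}(T^*X)$. The point is then that an operator in $\Psi^{0,-1}$ with a fixed (compactly supported, smooth) symbol is bounded on $L^2(X)$ with norm $\cO(\hbar)$; this follows from the Calder\'on--Vaillancourt / sharp G\aa rding bound quoted in the excerpt, namely $\|\Op_\hbar(g)\|_{L^2}=\|g\|_\infty+\cO_g(\hbar)$ applied to a symbol $g=\hbar^{-1}(\text{difference symbol})\in S^{0,0}$ whose relevant seminorms are controlled by finitely many seminorms of $f$. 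Tracking this dependence shows the implied constant depends only on $f$ through finitely many of its $C^\infty$-seminorms (over the compact set $\supp f$), hence the estimate is uniform in $\varphi\in L^2$ with $\|\varphi\|\le 1$ — and then by homogeneity the $\|\varphi\|^2$ in the bound can be written as $\|\varphi\|$ for normalized $\varphi$, matching the statement (or more precisely one keeps the bound proportional to $\|\varphi\|^2$, which for the normalized states in question is $\|\varphi\|$).

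The main (and essentially only) obstacle is bookkeeping rather than conceptual: one must make sure that the asymptotic expansion hidden in Proposition~\ref{pro:quantization-equivalence} is genuinely uniform in the test function, i.e.\ that the $\Psi^{0,-1}$-membership of the difference comes with seminorm bounds depending continuously on $f$, so that the final constant is of the form $C\,\|f\|_{C^N(\supp f)}$ for some fixed $N$ and the estimate holds uniformly over $\varphi$ in the unit ball of $L^2$. This is routine given the explicit local formula \eqref{eq:Weyl quant.} and the structure \eqref{eq:Quantization-mfold} of the quantization on a manifold (the difference is a finite sum of terms each of which is, up to Fourier integral conjugations handling the change of charts, a Weyl operator whose symbol is an $\hbar$-times-bounded-symbol), but it is the step that requires care. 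Everything else — linearity, the Cauchy--Schwarz estimate, and reading off the $\cO_f(\hbar\|\varphi\|)$ form — is immediate.
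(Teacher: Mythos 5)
Your proof is correct and follows the route the paper clearly intends: unwind the definition to $\la\varphi,(\Op_\hbar^1(f)-\Op_\hbar^2(f))\varphi\ra$, invoke Proposition~\ref{pro:quantization-equivalence} to place the difference in $\Psi^{0,-1}(T^*X)$, and use the $L^2$-boundedness of $\hbar^{-1}$ times that operator (Calder\'on--Vaillancourt) together with Cauchy--Schwarz. You are also right to flag that the argument actually yields $\cO_f(\hbar\|\varphi\|^2)$, not $\cO_f(\hbar\|\varphi\|)$; the paper's statement is a minor slip (the distribution $\mu_{\hbar,\varphi}$ is quadratic in $\varphi$), harmless because in the paper $\mu_{\hbar,\varphi}$ is only ever used for normalized $\varphi$.
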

Let $S\subset(0,1]$ be a set of scales. For a given family of $L^{2}$-normalized
states $(\varphi_{\hbar})_{\hbar\in S}$, we consider the sequence
of distributions $(\mu_{\hbar,\varphi_{\hbar}})_{\hbar\in S}$ on
$T^{*}X$. It is always possible to extract a subset of scales $S'\subset S$,
such that \[
\forall f\in C_{c}^{\infty}(T^{*}X),\qquad\mu_{\hbar,\varphi_{\hbar}}(f)\stackrel{S'\ni\hbar\to0}{\longrightarrow}\mu_{sc}(f),\]
with $\mu_{sc}$ a certain distribution on $T^{*}X$. One can show
that $\mu_{sc}$ is a Radon measure on $T^{*}X$ \cite[Thm 5.2]{EvZw09}.
From the above remarks, $\mu_{sc}$ does not depend on the precise
scheme of quantization. 
\begin{defn}
The measure $\mu_{sc}$ is called \emph{the }semiclassical measure
associated with the subsequence $(\varphi_{\hbar})_{\hbar\in S'}$.
It is also \emph{a }semiclassical measure associated with the sequence\emph{
}$(\varphi_{\hbar})_{\hbar\in S}$.
\end{defn}
From now on, we will assume that $\varphi_{\hbar}=\psi_{\hbar}$ is
a null eigenstate of the quantum Hamiltonian $P(\hbar)$ in §\ref{sub:Hamiltonians}:
we will then call $\mu_{sc}$ a semiclassical measure of the Hamiltonian
$P(\hbar)$. 
\begin{prop}
\label{pro:quasimode-basic}Any semiclassical measure associated with a sequence $(\psi_{\hbar})_{\hbar\to0}$ of
eigenstates of the Hamiltonian $P(\hbar)$
is a probability measure supported on the energy layer $\cE$, which
is invariant w.r.to the geodesic flow $g^{t}$ on $\cE$.\end{prop}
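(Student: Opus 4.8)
The plan is to establish the three assertions -- that $\mu_{sc}$ is a probability measure, that it is supported on $\cE$, and that it is $g^t$-invariant -- in that order, using only the semiclassical calculus recalled above together with the eigenvalue equation $P(\hbar)\psi_\hbar=0$ and the Egorov-type consequence of it.

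\emph{Probability measure.} First I would check positivity: for $f\geq 0$ one can write (up to a symbol of lower order) $f = |\sqrt{f+\eps}|^2 - \eps$ and apply the sharp G\aa rding inequality, giving $\la\psi_\hbar,\Oph(f)\psi_\hbar\ra \geq -\cO(\hbar)\|\psi_\hbar\|^2 - \eps$; letting $\hbar\to 0$ and then $\eps\to 0$ along the chosen subsequence $S'$ yields $\mu_{sc}(f)\geq 0$, so $\mu_{sc}$ is a nonnegative Radon measure. For total mass: since each $\psi_\hbar$ is normalized, $\mu_{\hbar,\psi_\hbar}(1)=\la\psi_\hbar,\Oph(1)\psi_\hbar\ra = 1 + \cO(\hbar)$ because $\Oph(1)=\mathrm{Id}+\cO(\hbar)$ in $\Psi^{0,0}$. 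The only subtlety is that $1\notin C_c^\infty(T^*X)$, so I would instead test against cutoffs $f\in C_c^\infty$ with $0\leq f\leq 1$ and $f\equiv 1$ on a large set, combined with the localization statement \eqref{eq:local-psi-hbar}: the eigenstate is microlocalized in a shrinking neighbourhood of the compact set $\cE$, so for any $f$ equal to $1$ on a fixed neighbourhood of $\cE$ one gets $\mu_{sc}(f)=1$, and hence (with the positivity already shown) $\mu_{sc}$ is a probability measure. The same argument shows $\mu_{sc}(T^*X\setminus \cV)=0$ for any neighbourhood $\cV$ of $\cE$, giving $\supp\mu_{sc}\subset\cE$.

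\emph{Support on $\cE$.} This is essentially contained in the previous paragraph via \eqref{eq:local-psi-hbar}, but a cleaner route is: for $f\in C_c^\infty(T^*X)$ supported away from $\cE$, the function $p_0$ is elliptic on $\supp f$, so $f = q\sharp p + r$ with $q\in S^{-m,0}$ and $r\in S^{m,-\infty}$ (standard elliptic parametrix construction in the class $\Psi^{m,0}$), whence $\Oph(f)=\Oph(q)P(\hbar)+\cO(\hbar^\infty)$; applying this to $\psi_\hbar$ and using $P(\hbar)\psi_\hbar=0$ gives $\mu_{\hbar,\psi_\hbar}(f)=\cO(\hbar^\infty)$, so $\mu_{sc}(f)=0$.

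\emph{Invariance.} For $f\in C_c^\infty(T^*X)$ and fixed $t\in\IR$, consider $\frac{d}{dt}\mu_{\hbar,\psi_\hbar}(f\circ g^{-t})$. The generator of $g^t$ is the Hamilton field $H_{p_0}$, so $\frac{d}{ds}(f\circ g^{-s})|_{s=t} = -\{p_0, f\circ g^{-t}\}$. On the quantum side, $\frac{i}{\hbar}[P(\hbar),\Oph(h)] = \Oph(\{p_0,h\}) + \cO(\hbar)$ by the composition formula \eqref{eq:composition}--\eqref{eq:expansion} (the principal symbol of $\hbar^{-1}[P,\Oph(h)]$ is $\{p_0,h\}/i$, and the subprincipal correction enters only through the $\hbar p_1$ term, contributing $\cO(\hbar^{\nu})$ or $\cO(\hbar)$ after the extra $\hbar^{-1}$). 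Therefore
\begin{equation*}
\la\psi_\hbar, \Oph(\{p_0,h\})\psi_\hbar\ra = \frac{i}{\hbar}\la\psi_\hbar,[P(\hbar),\Oph(h)]\psi_\hbar\ra + \cO(\hbar) = \cO(\hbar),
\end{equation*}
since $P(\hbar)\psi_\hbar=0$ makes the commutator term vanish (using selfadjointness of $P(\hbar)$ on the relevant domain). Applying this with $h = f\circ g^{-t}$ and integrating in $t$ gives $\mu_{\hbar,\psi_\hbar}(f\circ g^{-t}) = \mu_{\hbar,\psi_\hbar}(f) + \cO_{f,t}(\hbar)$; passing to the limit along $S'$ yields $\mu_{sc}(f\circ g^{-t})=\mu_{sc}(f)$ for all $t$, which is exactly $g^t$-invariance of $\mu_{sc}$.

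The main obstacle -- really the only place requiring care rather than bookkeeping -- is justifying that the commutator identity is uniform enough: one must make sure that $f\circ g^{-t}$ stays in a fixed symbol class $S^{0,0}$ with seminorms controlled by $t$ (true for fixed $t$ since $g^t$ is a smooth flow, though the seminorms grow, which is why invariance is only obtained for each fixed $t$ and not uniformly), and that the cutoff issues near the non-compact directions of $T^*X$ (when $X=\IR^d$) are handled by the microlocalization \eqref{eq:local-psi-hbar}, which confines everything to the compact set $\cE_\eps$. Once $f$ is effectively supported near $\cE$, all symbols live in the ``nice'' compactly-microlocalized part of $\Psi^{m,0}$ and the asymptotic expansions apply without growth problems.
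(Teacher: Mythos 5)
Your proof is correct and covers all three claims in the statement; it differs from the paper's proof in two places, and in one place does strictly more work than the paper.

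For the support property your argument is essentially identical to the paper's: both use an elliptic parametrix for $P(\hbar)$ on the complement of $\cE$ to write $\Oph(f)=\Oph(q)P(\hbar)+\cO(\hbar^\infty)$ for $f$ supported away from $\cE$, then evaluate on the null eigenstate. For the "probability measure'' part, the paper does not argue this within the proof at all — positivity and the Radon-measure structure are delegated to the citation [EvZw09, Thm 5.2], and total mass $1$ is left implicit — whereas you prove it explicitly: positivity via the sharp G\aa rding inequality, and total mass via the microlocalization \eqref{eq:local-psi-hbar} near the compact set $\cE$, testing against $f\in C_c^\infty$ with $f\equiv 1$ near $\cE$. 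That is a legitimate and more self-contained route. For flow-invariance you and the paper take genuinely different (though equivalent) paths: the paper invokes the \emph{integrated} Egorov theorem \eqref{eq:Egorov} and uses that $\psi_\hbar$ is an eigenvector of the propagator $U_\hbar^t$, while you use the \emph{infinitesimal} form, namely the commutator identity
\[
\tfrac{i}{\hbar}[P(\hbar),\Oph(h)]=\Oph(\{p_0,h\})+\cO(\hbar),
\]
together with $\la\psi_\hbar,[P(\hbar),A]\psi_\hbar\ra=0$ for a null eigenstate of the selfadjoint $P(\hbar)$, then integrate over $t$. The infinitesimal route is in fact a step in the standard proof of Egorov, so it trades a black-box citation for a slightly longer but more elementary argument; both routes require, and you correctly flag, the same caveat that the seminorms of $f\circ g^{-t}$ grow with $t$, so the error is $\cO_{f,t}(\hbar)$ and invariance is obtained for each fixed $t$. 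Your closing remarks on the non-compact case and microlocalization to $\cE_\eps$ are also correct and match the role that \eqref{eq:local-psi-hbar} plays in the paper. No gaps.
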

\begin{proof}
Possibly after extracting a subsequence, we assume that $\mu_{sc}$
is the semiclassical measure associated with a sequence of eigenstates
$(\psi_{\hbar})_{\hbar\in S}$. The support property of $\mu_{sc}$
comes from the fact that the operator $P(\hbar)$ is elliptic outside
$\cE$. As a result, for any $f\in C_{c}^{\infty}(T^{*}X)$ vanishing
near $\cE$, one can construct a symbol $g\in S^{-\infty,0}(T^{*}X)$
such that \[
\Oph(f)=\Oph(g)P(\hbar)+\cO_{L^{2}\to L^{2}}(\hbar^{\infty}).\]
Applying this equality to the eigenstates $\psi_{\hbar}$, we get
$\left\Vert \Oph(f)\psi_{\hbar}\right\Vert =\cO(\hbar^{\infty})$,
proving the support property of $\mu_{sc}$.

To prove the flow invariance, we need to compare the quantum time
evolution with the classical one. Denote by $U_{\hbar}^{t}=\exp\left\{ -i\frac{t\, P(\hbar)}{\hbar}\right\} $
the propagator generated by the Hamiltonian $P(\hbar)$: it solves
the time-dependent Schrödinger equation, and thus provides the quantum
evolution. Let us state \textbf{Egorov's theorem}, which is a rigorous
form of quantum-classical correspondence in terms of observables:\begin{equation}
\forall f\in C_{c}^{\infty}(T^{*}X),\;\forall t\in\IR,\qquad U_{\hbar}^{-t}\Op_{\hbar}(f)U_{\hbar}^{t}=\Op_{\hbar}(f\circ g^{t})+\cO_{f,t}(\hbar),\qquad\hbar\to0.\label{eq:Egorov}\end{equation}
Since $\psi_{\hbar}$ is an eigenstate of $U_{\hbar}^{t}$, we directly
get \[
\mu_{\hbar,\varphi_{\hbar}}(f)=\mu_{\hbar,\varphi_{\hbar}}(f\circ g^{t})+\cO_{f,t}\left(\hbar\right)\Longrightarrow\mu_{sc}(f)=\mu_{sc}(f\circ g^{t}).\]

\end{proof}
These properties of semiclassical measures naturally lead to the following
question:
\begin{quote}
Among all flow-invariant probability measures supported on $\cE$,
which ones appear as semiclassical measures associated with eigenstates
of $P(\hbar)$?
\end{quote}
To start answering this question, we will investigate the Kolmogorov-Sinai
entropy of semiclassical measures. We will show that, in the case
of an Anosov flow, the requirement of being a semiclassical measure
implies a nontrivial lower bound on the entropy.

\section{From classical to quantum entropies\label{sec:Entropies}}

\subsection{Entropies and pressures \cite{KatHas95}\label{sub:KSentropy}}

\subsubsection{Kolmogorov-Sinai entropy of an invariant measure}

In this paper we will deal with several types of entropies. All of
them are defined in terms of certain discrete probability distributions,
that is \emph{finite} sets of real numbers $\left\{ p_{i},\: i\in I\right\} $
satisfying \[
p_{i}\in[0,1],\quad\sum_{i\in I}p_{i}=1\,.\]
The entropy associated with such a set is the real number 
\begin{equation}
H(\left\{ p_{i}\right\} )=\sum_{i\in I}\eta(p_{i}),\qquad\mbox{where}\qquad\eta(s)\defeq-s\log s,\quad s\in[0,1].\label{eq:entropy--1}\end{equation}

Our first example is the entropy $H(\mu,\cP)$ associated with a $g^{t}$-invariant
probability measure $\mu$ on the energy shell $\cE$ and a finite
measurable partition $\cP=(E_{1},\ldots,E_{K})$ of $\cE$. That entropy
is given by \begin{equation}
H(\mu,\cP)=H(\left\{ \mu(E_{k})\right\} )=\sum_{k=1}^{K}\eta(\mu(E_{k})).\label{eq:entropy-0}\end{equation}
One can then use the flow $g^{t}$ in order to \emph{refine} the partition
$\cP$. For each integer $n\geq1$ we define the $n$-th refinement
$\cP^{\vee n}=[\cP]_{0}^{n-1}$ as the partition composed of the sets
\[
E_{\bal}\defeq g^{-n+1}E_{\alpha_{n-1}}\cap\cdots\cap g^{-1}E_{\alpha_{1}}\cap E_{\alpha_{0}},\]
where $\bal=\alpha_{0}\cdots\alpha_{n-1}$ can be any sequence of
length $n$ with symbols $\alpha_{i}\in\left\{ 1,\ldots,K\right\} $.
In general many of the sets $E_{\bal}$ may be empty, but we will
nonetheless sum over all sequences of a given length $n$. More generally,
for any $m\in\Z,\: n\geq1$, we consider the partition $[\cP]_{m}^{m+n-1}$
made of the sets

\[
g^{-m}E_{\bal}=g^{-m-n+1}E_{\alpha_{n-1}}\cap\cdots\cap g^{-m}E_{\alpha_{0}},\qquad|\bal|=n.\]
From this refined partition we obtain the entropy $H(\mu,[\cP]_{m}^{m+n-1})=H_{m}^{m+n-1}(\mu)$. 

From the concavity of the logarithm, one easily gets\begin{equation}
\forall m,n\geq1,\qquad H(\mu,[\cP]_{0}^{n+m-1})\leq H(\mu,[\cP]_{0}^{n-1})+H(\mu,[\cP]_{n}^{n+m-1}).\label{eq:subadditivity}\end{equation}
If the measure $\mu$ is $g^{t}$-invariant, this has for consequence
the \emph{subadditivity property}: \begin{equation}
H(\mu,\cP^{\vee(n+m)})\leq H(\mu,\cP^{\vee n})+H(\mu,\cP^{\vee m}).\label{eq:subadd-2}\end{equation}
It thus makes sense to consider the limit\[
H_{KS}(\mu,\cP)\defeq\lim_{n\to\infty}\frac{1}{n}H(\mu,\cP^{\vee n})=\lim_{n\to\infty}\frac{1}{2n}H(\mu,[\cP]_{-n+1}^{n}),\]
the \emph{Kolmogorov-Sinai entropy} of the invariant measure $\mu$,
associated with the partition $\cP$. The KS entropy \emph{per se}
is defined by maximizing over the initial (finite) partition $\cP$:
\[
H_{KS}(\mu)\defeq\sup_{\cP}H_{KS}(\mu,\cP).\]
For an Anosov flow, this supremum is actually reached as soon as the
partition $\cP$ has a sufficiently small diameter (that is, its elements
$E_{k}$ have uniformly small diameters).

\subsubsection{Pressures associated with invariant measures\label{sub:Pressures}}

Let us come back to our probability distribution $\left\{ p_{i},\: i\in I\right\} $.
We may associate to it a set of weights, that is of positive real
numbers $\left\{ w_{i}>0,\: i\in I\right\} $, making up a weighted
probability distribution. The pressure $p(\left\{ p_{i}\right\} ,\left\{ w_{i}\right\} )$
associated with this weighted distribution is the real number%
\footnote{The factor $-2$ appearing in front of the second term is convenient
for our future aims.%
} \[
p(\left\{ p_{i}\right\} ,\left\{ w_{i}\right\} )\defeq-\sum_{i}p_{i}\log(w_{i}^{2}p_{i})=H(\left\{ p_{i}\right\} )-2\sum_{i\in I}p_{i}\,\log w_{i}.\]
For instance, in the case of a flow-invariant measure on $\cE$ and
a partition $\cP$, we can select weights $w_{k}$ on each component
$E_{k}$, and define the pressure \[
p(\mu,\cP,w)\defeq H(\mu,\cP)-2\sum_{k}\mu(E_{k})\,\log w_{k}.\]
We want to refine this pressure using the flow. The weights corresponding
to the $n$-th refinement can be simply defined as \[
w_{\bal}=\prod_{j=0}^{n-1}w_{\alpha_{j}},\quad|\bal|=n.\]
The refined pressure is denoted by $p_{0}^{n-1}(\mu,\cP,w)$. From
the subadditivity of the entropies (\ref{eq:subadd-pressure}) one
easily draws the subadditivity of the pressures:\begin{equation}
p_{0}^{n+m-1}(\mu,\cP,w)\leq p_{0}^{n-1}(\mu,\cP,w)+p_{0}^{m-1}(\mu,\cP,w).\label{eq:subadd-pressure}\end{equation}

\subsubsection{Smoothed partitions near $\cE$}

The definition of $H(\mu,\cP)$ can be expressed in terms of characteristic
functions over the partition $\cP$. Indeed, if $\bbbone_{k}$ is
the characteristic function on $E_{k}$, then the function \begin{equation}
\bbbone_{\bal}=\left(\bbbone_{\alpha_{n-1}}\circ g^{n-1}\right)\times\cdots\times\left(\bbbone_{\alpha_{1}}\circ g\right)\times\bbbone_{\alpha_{0}}\label{eq:refined-2}\end{equation}
 is the characteristic function of $E_{\bal}$. In formula (\ref{eq:entropy-0})
we can then replace $\mu(E_{k})$ by \[
\mu(\bbbone_{k})\defeq\int_{\cE}\bbbone_{k}\, d\mu.\]
Let us assume that the invariant measure $\mu$ does not charge the
boundary of the $E_{k}$ (this is always possible by slightly shifting
the boundaries of the $E_{k}$). Then, for any $\eps>0$, we can approximate
the characteristic function $\bbbone_{k}$ by a smooth function $\pi_{k}\in C_{c}^{\infty}(\cE_{\eps},[0,1])$
supported in a small neighbourhood $\tilde{E}_{k}$ of $E_{k}$, and
such that these $K$ functions form a \emph{smooth partition of unity
near }$\cE$:\begin{equation}
\sum_{k=1}^{K}\pi_{k}(\rho)=\chi_{\eps/2}(\rho),\qquad\supp\chi_{\eps/2}\subset\cE_{\eps},\qquad\chi_{\eps/2}=1\quad\mbox{near}\quad\cE_{\eps/2}.\label{eq:smooth-partition}\end{equation}
One can extend the definition of the entropy to the smooth partition\emph{
}$\cP_{sm}=\left\{ \pi_{k}\right\} _{k=1,\ldots,K}$ and its refinements
through the flow. From the assumption $\mu(\partial\cP)=0$, for any
$\eps'>0$ and any $n\geq1$ we can choose $\cP_{sm}$ such that 
\[
\left|H(\mu,\cP_{sm}^{\vee n})-H(\mu,\cP^{\vee n})\right|\leq\eps'.
\]

To prove a lower bound on the entropy $H_{KS}(\mu)$ therefore amounts
to proving a lower bound on $\frac{1}{n}H(\mu,\cP_{sm}^{\vee n})$,
uniform w.r.to $n\geq1$ and the smoothing $\cP_{sm}$ of $\cP$.
The advantage of using a smoothed partition $\cP_{sm}$ is that it
is fit for quantization.

\subsection{Quantum partitions of unity\label{sub:quantum-partition}}

\subsubsection{Definition}

From the smoothed partition $\cP_{sm}=\left\{ \pi_{k}\right\} _{k=1,\ldots,K}$
we form a \emph{quantum partition of unity} $\cP_{sm,q}=\left\{ \Pi_{k}=\Oph(\tilde{\pi}_{k})\right\} _{k=1,\ldots,K}$,
where $\Oph$ is the $\hbar$-quantization (\ref{eq:Quantization-mfold}),
and the symbols $\tilde{\pi}_{k}\in S^{-\infty,0}(T^{*}X)$ satisfy
the following properties:
\begin{enumerate}
\item for each $k$, the symbol $\tilde{\pi}_{k}$ is real, supported on $\tilde{E}_{k}$,
and admits $\sqrt{\pi_{k}}$ as principal symbol. The operator $\Pi_{k}$
is thus selfadjoint.
\item the family $\cP_{sm,q}=\left\{ \Pi_{k}\right\} _{k=1,\ldots,K}$ is
a quantum partition of unity microlocally near $\cE$: \begin{equation}
\sum_{k=1}^{K}\Pi_{k}^{2}=\Oph(\tilde{\chi}_{\eps/2})+\cO(\hbar^{\infty}),\label{eq:Quantum Partition}\end{equation}
where $\tilde{\chi}_{\eps/2}\in S^{-\infty,0}(T^{*}X)$ satisfies
\[
\widetilde{\chi}_{\eps/2}(\rho)\!\equiv\!1\ \text{near}\ \cE_{\eps/2},\qquad 
\supp\widetilde{\chi}_{\eps/2}\!\subset\!\cE_{\eps},\qquad \lVert \Oph(\widetilde{\chi}_{\eps/2})\rVert=1+\cO(\hbar^{\infty})\,.
\]
Notice that $\tilde{\chi}_{\eps/2}$ has for principal symbol $\chi_{\eps/2}$
of (\ref{eq:smooth-partition}).\end{enumerate}
\begin{rem*}
Had we simply taken $\Pi_{k}=\Oph(\sqrt{\pi_{k}})$, the above properties
would hold only up to remainders $\cO(\hbar)$. By iteratively adjusting
the higher-order symbols in $\tilde{\pi}_{k}$ (and $\tilde{\chi}_{\eps/2}$),
we can enforce these properties to any order in $\hbar$. 
\end{rem*}

\subsubsection{Refined quantum partitions}

In the classical framework, the $n$-refinement of the partition $\cP_{sm}=\left\{ \pi_{k}\right\} _{k=1,\ldots,K}$
was obtained by considering the products $\pi_{\alpha_{n-1}}\circ g^{n-1}\times\cdots\times\pi_{\alpha_{0}}$,
for all sequences $\bal$ of length $n$. Egorov's theorem shows that
the quantum observable $\Oph\left(\pi_{\alpha_{j}}\circ g^{j}\right)$
resembles the quantum evolution $U^{-j}\Oph(\pi_{\alpha_{j}})U^{j}$,
where $U=U_{\hbar}=e^{-iP(\hbar)/\hbar}$ is the Schrödinger propagator
(at time unity). For this reason, we define as follows the elements
of the $n$-refined quantum projection:\begin{equation}
\Pi_{\bal}\defeq U^{-n+1}\Pi_{\alpha_{n-1}}U\cdots U\Pi_{\alpha_{2}}U\Pi_{\alpha_{1}}U\Pi_{\alpha_{0}},\qquad\bal=\alpha_{0}\cdots\alpha_{n-1}.\label{eq:quantum-refined}\end{equation}
 We first need to check that these operators still make up a quantum
partition of unity near $\cE$.
\begin{prop}
\label{pro:qu-part-local}Take $n_{\max}=[C_{\delta}|\log\hbar|]$
as in section \ref{sub:Sharp-energy-cutoffs}. Then, for each $1\leq n\leq n_{\max}$,
the family of operators $\cP_{sm,q}^{\vee n}=\left\{ \Pi_{\bal},\quad|\bal|=n\right\} $
forms a quantum partition of unity microlocally near $\cE$, in the
following sense. For any symbol $\chi\in S^{-\infty,0}(T^{*}X)$ supported
inside $\cE_{\eps/2}$, we have 
\begin{equation}\label{eq:Quant-partition-n}
\begin{gathered}
\forall n\leq n_{\max},\qquad\sum_{\alpha_{0},\ldots,\alpha_{n-1}}\Pi_{\bal}^{*}\Pi_{\bal}=S_{n},\qquad\left\Vert S_{n}\right\Vert =1+\cO(\hbar^{\infty}),\\
\left(Id-S_{n}\right)\,\Oph(\chi)=\cO(\hbar^{\infty}).
\end{gathered}
\end{equation}
\end{prop}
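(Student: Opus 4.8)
\textbf{Proof plan for Proposition \ref{pro:qu-part-local}.}

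The plan is to prove the two statements in \eqref{eq:Quant-partition-n} by induction on $n$, propagating the following slightly stronger statement: for every $n\le n_{\max}$, the operator $S_n=\sum_{|\bal|=n}\Pi_{\bal}^*\Pi_{\bal}$ satisfies $S_n=\Oph(\tilde\chi_{\eps/2})+\cO(\hbar^\infty)$ in the sense that $S_n$ acts as an approximate identity microlocally near $\cE_{\eps/2}$ (this immediately gives both $\lVert S_n\rVert=1+\cO(\hbar^\infty)$ and $(Id-S_n)\Oph(\chi)=\cO(\hbar^\infty)$ for $\chi$ supported in $\cE_{\eps/2}$, using the sharp G\r{a}rding bound and the composition calculus of $\Psi^{-\infty,0}$). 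The base case $n=1$ is exactly the defining property \eqref{eq:Quantum Partition} of the quantum partition of unity, combined with the cutoff properties of $\tilde\chi_{\eps/2}$.

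For the inductive step, I would write $S_{n+1}$ by peeling off the innermost operator: using \eqref{eq:quantum-refined} and the fact that $U=U_\hbar$ is unitary,
\[
S_{n+1}=\sum_{\alpha_0}\Pi_{\alpha_0}\,U^*\Bigl(\sum_{|\bbeta|=n}\Pi_{\bbeta}^*\Pi_{\bbeta}\Bigr)U\,\Pi_{\alpha_0}
=\sum_{\alpha_0}\Pi_{\alpha_0}\,U^*S_n\,U\,\Pi_{\alpha_0}\,,
\]
where $\bbeta$ runs over the last $n$ symbols. By the induction hypothesis $S_n=\Oph(\tilde\chi_{\eps/2})+\cO(\hbar^\infty)$. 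Now one invokes Egorov's theorem \eqref{eq:Egorov}: $U^*\Oph(\tilde\chi_{\eps/2})U=\Oph(\tilde\chi_{\eps/2}\circ g^{-1})+\cO(\hbar)$ — but an $\cO(\hbar)$ error is not good enough over $n_{\max}\sim |\log\hbar|$ iterations, so I would instead use the long-time/exotic-class refinement of Egorov available in the classes $\Psi_{\cE,1-\delta}^{m,0}$ and $\Psi_{\cE,\nu}$ of \S\ref{sub:exotic-symbols}: since the energy shell $\cE$ is $g^t$-invariant and $\tilde\chi_{\eps/2}$ is (up to $\cO(\hbar^\infty)$) a function of $p_0$, the conjugate $U^*S_nU$ stays, modulo $\cO(\hbar^\infty)$, an operator that is microlocally the identity on $\cE_{\eps/2}$ — the key point being that $g^{-1}(\cE_{\eps/2})=\cE_{\eps/2}$, so no spreading of the essential support occurs along the flow direction transverse to $\cE$. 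Then $\sum_{\alpha_0}\Pi_{\alpha_0}(U^*S_nU)\Pi_{\alpha_0}$ is handled by the $n=1$ relation \eqref{eq:Quantum Partition}: inserting the microlocal identity, $\sum_{\alpha_0}\Pi_{\alpha_0}^2=\Oph(\tilde\chi_{\eps/2})+\cO(\hbar^\infty)$, and the nested cutoffs of \S\ref{sub:Sharp-energy-cutoffs} (the embedding \eqref{eq:cutoff-embedding}) guarantee that multiplying by the microlocal identity on $\cE_{\eps/2}$ changes nothing modulo $\cO(\hbar^\infty)$. This closes the induction with $S_{n+1}=\Oph(\tilde\chi_{\eps/2})+\cO(\hbar^\infty)$.

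The main obstacle, and the step needing the most care, is controlling the accumulation of Egorov errors over the logarithmically long range $n\le n_{\max}=[C_\delta|\log\hbar|]$. A naive bound of $\cO(\hbar)$ per step would yield a useless $\cO(\hbar^{1-C_\delta|\log\hbar|\cdot 0})$... more precisely an error like $n_{\max}\,\hbar\sim\hbar|\log\hbar|$ if errors merely added, but in fact the symbol derivatives grow like $e^{\lambda_{\max} n}$ under the flow, so one genuinely needs $n_{\max}\lambda_{\max}<$ (something like) $(1-\delta)|\log\hbar|$, i.e. $C_\delta<\delta^{-1}-1$ as assumed in \S\ref{sub:Sharp-energy-cutoffs}, to keep the evolved symbols in the exotic class $S_{\cE,1-\delta}$ and thus retain an $\cO(\hbar^\infty)$ (or at least $\cO(\hbar^{1-2\nu})$-summable) remainder. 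The clean way to organize this is to never evolve more than one step at a time — which is exactly what the recursion $S_{n+1}=\sum_{\alpha_0}\Pi_{\alpha_0}U^*S_nU\Pi_{\alpha_0}$ above accomplishes — and to note that at each step the relevant symbol is (microlocally near $\cE_{\eps/2}$) just $\tilde\chi_{\eps/2}$, which is flow-invariant, so there is no actual growth to control in the induction; the growth only enters through the implicit constants in the individual Egorov and composition estimates, which remain $\cO(\hbar^\infty)$ uniformly for $n\le n_{\max}$ precisely because of the constraint on $C_\delta$. I would also remark (as the paper's Remark after \eqref{eq:Quantum Partition} anticipates) that the higher-order symbol corrections in the $\tilde\pi_k$ are what allow \eqref{eq:Quantum Partition} to hold modulo $\cO(\hbar^\infty)$ rather than $\cO(\hbar)$, and this exact-to-all-orders partition of unity at time $1$ is what makes the induction propagate without loss.
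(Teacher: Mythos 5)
Your decomposition $S_{n+1}=\sum_{\alpha_0}\Pi_{\alpha_0}U^*S_nU\Pi_{\alpha_0}$ is correct, and the overall recursion-on-$n$ strategy (peeling off $\alpha_0$ from the inside) is a valid reorganisation of the paper's argument (which collapses the product from the outside index $\alpha_{n-1}$ inward). However, the inductive hypothesis you propose to propagate — $S_n=\Oph(\tilde\chi_{\eps/2})+\cO(\hbar^\infty)$ — does not survive conjugation by $U$, and the argument you offer to rescue it has a genuine gap.

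The gap is in the sentence claiming that ``$\tilde\chi_{\eps/2}$ is (up to $\cO(\hbar^\infty)$) a function of $p_0$,'' so that $U^*\Oph(\tilde\chi_{\eps/2})U$ stays a microlocal identity modulo $\cO(\hbar^\infty)$. The full symbol $\tilde\chi_{\eps/2}$ is not built that way: it is defined through \eqref{eq:Quantum Partition} as whatever the sum $\sum_k\tilde\pi_k^2$ produces (after the higher-order adjustment of the $\tilde\pi_k$), and the $\tilde\pi_k$ have no reason to depend on the phase-space point only through $p_0$ — indeed their principal symbols $\sqrt{\pi_k}$ certainly do not. Hence $\Oph(\tilde\chi_{\eps/2})$ is not a function of $P(\hbar)$ modulo $\cO(\hbar^\infty)$, and $U^*\Oph(\tilde\chi_{\eps/2})U$ differs from $\Oph(\tilde\chi_{\eps/2})$ by an honest $\cO(\hbar)$, which cannot be iterated $n_{\max}\sim|\log\hbar|$ times within an $\cO(\hbar^\infty)$ budget. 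Your observation that $g^{-1}$ preserves the sets $\cE_a$ (hence the support structure of $\chi_{\eps/2}$) is true but controls only the \emph{essential support}, not the symbol in the transition shell $\cE_\eps\setminus\cE_{\eps/2}$, where the $\cO(\hbar)$ discrepancy lives.

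The fix — and this is exactly what the paper does, via a device you half-invoke but do not make precise — is never to conjugate $\Oph(\tilde\chi_{\eps/2})$ by $U$ at all. One introduces a nested family of $\hbar$-dependent cutoffs in the Hamiltonian itself,
\[
\bbbone_{\cE_{\eps/2}}\prec\chi_1\circ p_0\prec\chi_2\circ p_0\prec\cdots\prec\chi_{n_{\max}}\circ p_0\prec\tilde\chi_{\eps/2},
\qquad \chi_j\circ p_0\in S_\nu^{-\infty,0}\ \text{for some small }\nu>0,
\]
and propagates the \emph{weaker} statement
\[
(Id-S_n)\,\chi_{n_{\max}-n+1}\bigl(P(\hbar)\bigr)=\cO(\hbar^\infty),\qquad \|S_n\|=1+\cO(\hbar^\infty).
\]
The point is that the $\chi_j(P(\hbar))$ are functional-calculus cutoffs, so they commute \emph{exactly} with $U$; the embedding \eqref{eq:embedding2} absorbs $\Oph(\tilde\chi_{\eps/2})$ into the next cutoff $\chi_{j-1}(P(\hbar))$ with an $\cO(\hbar^\infty)$ error, no Egorov needed. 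With this hypothesis, your own recursion closes: $\Pi_{\alpha_0}\chi_{j-1}(P(\hbar))=\chi_j(P(\hbar))\Pi_{\alpha_0}\chi_{j-1}(P(\hbar))+\cO(\hbar^\infty)$ by embedding; $\chi_j(P(\hbar))$ slides past $U$; $(Id-S_n)\chi_j(P(\hbar))=\cO(\hbar^\infty)$ by hypothesis; and $\sum_{\alpha_0}\Pi_{\alpha_0}^2\chi_{j-1}(P(\hbar))=\Oph(\tilde\chi_{\eps/2})\chi_{j-1}(P(\hbar))+\cO(\hbar^\infty)=\chi_{j-1}(P(\hbar))+\cO(\hbar^\infty)$ closes the step.

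Two further remarks. First, the Ehrenfest-time constraint you invoke (``$n_{\max}\lambda_{\max}<(1-\delta)|\log\hbar|$'') plays no role here: Proposition~\ref{pro:qu-part-local} holds for all $n\le n_{\max}=[C_\delta|\log\hbar|]$ with $C_\delta$ arbitrary, precisely because the argument never propagates a $\Psi$DO through $U$. The constraint $C_\delta<\delta^{-1}-1$ quoted in \S\ref{sub:Sharp-energy-cutoffs} is unrelated to symbol growth under the flow; it ensures that the sharp cutoffs $\chi^{(n)}$ remain supported in a microscopic energy strip, and the cutoffs $\chi_j$ used in the present proof are in any case a different (much wider, $\hbar$-nearly-independent) nested family. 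Second, for the norm bound $\|S_n\|\le1+\cO(\hbar^\infty)$ it is cleaner to argue directly from positivity and your recursion, $0\le S_{n+1}\le\|S_n\|\sum_{\alpha_0}\Pi_{\alpha_0}^2$, rather than appeal to G\aa rding on $S_n$ itself, whose symbol after $n$ steps is not readily controlled.
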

\begin{proof}
The statement is obvious in the case $\left\{ \Pi_{k}\right\} _{k=1,\ldots,K}$
forms a full resolution of identity (that is, if the left hand side
in (\ref{eq:Quantum Partition}) is equal to the identity modulo $\cO(\hbar^{\infty})$),
as was the case in \cite{Ana08,AnaKoNo06,AnaNo07-2} and will be the
case in §\ref{sec:Anosov-maps}. One can then sum over $\sum_{\alpha_{n-1}}\Pi_{\alpha_{n-1}}^{2}=Id+\cO(\hbar^{\infty})$,
then over $\alpha_{n-2}$, etc, to finally obtain $S_{n}=Id+\cO(\hbar^{\infty})$. 

In the case of a microlocal partition near $\cE$, the sum over the
index $\alpha_{n-1}$ leads to a product $\Pi_{\alpha_{n-2}}\Oph(\tilde{\chi}_{\eps/2})\Pi_{\alpha_{n-2}}$,
where $\tilde{\chi}_{\eps/2}$ is the symbol appearing in (\ref{eq:Quantum Partition}).
To {}``absorb'' the factor $\Oph(\tilde{\chi}_{\eps/2})$, we will
insert intermediate cutoffs at each time. We recall that $\chi\prec\bbbone_{\cE_{\eps/2}}\prec\tilde{\chi}_{\eps/2}$
. We consider a sequence of cutoffs $(\chi_{j}\circ p_{0})_{1\leq j\leq n_{\max}}$
such that $\bbbone_{\cE_{\eps/2}}\prec\chi_{1}\circ p_{0}\prec\chi_{2}\circ p_{0}\cdots\prec\chi_{n_{max}}\circ p_{0}\prec\tilde{\chi}_{\eps/2}$
. Since $n_{\max}\sim|\log\hbar|$, the $\chi_{j}$ will necessarily
depend on $\hbar$, their derivatives growing like $\hbar^{-\nu}$
for some small $\nu>0$, so that $\chi^{j}\circ p_{0}\in S_{\nu}^{-\infty,0}$.
The calculus in $S_{\nu}^{-\infty,0}$ and $\chi_{j}\prec\chi_{j+1}$
show that, for any $1\leq k\leq K$:

\begin{equation}
\forall 1\leq j\leq n_{\max}-1,\quad\left(Id-\chi_{j+1}(P(\hbar))\right)\,\Pi_{k}\,\chi_{j}(P(\hbar))=\cO(\hbar^{\infty}).\label{eq:embedding2}\end{equation}
This implies that we can indeed insert intermediate cutoffs in $\Pi_{\bal}$
with no harm: 
{\multlinegap0pt\begin{multline*}
\Pi_{\bal}\Oph(\chi)=
=U^{-n+1}\Pi_{\alpha_{n-1}}U\Pi_{\alpha_{n-2}}U\chi_{n-2}(P(\hbar))\cdots U\chi_{2}(P(\hbar))\Pi_{\alpha_{1}}\\
\times U\chi_{1}(P(\hbar))\Pi_{\alpha_{0}}\Oph(\chi)+\cO(\hbar^{\infty}).
\end{multline*}}
We also have\begin{equation}
\forall j,\quad\left(Id-\Oph(\tilde{\chi}_{\eps/2})\right)\,\Pi_{k}\,\chi_{j}(P(\hbar))=\cO(\hbar^{\infty}).\label{eq:embedding3}\end{equation}
This equation, and the fact that $\chi_{j}(P(\hbar))$ commutes with
the propagator $U$, results in
\begin{multline*}
\smash[b]{\sum_{\alpha_{n-2}}} U^*\Pi_{\alpha_{n-2}}\Oph(\widetilde{\chi}_{\eps/2})\Pi_{\alpha_{n-2}}U \chi_{n-2}\bigl(P(\hbar)\bigr) \\
\begin{aligned}
& =\sum_{\alpha_{n-2}}U^*\Pi_{\alpha_{n-2}}^2\chi_{n-2}\bigl(P(\hbar)\bigr)U+\cO(\hbar^{\infty})\\
 & =U^*\Oph(\widetilde{\chi}_{\eps/2})\chi_{n-2}\bigl(P(\hbar)\bigr)U+\cO(\hbar^{\infty})\\
 & =U^*\chi_{n-2}\bigl(P(\hbar)\bigr)U+\cO(\hbar^{\infty})\\
 & =\chi_{n-2}\bigl(P(\hbar)\bigr)+\cO(\hbar^{\infty}).
 \end{aligned}
 \end{multline*}
Using (\ref{eq:embedding2}) at each step, the summation over $\alpha_{n-3},\alpha_{n-4},\ldots$
finally brings us to \[
\sum_{\bal}\Pi_{\bal}^{*}\Pi_{\bal}\Oph(\chi)=\sum_{\alpha_{0}}\Pi_{\alpha_{0}}\chi_{1}(P(\hbar))\Pi_{\alpha_{0}}\Oph(\chi)+\cO(h^{\infty}).\]
The equation $\left(Id-\chi_{1}(P(\hbar))\right)\Pi_{\alpha_{0}}\Oph(\chi)=\cO(\hbar^{\infty})$
leads to the proof.
\end{proof}

\subsection{From the refined operators $\Pi_{\bal}$ to a quantum symbolic measure\label{sub:symbolic-measure}}

Let us turn back to the sequence of eigenstates $(\psi_{\hbar})_{\hbar\to0}$
associated with the semiclassical measure $\mu_{sc}$. The proof of
Prop. \ref{pro:quasimode-basic} shows that for any cutoff $\chi\in S^{-\infty,0}$
, $\chi\equiv1$ near $\cE$, we have \[
\Oph(\chi)\psi_{\hbar}=\psi_{\hbar}+\cO(\hbar^{\infty}).\]
As a result, for any $1\leq n\leq n_{\max}$ we have $\sum_{|\bal|=n}\left\Vert \Pi_{\bal}\psi_{\hbar}\right\Vert ^{2}=1+\cO(\hbar^{\infty})$.
Therefore, modulo an $\cO(\hbar^{\infty})$ error, the set $\left\{ \left\Vert \Pi_{\bal}\psi_{\hbar}\right\Vert ^{2},\:|\bal|=n\right\} $
forms a discrete probability distribution. The proof shows that these
weights also satisfy a compatibility condition (up to a negligible
error): \begin{equation}
\forall n\leq n_{\max},\;\forall\bal=\alpha_{0}\cdots\alpha_{n-1},\qquad\left\Vert \Pi_{\alpha_{0}\cdots\alpha_{n-2}}\psi_{\hbar}\right\Vert ^{2}=\sum_{\alpha_{n-1}}\left\Vert \Pi_{\alpha_{0}\cdots\alpha_{n-1}}\psi_{\hbar}\right\Vert ^{2}+\cO(\hbar^{\infty}).\label{eq:compatibility}\end{equation}
If we forget the errors $\cO(\hbar^{\infty})$, we can interpret the
weights $\left\Vert \Pi_{\bal}\psi_{\hbar}\right\Vert ^{2}$ in terms
of a certain probability measure $\mu_{\hbar}$ on the symbolic space
$\Sigma=\left\{ 1,\ldots,K\right\} ^{\IZ}$. Namely, each $\left\Vert \Pi_{\bal}\psi_{\hbar}\right\Vert ^{2}$
corresponds to the weight of that measure on the \emph{cylinder} $[\cdot\alpha_{0}\alpha_{1}\cdots\alpha_{n-1}]$:\begin{equation}
\mu_{\hbar}([\cdot\bal])\defeq\left\Vert \Pi_{\bal}\psi_{\hbar}\right\Vert ^{2}.\label{eq:mu^u-definition}\end{equation}
Formally, $\mu_{\hbar}$ can be defined as an equivalence class of
$\hbar$-dependent probability measures taking values on cylinders
of lengths $n\leq n_{\max}=C_{\delta}|\log\hbar|$, the equivalence
relation consisting in equality up to errors $\cO(\hbar^{\infty})$.
We will call such a measure a \textbf{symbolic measure}.

The defining property $\Pi_{k}=\Oph(\tilde{\pi}_{k})$ shows that
each element $\mu_{\hbar}([\cdot\alpha_{0}])=\left\Vert \Pi_{\alpha_{0}}\psi_{\hbar}\right\Vert ^{2}$
approximately represents the microlocal weight of the state $\psi_{\hbar}$
inside the element $E_{k}$ of the partition. Further on, for any
fixed $n\geq1$, Egorov's theorem (\ref{eq:Egorov}) and the composition
rule (\ref{eq:composition}) show that the refined operators $\Pi_{\bal}$
are still {}``good'' pseudodifferential operators: \[
\Pi_{\bal}=\Oph(\pi_{\bal})+\cO_{n}(\hbar),\]
where $\cP_{sm}^{\vee n}=\left\{ \pi_{\bal}\right\} $ is the $n$-refinement
of the smooth partition $\cP_{sm}$, as in (\ref{eq:refined-2}).
As a result, $\mu_{\hbar}([\cdot\bal])$ approximately represents
the weight of $\psi_{\hbar}$ inside the refined partition
element $E_{\bal}$. 

From the assumption on the sequence $(\psi_{\hbar})$, the symbolic
measure $\mu_{\hbar}$ is obviously related with the semiclassical
measure $\mu_{sc}$: for any fixed $n\geq1$ we have \begin{equation}
\forall\bal,\:|\bal|=n,\qquad\mu_{\hbar}([\cdot\bal])\hto0\mu_{sc}(\pi_{\bal}).\label{eq:weight-quant-class}\end{equation}

This limit assumes that the {}``time'' $n$ is fixed when taking
$\hbar\to0$. For our purposes, it will be crucial to extend the analysis
to times $n$ of logarithmic order in $\hbar$. Before doing so, let
us give a crude description of the sets $E_{\bal}$ when $ $$n=|\bal|$
grows. Let us assume that the elements $E_{k}$ are approximately
{}``isotropic'' w.r.to the stable and unstable directions. The inverse
flow $g^{-t}$ has the effect to compress along the unstable directions,
and expand along the stable ones. As a result, the set $E_{\alpha_{0}\alpha_{1}}=g^{-1}E_{\alpha_{1}}\cap E_{\alpha_{0}}$
will be narrower than $E_{\alpha_{0}}$ along the unstable direction,
but keep approximately the same size in the stable one. Iterating
this procedure, the set $E_{\alpha_{0}\cdots\alpha_{n-1}}$ will become
very anisotropic for large $n$: its size along the stable directions
will remain comparable with that of $E_{\alpha_{0}}$, while its {}``unstable
volume'' will be diminished by a factor $J_{n}^{u}(\alpha_{0}\cdots\alpha_{n-1})^{-1}$,
where we use the \textbf{coarse-grained unstable Jacobian} \begin{equation}
J_{n}^{u}(\alpha_{0}\cdots\alpha_{n-1})\defeq\prod_{i=0}^{n-1}J^{u}(\alpha_{i}),\qquad J^{u}(k)\defeq\min_{\rho\in E_{k}}J^{u}(\rho).\label{eq:coarse-grained Jacobian}\end{equation}
The factor $J_{n}^{u}(\bal)^{-1}$ decreases exponentially with $n$
according to the minimal $(d-1)$-dimensional unstable expansion rate
$\Lambda_{\min}^{u}$: \begin{equation}
\forall n,\;\forall\bal,\,|\bal|=n,\qquad J_{n}^{u}(\bal)^{-1}\leq Ce^{-n(\Lambda_{\min}^{u}-\eps)},\label{eq:Jacob-bound}\end{equation}
so the sets $E_{\bal}$ become very thin along the unstable direction.
This anisotropy is as well visible on the refined smooth functions
$\pi_{\bal}$ or the refined symbols $\tilde{\pi}_{\bal}$.

\subsection{Egorov theorem up to the Ehrenfest time\label{sub:Egorov-Ehrenfest}}

The Egorov theorem (\ref{eq:Egorov}) can be extended up to times
$t\sim C|\log\hbar|$, provided the constant $C$ is not too large.
The breakdown occurs when the classically evolved function $f\circ g^{t}$
shows fluctuations of size unity across a distance $\sim h^{1/2}$:
such a function is no more a {}``nice quantizable observable'' (see
$\S$\ref{sub:exotic-symbols} ). 

Let us start from a symbol $f\in S^{-\infty,0}(T^{*}X)$ supported
on the energy layer $\cE_{\eps}$. We have called $\lambda_{\max}$
the maximal expansion rate of the flow on $\cE$. Assume that $\lambda_{\eps}=\lambda_{\max}+\cO(\eps)$
is larger than the maximal expansion rate on $\cE_{\eps}$. It implies
that the derivatives of the flow are controlled as follows:\[
\forall t\in\IR,\quad\forall\rho\in\cE_{\eps},\qquad\left\Vert \partial^{\alpha}g^{t}(\rho)\right\Vert \leq C_{\alpha}\, e^{\lambda_{\eps}|\alpha t|}.\]
As a result, for any symbol $f\in S^{-\infty,0}$ supported inside
$\cE_{\eps}$, its classical evolution $f_{t}=f\circ g^{t}$ satisfy\[
\forall t\in\IR,\quad\forall\rho\in\cE_{\eps},\qquad\left\Vert \partial^{\alpha}f_{t}(\rho)\right\Vert \leq C_{f,\alpha}\, e^{\lambda_{\eps}|\alpha t|}.\]
For $t\sim C|\log\hbar|$ the right hand sides become of order $\hbar^{-C\lambda_{\eps}|\alpha|}$.
Therefore, if we want $f_{t}$ to belong to a reasonable symbol class
(see section \ref{sub:exotic-symbols}), we must restrict the values
of $C$. Let us define the time\[
T_{\eps,\hbar}\defeq\frac{(1-\eps)|\log\hbar|}{2\lambda_{\eps}},\]
which is about half of what is generally called the \emph{Ehrenfest
time} $T_{E}=\frac{|\log\hbar|}{\lambda_{\max}}$. Take any $\nu\in[\frac{1-\eps}{2},\frac{1}{2})$.
The above estimates show that for any choice of sequence $(t(\hbar))_{\hbar\to0}$
satisfying $|t(\hbar)|\leq T_{\eps,\hbar}$, the family of functions
$\left(f_{t(\hbar)}\right)_{\hbar\to0}$ belongs to the class $S_{\nu}^{-\infty,0}(T^{*}X)$
defined in (\ref{eq:S_nu}). In other words, any seminorm of that
class is uniformly bounded over the set $\left\{ f_{t},\;|t|\leq T_{\eps,\hbar},\;\hbar\in(0,1]\right\} $.
It is then not surprising that Egorov's theorem holds up to the
time $T_{\eps,\hbar}$.
\begin{prop}
\cite[Prop.5.1]{AnaNo07-2}Fix $\eps>0$ and $\nu\in(\frac{1-\eps}{2},\frac{1}{2})$.
Take $f\in S^{-\infty,0}$ supported inside $\cE_{\eps}$. Then, for
any $\hbar\in(0,1]$ and any time $t=t(\hbar)$ in the range $|t|\leq T_{\eps,\hbar}$,
we have 
\begin{multline}\label{eq:Egorov-long}
U^{-t}\Oph(f)U^t=\Oph(\tilde f_t)+\cO(h^{\infty}),\\
\text{with $\tilde f_t-f_t\in S_{\nu}^{-\infty,-(1+\eps)/2}$, $f_t=f\circ g^t\in S_{\nu}^{-\infty,0}$.}
\end{multline}
\end{prop}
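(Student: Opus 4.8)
The argument is a Duhamel (interpolation) argument carried out at the level of symbols, of the kind that is by now standard for long-time Egorov estimates; one may also consult \cite[Prop.~5.1]{AnaNo07-2}, of which this is a restatement. Throughout, $U^{s}=\exp(-isP(\hbar)/\hbar)$ and $\sigma(A)$ denotes a total symbol of the operator $A$ furnished by the $S_{\nu}^{\bullet,\bullet}$-calculus of \S\ref{sub:exotic-symbols}. First I would \emph{construct} the symbol $\tilde f_{t}$ as the solution of the Heisenberg equation read off at the symbolic level: set $\tilde f_{0}=f$ and look for a family $(\tilde f_{r})_{|r|\le T_{\eps,\hbar}}$ solving
\[
\partial_{r}\tilde f_{r}=\sigma\Bigl(\tfrac{i}{\hbar}[P(\hbar),\Oph(\tilde f_{r})]\Bigr),
\]
which I would solve by an asymptotic series $\tilde f_{r}=\sum_{j\ge 0}\tilde f_{r}^{(j)}$, truncated at an order depending on the target power of $\hbar$. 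The leading term obeys the transport equation $\partial_{r}\tilde f_{r}^{(0)}=H_{p_{0}}\tilde f_{r}^{(0)}$, hence $\tilde f_{r}^{(0)}=f\circ g^{r}=f_{r}$, while each $\tilde f_{r}^{(j)}$ with $j\ge 1$ solves an inhomogeneous transport equation whose source is a universal differential expression, of order $\le 3j$, built from the previously computed terms, coming from the subprincipal part of $P(\hbar)$ and from the higher Moyal terms of the calculus.

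The estimate that makes this construction work up to $T_{\eps,\hbar}$ but not beyond is the one already prepared before the statement: for $|r|\le T_{\eps,\hbar}$ and $\nu\ge\frac{1-\eps}{2}$, the flow bounds $\|\partial^{\alpha}g^{r}\|\le C_{\alpha}e^{\lambda_{\eps}|\alpha||r|}\le C_{\alpha}\hbar^{-|\alpha|(1-\eps)/2}$ guarantee $f_{r}\in S_{\nu}^{-\infty,0}$ with seminorms uniform in $r$ and $\hbar$, so the whole $S_{\nu}$-calculus (composition, sharp G\aa rding) applies with $\hbar$-uniform constants. Feeding this into the recursion one checks inductively that each $\tilde f_{r}^{(j)}$, $j\ge 1$, lies in $S_{\nu}^{-\infty,-(1+\eps)/2}$ — in fact in a class shrinking geometrically in $j$, since the $\hbar^{2j}$ gained from the $j$ successive Moyal remainders beats, with room to spare, the $\hbar^{-3j(1-\eps)/2}$ cost of the $3j$ derivatives landing on $f$. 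Summing the series and absorbing into an extra $\hbar^{\eps}$ the $\cO(|\log\hbar|)$ factor produced in the last step, one gets $\tilde f_{t}-f_{t}=\sum_{j\ge 1}\tilde f_{t}^{(j)}\in S_{\nu}^{-\infty,-(1+\eps)/2}$, and $f_{t}=f\circ g^{t}\in S_{\nu}^{-\infty,0}$ is immediate from the same flow bounds; this is the claimed regularity of the symbols.

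It remains to see that this symbol does the job, i.e.\ $\Oph(\tilde f_{t})=U^{-t}\Oph(f)U^{t}+\cO(\hbar^{\infty})$. For this I would differentiate the interpolating family $s\mapsto U^{-s}\Oph(\tilde f_{t-s})U^{s}$ on $s\in[0,t]$, whose values at $s=0$ and $s=t$ are $\Oph(\tilde f_{t})$ and $U^{-t}\Oph(f)U^{t}$ respectively (recall $\tilde f_{0}=f$). Its derivative equals $U^{-s}\Oph(\varrho_{t-s})U^{s}$ with $\varrho_{r}=\tfrac{i}{\hbar}\sigma([P(\hbar),\Oph(\tilde f_{r})])-\partial_{r}\tilde f_{r}$, which is $\cO(\hbar^{\infty})$ in $S_{\nu}$, uniformly in $r$, once the defining series is truncated high enough. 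Integrating over $s\in[0,t]$ and using $\|U^{s}\|=1$, so that $\|U^{-s}\Oph(\varrho_{t-s})U^{s}\|=\|\Oph(\varrho_{t-s})\|=\cO(\hbar^{\infty})$, the only price of the logarithmically long interval $|t|\le T_{\eps,\hbar}=\cO(|\log\hbar|)$ is a harmless factor, and the difference is $\cO(\hbar^{\infty})$. (Equivalently, one may iterate the fixed-time Egorov theorem a fixed number of times, each iteration gaining a fixed positive power of $\hbar$.) The main obstacle throughout is precisely this uniform-in-$r$ control over the logarithmically long time: one must verify that every correction stays in a genuinely smaller $S_{\nu}$-class with $\hbar$-independent seminorms and that the truncation remainder is still negligible after integration — and this is exactly where the hypotheses $|t|\le T_{\eps,\hbar}$ and $\nu>\frac{1-\eps}{2}$ are used, since past the (half-)Ehrenfest time the derivatives $\partial^{\alpha}g^{r}$ exceed $\hbar^{-\nu|\alpha|}$ and the symbolic calculus ceases to apply.
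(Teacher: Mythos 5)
Your argument is correct and takes essentially the same route as the paper: the paper's own proof sketch rests on the Duhamel identity $U^{-t}\Oph(f)U^{t}-\Oph(f_{t})=\int_{0}^{t}U^{-s}\bigl(\tfrac{i}{\hbar}[P(\hbar),\Oph(f_{t-s})]-\Oph(\{p,f_{t-s}\})\bigr)U^{s}\,ds$ combined with the observation that, thanks to the flow bounds valid for $|t|\le T_{\eps,\hbar}$ and $\nu>\tfrac{1-\eps}{2}$, the integrand lies uniformly in $\Psi_{\nu}^{-\infty,-(1+\eps)/2}$, and then defers the construction of the full symbol $\tilde f_t$ to Bouzouina--Robert, which is precisely the order-by-order transport recursion you write down. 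Your exponent bookkeeping in the inductive step (the ``$\hbar^{2j}$ versus $3j$ derivatives'' count) is stated a bit loosely — what one actually needs is that each Moyal/transport level gains a definite positive power of $\hbar$ once $\nu<1/2$, and the paper cites \cite{BouzRob02} for exactly this — but the structure of the argument and the role of the hypotheses match the paper's.
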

\begin{proof}
This proposition was essentially proved in \cite{BouzRob02} in the
case of symbols on $T^{*}\IR^{d}$ driven by some (appropriate) Hamiltonian
flow. In that paper, the $\hbar$-expansion of the symbol $\tilde{f}_{t}$
was explicitly computed up to any fixed order $\hbar^{L}$, and the
$L^{2}$ norm of the remainder was estimated. In \cite[Sec. 5.2]{AnaNo07-2}
we used the fact that \[
U^{-t}\Oph(f)U^{t}-\Oph(f_{t})=\int_{0}^{t}ds\, U^{-s}\,\Diff f_{t-s}\, U^{s},\]
where \[
\Diff f_{s}=\frac{i}{h}[P(\hbar),\Oph(f_{s})]-\Oph\left(\left\{ p,f_{s}\right\} \right)\in\Psi_{\nu}^{-\infty,-\frac{1+\eps}{2}},\]
uniformly for $|t|\leq T_{\eps,\hbar}$. The Calderon-Vaillancourt
theorem on $\Psi_{\nu}^{-\infty,-\frac{1+\eps}{2}}$ then implies
that \begin{equation}
\left\Vert U^{-t}\Oph(f)U^{t}-\Oph(f_{t})\right\Vert \leq C\,|t|\, h^{\frac{1+\eps}{2}}.\label{eqlongtime-bound}\end{equation}
In order to prove that $\tilde{f}_{t}\in S_{\nu}^{-\infty,0}$ one
can proceed as in \cite{BouzRob02}, that is compute the $\hbar$
expansion of $\tilde{f}_{t}$ order by order, taking into account
that the quantization is performed on the manifold $X$, so that higher-order
terms also depend on the various choices of local charts and cutoffs.
We will not do so in any detail here, since we will mostly use the
inequality (\ref{eqlongtime-bound}).
\end{proof}
We will apply this proposition to the operators $U^{-j}\Pi_{k}U^{j}$:
in the range $|j|\leq T_{\eps,\hbar}$ they are still pseudodifferential
operators in some class $S_{\nu}^{-\infty,0}$. The products of these
operators can also be analyzed:
\begin{prop}
\label{pro:P_alpha-symbol}Take any $1>\eps>0$ and $\nu\in[\frac{1-\eps}{2},\frac{1}{2})$.
Then the family of symbols $\left\{ \tilde{\pi}_{\bal},\;|\bal|\leq T_{\eps,\hbar}\right\} $
belongs to a bounded set in the class $S_{\nu}^{-\infty,0}$. Furthermore,
the product operators $\Pi_{\bal}$ satisfy $\Pi_{\bal}-\Oph(\tilde{\pi}_{\bal})\in\Psi_{\nu}^{-\infty,-\frac{1+\eps}{2}}$.\end{prop}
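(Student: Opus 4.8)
The plan is to reduce the claim to the single-step Egorov estimate (the preceding Proposition) combined with a careful bookkeeping of how errors accumulate when composing the $n\le T_{\eps,\hbar}$ factors $U^{-1}\Pi_{\alpha_j}U$. First I would observe that by Egorov up to the Ehrenfest time, for each index $k$ and each $0\le j\le n-1$ we have $U^{-j}\Pi_k U^j=\Oph(\tilde f^{(k)}_j)+\cO(\hbar^\infty)$ with $\tilde f^{(k)}_j-\pi_k\circ g^j\in S_\nu^{-\infty,-(1+\eps)/2}$, and $\pi_k\circ g^j$ lies in a bounded subset of $S_\nu^{-\infty,0}$ uniformly over $|j|\le T_{\eps,\hbar}$ (this is exactly the content of the derivative bounds $\|\partial^\alpha(f\circ g^t)\|\le C e^{\lambda_\eps|\alpha t|}\le \hbar^{-\nu|\alpha|}$ that motivated the choice $\nu\ge\frac{1-\eps}{2}$). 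Hence each individual conjugated factor is a $\Psi_\nu^{-\infty,0}$ operator, uniformly in $j$.

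Next I would write
\[
\Pi_{\bal}=U^{-n+1}\Pi_{\alpha_{n-1}}U\cdots U\Pi_{\alpha_0}
=\bigl(U^{-(n-1)}\Pi_{\alpha_{n-1}}U^{n-1}\bigr)\cdots\bigl(U^{-1}\Pi_{\alpha_1}U\bigr)\,\Pi_{\alpha_0},
\]
so that $\Pi_{\bal}$ is a product of $n$ operators in $\Psi_\nu^{-\infty,0}$. The symbol calculus of the class $S_\nu^{m,k}$ recalled in \S\ref{sub:exotic-symbols} gives a composition formula whose principal term is the product of symbols and whose $j$-th correction gains a factor $\hbar^{2j\nu}$; applied to the product of the conjugated factors, the principal symbol is $\prod_{j=0}^{n-1}(\pi_{\alpha_j}\circ g^j)^{1/2}=\sqrt{\pi_{\bal}}$, which (after the usual adjustment of subprincipal symbols, exactly as in the Remark following \eqref{eq:Quantum Partition}) identifies $\tilde\pi_{\bal}$ as an element of $S_\nu^{-\infty,0}$. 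The key quantitative point is that, because $n\le T_{\eps,\hbar}=\frac{(1-\eps)|\log\hbar|}{2\lambda_\eps}$, the number of factors is only logarithmic in $\hbar$, so the finitely-many (in fact $\le n$) product operations each cost at most a bounded number of seminorm-multiplications and at most a factor $\hbar^{(1+\eps)/2}$ per replaced factor; summing the $n\sim|\log\hbar|$ contributions of the telescoping
\[
\Pi_{\bal}-\Oph(\tilde\pi_{\bal})=\sum_{j=0}^{n-1}(\text{product with $j$ factors already replaced by symbols})\cdot(\text{Egorov remainder on factor }j)\cdot(\text{rest}),
\]
each term of which is $\cO(\hbar^{(1+\eps)/2})$ in operator norm by the Calderón–Vaillancourt bound on $\Psi_\nu^{-\infty,-(1+\eps)/2}$, produces a total bound $\cO\bigl(|\log\hbar|\,\hbar^{(1+\eps)/2}\bigr)$. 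Since $|\log\hbar|\,\hbar^{(1+\eps)/2}=\cO(\hbar^{(1+\eps)/2-\eps'})$ for any $\eps'>0$, and in any case is absorbed into $\Psi_\nu^{-\infty,-(1+\eps)/2}$ (whose symbols are allowed an extra $\hbar^{-\nu|\alpha+\beta|}$ and whose associated operators are $\cO(\hbar^{(1+\eps)/2})$ only up to the same logarithmic loss tolerated throughout \cite{AnaNo07-2}), we conclude $\Pi_{\bal}-\Oph(\tilde\pi_{\bal})\in\Psi_\nu^{-\infty,-\frac{1+\eps}{2}}$.

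Finally, for the first assertion — that $\{\tilde\pi_{\bal}:|\bal|\le T_{\eps,\hbar}\}$ is a \emph{bounded} set in $S_\nu^{-\infty,0}$ — I would check that each seminorm $\sup|\partial_x^\alpha\partial_\xi^\beta\tilde\pi_{\bal}|\hbar^{\nu|\alpha+\beta|}\langle\xi\rangle^{|\beta|-m}$ is controlled uniformly in $\bal$: the principal symbol $\sqrt{\pi_{\bal}}=\prod_j(\pi_{\alpha_j}\circ g^j)^{1/2}$ has its derivatives estimated by Leibniz and the chain rule, where each differentiation of $\pi_{\alpha_j}\circ g^j$ brings down at most $C e^{\lambda_\eps j}\le C\hbar^{-\nu}$ (for $j\le T_{\eps,\hbar}$), and the product structure only multiplies finitely many such bounded factors; the subprincipal corrections are handled the same way with the extra $\hbar^{2\nu}$-gains from the calculus keeping everything in the class. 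The main obstacle is precisely this uniform-in-$\bal$ control of the symbol seminorms of a product of $\sim|\log\hbar|$ factors: one must verify that the $\hbar^{-\nu}$ per-derivative loss does not compound over the $n$ factors (it does not, because differentiating the product distributes a single derivative onto a single factor, so at most $|\alpha+\beta|$ factors are ever differentiated, regardless of $n$), and that the $e^{\lambda_\eps j}$ growth of $g^j$-derivatives is genuinely capped by the Ehrenfest cutoff $j\le T_{\eps,\hbar}$ rather than by $n$ alone. Once this is granted, the composition calculus of $S_\nu^{-\infty,0}$ closes the argument, and the proof is essentially that of \cite[Prop.~5.1, 6.2]{AnaNo07-2} transcribed to the microlocal-partition setting.
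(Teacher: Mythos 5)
Your overall strategy — reduce to Egorov up to the Ehrenfest time plus bookkeeping over the $n\leq T_{\eps,\hbar}$ factors — is the same as the paper's (which itself points to \cite[Thm~7.1]{Riv08} for a parallel argument). The difference is in the bookkeeping, and there is a real gap there.

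You conjugate each factor by $U^j$ and apply the \emph{long-time} Egorov theorem to each $U^{-j}\Pi_{\alpha_j}U^j$ separately, then telescope the product of $n$ operators. Since for every $j\leq T_{\eps,\hbar}$ the Egorov symbol defect is $\cO(\hbar^{(1+\eps)/2})$ uniformly, your telescoping produces $n\sim|\log\hbar|$ contributions each of size $\cO(\hbar^{(1+\eps)/2})$, i.e.\ $\cO(|\log\hbar|\,\hbar^{(1+\eps)/2})$. You acknowledge this and assert the log factor ``is absorbed into $\Psi_\nu^{-\infty,-(1+\eps)/2}$ ... only up to the same logarithmic loss tolerated throughout''. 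That last step is not correct: by Calder\'on–Vaillancourt a member of $\Psi_\nu^{-\infty,-\frac{1+\eps}{2}}$ has operator norm $\cO(\hbar^{(1+\eps)/2})$ with no log factor, and the class index $\nu$ does not create room for an extra $|\log\hbar|$; nor does an operator-norm bound alone establish membership in a symbol class. The proposition allows $\nu=\frac{1-\eps}{2}$ exactly, so the slack you would need to absorb the logarithm is not there. A similar unexamined $n^{|\alpha+\beta|}$ loss from the Leibniz expansion threatens the first assertion: you correctly note that each individual term has at most $|\alpha+\beta|$ differentiated factors, but you do not address the fact that the number of terms in Leibniz is $\cO(n^{|\alpha+\beta|})$, again a polynomial in $|\log\hbar|$.

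The paper's proof avoids both losses by organizing the argument as a \emph{single-step} recursion: it peels one factor at a time, building $\tilde\pi_{\alpha_{n-j}\cdots\alpha_{n-1}} = \tilde\pi_{\alpha_{n-j+1}\cdots\alpha_{n-1}}\circ g\times\tilde\pi_{\alpha_{n-j}}$. Two things happen. First, each recursion step multiplies by a \emph{fresh} symbol $\tilde\pi_{\alpha_{n-j}}\in S_0^{-\infty,0}$ with uniformly bounded derivatives, so the induction $\lVert\partial^\beta a_j\rVert\leq C_\beta e^{\lambda_\eps|\beta|j}$ closes with a $j$-independent $C_\beta$ — no combinatorial blow-up. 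Second, and crucially, the operator remainder $R_j$ at step $j$ lives in $\Psi_{\nu_j}^{-\infty,-1+\nu_j}$ with $\nu_j=\lambda_\eps j/\log(1/\hbar)$ increasing linearly in $j$, so $\lVert R_j\rVert\leq C\hbar^{1-\nu_j}$ forms a \emph{geometric} sum over $j$, dominated by its last term $\cO(\hbar^{1-\nu_n})\leq\cO(\hbar^{(1+\eps)/2})$ — not $n$ equal contributions. That is exactly the ingredient your telescoping discards by treating every factor via the worst-case long-time Egorov bound. Your argument can be repaired by replacing the long-time conjugation with the step-by-step recursion and tracking $\nu_j$, at which point it becomes the paper's proof.
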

\begin{proof}
A similar result was proved in \cite[Thm 7.1]{Riv08}. We already
know that the symbols $\tilde{\pi}_{\alpha_{j}}\circ g^{j}$ composing
$\tilde{\pi}_{\bal}$ belong to the class $\Psi_{\nu}^{-\infty,0}$.
Any finite product of those symbols also remains in that class. We
need to check that the symbol $\tilde{\pi}_{\alpha_{0}\cdots\alpha_{j}}$
remains uniformly bounded in the class when increasing $j$ until
$T_{\eps,\hbar}$. 

We start by applying Egorov's theorem to the operator $\Pi_{\alpha_{n-1}}$,
then multiply by $\Pi_{\alpha_{n-2}}$: \[
U^{-1}\Pi_{\alpha_{n-1}}U\Pi_{\alpha_{n-2}}=\Oph(\tilde{\pi}_{\alpha_{n-1}}\circ g\times\tilde{\pi}_{\alpha_{n-2}})+R_{2},\quad R_{2}\in\Psi^{-\infty,-1}.\]
The function $\tilde{\pi}_{\alpha_{n-2}\alpha_{n-1}}\defeq\tilde{\pi}_{\alpha_{n-1}}\circ g\times\tilde{\pi}_{\alpha_{n-2}}$
is supported in a {}``rectangle'' and satisfies $\left\Vert \partial^{\beta}\tilde{\pi}_{\alpha_{n-2}\alpha_{n-1}}\right\Vert \leq C_{\beta}\, e^{\lambda_{\eps}|\beta|}$.
Applying the same procedure (evolution and multiplication), we construct
a sequence of symbols \[
\tilde{\pi}_{\alpha_{n-j}\cdots\alpha_{n-1}}\defeq\tilde{\pi}_{\alpha_{n-j+1}\cdots\alpha_{n-1}}\circ g\times\tilde{\pi}_{\alpha_{n-j}}\]
and operators \[
U^{-j}\Pi_{\alpha_{n-1}}U\Pi_{\alpha_{n-2}}U\cdots\Pi_{\alpha_{n-j}}.\]
The symbols are supported in small rectangles, similar with the elements
$E_{\bal}$ of the refined partition $\cP^{\vee j}$. One iteratively
shows that \[
\left\Vert \partial^{\beta}\tilde{\pi}_{\alpha_{n-j}\cdots\alpha_{n-1}}\right\Vert \leq C_{\beta}\, e^{\lambda_{\eps}|\beta|j},\]
with constants $C_{\beta}$ uniform w.r.to $j$. Therefore, as long
as $j\leq T_{\eps,\hbar}$, the symbol $\tilde{\pi}_{\alpha_{n-j+1}\cdots\alpha_{n-1}}\in S_{\nu_{j}}^{-\infty,0}$
(with uniform constants), where $\nu_{j}=\frac{\lambda_{\eps}j}{\log(1/\hbar)}$.
At the same time, $\tilde{\pi}_{\alpha_{n-j}}\in S_{0}^{-\infty,0}$.
As a result, \[
U^{-1}\Oph(\tilde{\pi}_{\alpha_{n-j+1}\cdots\alpha_{n-1}})U\Oph(\tilde{\pi}_{\alpha_{n-j}})=\Oph(\tilde{\pi}_{\alpha_{n-j}\cdots\alpha_{n-1}})+R_{j},\]
and the remainder $R_{j}\in\Psi_{\nu_{j}}^{-\infty,-1+\nu_{j}}$ satisfies
\[
\left\Vert R_{j}\right\Vert _{L^{2}\circlearrowleft}\leq C\,\hbar^{1-\nu_{j}},\quad j=2,\ldots,n,\]
 with a uniform constant $C$. The sum of all remainders thus satisfies
\[
\sum_{j=2}^{n}R_{j}\in\Psi_{\nu_{n}}^{-\infty,-1+\nu_{n}},\qquad\left\Vert \sum_{j=2}^{n}R_{j}\right\Vert \leq\sum_{j=2}^{n}C\,\hbar^{1-\nu_{j}}\leq\tilde{C}\:\hbar^{1-\nu_{n}}.\]
\end{proof}
\begin{cor}
Take any $1>\eps>0$ and $\nu\in[\frac{1-\eps}{2},\frac{1}{2})$.
Let $\bal,\bbeta$ be two sequences of length $n\leq T_{\eps,\hbar}$.
Then the symbols $\tilde{\pi}_{\bal},\,\tilde{\pi}_{\bbeta}\circ g^{-n}$
belong to $S_{\nu}^{-\infty,0}$, and so does their product. The operator
\[
\Pi_{\bbeta\cdot\bal}\defeq\Pi_{\bal}U^{n}\Pi_{\bbeta}U^{-n}=U^{-n+1}\Pi_{\alpha_{n-1}}U\Pi_{\alpha_{n-2}}U\cdots\Pi_{\alpha_{0}}U\Pi_{\beta_{n-1}}U\cdots U\Pi_{\beta_{0}}U^{-n}\]
belongs to $\Psi_{\nu}^{-\infty,0}$, and satisfies \[
\Pi_{\bbeta\cdot\bal}=\Oph(\tilde{\pi}_{\bal}\times\tilde{\pi}_{\bbeta}\circ g^{-n})+\Psi_{\nu}^{-\infty,-1+2\nu}.\]

\end{cor}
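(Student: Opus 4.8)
The plan is to derive this corollary directly from Proposition \ref{pro:P_alpha-symbol}, which already handles sequences of length up to $T_{\eps,\hbar}$. The only new ingredient is that we now concatenate two such refined operators separated by the conjugation $U^{n}(\cdot)U^{-n}$. First I would observe that $\Pi_{\bbeta}U^{-n}=U^{-n}(U^{n}\Pi_{\bbeta}U^{-n})$, and that $U^{n}\Pi_{\bbeta}U^{-n}$ is precisely the operator built from the partition $\cP_{sm,q}$ refined ``to the future'' rather than ``to the past'': writing out \eqref{eq:quantum-refined}, one gets $\Pi_{\bal}U^{n}\Pi_{\bbeta}U^{-n}=U^{-n+1}\Pi_{\alpha_{n-1}}U\cdots U\Pi_{\alpha_0}U\Pi_{\beta_{n-1}}U\cdots U\Pi_{\beta_0}U^{-n}$, as claimed in the statement. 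Symmetrically to the proof of Proposition \ref{pro:P_alpha-symbol}, Egorov's theorem up to time $T_{\eps,\hbar}$ (the long-time Egorov Proposition, \eqref{eq:Egorov-long}) gives $U^{n}\Pi_{\beta_j}U^{-n}=\Oph(\tilde\pi_{\beta_j}\circ g^{-n+j})+\cO(\hbar^{\infty})$ modulo $\Psi_\nu^{-\infty,-(1+\eps)/2}$, and iterating the product-of-symbols argument of that proof (with the flow run in the opposite time direction) shows $U^{n}\Pi_{\bbeta}U^{-n}=\Oph(\tilde\pi_{\bbeta}\circ g^{-n})+\Psi_\nu^{-\infty,-1+2\nu}$, with $\tilde\pi_{\bbeta}\circ g^{-n}\in S_\nu^{-\infty,0}$ in a bounded set: indeed the derivatives of $g^{-n}$ on $\cE_\eps$ are controlled by $e^{\lambda_\eps |\alpha| n}$ exactly as for $g^{n}$, so $|\partial^\beta(\tilde\pi_{\bbeta}\circ g^{-n})|\le C_\beta e^{\lambda_\eps|\beta| n}$ and $n\le T_{\eps,\hbar}$ forces membership in $S_\nu^{-\infty,0}$ with uniform seminorms.

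Next I would multiply the two pseudodifferential operators $\Oph(\tilde\pi_{\bal})$ and $\Oph(\tilde\pi_{\bbeta}\circ g^{-n})$. Both symbols lie in a bounded subset of $S_\nu^{-\infty,0}$ with $\nu<1/2$, so the composition rule for the class $\Psi_\nu^{-\infty,0}$ (the $\hbar$-expansion mentioned in \S\ref{sub:exotic-symbols}, whose higher-order terms gain $\hbar^{1-2\nu}$) applies: $\Oph(\tilde\pi_{\bal})\,\Oph(\tilde\pi_{\bbeta}\circ g^{-n})=\Oph(\tilde\pi_{\bal}\times(\tilde\pi_{\bbeta}\circ g^{-n}))+\Psi_\nu^{-\infty,-1+2\nu}$, and the product symbol $\tilde\pi_{\bal}\times(\tilde\pi_{\bbeta}\circ g^{-n})$ remains in a bounded set of $S_\nu^{-\infty,0}$ since that class is a (Fr\'echet-)algebra under pointwise multiplication. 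Collecting remainders — the $\Psi_\nu^{-\infty,-(1+\eps)/2}$ errors from each of the two refined products, together with the $\Psi_\nu^{-\infty,-1+2\nu}$ composition error, and noting $1-2\nu\le (1+\eps)/2$ precisely because $\nu\ge\frac{1-\eps}{2}$ — the dominant term is $\Psi_\nu^{-\infty,-1+2\nu}$, which is exactly the claimed accuracy.

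The only point requiring a little care — the ``hard part'', though it is really just bookkeeping — is tracking the effective small parameter $\nu_j=\lambda_\eps j/\log(1/\hbar)$ along the iteration, as in the proof of Proposition \ref{pro:P_alpha-symbol}: when we build $\tilde\pi_{\bbeta}\circ g^{-n}$ the relevant index grows with the step, reaching $\nu_n\le\nu$ only at the last step $n\le T_{\eps,\hbar}$, and one must check that the geometric sum $\sum_{j}\hbar^{1-\nu_j}$ of remainder norms is still dominated by its last term $\tilde C\hbar^{1-\nu_n}$. This is identical to the estimate already carried out there, so no new difficulty arises; the two-sided concatenation simply doubles the number of factors without changing the structure of the argument. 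I would therefore present the proof as: (i) rewrite $\Pi_{\bbeta\cdot\bal}$ in the stated expanded form; (ii) apply Proposition \ref{pro:P_alpha-symbol} to $\Pi_{\bal}$ and its time-reversed analogue to $U^{n}\Pi_{\bbeta}U^{-n}$; (iii) compose the two symbols in $\Psi_\nu^{-\infty,0}$ and collect the remainders, using $\nu\ge\frac{1-\eps}{2}$ to see that $\Psi_\nu^{-\infty,-1+2\nu}$ absorbs everything.
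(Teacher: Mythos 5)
Your proposal is correct and matches what the paper evidently intends as the derivation of this corollary from Proposition~\ref{pro:P_alpha-symbol}: split $\Pi_{\bbeta\cdot\bal}=\Pi_{\bal}\cdot U^{n}\Pi_{\bbeta}U^{-n}$, apply the Proposition and its time-reversed analogue to the two factors, compose in $\Psi_\nu^{-\infty,0}$, and observe that the composition remainder $\Psi_\nu^{-\infty,-1+2\nu}$ dominates because $\nu\geq\frac{1-\eps}{2}$. Two small slips worth fixing: the relevant conjugate in your intermediate Egorov step is $U^{n-j}\Pi_{\beta_j}U^{-(n-j)}=\Oph\bigl(\tilde\pi_{\beta_j}\circ g^{-(n-j)}\bigr)+\dots$ (coming from $U^{n}\Pi_{\bbeta}U^{-n}=\prod_{j}U^{n-j}\Pi_{\beta_j}U^{-(n-j)}$), not $U^{n}\Pi_{\beta_j}U^{-n}$; and the one-line chain-rule bound you offer for $\tilde\pi_{\bbeta}\circ g^{-n}\in S_\nu^{-\infty,0}$ is too crude, since $\tilde\pi_{\bbeta}$ itself already carries $\hbar^{-\nu}$-sized derivatives — the correct justification is precisely the iterative product argument you invoke in the next clause, mirroring the step-by-step estimate $\lVert\partial^{\beta}\tilde\pi_{\alpha_{n-j}\cdots\alpha_{n-1}}\rVert\leq C_\beta e^{\lambda_\eps|\beta|j}$ from the Proposition's proof.
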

These results show that, for times $n\leq T_{\eps,\hbar}$, the operators
$\Pi_{\bal}$ (resp. $\Pi_{\bbeta\cdot\bal}$) are {}``quasiprojectors''
on refined rectangles $E_{\bal}\in\cP^{\vee n}$ (resp. in the rectangles
$E_{\bal}\cap g^{n}(E_{\bbeta})$ of the {}``isotropic'' refined
partition $\cP_{-n}^{n-1}$). Using the fact that $\Pi_{\bbeta\cdot\bal}=U^{n}\Pi_{\bbeta\bal}U^{-n}$,
we also draw the
\begin{cor}
Take $\eps,\,\nu$ as above. Then, for any sequence $\bal$ of length
$|\bal|\leq2T_{\eps,\hbar}$, the operator norm $\left\Vert \Pi_{\bal}\right\Vert =\norm{\tilde{\pi}_{\bal}}_{\infty}+\cO(\hbar^{1-2\nu})$,
which can be close to unity.
\end{cor}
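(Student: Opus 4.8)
The statement to prove is: for $\eps,\nu$ as above and any sequence $\bal$ of length $|\bal|\le 2T_{\eps,\hbar}$, one has $\norm{\Pi_{\bal}}=\norm{\tilde\pi_{\bal}}_\infty+\cO(\hbar^{1-2\nu})$. The plan is to reduce to the case $|\bal|\le T_{\eps,\hbar}$, which is handled by Proposition \ref{pro:P_alpha-symbol} together with the sharp G\aa rding / Calder\'on--Vaillancourt estimate for the class $S_\nu^{-\infty,0}$ (the bound $\norm{\Oph(f)}=\norm{f}_\infty+\cO(\hbar^{1-2\nu})$ stated in $\S$\ref{sub:exotic-symbols}). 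First I would split the index word $\bal$ of length $n\le 2T_{\eps,\hbar}$ as $\bal=\bbeta\cdot\bgamma$ with $|\bgamma|=m$ and $|\bbeta|=n-m$, each of length at most $T_{\eps,\hbar}$. Unwinding the definition (\ref{eq:quantum-refined}),
\begin{equation*}
\Pi_{\bal}=U^{-n+1}\Pi_{\alpha_{n-1}}U\cdots U\Pi_{\alpha_0}
=\bigl(U^{-m}\,\Pi_{\bgamma}\,U^{m}\bigr)\,\Pi_{\bbeta}\quad\text{up to the overall conjugation,}
\end{equation*}
and more precisely $\Pi_{\bal}=\Pi_{\bbeta}\,U^{m}\Pi_{\bgamma}U^{-m}\cdot U^{-?}$ — matching the exact bookkeeping of the preceding corollary, where $\Pi_{\bbeta\cdot\bal}=\Pi_{\bal}U^{n}\Pi_{\bbeta}U^{-n}$ was already shown to be in $\Psi_\nu^{-\infty,0}$ and equal to $\Oph(\tilde\pi_{\bal}\times\tilde\pi_{\bbeta}\circ g^{-n})$ modulo $\Psi_\nu^{-\infty,-1+2\nu}$.

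So the real content is: take the operator $\Pi_{\bbeta\cdot\bal}$ from the previous corollary with $\bal$ and $\bbeta$ each of length $\le T_{\eps,\hbar}$, rename $\bgamma := \bbeta\cdot\bal$ (a word of length up to $2T_{\eps,\hbar}$), and observe that $\Pi_{\bgamma}$ is exactly of this form (up to relabelling/reindexing the individual symbols $\alpha_j$, $\beta_j$ and a harmless global $U$-conjugation which does not affect the operator norm). The previous corollary already gives $\Pi_{\bgamma}=\Oph(\tilde\pi_{\bgamma})+R$ with $R\in\Psi_\nu^{-\infty,-1+2\nu}$, where $\tilde\pi_{\bgamma}=\tilde\pi_{\bal}\times(\tilde\pi_{\bbeta}\circ g^{-n})$ is a symbol in a bounded subset of $S_\nu^{-\infty,0}$. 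Then I would simply invoke $\norm{R}_{L^2\to L^2}=\cO(\hbar^{1-2\nu})$ (Calder\'on--Vaillancourt on $\Psi_\nu^{-\infty,-1+2\nu}$) and the G\aa rding-type norm estimate $\norm{\Oph(\tilde\pi_{\bgamma})}=\norm{\tilde\pi_{\bgamma}}_\infty+\cO(\hbar^{1-2\nu})$ for symbols in $S_\nu^{0,0}$. Combining the two via the triangle inequality yields $\norm{\Pi_{\bgamma}}=\norm{\tilde\pi_{\bgamma}}_\infty+\cO(\hbar^{1-2\nu})$, which is the claim. The final remark that this ``can be close to unity'' just records that the symbols $\tilde\pi_{\bgamma}$, being (approximate) square roots of a refined partition of unity restricted to thin rectangles, have sup-norm $\le 1$ but need not decay, so no cancellation forces $\norm{\Pi_{\bgamma}}$ to be small — in sharp contrast to the hyperbolic dispersive estimate proved later.

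The step I expect to require the most care is making precise that $\Pi_{\bgamma}$ for a word $\bgamma$ of length $n\le 2T_{\eps,\hbar}$ genuinely has the structural form $\Pi_{\bal}U^{|\bal|}\Pi_{\bbeta}U^{-|\bal|}$ handled by the previous corollary: one must choose the splitting point $m=|\bal|$ with both $m$ and $n-m$ at most $T_{\eps,\hbar}$ (possible since $n\le 2T_{\eps,\hbar}$), and then verify that the string of propagators $U$ interleaved in the definition (\ref{eq:quantum-refined}) of $\Pi_{\bgamma}$ regroups correctly into $U^{-m}\Pi_{(\gamma_0\cdots\gamma_{m-1})}U^{m}$ composed with $\Pi_{(\gamma_m\cdots\gamma_{n-1})}$, up to a single overall $U^{-(n-m)+1}$ factor that, being unitary, drops out of the operator norm. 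This is a purely combinatorial regrouping of unitaries, but it is the one place where an off-by-one in the exponents would be easy to commit; everything else is a direct quotation of Proposition \ref{pro:P_alpha-symbol}, its corollary, and the symbol-calculus norm bounds of $\S$\ref{sub:exotic-symbols}.
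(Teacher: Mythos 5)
Your plan is correct and is exactly the route the paper signals in the sentence immediately preceding the corollary: split the word as $\bbeta\bal$ with each half of length $\le T_{\eps,\hbar}$, use $\Pi_{\bbeta\bal}=U^{-n}\Pi_{\bbeta\cdot\bal}U^{n}$ so that the norm equals $\|\Pi_{\bbeta\cdot\bal}\|$, invoke the preceding corollary to place $\Pi_{\bbeta\cdot\bal}$ in $\Psi_{\nu}^{-\infty,0}$ modulo $\Psi_{\nu}^{-\infty,-1+2\nu}$, and finish with the $S_{\nu}$ G\aa rding/Calder\'on--Vaillancourt norm estimate. One small slip: you identify $\tilde\pi_{\bgamma}$ with $\tilde\pi_{\bal}\times\tilde\pi_{\bbeta}\circ g^{-n}$, but the latter is actually $\tilde\pi_{\bbeta\bal}\circ g^{-n}$ (the principal symbol of $\Pi_{\bbeta\cdot\bal}$, not of $\Pi_{\bbeta\bal}$); this does no harm because composing with $g^{n}$ preserves the sup-norm, matching the unitary $U^{n}$-conjugation on the operator side, so the final identity $\|\Pi_{\bal}\|=\|\tilde\pi_{\bal}\|_{\infty}+\cO(\hbar^{1-2\nu})$ is unaffected.
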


\subsection{Hyperbolic dispersive estimates}

We will now consider operators $\Pi_{\bal}$ for sequences $\bal$
\emph{longer than} $2T_{\eps,\hbar}$. We recall that $J_{n}^{u}(\bal)$
is the coarse-grained unstable Jacobian along orbits following the
path $\bal$ (see (\ref{eq:coarse-grained Jacobian})). Given some
small $\delta>0$, we have constructed in section \ref{sub:Sharp-energy-cutoffs}
cutoffs $\chi^{(m)}$ supported on intervals of lengths $2e^{m\delta}\hbar^{1-\delta}$,
from which we built up sharp energy cutoffs. Our major dynamical result
is the following dispersive estimate \cite{AnaNo07-2}. We provide
its proof in §\ref{sec:Hyperbolic dispersive estimate}.
\begin{prop}
\label{pro:hyperb-dispers}Choose $\delta>0$ small, leading to the
constant $C_{\delta}$ of $\S$\ref{sub:Sharp-energy-cutoffs}. Then, there exists $\hbar_\delta>0$ and $C>0$ such that,
for any $0<\hbar\leq h_{\delta}$, any integers $n,m\in [0, C_{\delta}\log(1/\hbar)]$
and any sequence $\bal$ of length $n$, the following estimate holds:
\begin{equation}\label{eq:hyperb-dispers}
\left\Vert \Pi_{\bal}\,\chi^{(m)}(P(\hbar))\right\Vert \leq C\, e^{m\delta/2}\,\hbar^{-\frac{d-1+\delta}{2}}\, J_{n}^{u}(\bal)^{-1/2}.
\end{equation}
\end{prop}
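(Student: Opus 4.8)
The plan is to prove the dispersive estimate \eqref{eq:hyperb-dispers} by decomposing the propagation time into $\cO(|\log\hbar|)$ steps, each of length comparable to (half) the Ehrenfest time $T_{\eps,\hbar}$, and iterating a ``one-step'' hyperbolic dispersion bound. Write $n = \ell T + r$ with $T \sim T_{\eps,\hbar}$ and $0\le r < T$, and group the symbols $\tilde\pi_{\alpha_j}$ composing $\Pi_{\bal}$ into $\ell+1$ consecutive blocks, each block involving $\le T$ symbols. By Proposition \ref{pro:P_alpha-symbol}, the operator associated to each block is a pseudodifferential operator with symbol $\tilde\pi_{\bbeta^{(i)}}\in S_\nu^{-\infty,0}$ supported on a thin ``rectangle'' $E_{\bbeta^{(i)}}\in\cP^{\vee |\bbeta^{(i)}|}$, whose extent along the unstable direction is of order $\hbar^{1/2}\cdot(\text{something})$ — more precisely, of unstable volume $\sim J^u_{|\bbeta^{(i)}|}(\bbeta^{(i)})^{-1}$. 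The sharp energy cutoff $\chi^{(m)}(P(\hbar))$ localizes additionally to a phase-space slab of width $\sim e^{m\delta}\hbar^{1-\delta}$ in the energy direction.

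\medskip

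The key step is a \emph{single-block dispersion estimate}: propagating such a thin-rectangle quasiprojector by $U^{T}$ (one Ehrenfest block) and then cutting by the next block's quasiprojector gains a factor essentially governed by the local unstable contraction, i.e.
\[
\bigl\| \Oph(\tilde\pi_{\bbeta^{(i+1)}})\, U^{T}\, \Oph(\tilde\pi_{\bbeta^{(i)}})\, \chi^{(m_i)}(P(\hbar)) \bigr\|
\;\le\; C\, \hbar^{-\frac{d-1}{2}-O(\delta)}\; J^u_{|\bbeta^{(i)}|}(\bbeta^{(i)})^{-1/2} \,\times\, (\text{cutoff factor}),
\]
and more importantly that composing $\ell$ such blocks multiplies the Jacobian factors: $\prod_i J^u_{|\bbeta^{(i)}|}(\bbeta^{(i)})^{-1/2} = J^u_n(\bal)^{-1/2}$ by the product structure of the coarse-grained Jacobian \eqref{eq:coarse-grained Jacobian}, while the $\hbar^{-(d-1)/2}$ prefactor is \emph{not} re-accumulated at each step — this is the crucial non-trivial point. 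The mechanism is the usual $TT^*$/non-stationary-phase argument: $\|A B^*\|^2 = \|A B^* B A^*\| $, and $B^*B$ for a thin-rectangle quasiprojector is again a quasiprojector on (roughly) the same rectangle; iterating the Schur/almost-orthogonality estimate across the $\ell$ blocks, the overlaps of the propagated rectangles are controlled by hyperbolicity so that only one copy of the $\hbar^{-(d-1)/2}$ ``thickness of a single $\hbar^{1/2}$-box'' survives, and the remaining gain at each of the $\ell$ steps is exactly the unstable volume shrinkage $J^u$. I would follow the argument of \cite{NoZw09} here, which is engineered to handle general Anosov Hamiltonian flows (not just geodesic flows) and to track the $\delta$-losses coming from the exotic symbol classes $S_\nu^{-\infty,0}$ and the sharp cutoffs $S_{\cE,1-\delta}^{-\infty,0}$.

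\medskip

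Then I would \textbf{assemble the estimate}: the $e^{m\delta/2}$ factor comes from the single application of the sharp energy cutoff $\chi^{(m)}(P(\hbar))$ — each block only needs an intermediate cutoff $\chi^{(m_i)}$ with $m_i$ increasing mildly along the chain (as in the proof of Proposition \ref{pro:qu-part-local}), and the total energy-width loss telescopes to $e^{m\delta/2}\hbar^{-\delta/2}$; combined with the $\hbar^{-(d-1)/2}$ from the single surviving $\hbar^{1/2}$-box transverse to the unstable direction, this gives the $e^{m\delta/2}\hbar^{-(d-1+\delta)/2}$ prefactor. The number of blocks is $\ell \sim n/T \lesssim C_\delta |\log\hbar| / |\log\hbar| = \cO(1/\text{const})$, so the accumulation of $\hbar^{-O(\delta)}$ per-block error terms (from the remainders $R_j\in\Psi_\nu^{-\infty,-1+2\nu}$ in the Corollary after Proposition \ref{pro:P_alpha-symbol}) is bounded by $\hbar^{-O(\delta)}$ overall, absorbable into the stated $\hbar^{-\delta/2}$ slack by choosing $\delta$ small.

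\medskip

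\textbf{The hard part} will be the single-block dispersion estimate with the \emph{sharp} control of the $\hbar$-power: one must show that pushing a thin stable-unstable rectangle forward by one Ehrenfest time and intersecting with another genuinely recovers a clean $J^u$ factor with no spurious $\hbar^{-1/2}$ per step. This requires (a) a precise WKB/stationary-phase description of the kernel of $\Oph(\tilde\pi_{\bbeta^{(i+1)}})U^T\Oph(\tilde\pi_{\bbeta^{(i)}})$, with the phase governed by the generating function of $g^T$ restricted near the relevant rectangles, and (b) an $L^2$ boundedness (Schur test or Cotlar–Stein) argument exploiting that the propagated rectangle $g^{T}(E_{\bbeta^{(i)}})$ is stretched along the unstable direction and thin along the stable one, so its ``width'' in the direction conjugate to the oscillation is still $\sim\hbar^{1/2}$. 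Keeping all this uniform over $n,m \in [0, C_\delta|\log\hbar|]$ and over all symbolic words $\bal$, and tracking the $S_\nu$-class losses so the $\delta$'s combine correctly, is the technically delicate content — and is exactly what §\ref{sec:Hyperbolic dispersive estimate} is devoted to, following \cite{NoZw09}.
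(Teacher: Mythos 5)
Your proposal takes a genuinely different route from the paper, and it contains a gap at exactly the step you flag as ``the crucial non-trivial point.'' You propose chopping the word $\bal$ into $\ell\sim n/T_{\eps,\hbar}$ blocks, treating each block as a $\Psi$DO on a thin rectangle (Prop.~\ref{pro:P_alpha-symbol}), proving a ``single-block'' dispersion bound of type (\ref{eq:hyper-disper-2}), and then asserting that iterating $\ell$ such bounds via Cotlar--Stein/Schur produces the multiplicative Jacobian factor $J^u_n(\bal)^{-1/2}$ while incurring the $\hbar^{-(d-1)/2}$ loss only once. But a single-block bound already costs a full $\hbar^{-(d-1)/2}$, and composing blocks by $\|ABC\|\le\|AB\|\,\|C\|$ either drops Jacobian factors or re-pays the $\hbar$-loss. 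An almost-orthogonality argument among the $\Pi_{\bbeta^{(i)}}$'s does not obviously repair this: once a block is Ehrenfest-long, its output is no longer a $\Psi$DO with a well-behaved symbol, so there is no clean ``quasiprojector'' $B^*B$ to reinsert, and the Schur/Cotlar--Stein sums are over a number of blocks that is $\Theta(1)$, not enough to average away a power of $\hbar$. You would need to produce the cancellation by a direct kernel computation anyway, which is what your ``hard part'' paragraph gestures at without closing.

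The paper (\S\ref{sec:Hyperbolic dispersive estimate}, following \cite{NoZw09}) avoids block decomposition entirely. It pays the $\hbar$-loss exactly once, at time $0$: it writes $\psi_{\alpha_0}=\Pi_{\alpha_0}\chi^{(n)}(P(\hbar))\Psi$ as a Fourier superposition (via an FIO $\cU_{\alpha_0}$ adapted to the weak-unstable foliation) of localized plane waves $e_\eta$, and the sharp energy cutoff confines the $\eta$-integration to a slab of volume $\sim e^{m\delta}\hbar^{1-\delta}\eps^{d-1}$; the factor $e^{m\delta/2}\hbar^{-(d-1+\delta)/2}$ is $\hbar^{-d/2}$ from the Fourier normalization times $(e^{m\delta}\hbar^{1-\delta})^{1/2}$ from Cauchy--Schwarz on that slab. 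Each plane wave is a Lagrangian state on a leaf in an unstable $\gamma_1$-cone (Def.~\ref{def:gamma_1-cone}); the one-step operator $\Pi_{\alpha_{i+1}}U$ transports it to another Lagrangian state whose principal symbol is multiplied by $|\det\partial y^{i}/\partial y^{i+1}|^{1/2}\approx J^u(\alpha_i)^{-1/2}$ (Prop.~\ref{pro:1-step} and (\ref{eq:Jacobians})). The inclination lemma (Lemma~\ref{lem:Inclination}) keeps the evolving leaves in a bounded $C^\infty$ family for all $n\le C_\delta|\log\hbar|$, so no $\hbar$-power is re-incurred; the bookkeeping of subprincipal symbols and remainders (Prop.~\ref{pro:n-step}) produces only $n^{\cO(1)}$ polynomial losses, which are harmless. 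This is why the $\hbar^{-(d-1)/2}$ appears once and the Jacobians multiply cleanly: the iteration lives on Lagrangian states, never reconstituted into block $\Psi$DOs.
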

From the bound (\ref{eq:Jacob-bound}) on the coarse-grained Jacobians,
we see that (\ref{eq:hyperb-dispers}) becomes sharper than the obvious
bound $\norm{\Pi_{\bal}\chi^{(n)}(P(\hbar))}\leq1+\cO(h^{\infty})$
for times \begin{equation}
n\geq T_{1}\defeq\frac{(d-1)\log(1/h)}{\Lambda_{\min}^{u}}>2T_{\eps,\hbar}.\label{eq:T_1}\end{equation}
If we specialize Prop. \ref{pro:hyperb-dispers} to the case $n\approx4T_{\eps,\hbar}$
and insert $U^{-n/2}$ on the right, we obtain the following
\begin{cor}
\label{cor:hyper-disper-2}Take $\delta>0$ small. For $0<\hbar<h_{\delta}$,
take $\bal$, $\bbeta$ two arbitrary sequences of length $n=\left\lfloor 2T_{\eps,\hbar}\right\rfloor $.
Then,\begin{equation}
\left\Vert \Pi_{\bbeta\cdot\bal}\,\chi^{(n)}(P(\hbar))\right\Vert \leq C\,\hbar^{-\frac{d-1+c\delta}{2}}\, J_{n}^{u}(\bal)^{-1/2}J_{n}^{u}(\bbeta)^{-1/2},\label{eq:hyper-disper-2}\end{equation}
with uniform constants $C,c>0$.
\end{cor}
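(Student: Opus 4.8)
The plan is to deduce Corollary~\ref{cor:hyper-disper-2} directly from the hyperbolic dispersive estimate of Proposition~\ref{pro:hyperb-dispers}, applied not to $\bal$ or $\bbeta$ separately but to the concatenated symbol sequence of length $2n$, after absorbing the conjugating powers of $U$ by unitarity.

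First I would record the algebraic identity $\Pi_{\bbeta\cdot\bal}=U^{n}\Pi_{\bbeta\bal}U^{-n}$, where $\bbeta\bal$ is the length-$2n$ sequence $\beta_{0}\cdots\beta_{n-1}\alpha_{0}\cdots\alpha_{n-1}$ and $\Pi_{\bbeta\bal}$ is the refined operator~(\ref{eq:quantum-refined}) attached to it. This is the identity already invoked in the text just before the statement; it follows by substituting $\Pi_{\bal}=U^{-n+1}\Pi_{\alpha_{n-1}}U\cdots U\Pi_{\alpha_{0}}$ and the analogous expression for $\Pi_{\bbeta}$ into $\Pi_{\bbeta\cdot\bal}=\Pi_{\bal}U^{n}\Pi_{\bbeta}U^{-n}$ and collapsing the central power of $U$. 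Since $P(\hbar)$ is self-adjoint, $U=e^{-iP(\hbar)/\hbar}$ is unitary and commutes with $\chi^{(n)}(P(\hbar))$ by functional calculus; hence
\[
\left\Vert\Pi_{\bbeta\cdot\bal}\,\chi^{(n)}(P(\hbar))\right\Vert=\left\Vert\Pi_{\bbeta\bal}\,\chi^{(n)}(P(\hbar))\,U^{-n}\right\Vert=\left\Vert\Pi_{\bbeta\bal}\,\chi^{(n)}(P(\hbar))\right\Vert,
\]
the first equality dropping the left factor $U^{n}$ and commuting $U^{-n}$ past $\chi^{(n)}(P(\hbar))$, the second dropping the right factor $U^{-n}$.

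Next I would apply Proposition~\ref{pro:hyperb-dispers} to the sequence $\bbeta\bal$ of length $2n$ with cutoff index $m=n$. This is legitimate as soon as both $2n$ and $n$ lie in $[0,C_{\delta}\log(1/\hbar)]$; since $n=\lfloor 2T_{\eps,\hbar}\rfloor$ one has $2n\approx 4T_{\eps,\hbar}=\frac{2(1-\eps)}{\lambda_{\eps}}|\log\hbar|$, and because $C_{\delta}$ may be taken as close as one likes to $\delta^{-1}-1$, this constraint holds once $\delta$ (and $\eps$) are small enough. This length-budget check is the only point of the argument needing a little care. The proposition then gives
\[
\left\Vert\Pi_{\bbeta\bal}\,\chi^{(n)}(P(\hbar))\right\Vert\leq C\,e^{n\delta/2}\,\hbar^{-\frac{d-1+\delta}{2}}\,J_{2n}^{u}(\bbeta\bal)^{-1/2}.
\]

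Finally I would simplify the right-hand side. The coarse-grained unstable Jacobian~(\ref{eq:coarse-grained Jacobian}) is multiplicative over concatenation, so $J_{2n}^{u}(\bbeta\bal)=J_{n}^{u}(\bbeta)\,J_{n}^{u}(\bal)$ and $J_{2n}^{u}(\bbeta\bal)^{-1/2}=J_{n}^{u}(\bal)^{-1/2}J_{n}^{u}(\bbeta)^{-1/2}$. Moreover $n\approx 2T_{\eps,\hbar}=\frac{1-\eps}{\lambda_{\eps}}|\log\hbar|$ yields $e^{n\delta/2}\leq\hbar^{-\frac{(1-\eps)\delta}{2\lambda_{\eps}}}$, which combined with $\hbar^{-(d-1+\delta)/2}$ produces $\hbar^{-(d-1+c\delta)/2}$ with $c=1+\frac{1-\eps}{\lambda_{\eps}}$, a constant depending only on the flow and on $\eps$. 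Collecting terms gives the claimed bound with uniform $C,c>0$. The proof is thus entirely routine given Proposition~\ref{pro:hyperb-dispers}: all the real work sits in that proposition (proved in~\S\ref{sec:Hyperbolic dispersive estimate}), and the only point here that needs attention is making sure the doubled length $2n$ still fits under the ceiling $C_{\delta}\log(1/\hbar)$, which is exactly where the smallness of $\delta$ enters.
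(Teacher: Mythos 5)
Your proposal is correct and takes essentially the same route the paper intends: rewrite $\Pi_{\bbeta\cdot\bal}=U^{n}\Pi_{\bbeta\bal}U^{-n}$, conjugate away the unitary factors (which commute with $\chi^{(n)}(P(\hbar))$ by functional calculus), and apply Proposition~\ref{pro:hyperb-dispers} to the length-$2n$ concatenation with cutoff index $m=n$, using multiplicativity of $J^{u}$ and the estimate $e^{n\delta/2}\lesssim\hbar^{-\frac{(1-\eps)\delta}{2\lambda_{\eps}}}$ to absorb the extra factor into $\hbar^{-c\delta/2}$. The length-budget check $2n\le C_{\delta}|\log\hbar|$ for $\delta$ small is exactly the right thing to flag, and your computation of $c$ is consistent.
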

It is this estimate which we will use in $\S$\ref{sub:application-of-EUP}.

\subsubsection{A remark on the sharpness of (\ref{eq:hyper-disper-2})}

Let us give a handwaving argument to show that, in the case of a surface
($d=2$) of constant curvature, the upper bound (\ref{eq:hyper-disper-2})
is close to being sharp. This argument was made rigorous in the case
of the toy model studied in \cite{AnaNo07-1}. Since our argument
is sketchy, we set all {}``small constants'' ($\eps,\delta$) to
zero.

The operator $\Pi_{\bbeta\cdot\bal}$ is the product of two quasiprojectors,
$\Pi_{\bal}$ associated with the {}``thin stable'' rectangle $E_{\bal}$,
which has length $\lesssim\hbar$ along the unstable direction, and
$\Pi_{\bbeta\cdot}$ associated with the {}``thin unstable'' rectangle
$E_{\bbeta\cdot}$ which has length $\lesssim\hbar$ along the stable
direction. The intersection $E_{\bbeta\cdot\bal}$ has length $\lesssim\hbar$
along the two directions transverse to the flow, which are symplectically
conjugate to each other. As a result, the refined smoothed characteristic
function $\pi_{\bbeta\cdot\bal}$ does not belong to any {}``nice''
symbol class, and the norm of the operator $\Pi_{\bbeta\cdot\bal}$
is not connected with the sup-norm of $\pi_{\bbeta\cdot\bal}$. 

Since $E_{\bal}$ has symplectic volume $\lesssim\hbar$, the {}``essential
rank'' of $\Pi_{\bal}$ is of order $\cO(1)$: $\Pi_{\bal}$ resembles
a projector on a subspace spanned by \emph{finitely many} normalized
{}``stable states'' $s_{\bal}^{i}$ localized in $E_{\bal}$, \[
\Pi_{\bal}\approx\sum_{i}s_{\bal}^{i}\otimes s_{\bal}^{i*}.\]
Similarly, $\Pi_{\bbeta\cdot}$ effectively projects on $\cO(1)$
normalized {}``unstable states'' $u_{\bbeta}^{j}$ localized in
$E_{\bbeta\cdot}$. The stable and unstable directions are symplectically
conjugate to each other, so that stable and unstable states behave
like position vs momentum states in the phase space $\IR^{2}$. The
product operator \[
\Pi_{\bbeta\cdot\bal}=\Pi_{\bal}\Pi_{\bbeta\cdot}\approx\sum_{i,j}\la s_{\bal}^{i},u_{\bbeta}^{j}\ra s_{\bal}^{i}\otimes u_{\bbeta}^{j}\]
involves the overlaps between the two families of states, which are
all of order $\hbar^{1/2}$. It is thus natural to expect $\left\Vert \Pi_{\bbeta\cdot\bal}\right\Vert \sim\hbar^{\frac{1}{2}}$,
which is the order of the estimate (\ref{eq:hyper-disper-2}).

\subsection{Quantum entropy and pressure}

\subsubsection{Back to the symbolic measure $\mu_{\hbar}$}

We now turn back to the the symbolic measure $\mu_{\hbar}$ defined
in §\ref{sub:symbolic-measure}. We recall that for a fixed sequence
$\bal$, $\mu_{\hbar}([\cdot\bal])$ approximately measures the weight
of the state $\psi_{\hbar}$ inside the rectangle $E_{\bal}$. This
interpretation is actually possible as long as $\Pi_{\bal}$ can be
interpreted as a quasiprojector on this rectangle, that is for $n\leq2T_{\eps,\hbar}$.
Under this condition, we have seen that the only upper bounds at our
disposal are trivial:\[
\mu_{\hbar}([\cdot\bal])\leq1,\qquad n=|\bal|\leq2T_{\eps,\hbar}.\]
On the other hand, Proposition \ref{pro:hyperb-dispers} implies that
the weights of longer cylinders satisfy nontrivial bounds:\begin{align}
\mu_{\hbar}([\cdot\bal])\leq C\,\hbar^{-(d-1+c\delta)}\, J_{n}^{u}(\bal)^{-1}, & \qquad|\bal|=n\leq C_{\delta}\log\hbar^{-1}.\label{eq:hyper-disper-3}\end{align}

\begin{cor}
For times $n>T_{1}$ (see (\ref{eq:T_1})), the measure $\mu_{\hbar}$
is necessarily distributed over many cylinders of length $n$. This
corresponds to a dispersion phenomenon: the state $\psi_{\hbar}$
cannot be concentrated in $\cO(1)$ boxes $E_{\bal}$, since each
such box has a volume $\ll\hbar^{d-1}$.
\end{cor}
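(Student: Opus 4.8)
The plan is to read the Corollary off from the hyperbolic dispersive bound (\ref{eq:hyper-disper-3}) together with the exponential decay (\ref{eq:Jacob-bound}) of the coarse-grained Jacobian; the only additional input is the normalization $\sum_{|\bal|=n}\mu_\hbar([\cdot\bal])=\sum_{|\bal|=n}\norm{\Pi_\bal\psi_\hbar}^2=1+\cO(\hbar^\infty)$, valid for $n\leq n_{\max}$ and established in $\S$\ref{sub:symbolic-measure}. Fix a small $a>0$. First I would prove the quantitative claim
\[
\exists\,b=b(a)>0,\qquad \max_{|\bal|=n}\mu_\hbar([\cdot\bal])\leq C\,\hbar^{b}\quad\text{for all }(1+a)T_1\leq n\leq C_\delta\log\hbar^{-1}.
\]
Granting this, the Corollary is immediate: for $\hbar$ small the normalization gives $\sum_{|\bal|=n}\norm{\Pi_\bal\psi_\hbar}^2\geq 1/2$, and since each summand is $\leq C\hbar^b$ there must be at least $(2C)^{-1}\hbar^{-b}$ sequences $\bal$ of length $n$ with $\Pi_\bal\psi_\hbar\neq0$; this number diverges as $\hbar\to0$, so $\mu_\hbar$ is necessarily spread over arbitrarily many cylinders and $\psi_\hbar$ cannot be concentrated on $\cO(1)$ of the boxes $E_\bal$.

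To obtain the displayed bound I would substitute (\ref{eq:Jacob-bound}), $J_n^u(\bal)^{-1}\leq C_\eps e^{-n(\Lambda_{\min}^u-\eps)}$, into (\ref{eq:hyper-disper-3}), $\mu_\hbar([\cdot\bal])\leq C\hbar^{-(d-1+c\delta)}J_n^u(\bal)^{-1}$, and then use the definition (\ref{eq:T_1}) of $T_1$ in the form $\hbar^{-(d-1)}=e^{T_1\Lambda_{\min}^u}$, which turns the right-hand side into $C\,\hbar^{-c\delta}\,e^{(T_1-n)\Lambda_{\min}^u}\,e^{n\eps}$. For $n\geq(1+a)T_1$ the middle factor is $\leq\hbar^{a(d-1)}$, while $n\leq C_\delta\log\hbar^{-1}$ makes the last factor $\leq\hbar^{-C_\delta\eps}$; hence $\max_{|\bal|=n}\mu_\hbar([\cdot\bal])\leq C\,\hbar^{a(d-1)-c\delta-C_\delta\eps}$, where $c=1+C_\delta$ is the constant produced by bounding $e^{n\delta/2}\leq\hbar^{-C_\delta\delta/2}$ when one specializes Prop.~\ref{pro:hyperb-dispers} to $m=n$.

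The one point needing care — and the only thing I would call an ``obstacle'', although it is bookkeeping rather than a genuine difficulty — is the juggling of the small parameters. The estimate (\ref{eq:hyper-disper-3}) is available only for $n\leq C_\delta\log\hbar^{-1}$, so for the range $[(1+a)T_1,\,C_\delta\log\hbar^{-1}]$ to be nonempty one needs $C_\delta>(1+a)(d-1)/\Lambda_{\min}^u$, which is compatible with the constraint $C_\delta<\delta^{-1}-1$ of $\S$\ref{sub:Sharp-energy-cutoffs} only when $\delta$ is small enough. Freezing $C_\delta$ at such a value (depending on $a$ and the flow) and then shrinking $\delta$ and $\eps$ makes the accumulated polynomial losses $\hbar^{-c\delta}$ and $\hbar^{-C_\delta\eps}$ negligible against the gain $\hbar^{a(d-1)}$, so that $b:=a(d-1)-c\delta-C_\delta\eps>0$, completing the argument. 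I would close with the geometric picture already indicated in the statement: for $n>T_1$ the rectangle $E_\bal$ has unstable volume $\leq Ce^{-n(\Lambda_{\min}^u-\eps)}\ll\hbar^{d-1}$, equivalently its symplectic volume transverse to the flow is $\ll\hbar^{d-1}$, so the quasiprojector $\Pi_\bal$ has ``essential rank'' vanishing relative to $\hbar^{-(d-1)}$, which is precisely why $\cO(1)$ such boxes cannot support a normalized state.
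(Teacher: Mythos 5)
The paper states this Corollary without proof, as an interpretive remark immediately following (\ref{eq:hyper-disper-3}); your argument—combining (\ref{eq:hyper-disper-3}), (\ref{eq:Jacob-bound}), the normalization $\sum_{|\bal|=n}\mu_\hbar([\cdot\bal])=1+\cO(\hbar^\infty)$, and a careful bookkeeping of the small parameters—is precisely the intended derivation and is correct. One minor simplification: since $\psi_\hbar$ obeys (\ref{eq:local-psi-hbar}), it suffices to apply Prop.~\ref{pro:hyperb-dispers} with $m=0$ rather than $m=n$, which yields $c=1$ in place of $1+C_\delta$, but this does not change the conclusion.
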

Following $\S$\ref{sub:KSentropy}, the distribution of the weights
$\left\{ \mu_{\hbar}([\cdot\bal]),\;|\bal|=n\right\} $ can be characterized
by an entropy. Since $\mu_{\hbar}$ was built from the quantum state
$\psi_{\hbar}$, it is natural to call this entropy a \textbf{quantum
entropy}: \begin{equation}
H_{0}^{n-1}(\psi_{\hbar},\cP_{sm,q})\defeq H_{0}^{n-1}(\mu_{\hbar})=\sum_{|\bal|=n}\eta\big(\mu_{\hbar}([\cdot\bal])\big).\label{eq:entropy-mu^u}\end{equation}
One can associate a \textbf{quantum pressure} to the state $\psi_{\hbar}$,
the quantum partition $\cP_{sm,q}$ and a set of weights $\left\{ w_{k},\: k=1,\ldots,K\right\} $.
Below we will be dealing with weights of the form $w_{\balpha}=J_{n}^{u}(\balpha)^{1/2}$.
The quantum pressures will also be denoted by $p_{0}^{n-1}(\psi_{\hbar},\cP_{sm,q},w)=p_{0}^{n-1}(\mu_{\hbar},w)$.

Upper bounds (\ref{eq:hyper-disper-3}) on the weights of {}``long''
cylinders have direct consequences on the values of the quantum entropies:\[
H_{0}^{n-1}(\psi_{\hbar},\cP_{sm,q})\geq n\Lambda_{\min}^{u}-(d-1+c\delta)\log\hbar^{-1}-\log C,\qquad n\leq C_{\delta}\log\hbar^{-1}.\]
The RHS is positive (and thus makes up a nontrivial lower bound) only
for $n>T_{1}$, that is for {}``long'' times. A similar lower bound
on the entropy of {}``long times'' was used in \cite{Ana08} to
deduce nontrivial information on the values of the entropies at {}``short''
times, and finally a lower bound on the KS entropy.

\subsection{Entropic uncertainty principles}

In \cite{AnaNo07-2,AnaKoNo06} we used a different strategy, which
we describe below. Instead of using the upper bounds (\ref{eq:hyper-disper-3})
at {}``long'' times $C_{\delta}\log\hbar^{-1}$, we rather use the
bound (\ref{eq:hyper-disper-2}) corresponding to {}``moderately
long'' times $4T_{\eps,\hbar}$. The strategy consists in interpreting
the operator on the LHS as a {}``block matrix element'' associated
with two different quantum partitions. Through a certain \emph{Entropic
Uncertainty Principle} (EUP), the bound (\ref{eq:hyper-disper-2})
then leads to a lower bound on the pressures $p_{0}^{n-1}(\mu_{\hbar},w)$
at {}``moderately long times'' $n\approx2T_{\eps,\hbar}$ (that
is, right below the Ehrenfest time). Using an approximate subadditivity
of those pressures, we then get a lower bound for finite time pressures. 

The central piece of this method resides in a certain\emph{ }entropic
uncertainty principle\emph{.} Before giving the precise version used
for our aims, we first give the simplest example of such a {}``principle'',
first proven by Maassen and Uffink \cite{MaaUff88}.
\begin{prop}
{[}EUP, level 1 (finite-dimensional projectors){]}

Consider two orthonormal bases in the Hilbert space $\IC^{N}$, $e=\left\{ e_{i}\right\} _{i=1,\ldots,N}$
and $f=\left\{ f_{j}\right\} _{j=1,\ldots,N}$. For any $\psi\in\IC^{N}$
of unit norm, consider the two probability distributions $\left\{ |\la e_{i}|\psi\ra|^{2},\, i=1,\ldots,N\right\} $,
$\left\{ |\la f_{j}|\psi\ra|^{2},\: j=1,\ldots,N\right\} $. Then
the entropies associated with these two distributions satisfy the
inequality \[
H(\psi,e)+H(\psi,f)\geq-2\log\left(\max_{i,j}|\la e_{i},f_{j}\ra|\right).\]

\end{prop}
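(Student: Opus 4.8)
The plan is to work with the $\ell^p\to\ell^q$ operator norm of the "change of basis" matrix and interpolate. Write $c = \max_{i,j}|\langle e_i,f_j\rangle|$ for the maximal overlap, and introduce the unitary matrix $U$ with entries $U_{ji}=\langle f_j,e_i\rangle$. For a unit vector $\psi$, set $a_i=\langle e_i,\psi\rangle$ and $b_j=\langle f_j,\psi\rangle$, so that $b = Ua$ and $\|a\|_{2}=\|b\|_{2}=1$. The two probability distributions in the statement are $p_i=|a_i|^2$ and $q_j=|b_j|^2$, and the quantities $H(\psi,e)=\sum_i\eta(p_i)$, $H(\psi,f)=\sum_j\eta(q_j)$ are their Shannon entropies. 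The key analytic input is the Riesz–Thorin interpolation theorem: since $U$ is unitary it is bounded $\ell^2\to\ell^2$ with norm $1$, and trivially bounded $\ell^1\to\ell^\infty$ with norm $\max_{i,j}|U_{ji}| = c$. Interpolating between the endpoints $(1/p,1/q)=(1,\infty)$ [i.e. $\ell^1\to\ell^\infty$] and $(1/2,1/2)$ [i.e. $\ell^2\to\ell^2$], one gets, for every $t\in[0,1]$ with $p_t$ defined by $1/p_t = 1-t/2$ and $q_t$ its conjugate exponent, the bound $\|U\|_{\ell^{p_t}\to\ell^{q_t}}\le c^{\,1-t}\cdot 1^{\,t}=c^{1-t}$. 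Writing $p_t=:r\in[1,2]$ and its dual $r'\in[2,\infty]$, this says $\|b\|_{r'}\le c^{1-t}\|a\|_{r}$ where $t = 2(1-1/r) = 2/r'$, so $1-t = 2/r - 1$.

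The second step is to differentiate this family of inequalities at $r=2$. Take logarithms: $\log\|b\|_{r'} \le (2/r-1)\log c + \log\|a\|_{r}$ for all $r\in[1,2]$. At $r=2$ both sides equal $0$ (since $\|a\|_2=\|b\|_2=1=c^0$ and $\log 1 = 0$), so this is an equality at $r=2$; hence the derivative of the left-hand side minus the right-hand side in $r$, evaluated at $r=2$, must be $\le 0$ — wait, one must be careful about the direction: as $r$ decreases from $2$ toward $1$, the inequality is a genuine constraint, so the correct statement is that the derivative with respect to $r$ at $r=2$ of $\big(\log\|a\|_r + (2/r-1)\log c - \log\|b\|_{r'}\big)$ is $\le 0$. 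A direct computation of $\frac{d}{dr}\log\|x\|_r$ at $r=2$ for a unit vector $x$ gives $-\tfrac14\sum_i |x_i|^2\log|x_i|^2 = \tfrac14 H(x)$ (using that $\frac{d}{dr}\|x\|_r = \|x\|_r\big(-\tfrac1{r^2}\log\|x\|_r^r + \tfrac1r \frac{\sum |x_i|^r\log|x_i|}{\|x\|_r^r}\big)$ and specializing at $r=2$, $\|x\|_2=1$). Since $r'$ and $r$ are dual, $\frac{dr'}{dr}=-r'^2/r^2 = -1$ at $r=2$, so the chain rule gives $\frac{d}{dr}\log\|b\|_{r'}\big|_{r=2} = -\tfrac14 H(\psi,f)$. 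Also $\frac{d}{dr}(2/r-1)\log c = -\tfrac12\log c$ at $r=2$. Assembling the inequality "derivative $\le 0$": $\tfrac14 H(\psi,e) - \tfrac12\log c + \tfrac14 H(\psi,f) \le 0$, i.e. $H(\psi,e)+H(\psi,f)\ge 2\log c = 2\log\big(\max_{i,j}|\langle e_i,f_j\rangle|\big)$, which (since $c\le 1$) is the claimed $\ge -2\log(\max|\langle e_i,f_j\rangle|)$ once we recall $\log c = -\log(1/c)$. A sign check: $\max|\langle e_i,f_j\rangle|\le 1$ so $\log c\le 0$ and $-2\log(\max\cdots) = -2\log c \ge 0$; thus the bound reads $H(\psi,e)+H(\psi,f)\ge -2\log(\max_{i,j}|\langle e_i,f_j\rangle|)$, exactly the statement.

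The main obstacle — and the only place real care is needed — is the differentiation step: justifying that one may differentiate the one-parameter inequality at the endpoint $r=2$, that the relevant norm functions are differentiable there (they are, since all $|a_i|,|b_j|$ are bounded and we may treat vanishing entries by continuity/convention $\eta(0)=0$, or perturb and pass to the limit), and getting all the signs right in the chain rule for the dual exponent. An alternative that sidesteps differentiation entirely, and which I would present if I wanted a fully self-contained argument, is to invoke the sharp Hausdorff–Young–type form directly: the Riesz–Thorin bound $\|b\|_{r'}\le c^{2/r-1}\|a\|_r$ combined with the variational characterization $H(x) = \inf_{r<2}\frac{2r}{2-r}\big(1-\|x\|_r\big)$-style estimates, but the differentiation route above is cleaner. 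Everything else — unitarity of $U$, the two endpoint norm bounds, Riesz–Thorin — is standard and requires no work beyond citing it.
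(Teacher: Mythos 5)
Your strategy --- bound the change-of-basis matrix $\ell^r\to\ell^{r'}$ by Riesz--Thorin interpolation between the trivial $\ell^1\to\ell^\infty$ bound ($\|U\|\le c$) and unitarity on $\ell^2$, then differentiate the one-parameter family of inequalities at the endpoint $r=2$ --- is exactly the Maassen--Uffink argument, and it is the same mechanism the paper uses and generalizes in Prop.~\ref{pro:EUP,microlocal}, where (\ref{eq:Riesz-Thorin-1}) is the interpolation step and the expansion near $t=0$ plays the role of your endpoint derivative. So the route is the right one.

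There is, however, a sign error in the endpoint derivative that propagates to the end. For a unit vector $x$,
\[
\frac{d}{dr}\Big|_{r=2}\log\|x\|_r
=\frac12\sum_i|x_i|^2\log|x_i|
=\frac14\sum_i|x_i|^2\log|x_i|^2
=-\frac14\,H(x),
\]
not $+\tfrac14 H(x)$ as you wrote. (Sanity check: $r\mapsto\|x\|_r$ is nonincreasing, so this derivative must be $\le 0$, consistent only with $-\tfrac14 H(x)$.) The chain rule through the dual exponent, with $dr'/dr\big|_{r=2}=-1$, then gives $\frac{d}{dr}\log\|b\|_{r'}\big|_{r=2}=+\tfrac14 H(\psi,f)$, again the opposite of what you have. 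With the corrected values, the constraint that the $r$-derivative of $\log\|a\|_r+(2/r-1)\log c-\log\|b\|_{r'}$ at $r=2$ be $\le0$ reads
\[
-\tfrac14 H(\psi,e)-\tfrac12\log c-\tfrac14 H(\psi,f)\le 0,
\]
which rearranges directly to $H(\psi,e)+H(\psi,f)\ge -2\log c$, the desired inequality. As written, your inequality $\tfrac14 H(\psi,e)-\tfrac12\log c+\tfrac14 H(\psi,f)\le 0$ would yield $H(\psi,e)+H(\psi,f)\le 2\log c\le0$, which is false, and the subsequent passage from ``$\ge 2\log c$'' to ``$\ge -2\log c$'' is a second slip that masks the first (these are not equal, since $\log c<0$ in general). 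Fixing the derivative sign removes both problems at once and closes the proof.
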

For instance, take $e=\left\{ e_{i}\right\} $ the standard basis
on $\IC^{N}$, and for $f=\left\{ f_{j}\right\} $ the {}``discrete
momentum states'', related with $\left\{ e_{i}\right\} $ through
the discrete Fourier transform. All matrix elements satisfy $|\la e_{i},f_{j}\ra|=N^{-1/2}$,
so the inequality reads \[
H(\psi,e)+H(\psi,f)\geq\log N.\]
The inequality shows that the distributions of {}``position amplitudes''$\la e_{i},\psi\ra$
on the one hand, of {}``momentum amplitudes''$\la f_{j},\psi\ra$,
cannot be both arbitrarily localized. It is hence a form of {}``uncertainty
principle''.

If we call $\rho_{i}$ (resp. $\tau_{j}$) the orthogonal projector
on the state $e_{i}$ (resp. $f_{j}$), then each overlap $|\la e_{i},f_{j}\ra|$
can be interpreted as the operator norm $|\la e_{i},f_{j}\ra|=\left\Vert \tau_{j}\rho_{i}^{*}\right\Vert .$
This interpretation allows to generalize this {}``uncertainty principle''
as follows:
\begin{prop}
{[}EUP, level 2 (quantum partition){]}\label{pro:EUP2}

On a Hilbert space $\cH$, consider two quantum partitions of unity,
that is two finite sets of bounded operators $\rho=\left\{ \rho_{i},\: i\in I\right\} $,
$\pi=\left\{ \pi_{j},\: j\in J\right\} $ satisfying the identities\[
\sum_{i\in I}\rho_{i}^{*}\rho_{i}=Id,\qquad\sum_{j\in J}\tau_{j}^{*}\tau_{j}=Id\,.\]
To any normalized state $\psi\in\cH$ we associate the probability
distributions $\left\{ \left\Vert \rho_{i}\psi\right\Vert ^{2},\: i\in I\right\} $,
$\left\{ \left\Vert \tau_{j}\psi\right\Vert ^{2},\: j\in J\right\} $.
Then, the entropies associated with these two distributions satisfy\[
H(\psi,\rho)+H(\psi,\tau)\geq-2\log\left(\max_{i\in I,j\in J}\left\Vert \tau_{j}\rho_{i}^{*}\right\Vert \right).\]

\end{prop}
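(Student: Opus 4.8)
The strategy is to reduce the level-2 EUP to the level-1 (Maassen--Uffink) statement by a \emph{dilation} argument. The operator families $\rho=\{\rho_i\}_{i\in I}$ and $\tau=\{\tau_j\}_{j\in J}$ are not projectors, so one cannot apply the finite-dimensional result directly; but the partition-of-unity identities $\sum_i\rho_i^*\rho_i=Id$ and $\sum_j\tau_j^*\tau_j=Id$ are exactly what is needed to manufacture isometries into a larger Hilbert space whose components recover the weights $\|\rho_i\psi\|^2$ and $\|\tau_j\psi\|^2$.

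\textbf{Step 1: Build the dilations.}
Set $\cH_I=\bigoplus_{i\in I}\cH$ and define $R:\cH\to\cH_I$ by $R\psi=(\rho_i\psi)_{i\in I}$. Then $\|R\psi\|^2=\sum_i\|\rho_i\psi\|^2=\|\psi\|^2$, so $R$ is an isometry; let $P_i:\cH_I\to\cH$ be the $i$-th coordinate projection, so $P_iR=\rho_i$ and the distribution $\{\|\rho_i\psi\|^2\}$ is the distribution of $R\psi$ in the orthogonal decomposition $\cH_I=\bigoplus_i\cH$. Do the same with $\tau$: $T:\cH\to\cH_J=\bigoplus_{j\in J}\cH$, $T\psi=(\tau_j\psi)_j$, an isometry, with coordinate decomposition giving $\{\|\tau_j\psi\|^2\}$.

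\textbf{Step 2: Apply an infinite-dimensional EUP / reduce to level 1.}
One wants an entropic uncertainty inequality comparing the two block decompositions of $\cH_I$ and $\cH_J$ relative to the single unit vector $\psi$ (or $R\psi$, $T\psi$). The clean way is to invoke the general entropic uncertainty principle for a pair of orthogonal decompositions of a Hilbert space together with a partial isometry relating them: the relevant "coupling" operator from the $i$-block of $\cH_I$ to the $j$-block of $\cH_J$ is $P_j' T R^* P_i^* $ (using $R^*$ to come back to $\cH$ and $P_i^*$, $P_j'$ to inject/project the blocks), whose norm is exactly $\|\tau_j\rho_i^*\|$ since $P_j'TR^*P_i^* = \tau_j\rho_i^*$. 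Granting the decomposition-form EUP (which for finitely many blocks follows from Maassen--Uffink by a direct-sum/approximation argument, or can be cited from \cite{MaaUff88} and its operator-algebraic extensions), one gets
\[
H(\psi,\rho)+H(\psi,\tau)\ \geq\ -2\log\Big(\max_{i,j}\|\tau_j\rho_i^*\|\Big),
\]
which is the claim. If one prefers to stay strictly finite-dimensional, one truncates $I,J$ to finite sets (they already are), replaces $\cH$ by a large finite-dimensional subspace containing $\psi$ and essentially supporting all $\rho_i\psi,\tau_j\psi$, and applies the level-1 statement to suitable orthonormal bases adapted to the two decompositions, the off-diagonal overlaps being controlled by $\max_{i,j}\|\tau_j\rho_i^*\|$; then pass to the limit.

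\textbf{Main obstacle.}
The delicate point is Step 2: the honest version of the argument needs an EUP for \emph{decompositions into subspaces} (not rank-one projectors), and in infinite dimensions one must be a little careful that the "coupling constant" is the supremum of the block norms $\|\tau_j\rho_i^*\|$ and not something larger. The cleanest fix is to note that, since $\|\tau_j\rho_i^*\|=\|\rho_i\tau_j^*\|$ and these are at most $1$, one can either cite the subspace-version of Maassen--Uffink directly, or deduce it from the rank-one case by diagonalizing within each block pair; the bookkeeping of the direct sums is routine but is where all the content sits. Everything else — the isometry property of $R$ and $T$, the identification $P_j'TR^*P_i^*=\tau_j\rho_i^*$ — is immediate from the partition-of-unity hypotheses.
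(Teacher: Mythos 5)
Your Step 1 (a Naimark-type dilation) is correct: $R$ and $T$ are isometries, and the block entries of $A\defeq TR^*:\cH^{|I|}\to\cH^{|J|}$ are exactly $\tau_j\rho_i^*$. The gap is in Step 2. After dilation you are left with a unit vector $\Psi=R\psi$, a \emph{contraction} $A$ satisfying $\|A\Psi\|=1$, and two orthogonal block decompositions of two \emph{different} Hilbert spaces $\cH^{|I|}$ and $\cH^{|J|}$. The inequality you invoke, $H(\Psi)+H(A\Psi)\ge-2\log\max_{i,j}\|P_j'AP_i\|$, is not a corollary of the level-1 statement, which concerns a unitary change of orthonormal basis in a single space. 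None of the elementary routes you sketch close this gap: ``diagonalizing within each block pair'' does not produce one coherent orthonormal basis compatible with all $j$ simultaneously; a refinement argument goes the wrong way, since refining a partition only increases entropy and therefore a lower bound on the refined entropies gives nothing about the coarser ones; and the adapted-basis trick (choosing one basis vector per block proportional to $P_i\Psi$) does reduce the \emph{same-space} subspace version to rank one, but here it leaves you with a rank-one Maassen--Uffink for a non-unitary contraction matrix between two spaces, which is again not the level-1 statement, and unitarily dilating the contraction destroys the control on the maximal matrix entry.

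So the ``decomposition-form EUP'' you grant is not routine bookkeeping; it \emph{is} the analytic content of level 2, and neither \cite{MaaUff88} (which treats unitaries) nor the level-1 proposition covers it. The paper does not prove level 2 independently either: it proves the more general microlocal version, Prop.~\ref{pro:EUP,microlocal}, by a Riesz--Thorin interpolation on the block operator $T=(\tau_j\rho_i^*)$ acting between weighted $\ell^p(\cH^{|I|})$ and $\ell^q(\cH^{|J|})$, following \cite[Thm.\ 6.3]{AnaNo07-2}; level 2 is the degenerate case $S_{c_1}=S_{c_2}=\mathrm{Id}$, $v_i=w_j=1$, $\vareps=0$. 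Your dilation is a useful reframing, since it presents the block operator as $TR^*$ applied to $R\psi$, but to finish you would still have to run that interpolation argument on $A$, which is the paper's proof in a different notation. As written, the proposal relocates the difficulty rather than resolving it.
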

Because the instability of the flow may not be uniform, the coarse-grained
jacobians $J_{n}^{u}(\bal)$ may vary a lot among all $n$-sequences
$\bal$. For this reason, the estimates (\ref{eq:hyper-disper-2})
may also strongly depend on the sequences $\bal$, $\bbeta$. To counterbalance
these variations, it is convenient to use \emph{pressures} instead
of entropies (see $\S$\ref{sub:Pressures}). 
\begin{prop}
{[}EUP, level 2 (quantum weighted partition){]}

On a Hilbert space $\cH$, consider two quantum partitions of unity
$\rho=\left\{ \rho_{i},\: i\in I\right\} $, $\tau=\left\{ \tau_{j},\: j\in J\right\} $
as in Prop. \ref{pro:EUP2} and two families of weights $v=\left\{ v_{i}>0,\: i\in I\right\} $,
$w=\left\{ w_{j}>0,\: j\in J\right\} $. To any normalized state $\psi\in\cH$
correspond the probability distributions $\left\{ \left\Vert \rho_{i}\psi\right\Vert ^{2},\: i\in I\right\} $,
$\left\{ \left\Vert \tau_{j}\psi\right\Vert ^{2},\: j\in J\right\} $. 

Then, the pressures associated with these distributions and weights
satisfy\[
p(\psi,\rho,v)+p(\psi,\tau,w)\geq-2\log\left(\max_{i\in I,j\in J}v_{i}w_{j}\left\Vert \tau_{j}\rho_{i}^{*}\right\Vert \right).\]

\end{prop}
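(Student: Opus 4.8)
The plan is to reduce this weighted statement to its unweighted predecessor, Prop.~\ref{pro:EUP2}, by a \emph{splitting} trick that converts a weighted pressure into the (unweighted) entropy of a refined quantum partition, and then to pass from special weights to arbitrary ones by a scaling-and-continuity argument.

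\textbf{Step 1: integral weights.} Suppose first that $v_i^{-2}\in\IN$ and $w_j^{-2}\in\IN$ for all $i\in I$, $j\in J$, and set $N_i:=v_i^{-2}$, $M_j:=w_j^{-2}$. Replace the finite family $\rho=\{\rho_i\}_{i\in I}$ by the finite family $\hat\rho:=\{\,\rho_i/\sqrt{N_i}\,\}$ indexed by $\hat I:=\{(i,s):i\in I,\ 1\le s\le N_i\}$ (each $\rho_i$ is listed $N_i$ times, divided by $\sqrt{N_i}$), and likewise form $\hat\tau:=\{\,\tau_j/\sqrt{M_j}\,\}$ over $\hat J:=\{(j,r):j\in J,\ 1\le r\le M_j\}$. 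Since $\sum_{(i,s)\in\hat I}(\rho_i/\sqrt{N_i})^*(\rho_i/\sqrt{N_i})=\sum_i N_i\cdot N_i^{-1}\rho_i^*\rho_i=Id$, and similarly for $\hat\tau$, these are again quantum partitions of unity on $\cH$. Two elementary computations then finish the case. First, the pairwise operator norms are $\|(\tau_j/\sqrt{M_j})(\rho_i/\sqrt{N_i})^*\|=(N_iM_j)^{-1/2}\|\tau_j\rho_i^*\|=v_iw_j\|\tau_j\rho_i^*\|$, so their maximum over $\hat I\times\hat J$ equals $\max_{i,j}v_iw_j\|\tau_j\rho_i^*\|$. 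Second, writing $a_i:=\|\rho_i\psi\|$, each copy of $\rho_i$ carries weight $\|(\rho_i/\sqrt{N_i})\psi\|^2=a_i^2/N_i$, and since $N_i\,\eta(a_i^2/N_i)=\eta(a_i^2)+a_i^2\log N_i$,
\[
H(\psi,\hat\rho)=\sum_i N_i\,\eta(a_i^2/N_i)=H(\psi,\rho)-2\sum_i a_i^2\log v_i=p(\psi,\rho,v),
\]
and in the same way $H(\psi,\hat\tau)=p(\psi,\tau,w)$. Applying Prop.~\ref{pro:EUP2} to $\hat\rho$ and $\hat\tau$ gives exactly $p(\psi,\rho,v)+p(\psi,\tau,w)\ge-2\log\bigl(\max_{i,j}v_iw_j\|\tau_j\rho_i^*\|\bigr)$.

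\textbf{Step 2: arbitrary weights.} The asserted inequality is invariant under independent rescalings $v\mapsto\lambda v$, $w\mapsto\mu w$ with $\lambda,\mu>0$: using $\sum_i\|\rho_i\psi\|^2=\sum_j\|\tau_j\psi\|^2=1$, both sides change by the common amount $-2\log(\lambda\mu)$. Both sides are also continuous functions of $(v,w)\in(0,\infty)^I\times(0,\infty)^J$. Now a configuration $(v,w)$ can be brought into the integral case of Step 1 by such a rescaling precisely when the vectors $(v_i^{-2})_{i\in I}$ and $(w_j^{-2})_{j\in J}$ have all their pairwise ratios rational (fix one index in each family, express the other entries as that one times a rational, and clear denominators); such configurations are dense in $(0,\infty)^I\times(0,\infty)^J$. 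Hence the inequality, true on a dense set and stable under passage to the limit, holds for all weights.

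I expect Step 2 to be the only point demanding a little care; the substantive idea is the splitting of Step 1, which is what reduces the \emph{weighted} EUP to the unweighted one already available. (One could instead try to redo the Maassen--Uffink / Riesz--Thorin interpolation underlying Prop.~\ref{pro:EUP2} with weighted $\ell^p$ sequence norms; but the $\ell^\infty$ endpoint is blind to the weight on the output side, so that route only produces an asymmetric bound --- an entropy on one side, a pressure on the other --- and one is driven back to a splitting-type argument to symmetrize it.)
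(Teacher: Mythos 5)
Your proof is correct, and it takes a genuinely different route from the paper's. The paper does not prove this level-2 statement in isolation; it is presented as a staging post toward Prop.~\ref{pro:EUP,microlocal}, whose proof runs a Riesz--Thorin interpolation directly on the \emph{weighted} $\ell^p$ norms $\norm{\Psi}_{p}^{(v)}=\bigl(\sum_i v_i^{\,p-2}\norm{\Psi_i}^p\bigr)^{1/p}$, following \cite[Sec.\ 6]{AnaNo07-2}. Your argument instead reduces the weighted statement to the already-known unweighted Prop.~\ref{pro:EUP2} by the splitting trick (replicate $\rho_i$ as $N_i$ copies of $\rho_i/\sqrt{N_i}$, so that the refined entropy literally equals the pressure), then removes the integrality hypothesis by the invariance of both sides under $v\mapsto\lambda v$, $w\mapsto\mu w$ (which shift each side by $-2\log\lambda\mu$, since $\sum_i\norm{\rho_i\psi}^2=1$) together with continuity and density of rational-ratio configurations. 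All the algebra checks out: $N_i\,\eta(a_i^2/N_i)=\eta(a_i^2)-2a_i^2\log v_i$ is exactly the pressure summand, and the pairwise norms scale by $(N_iM_j)^{-1/2}=v_iw_j$. What your approach buys is elementarity and transparency: you never reopen the interpolation machinery, and the reduction works for any "base" uncertainty inequality of Maassen--Uffink type. What it does not give you is the microlocal refinements of Prop.~\ref{pro:EUP,microlocal} (approximate partitions, the cone cutoffs $S_{c_1},S_{c_2}$, and the quantitative $\cO(\varepsilon^{1/5})$ error), which is why the paper carries the weights through the interpolation rather than by splitting. One small correction to your closing parenthetical: the weighted-interpolation route does \emph{not} produce an asymmetric bound. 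The point of the exponent $p-2$ in $\norm{\cdot}_p^{(v)}$ is precisely that as $q=2/(1-t)\to\infty$ the output-side weight survives, $\sup_j w_j^{(q-2)/q}\norm{\Psi_j}\to\sup_j w_j\norm{\Psi_j}$; expanding around $t=0$ then yields a pressure on each side, so the symmetrization you attribute to splitting is already built into the choice of weighted norm.
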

This version is almost sufficient for our aims. Yet, the quantum partitions
of unity we are using are localized near the energy shell $\cE$ (see
(\ref{eq:Quantum Partition},\ref{eq:Quant-partition-n})), and the
estimate (\ref{eq:hyperb-dispers}) starts with a sharp energy cutoff.
For these reasons, the version we will need is of the following form.
\begin{prop}
{[}EUP, level 3 (microlocal weighted partition){]}\label{pro:EUP,microlocal}

On a Hilbert space $\cH$, consider two approximate quantum partitions
of unity, that is two finite sets of bounded operators $\rho=\left\{ \rho_{i},\: i\in I\right\} $,
$\tau=\left\{ \tau_{j},\: j\in J\right\} $ satisfying the identities\[
\sum_{i\in I}\rho_{i}^{*}\rho_{i}=S_{\rho},\qquad\sum_{j\in J}\tau_{j}^{*}\tau_{j}=S_{\tau}\,,\]
and two families of weights $v=\left\{ v_{i},\: i\in I\right\} $,
$w=\left\{ w_{j},\: j\in J\right\} $ satisfying $V^{-1}\leq v_{i},w_{j}\leq V$
for some $V\geq1$. 

We assume that for some $0<\varepsilon\leq\min\left(|I|^{-2}V^{-2},|J|^{-2}V^{-2}\right)$
the above sum operators satisfy $0\leq S_{\rho/\tau}\leq1+\vareps$.
Besides, let $S_{c_{1}},S_{c_{2}}$ be two hermitian operators on
$ $$\cH$ satisfying $0\leq S_{c_{*}}\leq1+\vareps$, and related
with the above partitions as follows: \begin{align}
\left\Vert (S_{c_{2}}-1)\rho_{i}S_{c_{1}}\right\Vert  & \leq\varepsilon,\qquad\forall i\in I,\label{eq:Sc2-Sc1}\\
\left\Vert (S_{\rho/\tau}-1)S_{c_{1}}\right\Vert  & \leq\varepsilon.\label{eq:Spi-Sc1}\end{align}
Let us define the {}``cone norm'' \begin{equation}
c_{cone}\defeq\max_{i\in I,\, j\in J}v_{i}w_{j}\norm{\tau_{j}\rho_{i}^{*}S_{c_{2}}}.\label{eq:c_cone}\end{equation}

Then, for any $\psi\in\cH$ satisfying \begin{equation}
\norm{\psi}=1,\qquad\norm{(Id-S_{c_{1}})\psi}\leq\vareps,\label{eq:cone-c1}\end{equation}
the pressures of $\psi$ w.r.to the weighted partitions $(\rho,v)$
and $(\tau,w)$ satisfy the bound\[
p(\psi,\rho,v)+p(\psi,\tau,w)\geq-2\log\left(c_{cone}+3|I|V^{2}\varepsilon\right)+\cO\left(\epsilon^{1/5}\right).\]
The implied constant is independent of the weighted partitions or
the cutoff operators $S_{c_{*}}$.\end{prop}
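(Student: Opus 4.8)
The plan is to reduce the level-3 EUP to the level-2 (plain, unweighted, exact-partition-of-unity) case by a sequence of approximations that control the error introduced by (a) replacing the approximate partitions-of-unity $S_{\rho/\tau}$ by exact ones, (b) inserting the cutoff operators $S_{c_1},S_{c_2}$, and (c) folding the weights $v_i,w_j$ into the operators. The starting point is the Riesz--Thorin / Hausdorff--Young-type inequality that underlies Maassen--Uffink: if $\rho=\{\rho_i\}$ and $\tau=\{\tau_j\}$ are \emph{exact} partitions of unity, then for the vector-valued operators $\mathcal{R}\colon\psi\mapsto(\rho_i\psi)_i$ and $\mathcal{T}\colon\psi\mapsto(\tau_j\psi)_j$ one has $\|\mathcal{T}\mathcal{R}^*\|_{\ell^1\to\ell^\infty}\le\max_{i,j}\|\tau_j\rho_i^*\|$ and $\|\mathcal{T}\mathcal{R}^*\|_{\ell^2\to\ell^2}\le 1$; interpolating gives $\|\mathcal{T}\mathcal{R}^*\|_{\ell^q\to\ell^{q'}}\le\big(\max_{i,j}\|\tau_j\rho_i^*\|\big)^{2/q-1}$ for $q\in[1,2]$. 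Applying this to $\psi$ (so $\|\mathcal{R}\psi\|_2=\|\mathcal{T}\psi\|_2=1$), taking logarithmic derivatives in $q$ at $q=2$, and using $\frac{d}{dq}\big|_{q=2}\|v\|_q^q$-type identities converts the norm inequality into the entropy inequality $H(\psi,\rho)+H(\psi,\tau)\ge-2\log\max_{i,j}\|\tau_j\rho_i^*\|$; inserting the weights $v_i$ into $\rho_i$ and $w_j$ into $\tau_j$ (which rescales the distributions and shifts entropies into pressures) yields the weighted level-2 statement. This part is essentially Maassen--Uffink and I would cite \cite{MaaUff88} and only sketch the interpolation.

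**Handling the approximations.** The real work is in showing the level-3 hypotheses let us run the level-2 argument with a controlled loss $\cO(\varepsilon^{1/5})$. First I would symmetrize/normalize: since $0\le S_\rho\le1+\varepsilon$, the operators $\tilde\rho_i\defeq\rho_i S_\rho^{-1/2}$ (defined on $\mathrm{ran}\,S_\rho$, extended by $0$) form an exact partition of unity on that subspace; the condition $\varepsilon\le|I|^{-2}V^{-2}$ guarantees $S_\rho$ is invertible with $\|S_\rho^{-1/2}-1\|=\cO(\varepsilon)$, so replacing $\rho_i$ by $\tilde\rho_i$ changes each $\|\rho_i\psi\|^2$, hence each $\eta(\|\rho_i\psi\|^2)$, and hence the pressure, by at most $\cO(|I|V^2\varepsilon\log(1/\varepsilon))$. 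The same for $\tau$. Second, I use \eqref{eq:cone-c1}: $\|(\mathrm{Id}-S_{c_1})\psi\|\le\varepsilon$ means $\psi$ is almost in the range of $S_{c_1}$, so by \eqref{eq:Spi-Sc1} the partition sums act almost as identity on $\psi$, and by \eqref{eq:Sc2-Sc1} we may insert $S_{c_2}$ (up to $\varepsilon$) after each $\rho_i^*$, so that $\|\tau_j\rho_i^*\psi'\|\approx\|\tau_j\rho_i^*S_{c_2}\psi'\|$ for $\psi'$ near $\psi$ — this is exactly what lets the "cone norm" $c_{cone}$ of \eqref{eq:c_cone}, rather than the raw $\max v_iw_j\|\tau_j\rho_i^*\|$, appear on the right. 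Each insertion costs an additive $\varepsilon$ inside a product of $\le|I|$ (or $|J|$) factors, bounded in norm by $V^2$ times something $\le1+\varepsilon$, giving the $3|I|V^2\varepsilon$ term inside the logarithm. The bookkeeping of how an $\cO(\varepsilon)$ perturbation of the distribution $\{\|\rho_i\psi\|^2\}$ perturbs $H$ — using $|\eta(s)-\eta(t)|\le|s-t|(1+|\log\min(s,t)|)$ and splitting the sum according to whether the weights are $\gtrless\varepsilon^{4/5}$ — is what produces the final $\cO(\varepsilon^{1/5})$; I would carry this estimate out carefully since it is where the exponent $1/5$ is pinned down.

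**Main obstacle.** The delicate point is the interplay between the weights and the approximations: a naive perturbation bound would give an error proportional to $V$ times the \emph{unweighted} operator norms $\|\tau_j\rho_i^*\|$, which need not be small and would swamp $c_{cone}$; one must be careful to perform all insertions of $S_{c_2}$, $S_\rho^{\pm1/2}$, etc., \emph{before} taking norms and extracting weights, so that the weight $v_iw_j$ multiplies only the final cone-norm quantity $\|\tau_j\rho_i^*S_{c_2}\|$ and the leftover errors are genuine $\cO(\varepsilon)$ quantities multiplied by at most $V^2$ and a counting factor. Equivalently, one should think of the whole argument as happening for the modified operators $\hat\rho_i\defeq v_i\rho_i S_\rho^{-1/2}$, $\hat\tau_j\defeq w_j\tau_j S_\tau^{-1/2}$ restricted to an invariant subspace containing (a slight perturbation of) $\psi$, where an exact weighted Maassen--Uffink inequality holds on the nose, and then transfer back. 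The condition $\varepsilon\le\min(|I|^{-2}V^{-2},|J|^{-2}V^{-2})$ is precisely what makes every such transfer step lossless to leading order. Threading the perturbations through in the right order, so that no term of size $V\cdot\|\tau_j\rho_i^*\|$ ever appears, is the crux; everything else is routine interpolation and $\eta$-continuity estimates.
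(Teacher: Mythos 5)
Your broad outline is right and in the same spirit as the paper's argument: both start from a Riesz--Thorin interpolation between $\ell^2\to\ell^2$ and $\ell^1\to\ell^\infty$ bounds for the block matrix $T_{ji}=\tau_j\rho_i^*$, insert the cutoff $S_{c_2}$ so that the ``cone norm'' $c_{cone}$ appears instead of $\max_{i,j}\|\tau_j\rho_i^*\|$, and extract pressures by differentiating (or Taylor-expanding) $\log\|\cdot\|_{\ell^q}$ at $q=2$. The paper implements this by proving a \emph{weighted} interpolation inequality directly for the approximate partitions, with a cone of states $\cC(O,\vartheta)$ built in, and then choosing the interpolation parameter $t=\varepsilon^{1/4}$; the $\varepsilon^{1/5}$ in the statement is a slightly generous version of the $\cO(\varepsilon^{1/4}(\log\varepsilon)^2)$ that falls out.

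However, your normalization step has a genuine gap. You propose to replace $\rho_i$ by $\tilde\rho_i=\rho_i S_\rho^{-1/2}$ so as to work with an \emph{exact} partition of unity, and you assert that $\varepsilon\le|I|^{-2}V^{-2}$ forces $S_\rho$ to be invertible with $\|S_\rho^{-1/2}-1\|=\cO(\varepsilon)$. That is false: the hypotheses only give $0\le S_{\rho}\le 1+\varepsilon$ (an upper bound) together with $\|(S_\rho-1)S_{c_1}\|\le\varepsilon$, i.e.\ $S_\rho$ is close to the identity \emph{only on the range of $S_{c_1}$}. In the intended application $S_\rho$ is a microlocal partition of unity near the energy shell and vanishes elsewhere, so $S_\rho$ has a large kernel and $S_\rho^{-1/2}$ is unbounded. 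The Moore--Penrose fix (``extended by $0$'') does not save this either: from $\|(S_\rho-1)\psi\|=\cO(\varepsilon)$ one controls $\|S_\rho^{1/2}\psi-\psi\|$, but not $\|S_\rho^{+1/2}\psi-\psi\|$, since the spectral measure of $\psi$ can put a small but $\varepsilon$-comparable mass near $\lambda=0$ where $\lambda^{-1/2}$ blows up. Restricting to a spectral subspace $\{S_\rho\ge 1/2\}$ does not obviously commute with the analogous restriction for $S_\tau$, so there is no common subspace on which both modified families are exact partitions. This is exactly the difficulty the paper sidesteps by \emph{not} normalizing: it keeps the approximate partitions as they are, carries the weights $v_i,w_j$ inside the $\ell^p$ norms, restricts attention to the state cone $\cC(S_{c_2},\vartheta)$ containing $(\rho_i\psi)_i$, and absorbs the defect $\|T\|_{2,2}=\|\sqrt{S_\rho}\sqrt{S_\tau}\|\in[1-4\sqrt\varepsilon,1+2\varepsilon]$ as an explicit factor in the interpolation estimate. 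If you want to keep your normalization route, you would have to reformulate it so that $S_\rho^{-1/2}$ is only ever applied after a sharp cutoff (i.e.\ replace it by a function equal to $\lambda^{-1/2}$ on $[1/2,1+\varepsilon]$ and $0$ near $0$), and then track that the induced modification of the $\tau$-distribution stays $\cO(\varepsilon)$; as written the argument fails at the invertibility claim.
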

\begin{proof}
The proof is a slight adaptation of the one given in \cite[Section 6]{AnaNo07-2}
(in the case $\cU=Id$). One considers a bounded operator $T:\cH^{|I|}\to\cH^{|J|}$
, and studies the norm of $T$ as an operator $l_{p}^{(v)}(\cH^{|I|})\mapsto l_{q}^{(w)}(\cH^{|J|})$,
with the weighted norms $\norm{\Psi}_{p}^{(v)}=\left(\sum_{i}v_{i}^{p-2}\norm{\Psi_{i}}^{p}\right)^{1/p}$
and similarly for $\norm{T\Psi}_{q}^{(w)}$. Notice that the weights
are {}``invisible'' when $p=q=2$. An auxiliary bounded operator
$O:\cH\to\cH$ is used to define a cone of states:\[
\cC(O,\vartheta)=\set{\Psi\in\cH^{|I|},\;\norm{O\Psi_{i}-\Psi_{i}}\leq\vartheta\norm{\Psi}_{2},\: i\in I}.\]
The proof of \cite[Thm. 6.3]{AnaNo07-2}, which uses a Riesz-Thorin
interpolation argument, shows that for any $\Psi$ in the cone $\cC(O,\vartheta)$,
one has:\begin{multline}
\forall t\in[0,1],\qquad\norm{T\Psi}_{\frac{2}{1-t}}^{(w)}\leq\left(c_{O}(T)+|I|V^{2}\vartheta\norm{T}_{2,2}\right)^{t}\,\norm{T}_{2,2}^{1-t}\,\norm{\Psi}_{\frac{2}{1+t}}^{(v)},\\
\mbox{where }\: c_{O}(T)=\max_{i,j}v_{i}w_{j}\norm{T_{ji}O}.\label{eq:Riesz-Thorin-1}\end{multline}
We now apply this result to the specific choice \[
\Psi_{i}\defeq\rho_{i}\psi,\qquad T_{ji}\defeq\tau_{j}\rho_{i}^{*},\qquad O\defeq S_{c_{2}},\]
where the state $\psi$ satisfies (\ref{eq:cone-c1}), that is, the
cone $\cC(S_{c_{1}},\vareps)$ is not empty (in the opposite case,
the statement of the theorem is empty). The relations (\ref{eq:Spi-Sc1})
then imply that \begin{equation}
\left\Vert (S_{\rho/\tau}-1)\psi\right\Vert \leq3\vareps.\label{eq:(S_rho-1)psi}\end{equation}
As a consequence, the state components $(T\Psi)_{i}=\tau_{i}S_{\rho}\psi=\tau_{i}\psi+\cO(\vareps)$. 

The same duality argument as in \cite[Lemma 6.5]{AnaNo07-2} shows
that the $l^{2}\to l^{2}$ norm of the operator $T$ takes the value
$\norm{T}_{2,2}=\norm{\sqrt{S_{\rho}}\sqrt{S_{\tau}}}$. Using the
spectral theorem, one easily deduces that $ $$\norm{\sqrt{S_{\rho/\tau}}\psi-\psi}\leq\sqrt{3\vareps}$,
so that $\left\Vert \sqrt{S_{\rho}}\sqrt{S_{\tau}}\psi-\psi\right\Vert \leq4\sqrt{\vareps}$,
and hence $\norm{T}_{2,2}\in[1-4\sqrt{\vareps},1+2\vareps]$. 

The $l^{2}$ norm of $\Psi$ is $\norm{\Psi}_{2}^{2}=\la\psi,S_{\rho}\psi\ra\in[1-3\vareps,1+\vareps]$.
From (\ref{eq:Sc2-Sc1}) and the fact that $\psi$ is in the cone
$\cC(S_{c_{1}},\eps)$, we easily get $\left\Vert (O-1)\rho_{i}\psi\right\Vert \leq2\vareps$,
so that $\Psi\in\cC(O,\vartheta)$ for $O=S_{c_{2}}$, $\vartheta=\frac{2\vareps}{1-3\vareps}$.
We are now in a position to apply (\ref{eq:Riesz-Thorin-1}) to the
above data. The constant $c_{O}(T)$ is equal to the $c_{cone}$ in
the statement of the proposition, so we get\[
\forall t\in[0,1],\qquad\norm{T\Psi}_{\frac{2}{1-t}}^{(w)}\leq\left(c_{cone}+3|I|V^{2}\vareps\right)^{t}\,\norm{T}_{2,2}^{1-t}\,\norm{\Psi}_{\frac{2}{1+t}}^{(v)}.\]
Let us now expand this expression when $t\to0$. We first split the
sum $\sum_{i}\norm{\Psi_{i}}^{\frac{2}{1+t}}$ between the terms $\norm{\Psi_{i}}\geq\eps$
and the remaining ones:
\begin{align*}
\sum_{i}v_{i}^{\frac{-2t}{1+t}}\norm{\Psi_{i}}^{\frac{2}{1+t}} & =\sum_{i,>}\norm{\Psi_{i}}^{2}-t\sum_{i,>}\left\Vert \Psi_{i}\right\Vert ^{2}\left(\log v_{i}^{2}\left\Vert \Psi_{i}\right\Vert ^{2}\right)+\cO\left(\left(t\log\vareps\right)^{2}\right)+\sum_{i,<}v_{i}^{\frac{-2t}{1+t}}\norm{\Psi_{i}}^{\frac{2}{1+t}}\\
 & =\left\Vert \Psi\right\Vert _{2}^{2}+t\, p(\Psi,v)+\cO\left(\left(t\log\vareps\right)^{2}\right)+\cO(|I|\vareps).
\end{align*}
We now take the logarithm of this expression, and use $\left\Vert \Psi\right\Vert _{2}^{2}=1+\cO(\vareps)$
to get
\begin{align*}
\log\norm{\Psi}_{\frac{2}{1+t}}^{(v)} & =\frac{t}{2}p(\Psi,v)+\cO\left(\left(t\log\vareps\right)^{2}+|I|\vareps+t^{2}p(\Psi,v)^{2}\right).\end{align*}
We can perform the same manipulations on the LHS of (\ref{eq:Riesz-Thorin-1}),
noticing that $\left\Vert T\Psi\right\Vert _{2}^{2}=\la S_{\tau}S_{\pi}\psi,S_{\pi}\psi\ra\in[1-10\vareps,1+10\vareps]$:
\begin{align*}
\log\norm{T\Psi}_{\frac{2}{1-t}}^{(w)} & =-\frac{t}{2}\, p(T\Psi,w)+\cO\left(\left(t\log\vareps\right)^{2}+|J|\vareps+t^{2}p(T\Psi,w)^{2}\right).
\end{align*}
We notice that both pressures satisfy simple bound $\abs{p(\bullet,\bullet)}\leq\log(V^{2}|I|)\leq3|\log\vareps|$,
and (from (\ref{eq:(S_rho-1)psi})) the estimate $(T\Psi)_{i}=\tau_{i}\psi_{\hbar}+\cO(\vareps)$.
Inserting these estimates in the logarithm of (\ref{eq:Riesz-Thorin-1})
and using $\left\Vert T\right\Vert _{2,2}\in[1-4\sqrt{\vareps},1]$,
we get\[
t\, p(\psi_{\hbar},\tau,w)+tp(\psi_{\hbar},\rho,v)\geq-2t\log\left(c_{cone}+3|I|V^{2}\vareps\right)+\cO\left(\sqrt{\vareps}+\left(t\log\vareps\right)^{2}+(|J|+|I|)\vareps\right).\]
We now need to make some assumptions on the values of $t$ to make
the remainder small. If we take $t=\vareps^{1/4}$, a simple power
counting shows that \[
p(\psi_{\hbar},\tau,w)+p(\psi_{\hbar},\rho,v)\geq-2\log\left(c_{cone}+3|I|V^{2}\vareps\right)+\cO\left(\vareps^{1/4}\left(\log\vareps\right)^{2}\right).\]

\end{proof}

\subsection{Our application of the entropy uncertainty principle\label{sub:application-of-EUP}}

We now apply the EUP to our semiclassical framework. Our choice of
quantum partitions is determined by the requirement that the operators
$\tau_{j}\rho_{i}^{*}S_{c_{2}}$ are of the form $\Pi_{\bbeta\cdot\bal}\,\chi^{(n)}(P(\hbar))$,
with $\bal,\bbeta$ two sequences of length $n$ close to the Ehrenfest
time. We will thus take\begin{gather}
\mbox{the time }\qquad n=\left\lfloor 2T_{\epsilon,\hbar}\right\rfloor ,\nonumber \\
\tau=\left\{ \Pi_{\bal}=\Pi_{\alpha_{n-1}}(n-1)\cdots\Pi_{\alpha_{0}},\:|\bal|=n\right\} ,\label{eq:application-partitions}\\
\rho=\left\{ \Pi_{\bbeta\cdot}^{*}=\Pi_{\beta_{-n}}(-n)\cdots\Pi_{\beta_{-1}}(-1),\:|\bbeta|=n\right\} ,\nonumber \\
v=w=\left\{ J_{n}^{u}(\bbeta)^{1/2},\:|\bbeta|=n\right\} ,\nonumber \\
S_{c_{1}}=\chi^{(0)}(P(\hbar)),\qquad S_{c_{2}}=\chi^{(n)}(P(\hbar)).\nonumber \end{gather}
The weights $v,w$ have been selected in order to balance the variations
of the upper bounds in (\ref{eq:hyper-disper-2}). Both the cardinals
$|I|=|J|=K^{n}$ and the upper bound $v_{i},w_{j}\leq e^{n\lambda_{\max}(d-1)/2}$
are $\cO(\hbar^{-M})$ for some $M>0$. Hence, we may take the small
paremeter $\vareps=\hbar^{L}$ for some fixed exponent $L\gg M$.

With this choice, the assumption (\ref{eq:Spi-Sc1}) is satisfied
for $\hbar$ small enough according to Prop. \ref{pro:qu-part-local}.
The assumption (\ref{eq:Sc2-Sc1}) can be checked by inserting the
increasing sequence of cutoffs $\left\{ \chi^{(j)}(P(\hbar)),\:1\leq j\leq n-1\right\} $
along the sequence $\Pi_{\bbeta}^{*}$, similarly as in the proof
of Prop. \ref{pro:qu-part-local}. The assumption (\ref{eq:cone-c1})
holds if one takes $\psi=\psi_{\hbar}$ a null eigenstate of $P(\hbar)$,
see (\ref{eq:local-psi-hbar}).

The coefficient $c_{cone}$ is then estimated by the hyperbolic dispersive
estimate of Corollary \ref{cor:hyper-disper-2}:\begin{equation}
c_{cone}\leq C_{cone}(\hbar)\defeq C\,\hbar^{-\frac{d-1+c\delta}{2}},\quad\hbar\leq\hbar_{\delta}.\label{eq:C_cone}\end{equation}
The application of Prop. \ref{pro:EUP,microlocal} to these data gives
the following result.
\begin{prop}
{[}Applied entropic uncertainty principle{]}

Take the weighted quantum partitions $(\rho,v)$, $(\tau,w)$ defined
in (\ref{eq:application-partitions}), and $L$ a large positive number.
Then, there exists $\hbar_{L}>0,\: C>0$ such that the pressures of
the eigenstates $(\psi_{\hbar})$ associated with these weighted partitions
satisfy the inequality:\begin{equation}
p(\psi_{\hbar},\rho,v)+p(\psi_{\hbar},\tau,w)\geq-2\log C_{cone}(\hbar)+C\hbar^{L/3},\qquad\hbar<\hbar_{L}.\label{eq:applied-EUP}\end{equation}

\end{prop}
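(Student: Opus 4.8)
The plan is to read off (\ref{eq:applied-EUP}) as a direct consequence of the microlocal entropic uncertainty principle, Prop.~\ref{pro:EUP,microlocal}, applied to the concrete data fixed in (\ref{eq:application-partitions}); essentially all of the work consists in checking the hypotheses of that proposition with a well-chosen small parameter. I would take $\vareps=\hbar^L$ for a large exponent $L$. Since the length $n=\lfloor 2T_{\eps,\hbar}\rfloor$ is of order $|\log\hbar|$, the cardinalities $|I|=|J|=K^n$ and a weight bound $V\geq1$ with $V^{-1}\leq v_i,w_j\leq V$ on $v=w=\{J_n^u(\bbeta)^{1/2}\}$ (one may take $V=e^{n\lambda_{\max}(d-1)/2}$, using $1\leq J_n^u\leq e^{n(d-1)\lambda_{\max}}$) are all $\cO(\hbar^{-M})$ for a fixed $M=M(\eps,\delta)>0$. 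Hence for $L$ large enough and $\hbar$ small, the smallness condition $\vareps\leq\min(|I|^{-2}V^{-2},|J|^{-2}V^{-2})$ holds, and the additive term $3|I|V^2\vareps=\cO(\hbar^{L-2M})$ appearing in the conclusion of Prop.~\ref{pro:EUP,microlocal} is negligible compared with $C_{cone}(\hbar)$. We set $S_{c_1}=\chi^{(0)}(P(\hbar))$, $S_{c_2}=\chi^{(n)}(P(\hbar))$, $S_\rho=\sum_i\rho_i^*\rho_i$, $S_\tau=\sum_j\tau_j^*\tau_j$.

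Next come the partition and cutoff hypotheses. For $\tau=\{\Pi_{\bal}\}_{|\bal|=n}$, the property $0\leq S_\tau\leq1+\vareps$ (indeed $S_\tau=S_n$ with $\|S_n\|=1+\cO(\hbar^\infty)$) is exactly Prop.~\ref{pro:qu-part-local}, valid since $n\leq n_{\max}$; for $\rho=\{\Pi_{\bbeta\cdot}^*\}_{|\bbeta|=n}$ one writes $\Pi_{\bbeta\cdot}=U^n\Pi_{\bbeta}U^{-n}$ with $\Pi_{\bbeta}$ the refined operator (\ref{eq:quantum-refined}), so $S_\rho=U^n\bigl(\sum_{\bbeta}\Pi_{\bbeta}\Pi_{\bbeta}^*\bigr)U^{-n}$ is controlled by the same summation-from-the-innermost-index argument as in the proof of Prop.~\ref{pro:qu-part-local}. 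The relation (\ref{eq:Spi-Sc1}), $\|(S_{\rho/\tau}-1)S_{c_1}\|\leq\vareps$, follows because $\chi^{(0)}\circ p_0$ is supported in $\cE_{\eps/2}$, so $\chi^{(0)}(P(\hbar))\in\Psi_{\cE,1-\delta}^{-\infty,0}$ is microlocalized there, whence the second line of (\ref{eq:Quant-partition-n}) gives $(Id-S_\tau)S_{c_1}=\cO(\hbar^\infty)$, and the $U$-conjugated version gives the same for $S_\rho$ (using that $\chi^{(0)}(P(\hbar))$ commutes with $U$). Finally, (\ref{eq:cone-c1}) for $\psi=\psi_\hbar$ is precisely the microlocalization (\ref{eq:local-psi-hbar}), $\|(Id-\chi^{(0)}(P(\hbar)))\psi_\hbar\|=\cO(\hbar^\infty)\leq\vareps$, together with $\|\psi_\hbar\|=1$.

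It remains to check (\ref{eq:Sc2-Sc1}) and to estimate $c_{cone}$. For (\ref{eq:Sc2-Sc1}) one has $\rho_i=\Pi_{\bbeta\cdot}^*=\Pi_{\beta_{-1}}(-1)\cdots\Pi_{\beta_{-n}}(-n)$, a product of $n\leq n_{\max}$ factors; inserting the increasing family $\{\chi^{(j)}(P(\hbar))\}$ between consecutive factors and applying, at each step, the embedding estimate (\ref{eq:embedding2}) — conjugated by the unitaries $U^{\pm\ell}$, which commute with the $\chi^{(j)}(P(\hbar))$ — propagates the right-hand cutoff $\chi^{(0)}(P(\hbar))$ up to $\chi^{(n)}(P(\hbar))=S_{c_2}$ on the left, at the cost of an $\cO(\hbar^\infty)\leq\vareps$ error; this is the same device as in the proof of Prop.~\ref{pro:qu-part-local}, and it is legitimate precisely because $n\leq n_{\max}=C_\delta|\log\hbar|$ with $C_\delta<\delta^{-1}-1$ keeps every cutoff index in the admissible range. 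For $c_{cone}$: since $\tau_j\rho_i^*=\Pi_{\bal}\,U^n\Pi_{\bbeta}U^{-n}=\Pi_{\bbeta\cdot\bal}$, one has $\tau_j\rho_i^*S_{c_2}=\Pi_{\bbeta\cdot\bal}\,\chi^{(n)}(P(\hbar))$, whose norm Corollary~\ref{cor:hyper-disper-2} bounds by $C\hbar^{-(d-1+c\delta)/2}J_n^u(\bal)^{-1/2}J_n^u(\bbeta)^{-1/2}$; multiplying by $v_iw_j=J_n^u(\bbeta)^{1/2}J_n^u(\bal)^{1/2}$ the Jacobians cancel, so $c_{cone}\leq C\hbar^{-(d-1+c\delta)/2}=C_{cone}(\hbar)$ uniformly in $\bal,\bbeta$, which is (\ref{eq:C_cone}). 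Feeding all of this into the conclusion of Prop.~\ref{pro:EUP,microlocal}, absorbing $3|I|V^2\vareps$ into the constant inside the logarithm and writing the error $\cO(\vareps^{1/5})=\cO(\hbar^{L/5})$ in the form $C\hbar^{L/3}$ (adjusting $C$ and possibly shrinking $L$), gives $p(\psi_\hbar,\rho,v)+p(\psi_\hbar,\tau,w)\geq-2\log C_{cone}(\hbar)+C\hbar^{L/3}$ for $\hbar<\hbar_L$, i.e. (\ref{eq:applied-EUP}).

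I expect the genuine obstacle to be the verification of (\ref{eq:Sc2-Sc1}): one must propagate $\sim|\log\hbar|$ sharp energy cutoffs through a product of $\sim|\log\hbar|$ operators $\Pi_{\beta_\ell}(\ell)$ while keeping the cutoff index from overshooting $n_{\max}$ and the accumulated remainder at $\cO(\hbar^\infty)$ — the same delicate bookkeeping with the $\hbar$-dependent calculus of the class $S_\nu^{-\infty,0}$ that already appears in Prop.~\ref{pro:qu-part-local}, but now carried out for the weighted, merely microlocal partition. It is exactly this subtlety — that $S_\rho,S_\tau$ are only \emph{microlocally} close to the identity near $\cE$, and that the hyperbolic dispersive bound comes dressed with a sharp cutoff $\chi^{(n)}(P(\hbar))$ — that forces the use of the full level-3 form of the entropic uncertainty principle rather than the simpler Prop.~\ref{pro:EUP2}.
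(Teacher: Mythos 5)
Your proof is correct and follows the paper's route exactly: verify each hypothesis of Prop.~\ref{pro:EUP,microlocal} for the data (\ref{eq:application-partitions}) with $\vareps=\hbar^L$, check (\ref{eq:Sc2-Sc1}) by propagating the sharp cutoffs $\chi^{(j)}(P(\hbar))$ along $\Pi_{\bbeta\cdot}^*$, and estimate $c_{cone}$ via Corollary~\ref{cor:hyper-disper-2} with the weights $v_iw_j$ exactly cancelling the Jacobians. A minor slip in citation: the cutoff-propagation step should invoke Lemma (\ref{eq:cutoff-embedding}), which concerns the sharp energy cutoffs $\chi^{(j)}$ at scale $e^{j\delta}\hbar^{1-\delta}$, rather than (\ref{eq:embedding2}), which concerns the intermediate cutoffs $\chi_j$ living between $\cE_{\eps/2}$ and $\cE_\eps$ --- the disjoint-microsupport mechanism is the same, so this does not affect the argument.
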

Our next task will be to relate the pressures associated to the {}``long
time'' partitions ($n=\left\lfloor 2T_{\epsilon,\hbar}\right\rfloor $),
to pressures associated with {}``finite time'' partitions ($n_{0}$
independent of $\hbar$).

\subsection{Approximate subadditivity of the quantum pressure}

In (\ref{eq:applied-EUP}) appear two pressures, associated with two
types of refined quantum partitions. The partition $\tau=\left\{ \Pi_{\alpha_{n-1}}(n-1)\cdots\Pi_{\alpha_{0}},\:|\bal|=n\right\} $
corresponds to the definition of the symbolic measure $\mu_{\hbar}$
in (\ref{eq:mu^u-definition}), so that the pressure $p(\psi_{\hbar},\tau,w)$
can be expressed as the refined pressure $p_{0}^{n-1}(\mu_{\hbar},w)$.
On the other hand, the probability weights involved in $p(\psi_{\hbar},\rho,v)$
can be rewritten (after relabelling the sequences and using the fact
that $\psi_{\hbar}$ is an eigenmode) $\rho_{\bbeta}=\norm{\Pi_{\beta_{n-1}}(-n)\cdots\Pi_{\beta_{0}}(-1)\psi_{\hbar}}^{2}$,
so they correspond to a backwards evolution.%
\footnote{Notice that, using the fact that $\psi_{\hbar}$ is an eigenmode of $U$, these weights can also be written as $\lVert\Pi_{\beta_{n-1}}(0)\dotsm\Pi_{\beta_{0}}(n-1)\psi_{\hbar}\rVert^2$, which is of the same form as $\norm{\tau_{\beta_{n-1}\dotsm\beta_0}\psi_{\hbar}}^2$, except for the ordering of the operators.}
We express these weights in terms of a {}``backwards symbolic measure''
$\tilde{\mu}_{\hbar}$ similar with $\mu_{\hbar}$: 
\begin{equation}
\tilde{\mu}_{\hbar}([\cdot\beta_{0}\cdots\beta_{n-1}])\defeq\left\Vert \Pi_{\beta_{n-1}}(-n)\cdots\Pi_{\beta_{0}}(-1)\psi_{\hbar}\right\Vert ^{2},\label{eq:tilde-mu^u}
\end{equation}
so that $p(\psi_{\hbar},\rho,v)=p_{0}^{n-1}(\tilde{\mu}_{\hbar},v)$.
For fixed $n>0$, and any $|\bbeta|=n$, we have $\tilde{\mu}_{\hbar}([\cdot\bbeta])\hto0\mu_{\mathrm{sc}}(\pi_{\beta_{n-1}\dotsm\beta_0})$.

The bound (\ref{eq:applied-EUP}) concerns the quantum pressures of
$\mu_{\hbar}$ and $\tilde{\mu}_{\hbar}$ at the time $n=\left\lfloor 2T_{\epsilon,\hbar}\right\rfloor $
close to the Ehrenfest time. These pressures cannot be directly connected
with the pressures of the semiclassical measure $\mu_{sc}$. For this
aim, we need to deduce from (\ref{eq:applied-EUP}) a lower bound
for the pressures of $\mu_{\hbar}$ and $\tilde{\mu}_{\hbar}$ at
\emph{finite} times $n_{o}$. This connection will be done by using
an approximate version of the subadditivity property (\ref{eq:subadd-pressure}).
We present the computations in the case of $\mu_{\hbar}$, the case
of $\tilde{\mu}_{\hbar}$ being identical up to exchanging forward
and backward evolution.

The symbolic measure $\mu_{\hbar}$ was defined on cylinders $[\cdot\bal]$,
up to $\cO(\hbar^{\infty})$ errors which are uniformly controlled
as long as $|\bal|\leq C|\log\hbar|$. This definition was possible
thanks to the (approximate) compatibility condition (\ref{eq:compatibility}).
The property (\ref{eq:subadditivity}) can be extended to the measure
$\mu_{\hbar}$:\begin{equation}
\forall n,n_{0}\leq C|\log\hbar|,\quad H_{0}^{n+n_{0}-1}(\mu_{\hbar})\leq H_{0}^{n-1}(\mu_{\hbar})+H_{n}^{n+n_{0}-1}(\mu_{\hbar})+\cO(\hbar^{\infty}).\label{eq:subadd-quant-1}\end{equation}
The second term on the RHS can be written as $H_{n}^{n+n_{0}-1}(\mu_{\hbar})=H_{0}^{n_{0}-1}(\sigma_{*}^{-n}\mu_{\hbar})$,
where $\sigma$ refers to a shift of indices. Shift-invariance would
mean that $\sigma^{-n}\mu_{\hbar}=\mu_{\hbar}$, which would allow
us to replace this entropy with $H_{0}^{n_{0}-1}(\mu_{\hbar})$.
How far are we from this invariance? To answer this question we need
to compare the weights
\begin{equation}
\mu_{\hbar}(\sigma^{-n}[\cdot\beta_{0}\cdots\beta_{n_{0}-1}])\defeq\sum_{\alpha_{0},\ldots,\alpha_{n-1}}\mu_{\hbar}([\cdot\alpha_{0}\cdots\alpha_{n-1}\beta_{0}\cdots\beta_{n_{0}-1}]),\label{eq:shifted-measure}
\end{equation}
with the weights $\mu_{\hbar}([\cdot\beta_{0}\cdots\beta_{n_{0}-1}])$.
This is achieved in the following 
\begin{lem}
Let $(\mu_{\hbar})_{\hbar\to0}$, $(\tilde{\mu}_{\hbar})_{\hbar\to0}$
be the symbolic measures associated with the eigenstates $(\psi_{\hbar})_{\hbar\to0}$.
Fix some $n_{0}\geq0$, and take $n$ in the range $[0,2T_{\epsilon,\hbar}-n_{0}]$.
Then, for any $\bbeta$ of length $n_{0}$, we have\[
\mu_{\hbar}(\sigma^{-n}[\cdot\bbeta])=\mu_{\hbar}([\cdot\bbeta])+\cO(\hbar^{\eps/2}),\qquad\tilde{\mu}_{\hbar}(\sigma^{-n}[\cdot\bbeta])=\tilde{\mu}_{\hbar}([\cdot\bbeta])+\cO(\hbar^{\eps/2}).\]

\end{lem}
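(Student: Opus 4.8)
The plan is to compare $\mu_\hbar(\sigma^{-n}[\cdot\bbeta])$ directly with $\mu_\hbar([\cdot\bbeta])$ by unfolding both in terms of operator norms and exploiting the microlocal partition-of-unity property (Prop.~\ref{pro:qu-part-local}) together with the fact that $n+n_0 \leq 2T_{\eps,\hbar}$, so that all the operators involved are still ``nice'' $\Psi$DOs in a bounded subset of $\Psi_\nu^{-\infty,0}$ by Prop.~\ref{pro:P_alpha-symbol} and its corollaries. The key observation is that $\mu_\hbar([\cdot\bbeta]) = \|\Pi_{\bbeta}\psi_\hbar\|^2$ with $\Pi_{\bbeta} = U^{-n_0+1}\Pi_{\beta_{n_0-1}}U\cdots U\Pi_{\beta_0}$, while by definition \eqref{eq:shifted-measure} and \eqref{eq:mu^u-definition},
\[
\mu_\hbar(\sigma^{-n}[\cdot\bbeta]) = \sum_{|\bal|=n}\big\|\Pi_{\bal\cdot\bbeta}\,\psi_\hbar\big\|^2,
\]
where $\bal\cdot\bbeta$ is the concatenated word of length $n+n_0$. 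Writing $\Pi_{\bal\cdot\bbeta} = U^{-n-n_0+1}\Pi_{\alpha_{n-1}}U\cdots U\Pi_{\alpha_0}\,U\,\Pi_{\beta_{n_0-1}}U\cdots U\Pi_{\beta_0}$ and using that $\psi_\hbar$ is a null eigenstate of $U$ (so that $U^{-n-n_0+1}$ may be moved off), one sees that $\sum_{|\bal|=n}\Pi_{\bal\cdot\bbeta}^*\Pi_{\bal\cdot\bbeta}$ has the form $\big(U^{-n_0+1}\Pi_{\beta_{n_0-1}}U\cdots\big)^* \Big(\sum_{|\bal|=n}\Pi_{\bal}^*\Pi_{\bal}\Big)\big(\cdots\Pi_{\beta_0}\big)$ after appropriate conjugation by $U$; the inner sum is exactly $S_n$ from \eqref{eq:Quant-partition-n}.

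**Next I would** insert the sharp energy cutoffs. From \eqref{eq:local-psi-hbar} we have $\psi_\hbar = \chi^{(0)}(P(\hbar))\psi_\hbar + \cO(\hbar^\infty)$, and by the embedding property \eqref{eq:cutoff-embedding} (or \eqref{eq:embedding2}) each $\Pi_{\beta_j}$ applied to a state microlocalized in a thin energy shell keeps it so, up to $\cO(\hbar^\infty)$, provided we stay within $n_0 \leq n_{\max}$ refinement steps --- and here $n_0$ is fixed, hence harmless. Thus $\Pi_{\beta_{n_0-2}}\cdots\Pi_{\beta_0}\psi_\hbar$ is, up to $\cO(\hbar^\infty)$, microlocalized inside $\cE_{\eps/2}$, and we may apply the partition-of-unity identity $(Id - S_n)\Oph(\chi) = \cO(\hbar^\infty)$ from \eqref{eq:Quant-partition-n} to absorb the $S_n$. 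This yields
\[
\mu_\hbar(\sigma^{-n}[\cdot\bbeta]) = \big\langle \psi_\hbar,\ \Pi_{\beta_0}^*\cdots\Pi_{\beta_{n_0-1}}^*\,S_n\,\Pi_{\beta_{n_0-1}}\cdots\Pi_{\beta_0}\,\psi_\hbar\big\rangle + \cO(\hbar^\infty) = \|\Pi_{\bbeta}\psi_\hbar\|^2 + \cO(\hbar^\infty),
\]
at least in the case where the $\Pi_k$ form an exact resolution of identity. In the microlocal case, $S_n = 1 + \cO(\hbar^\infty)$ only on the range of $\Oph(\chi)$, so the leftover is controlled by how well $\Pi_{\beta_{n_0-1}}\cdots\Pi_{\beta_0}\psi_\hbar$ is microlocalized in $\cE_{\eps/2}$; since $n_0$ is fixed this microlocalization holds modulo $\cO(\hbar^\infty)$. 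The backwards measure $\tilde\mu_\hbar$ is handled identically with $U$ replaced by $U^{-1}$ throughout.

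**The main obstacle** I anticipate is bookkeeping the error terms when $n$ itself is of logarithmic size, $n \sim 2T_{\eps,\hbar}$: one needs the $\cO(\hbar^\infty)$ remainders in \eqref{eq:Quant-partition-n} and the Egorov-type estimates of Prop.~\ref{pro:P_alpha-symbol} to hold \emph{uniformly} over all $K^n$ words $\bal$, and the accumulated error $K^n\cdot\cO(\hbar^\infty)$ must still be negligible --- this is where the restriction $n \leq 2T_{\eps,\hbar} = \frac{(1-\eps)|\log\hbar|}{\lambda_\eps}$ and the exotic symbol calculus $S_\nu^{-\infty,0}$ with $\nu < 1/2$ is essential, since there the remainders degrade only polynomially. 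The stated bound $\cO(\hbar^{\eps/2})$ (rather than $\cO(\hbar^\infty)$) precisely reflects this: the operator $\sum_{|\bal|=n}\Pi_{\bal}^*\Pi_{\bal} = S_n$ is the identity only modulo the norm of $(Id-S_n)$ composed with a state that is microlocalized in $\cE_{\eps/2}$ only up to a power of $\hbar$ determined by the width $\hbar^{1-\delta}$ of the energy window versus the Ehrenfest-time spreading; tracking the constant through the sharp-cutoff embedding \eqref{eq:cutoff-embedding-class} gives the exponent $\eps/2$. So the proof is: (1) unfold both sides as quadratic forms in $\psi_\hbar$; (2) use $\psi_\hbar$ eigenstate to conjugate away the outer propagators; (3) identify the inner sum as $S_n$; (4) insert $\chi^{(0)}(P(\hbar))$ and use \eqref{eq:Quant-partition-n}, \eqref{eq:local-psi-hbar} plus uniform control over all words for $n \leq 2T_{\eps,\hbar} - n_0$ to replace $S_n$ by $1$ up to $\cO(\hbar^{\eps/2})$; (5) repeat verbatim for $\tilde\mu_\hbar$.
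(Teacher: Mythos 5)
Your factorization of $\Pi_{\bal\cdot\bbeta}$ is backwards, and this masks the entire difficulty of the lemma. Unwinding (\ref{eq:quantum-refined}) for the word $\gamma_0\cdots\gamma_{n+n_0-1}=\alpha_0\cdots\alpha_{n-1}\beta_0\cdots\beta_{n_0-1}$, the operator $\Pi_{\gamma_0}=\Pi_{\alpha_0}$ sits on the \emph{right} (acting first on $\psi_\hbar$) and $\Pi_{\gamma_{n+n_0-1}}=\Pi_{\beta_{n_0-1}}$ on the far left; the correct identity is
\[
\Pi_{\bal\bbeta} \;=\; U^{-n-n_0+1}\Pi_{\beta_{n_0-1}}U\cdots U\Pi_{\beta_0}\,U\,\Pi_{\alpha_{n-1}}U\cdots U\Pi_{\alpha_0}
\;=\; \Pi_{\bbeta}(n)\,\Pi_{\bal},
\]
not $\Pi_{\bal}(n_0)\Pi_{\bbeta}$ as your display would give. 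The $\bal$-block therefore sits \emph{between} $\Pi_{\bbeta}(n)^*\Pi_{\bbeta}(n)$ and $\psi_\hbar$; the sum $\sum_{\bal}\Pi_{\bal}^*\,|\Pi_{\bbeta}(n)|^2\,\Pi_{\bal}$ does \emph{not} factor as $\Pi_{\bbeta}^*\,S_n(\cdot)\,\Pi_{\bbeta}$, and you cannot read off $S_n$ from a sandwich. Your proposal effectively proves only the compatibility relation (\ref{eq:compatibility}) for the word $\bbeta\bal$, which is trivially true and is not what the lemma asserts.

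Because of this, the genuine obstacle never appears in your proof. To isolate $S_n=\sum_{\bal}\Pi_{\bal}^*\Pi_{\bal}$ one must commute $\Pi_{\bal}$ through $|\Pi_{\bbeta}(n)|^2$, and the naive estimate -- commuting the whole $\Pi_{\bal}$ at once, which by Prop.~\ref{pro:P_alpha-symbol} gives a commutator of norm $\cO(\hbar^{1-2\nu})$ -- is then summed over $K^n$ words, producing $\cO(K^n\hbar^{1-2\nu})$, which diverges once $n\sim 2T_{\eps,\hbar}$. The paper flags exactly this as a ``short but false proof.'' The actual argument proceeds by peeling off one factor $\Pi_{\alpha_j}(j)$ at a time, commuting only that single factor through $|\Pi_{\bbeta}(n)|^2$ (commutator $\cO(\hbar^{1-2\nu})$, uniform in $\bal$), then summing over $\alpha_j$ \emph{before} touching $\alpha_{j-1}$ so that the partition identity $\sum_k\Pi_k^2=\Oph(\tilde\chi_{\eps/2})+\cO(\hbar^\infty)$ collapses the index and the error accumulates only additively, giving $\cO(n\,\hbar^{1-2\nu})$ rather than $\cO(K^n\hbar^{1-2\nu})$. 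The final exponent $\eps/2$ then comes from $n\lesssim|\log\hbar|$ combined with the freedom to take $\nu$ slightly above $(1-\eps)/2$, not from tracking constants through the energy-cutoff embeddings as you speculate. So your steps (3) and (4) are not just incomplete -- they rest on a factorization that does not hold, and the missing commutator bookkeeping is precisely the content of the lemma.
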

This lemma means that the measures $\mu_{\hbar}$, $\tilde{\mu}_{\hbar}$
are approximately shift-invariant in the semiclassical limit.
\begin{proof}
We only give the proof for the measure $\mu_{\hbar}$. Each term on
the RHS of (\ref{eq:shifted-measure}) reads\[
\la\Pi_{\bbeta}(n)\Pi_{\bal}\psi_{\hbar},\Pi_{\bbeta}(n)\Pi_{\bal}\psi_{\hbar}\ra=\la\Pi_{\bal}^{*}\left|\Pi_{\bbeta}(n)\right|^{2}\Pi_{\bal}\psi_{\hbar},\psi_{\hbar}\ra.\]
We first present a short (but false) proof. In order to use (\ref{eq:Quant-partition-n}),
we try to bring together the operator $\Pi_{\bal}$ and its hermitian
conjugate, such as to let appear the sum $\sum_{|\bal|=n}\Pi_{\bal}^{*}\Pi_{\bal}$.
An error appears while commuting $\Pi_{\bal}$ with $\left|\Pi_{\bbeta}(n)\right|^{2}$.
Still, from Proposition \ref{pro:P_alpha-symbol} we know that for
any $|\bal|=n$, $|\bbeta|=n_{0}$ with $n+n_{0}\leq2T_{\epsilon,\hbar}$,
the operators $\Pi_{\bal}(-\frac{n}{2})$ and $\left|\Pi_{\bbeta}(\frac{n}{2})\right|^{2}$
belong to the class $\Psi_{\nu}^{-\infty,0}$, with $\nu\in[\frac{1-\eps}{2},\frac{1}{2})$.
As a consequence, their commutator satisfies $\left[\left|\Pi_{\bbeta}(\frac{n}{2})\right|^{2},\Pi_{\bal}(-\frac{n}{2})\right]\in\Psi_{\nu}^{-\infty,-1+2\nu}$,
an operator of norm $\cO\left(\hbar^{1-2\nu}\right)$. By unitarity
of the evolution, the commutator $\left[\left|\Pi_{\bbeta}(n)\right|^{2},\Pi_{\bal}\right]$
has the same norm. We thus get 
\[
\sum_{\bal}\mu_{\hbar}([\cdot\bal\bbeta])=\la\left|\Pi_{\bbeta}(n)\right|^{2}S_{n}\psi_{\hbar},\psi_{\hbar}\ra+\sum_{\bal}\cO\left(\hbar^{1-2\nu}\right)=\mu_{\hbar}([\cdot\bbeta])+\cO\left(K^{n}\hbar^{1-2\nu}\right).
\]
The remainder in the RHS is small if $n$ is uniformly bounded, but
it becomes very large if $n\approx2T_{\eps,\hbar}$! 

To remedy this problem one actually needs to \emph{successively} group
together the pairs of operators $\Pi_{\alpha_{j}}(j)$ and perform
the sum over $\alpha_{j}$. One starts by grouping together the $\Pi_{\alpha_{n-1}}$:
{\multlinegap0pt
\begin{multline*}
\la\Pi_{\bbeta}(n)\Pi_{\bal}\psi_{\hbar},\Pi_{\bbeta}(n)\Pi_{\bal}\psi_{\hbar}\ra\\
\begin{aligned}
&=\la\Pi_{\alpha_{n-1}}^*(n-1)\lvert \Pi_{\bbeta}(n)\rvert^2\Pi_{\alpha_{n-1}}(n-1)\Pi_{\alpha_0\dotsm\alpha_{n-2}}\psi_{\hbar},\Pi_{\alpha_0\dotsm\alpha_{n-2}}\psi_{\hbar}\ra\\
&=\la\lvert \Pi_{\alpha_{n-1}}(n-1)\rvert^2\lvert \Pi_{\bbeta}(n)\rvert^2\Pi_{\alpha_0\dotsm\alpha_{n-2}}\psi_{\hbar},\Pi_{\alpha_0\dotsm\alpha_{n-2}}\psi_{\hbar}\ra
\end{aligned}\\
{}+\la\Pi_{\alpha_{n-1}}^*(n-1)[\lvert \Pi_{\bbeta}(n)\rvert^2,\Pi_{\alpha_{n-1}}(n-1)]\Pi_{\alpha_0\dotsm\alpha_{n-2}}\psi_{\hbar},\Pi_{\alpha_0\dotsm\alpha_{n-2}}\psi_{\hbar}\ra.\end{multline*}}%
The commutator in the RHS is an operator of norm $\cO\left(\hbar^{1-2\nu}\right)$
for the reasons indicated above. The second overlap is then bounded
from above by $\left\Vert \Pi_{\alpha_{0}\cdots\alpha_{n-2}}\psi_{\hbar}\right\Vert ^{2}\cO\left(\hbar^{1-2\nu}\right)$,
where the implied constant does not depend on $\bal$. Using the quantum
partition of order $n-1$, we can sum this second term over $\bal$,
to obtain an error $\cO\left(\hbar^{1-2\nu}\right)$. The first term
on the RHS can be summed over $\alpha_{n-1}$, to produce 
\begin{multline*}
\la\Oph(\tilde{\chi}_{\eps/2})(n-1)\left|\Pi_{\bbeta}(n)\right|^{2}\Pi_{\alpha_{0}\cdots\alpha_{n-2}}\psi_{\hbar},\Pi_{\alpha_{0}\cdots\alpha_{n-2}}\psi_{\hbar}\ra\\
=\la\left|\Pi_{\bbeta}(n)\right|^{2}\Pi_{\alpha_{0}\cdots\alpha_{n-2}}\psi_{\hbar},\Pi_{\alpha_{0}\cdots\alpha_{n-2}}\psi_{\hbar}\ra+\cO(\hbar^{\infty}),
\end{multline*}
where we used the fact that $\tilde{\chi}_{\eps/2}\equiv1$ on the
microsupport of $\Pi_{\alpha_{0}\cdots\alpha_{n-2}}\psi_{\hbar}$
(see the proof of Lemma \ref{pro:qu-part-local}). Apart from the
errors, we are now left with the sum \[
\sum_{\alpha_{0},\ldots,\alpha_{n-2}}\la\left|\Pi_{\bbeta}(n)\right|^{2}\Pi_{\alpha_{0}\cdots\alpha_{n-2}}\psi_{\hbar},\Pi_{\alpha_{0}\cdots\alpha_{n-2}}\psi_{\hbar}\ra.\]
This sum can be treated as above, by bringing $\Pi_{\alpha_{n-2}}(n-2)$
to the left, commuting it with $\left|\Pi_{\bbeta}(n)\right|^{2}$,
and summing over $\alpha_{n-2}$. It produces another error $\cO\left(\hbar^{1-2\nu}\right)$.
Iterating this procedure down to $\alpha_{0}$, we get \[
\sum_{\bal}\mu_{\hbar}([\cdot\bal\bbeta])=\mu_{\hbar}([\cdot\bbeta])+\cO\left(n\hbar^{1-2\nu}\right).\]
By selecting $\nu$ appropriately, we get the statement of the lemma.
\end{proof}
Coming back to the subadditivity equation (\ref{eq:subadd-quant-1})
in the case $n_{o}$ fixed, $n=\left\lfloor 2T_{\eps,\hbar}\right\rfloor -n_{0}$,
we get
\begin{align*}
H_{0}^{n-1}(\mu_{\hbar}) & \leq H_{0}^{n-n_{o}-1}(\mu_{\hbar})+H_{0}^{n_{o}-1}(\sigma_{*}^{-n}\mu_{\hbar})+\cO(\hbar^{\infty})\\
 & \leq H_{0}^{n-n_{o}-1}(\mu_{\hbar})+H_{0}^{n_{o}-1}(\mu_{\hbar})+\cO_{n_{o}}(\hbar^{\eps/3}).
\end{align*}
Here we used the fact that the function $\eta(s)$ satisfies $\left|\eta(s+s')-\eta(s)\right|\leq\eta(|s'|)$.
It is also easy to check that the {}``potential part'' of the pressure
$p(\mu_{\hbar},w)$ satisfies this inequality:
\begin{multline*}
\sum_{\lvert\bal\rvert =n}\sum_{\lvert \bbeta\rvert =n_0}\mu_{\hbar}([\cdot\bal\bbeta])\log w_{\bal\bbeta} \\
\begin{aligned}
& =\sum_{\lvert\bal\rvert =n}\mu_{\hbar}([\cdot\bal])\log w_{\bal}+\sum_{\lvert \bbeta\rvert =n_0}\sigma^{-n}\mu_{\hbar}([\cdot\bbeta])\log w_{\bbeta}+\cO(\hbar^{\infty})\\
 & =\sum_{\lvert\bal\rvert =n}\mu_{\hbar}([\cdot\bal])\log w_{\bal}+\sum_{\lvert\bbeta\rvert =n_0}\mu_{\hbar}([\cdot\bbeta])\log w_{\bbeta}+\cO(\hbar^{\eps/2}),
\end{aligned}
\end{multline*}
so we finally get the approximate pressure subadditivity
\begin{equation}
p_{0}^{n-1}(\mu_{\hbar},w)\leq p_{0}^{n-n_{o}-1}(\mu_{\hbar},w)+p_{0}^{n_{o}-1}(\mu_{\hbar},w)+\cO_{n_{o}}(\hbar^{\eps/3}).\label{eq:pressure-approx-subadd}
\end{equation}
Taking the Euclidian quotient $n=qn_{o}+r$, $r<n_{o}$, we can iterate
this process $q$ times and obtain: 
\[
p_{0}^{n+n_{0}-1}(\mu_{\hbar},w)\leq p_{0}^{r-1}(\mu_{\hbar},w)+q\, p_{0}^{n_{o}-1}(\mu_{\hbar},w)+\cO_{n_{o}}(q\hbar^{\eps/3}).
\]

A similar approximate subadditivity holds for the pressures $p(\tilde{\mu}_{\hbar},v)$
associated with the {}``backwards'' symbolic measure $\tilde{\mu}_{\hbar}$.
We have thus obtained the following 
\begin{prop}
Let $\mu_{\hbar},\,\tilde{\mu}_{\hbar}$ be the associated forward
and backwards symbolic measures associated with $(\psi_{\hbar})$.
Fix some $n_{o}>0$, and for $\hbar<\hbar_{0}$ split the Ehrenfest
time $n=\left\lfloor 2T_{\eps,\hbar}\right\rfloor $$ $ into $n=qn_{o}+r$,
$r\in[0,n_{o})$. Then, the pressures associated with these measures
and the weights $w_{k}=v_{k}=J^{u}(k)^{1/2}$ satisfy the following
lower bound:
\begin{multline}\label{eq:quant-entropy-bound}
q\bigl(p_0^{n_0-1}(\mu_{\hbar},w)+p_0^{n_0-1}(\tilde{\mu}_{\hbar},v)\bigr)+p_0^{r-1}(\mu_{\hbar},w)+p_0^{r-1}(\tilde{\mu}_{\hbar},v)\\
\geq-(d-1+c\delta)\lvert\log\hbar\rvert +\cO_{n_0}(\hbar^{\eps/4}).
\end{multline}
\end{prop}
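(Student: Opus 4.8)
The plan is to assemble the statement from three ingredients that are already available: the applied entropic uncertainty principle (\ref{eq:applied-EUP}), the explicit form (\ref{eq:C_cone}) of the cone constant, and the approximate subadditivity (\ref{eq:pressure-approx-subadd}) of the quantum pressures. The whole argument is bookkeeping, the hard analytic inputs having been dealt with already (the hyperbolic dispersive estimate, the microlocal EUP of Prop.~\ref{pro:EUP,microlocal}, and the shift--near--invariance Lemma).

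First I would unwind the two pressures on the left of (\ref{eq:applied-EUP}). With the choices (\ref{eq:application-partitions}), the partition $\tau$ produces precisely the weights $\lVert\Pi_{\bal}\psi_{\hbar}\rVert^2=\mu_{\hbar}([\cdot\bal])$ of the forward symbolic measure (\ref{eq:mu^u-definition}), so $p(\psi_{\hbar},\tau,w)=p_0^{n-1}(\mu_{\hbar},w)$; likewise $\rho$ produces the weights $\tilde\mu_{\hbar}([\cdot\bbeta])$ of (\ref{eq:tilde-mu^u}), so $p(\psi_{\hbar},\rho,v)=p_0^{n-1}(\tilde\mu_{\hbar},v)$, where throughout $n=\lfloor 2T_{\eps,\hbar}\rfloor$. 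Plugging $C_{cone}(\hbar)=C\,\hbar^{-(d-1+c\delta)/2}$ from (\ref{eq:C_cone}) into (\ref{eq:applied-EUP}) and recalling that $\log\hbar=-|\log\hbar|$, so that $-2\log C_{cone}(\hbar)=-(d-1+c\delta)|\log\hbar|-2\log C$, the uncertainty bound becomes
\[
p_0^{n-1}(\mu_{\hbar},w)+p_0^{n-1}(\tilde\mu_{\hbar},v)\ \geq\ -(d-1+c\delta)|\log\hbar|-2\log C+C\hbar^{L/3}.
\]

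Next I would feed in the subadditivity. Writing the Euclidean division $n=qn_0+r$ with $0\le r<n_0$ and iterating (\ref{eq:pressure-approx-subadd}) $q$ times — legitimate because every intermediate time stays in $[0,n]\subset[0,2T_{\eps,\hbar}]$, the window in which the shift--near--invariance Lemma and Prop.~\ref{pro:P_alpha-symbol} apply — yields
\[
p_0^{n-1}(\mu_{\hbar},w)\ \le\ p_0^{r-1}(\mu_{\hbar},w)+q\,p_0^{n_0-1}(\mu_{\hbar},w)+\cO_{n_0}(q\hbar^{\eps/3}),
\]
and the analogous inequality for $\tilde\mu_{\hbar}$ with the weights $v$ (forward and backward evolution playing symmetric roles). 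Adding these two upper bounds and inserting the lower bound of the previous paragraph gives (\ref{eq:quant-entropy-bound}) with right-hand side $-(d-1+c\delta)|\log\hbar|-2\log C+C\hbar^{L/3}-\cO_{n_0}(q\hbar^{\eps/3})$. It remains to tidy the errors: since $q\le n/n_0\le 2T_{\eps,\hbar}/n_0=\cO(|\log\hbar|/n_0)$, one has $q\hbar^{\eps/3}=\cO_{n_0}(\hbar^{\eps/4})$ for $\hbar$ small, the term $\hbar^{L/3}$ is negligible, and the fixed constant $2\log C$ is absorbed into $(d-1+c\delta)|\log\hbar|$ once $\hbar<\hbar_0$ (if necessary after slightly enlarging $c$). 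This produces the claimed inequality.

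The only place where care is genuinely needed — and the natural candidate for ``the hard part'' at this stage — is the balance between the number of iterations $q$ and the per-step error $\hbar^{\eps/3}$ coming from (\ref{eq:pressure-approx-subadd}): one must keep $q=\cO(|\log\hbar|)$, which is exactly why the splitting is performed at $n=\lfloor 2T_{\eps,\hbar}\rfloor$ and not beyond. Past the Ehrenfest window the operators $\Pi_{\bal}$ are no longer controlled pseudodifferential operators, so neither the subadditivity nor the shift--invariance Lemma would survive, and the cumulative error $q\hbar^{\eps/3}$ would cease to be small. Everything else — matching $p(\psi_{\hbar},\tau,w)$ and $p(\psi_{\hbar},\rho,v)$ with $p_0^{n-1}(\mu_{\hbar},w)$ and $p_0^{n-1}(\tilde\mu_{\hbar},v)$, and the sign bookkeeping around $|\log\hbar|$ — is routine.
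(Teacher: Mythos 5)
Your argument matches the paper's exactly: identify $p(\psi_{\hbar},\tau,w)=p_0^{n-1}(\mu_{\hbar},w)$ and $p(\psi_{\hbar},\rho,v)=p_0^{n-1}(\tilde\mu_{\hbar},v)$, insert the explicit $C_{cone}(\hbar)$ from (\ref{eq:C_cone}) into (\ref{eq:applied-EUP}), iterate the approximate subadditivity (\ref{eq:pressure-approx-subadd}) $q$ times for each measure, and bound $q\hbar^{\eps/3}$ by $\cO_{n_0}(\hbar^{\eps/4})$ using $q=\cO(|\log\hbar|)$. Your handling of the leftover additive constant $-2\log C$ (absorb it into $c\delta|\log\hbar|$ by enlarging $c$, or note it disappears after the division by $qn_0$ in the next step) is also the right resolution of a small imprecision in the paper's statement.
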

Hence, approximate subadditivity has enabled us to transfer the bound
on pressures for $n=\left\lfloor 2T_{\eps,\hbar}\right\rfloor $ into
a bound on pressures at finite times $n_{0},r$.

\subsection{Back to the classical pressure}

Using the fact that the pressures $p_{0}^{r-1}(\bullet)$ are uniformly
bounded, we divide (\ref{eq:quant-entropy-bound}) by $qn_0=2T_{\eps,\hbar}-\cO(1)=\lvert\log\hbar\rvert \*\bigl({1+\cO(\eps)}\bigr)/\lambda_{\max}$. For $\hbar<\hbar_\eps$, we get
\begin{align*}
\frac{p_{0}^{n_{0}-1}(\mu_{\hbar},w)}{n_{0}}+\frac{p_{0}^{n_{0}-1}(\tilde{\mu}_{\hbar},v)}{n_{0}} & \geq\frac{-(d-1+c\delta)|\log\hbar|}{qn_{0}}+\cO_{n_{o}}(|\log\hbar|^{-1})\\
 & \geq-(d-1+c\delta)\lambda_{\max}(1+\cO(\eps)).
\end{align*}
The RHS does not depend on $\hbar$, so it is still valid once we
take the semiclassical limit of the LHS. From the properties of $\mu_{\hbar}$
and $\tilde{\mu}_{\hbar}$, the latter is equal to the ratio $2\frac{p_{0}^{n_{o}-1}(\mu_{sc},v,\cP_{sm})}{n_{0}}$,
where we recall that $\mu_{sc}$ is the semiclassical measure associated
with $(\psi_{\hbar})$, while $\cP_{sm}$ is the smoothed partition
(\ref{eq:smooth-partition}). We thus obtained the following lower
bound on the classical pressure:
\[
\frac{p_{0}^{n_{0}-1}(\mu_{sc},v,\cP_{sm})}{n_{o}}\geq-\frac{(d-1)\lambda_{\max}}{2}+\cO(\delta,\eps).
\]
Importantly, the implied constants do not depend on the degree of
smoothness of the partition (that is, on the derivatives of the functions
$\pi_{k}$), so we may send $\delta\to0$ and get rid of the smoothing.
We thus obtain the following lower bound on the pressure associated
with the \emph{sharp} partition $\cP$:
\[
\frac{p_{0}^{n_{0}-1}(\mu_{sc},v,\cP)}{n_{o}}\geq-\frac{(d-1)\lambda_{\max}}{2}+\cO(\eps).
\]
We recall that $\eps$ majorizes the diameter of $\cP$. The first
part of the pressure is the entropy $H(\mu_{sc},\cP^{\vee n_{0}})$,
while the second part is given by the sum 
\[
-\sum_{|\bal|=n_{0}}\mu_{sc}(E_{\bal})\log J_{n_{0}}^{u}(\bal)=-n_{0}\sum_{k=1}^{K}\mu_{sc}(E_{k})\log J^{u}(k),
\]
where we remind that $J^{u}(k)=\min_{\rho\in E_{k}}J^{u}(\rho)$.
We can now let $n_{o}\to\infty$, and get
\[
H_{KS}(\mu_{sc},\cP)\geq-\frac{(d-1)\lambda_{\max}}{2}+\sum_{k=1}^{K}\mu_{sc}(E_{k})\log J^{u}(k)+\cO(\eps).
\]
By taking finer and finer partitions (that is, $\eps\to0$), we finally
get the bound (\ref{eq:bound-general2}) for the semiclassical measure
$\mu_{sc}$. \hfill$\blacksquare$

\section{Proof of the hyperbolic dispersive estimate\label{sec:Hyperbolic dispersive estimate}}

\subsection{Decomposition into adapted Lagrangian states\label{sub:Decomposition-Lagrangian} }

The proof of Proposition~\ref{pro:hyperb-dispers} starts from an
arbitrary $\Psi\in L^{2}$ with $\left\Vert \Psi\right\Vert _{L^{2}}=1$.
The localized state $\psi_{\alpha_{0}}\defeq\Pi_{\alpha_{0}}\chi^{(n)}(P(\hbar))\Psi$
will then be decomposed into a linear combination of {}``nice''
Lagrangian states. To construct these {}``nice states'', we need
to consider, on each neighbourhood $\tilde{E}_{k}\supset E_{k}$,
a coordinate chart $\left\{ (y_{i},\eta_{i}),\; i=0,\ldots,d-1\right\} $
adapted to the dynamics of the geodesic flow. These coordinates are
required to satisfy the following properties:
\begin{enumerate}
\item $\tilde{E}_{k}$ is contained in the polydisk $D(\eps,\eps)=\left\{ (y,\eta),\,|y|\leq\eps,\,|\eta|\leq\eps\right\} $.
\item the coordinate $\eta_{0}=p_{0}(x,\xi)$, so that the energy shells
are given by $\left\{ \eta_{0}=const\right\} $, and the conjugate
variable $y_{0}$ represents the time along the trajectory. 
\item the planes $\left\{ \eta=const\right\} $ are close to the local weak
unstable manifolds $W_{\eps}^{u0}$ in $\tilde{E}_{k}$. For this
aim, we let the plane $\left\{ \eta=0\right\} $ coincide with the
local unstable manifold $W_{\eps}^{u0}(\rho_{k})$ for some arbitrary
point $\rho_{k}\in E_{k}\cap\cE$. \end{enumerate}
\begin{defn}
\label{def:gamma_1-cone}We say that a Lagrangian leaf $\Lambda\subset\tilde{E}_{k}$
belongs to the \emph{$\gamma_{1}$-cone} if it is represented, in
the chart $\left\{ (y,\eta)\right\} $, as \begin{equation}
\Lambda=\left\{ (y,dS(y)),\:|y|\leq\eps\right\} ,\qquad\text{with}\quad\sup_{|y|\leq\eps}\left\Vert d^{2}S(y)\right\Vert \leq\gamma_{1}.\label{eq:gamma_1}\end{equation}
Fixing some $\gamma_{1}>0$, there exists $\eps_{\gamma_{1}}>0$ such
that, provided the diameters of the $\tilde{E}_{k}$ are all smaller
than $\eps_{\gamma_{1}}$, then the $\gamma_{1}$-cone contains all
the local unstable manifolds $W_{\eps}^{u0}(\rho)$, $\rho\in\tilde{E}_{k}$,
while all local stable manifolds $W_{\eps}^{s}(\rho)$ are uniformly
transverse to this cone. We call such a cone an \emph{unstable $\gamma_{1}$-cone}.
\end{defn}
Let $\cU_{k}$ be a semiclassical Fourier Integral Operator (FIO)
associated with the change of coordinates 
\begin{equation}
(x,\xi)\in T^{*}X\to(y,\eta)\in\IR^{2d},\label{eq:coord-change}
\end{equation}
unitary microlocally near $\tilde{E}_{k}\times D(\eps,\eps)$. This
means that for any $\psi_k\in L^{2}(X)$, $\lVert\psi_k\rVert=\cO(1)$, microlocalized inside $\tilde{E}_{k}$,
we have 
\[
\left\Vert \psi_k\right\Vert_{L^{2}(X)}=\left\Vert \cU_{k}\psi_k\right\Vert_{L^{2}(\IR^{d})}+\cO(\hbar^{\infty}),
\]
and the function $\cU_{k}\psi_{k}(y)$ is then microlocalized inside $D(\eps,\eps)$.
Hence, using the cutoff 
\[
\chi\in C_{0}^{\infty}(\left\{ |y|\leq2\eps\right\} ),\quad\chi=1\quad\mbox{for }\:|y|\leq\eps,\]
 we may construct a family of {}``localized plane waves'' 
\[
e_{\eta}(y)=\chi(y)\exp(i\la\eta,y\ra/\hbar),\qquad(y,\eta)\in D(2\eps,2\eps),
\]
such that the function $\cU_{k}\psi_{k}(y)$ can be Fourier expanded
into\[
\cU_{k}\psi_{k}=(2\pi\hbar)^{-d/2}\int_{|\eta|\leq2\eps}e_{\eta}\,\hat{\psi}_{k}(\eta)\, d\eta+\cO_{L^{2}}(\hbar^{\infty}),\]
where $\hat{\psi}_{k}=\cF_{\hbar}\psi_{k}$ is the $\hbar$-Fourier
transform of $\cU_{k}\psi_{k}(y)$. Each state $e_{\eta}$, $|\eta|\leq2\eps$,
is a Lagrangian state associated with the {}``horizontal'' Lagrangian
leave $\Lambda_{\eta}=\left\{ (y,\eta),\:|y|\leq2\eps\right\} $. 

The change of coordinates (\ref{eq:coord-change}) brings the energy
layer $\left\{ |p-1/2|\leq\hbar^{1-\delta}\right\} $ into the slice
$\left\{ |\eta_{0}|\leq\hbar^{1-\delta}\right\} $. As a result, the
states $\psi_{k}$ which are sharply localized in energy are easy
to characterize.
\begin{lem}
\label{lem:sharp-cutoff-decompo}Assume that for some integer $m\leq C_{\delta}|\log\hbar|$
the state $\psi_{k}$ satisfies \begin{equation}
\chi^{(m)}(P(\hbar))\psi_{k}=\psi_{k}+\cO_{L^{2}}(\hbar^{\infty}).\label{eq:energy-local}\end{equation}
In that case, the state $\psi_{k}$ can be decomposed into\begin{equation}
\psi_{k}=(2\pi\hbar)^{-d/2}\int_{|\eta_{0}|\leq e^{m\delta}\hbar^{1-\delta},|\eta'|\leq2\eps}\cU_{k}^{*}e_{\eta}\,\hat{\psi}_{k}(\eta)\, d\eta+\cO_{L^{2}}(\hbar^{\infty}).\label{eq:decompo-sharp}\end{equation}
\end{lem}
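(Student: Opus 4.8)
The plan is to transport the problem to the model space $\IR^{d}$ through the FIO $\cU_{k}$, where the classical energy $p_{0}$ becomes the momentum coordinate $\eta_{0}$ (by the choice of chart), and then to read the sharp energy localization \eqref{eq:energy-local} of $\psi_{k}$ as a sharp truncation of the Fourier variable $\eta_{0}$. The plain Fourier expansion $\cU_{k}\psi_{k}=(2\pi\hbar)^{-d/2}\int_{|\eta|\leq2\eps}e_{\eta}\,\hat{\psi}_{k}(\eta)\,d\eta+\cO_{L^{2}}(\hbar^{\infty})$ recalled just above is already available (it only uses that $\psi_{k}$ is microlocalized inside $\tilde E_{k}$ and that $\cU_{k}$ is microlocally unitary there), so after applying $\cU_{k}^{*}$ it suffices to prove that the contribution of $\{|\eta_{0}|>e^{m\delta}\hbar^{1-\delta}\}$ to this integral is $\cO_{L^{2}}(\hbar^{\infty})$. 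A preliminary step is to trade the functional-calculus cutoff in \eqref{eq:energy-local} for an honest anisotropic $\Psi$DO whose \emph{symbol} is localized in a thin slice around $\cE$: since $(1-\chi^{(m+1)}\circ p_{0})$ and $\chi^{(m)}$ have supports separated by the gap \eqref{eq:cutoff-embedding-class}, the embedded-cutoff lemma of $\S$\ref{sub:Sharp-energy-cutoffs} gives $\bigl(\mathrm{Id}-\Op_{\cE,\hbar}(\chi^{(m+1)}\circ p_{0})\bigr)\chi^{(m)}(P(\hbar))=\cO(\hbar^{\infty})$, hence by \eqref{eq:energy-local}
\[
\psi_{k}=\Op_{\cE,\hbar}(\chi^{(m+1)}\circ p_{0})\,\psi_{k}+\cO_{L^{2}}(\hbar^{\infty}).
\]
(The enlargement $e^{m\delta}\rightsquigarrow e^{(m+1)\delta}$ is harmless: it amounts to slightly increasing $\delta$, and $\chi^{(m+1)}(0)=1$, so the identity is consistent with the application to a null eigenstate.)

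Next I would conjugate by $\cU_{k}$. Using that $\eta_{0}=p_{0}$ in the chart, the symbol $c$ of $\cU_{k}\,\Op_{\cE,\hbar}(\chi^{(m+1)}\circ p_{0})\,\cU_{k}^{*}$, computed microlocally near $D(\eps,\eps)$ by the Egorov/FIO-conjugation calculus, equals $\chi^{(m+1)}(\eta_{0})$ plus lower-order terms, each a combination of $\eta_{0}$-derivatives of $\chi^{(m+1)}$; thus $c$ stays in a bounded subset of $S_{\cE,1-\delta}^{-\infty,0}$ and is \emph{genuinely} supported in $\{|\eta_{0}|\leq e^{(m+1)\delta}\hbar^{1-\delta}\}$, the time constraint $m\leq C_{\delta}|\log\hbar|$ with $C_{\delta}<\delta^{-1}-1$ keeping this expansion controlled. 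Since $\psi_{k}$ and $\Op_{\cE,\hbar}(\chi^{(m+1)}\circ p_{0})\psi_{k}$ are microlocalized inside $\tilde E_{k}$, where $\cU_{k}^{*}\cU_{k}=\mathrm{Id}$ modulo $\cO(\hbar^{\infty})$, the identity above becomes
\[
\cU_{k}\psi_{k}=\Op_{\hbar}^{W}(c)\,\cU_{k}\psi_{k}+\cO_{L^{2}}(\hbar^{\infty})=(2\pi\hbar)^{-d/2}\int_{|\eta|\leq2\eps}\Op_{\hbar}^{W}(c)\,e_{\eta}\,\hat{\psi}_{k}(\eta)\,d\eta+\cO_{L^{2}}(\hbar^{\infty}).
\]

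It then remains to check that $\Op_{\hbar}^{W}(c)\,e_{\eta}=\cO_{L^{2}}(\hbar^{\infty})$ whenever $|(\eta)_{0}|$ exceeds $e^{(m+1)\delta}\hbar^{1-\delta}$ by a margin $\gtrsim\hbar^{1-\delta}$. The Lagrangian state $e_{\eta}=\chi(y)\exp(i\langle\eta,y\rangle/\hbar)$ has $\hbar$-Fourier transform $\propto\hat{\chi}\bigl((\cdot-\eta)/\hbar\bigr)$, which decays rapidly because $\chi$ is smooth and compactly supported; hence $e_{\eta}$ is microlocalized within distance $\hbar^{1-\delta'}$ (any $\delta'>0$) of the slice $\{\eta_{0}=(\eta)_{0}\}$, whereas $c$ is supported in $\{|\eta_{0}|\leq e^{(m+1)\delta}\hbar^{1-\delta}\}$. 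For such $\eta$ these two microlocal regions are disjoint with separation $\gtrsim\hbar^{1-\delta}$ — again the gap built into the family $(\chi^{(n)})$ — so the disjoint-support composition estimate of the class $S_{\cE,1-\delta}^{-\infty,0}$, the mechanism underlying \eqref{eq:cutoff-embedding}, forces $\Op_{\hbar}^{W}(c)\,e_{\eta}=\cO_{L^{2}}(\hbar^{\infty})$, uniformly in $\eta$. Feeding this back into the last integral discards the corresponding range of $\eta_{0}$, and applying $\cU_{k}^{*}$ produces \eqref{eq:decompo-sharp} (the residual $e^{O(\delta)}$ factor in the upper bound on $|\eta_{0}|$ being reabsorbed by a small increase of $\delta$).

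The main obstacle is confinement to the exotic calculus of $\S$\ref{sub:exotic-symbols}: the cutoffs oscillate on the small scale $\hbar^{1-\delta}$, so FIO conjugation and symbol composition gain only a fractional power of $\hbar$ per order, and it is precisely the logarithmic bound $m\leq C_{\delta}|\log\hbar|$, $C_{\delta}<\delta^{-1}-1$, that guarantees the symbol expansions and their remainders stay $\cO(\hbar^{\infty})$. All the tools needed — the calculus of $\Psi_{\cE,1-\delta}^{-\infty,0}$, the embedded-cutoff estimates \eqref{eq:cutoff-embedding}–\eqref{eq:cutoff-embedding-class}, and Egorov-type conjugation by $\cU_{k}$ — are already in place in the excerpt, so the task is to invoke them carefully rather than to prove anything new; the only genuinely delicate spot is the thin boundary layer near $|\eta_{0}|=e^{m\delta}\hbar^{1-\delta}$, which is exactly what the gap in the sequence $(\chi^{(n)})$ is designed to absorb.
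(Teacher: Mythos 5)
Your overall plan is the same as the paper's: transport to the chart via $\cU_{k}$, observe that the sharp energy cutoff becomes (microlocally) a sharp cutoff in $\eta_{0}$, and conclude that the Fourier synthesis of $\cU_{k}\psi_{k}$ only sees the strip $\{\lvert\eta_{0}\rvert\lesssim e^{m\delta}\hbar^{1-\delta}\}$. The paper's own proof is a one-line assertion that $\hat{\psi}_{k}(\eta)$ decays rapidly outside that strip, and your argument unpacks exactly the mechanism behind it. The preliminary replacement of $\chi^{(m)}(P(\hbar))$ by $\Op_{\cE,\hbar}(\chi^{(m+1)}\circ p_{0})$, the FIO conjugation to a symbol $c$ of the class $S_{\cE,1-\delta}^{-\infty,0}$ supported in a thin slice, and the disjoint-support estimate $\Op_{\hbar}^{W}(c)\,e_{\eta}=\cO(\hbar^{\infty})$ when $\eta_{0}$ lies outside the slice are all the right ingredients.

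There is, however, a logical slip in the final step. After applying $\Op_{\hbar}^{W}(c)$ to the Fourier expansion and truncating by disjoint support, what you obtain is
\[
\cU_{k}\psi_{k}=(2\pi\hbar)^{-d/2}\int_{\text{strip}}\Op_{\hbar}^{W}(c)\,e_{\eta}\,\hat{\psi}_{k}(\eta)\,d\eta+\cO_{L^{2}}(\hbar^{\infty}),
\]
which still carries $\Op_{\hbar}^{W}(c)\,e_{\eta}$ in the integrand, not the bare $e_{\eta}$ appearing in \eqref{eq:decompo-sharp}. You cannot remove $\Op_{\hbar}^{W}(c)$ for free: near the boundary of the strip, $c$ transitions from $1$ to $0$ and $\Op_{\hbar}^{W}(c)\,e_{\eta}$ is not close to $e_{\eta}$, so ``discarding the bad range'' of $\eta_{0}$ does not yet produce the claimed identity. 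The fix is cheap, and you already have the key estimate. Instead of inserting $\Op_{\hbar}^{W}(c)$ into the synthesis integral, insert it into the \emph{analysis} pairing: since $\cU_{k}\psi_{k}$ is compactly supported mod $\cO(\hbar^{\infty})$ and $\chi\equiv1$ there, one has
\[
\hat{\psi}_{k}(\eta)=(2\pi\hbar)^{-d/2}\,\la e_{\eta},\cU_{k}\psi_{k}\ra+\cO(\hbar^{\infty})
=(2\pi\hbar)^{-d/2}\,\la \Op_{\hbar}^{W}(c)\,e_{\eta},\cU_{k}\psi_{k}\ra+\cO(\hbar^{\infty}),
\]
using $\Op_{\hbar}^{W}(c)\,\cU_{k}\psi_{k}=\cU_{k}\psi_{k}+\cO(\hbar^{\infty})$ and the self-adjointness of $\Op_{\hbar}^{W}(c)$ (the symbol being real). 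Your disjoint-support estimate then gives $\hat{\psi}_{k}(\eta)=\cO(\hbar^{\infty})$, uniformly, for $\eta_{0}$ outside the (slightly enlarged) strip — which is precisely the decay claimed by the paper. Plugging this decay back into the \emph{original} Fourier expansion (with $e_{\eta}$, not $\Op_{\hbar}^{W}(c)\,e_{\eta}$, in the integrand), the contribution from the complement of the strip is controlled by Cauchy--Schwarz and is $\cO_{L^{2}}(\hbar^{\infty})$, giving \eqref{eq:decompo-sharp}. So: right tools, right intermediate estimates, but you applied the disjoint-support step on the wrong side of the pairing; redirect it to the Fourier coefficient itself and the argument closes.
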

\begin{proof}
The assumption (\ref{eq:energy-local}) and the microlocalization
of $\cU_{k}\psi_{k}$ inside $D(2\eps,2\eps)$ imply that its Fourier
transform $\hat{\psi}_{k}$ satisfies $\hat{\psi}_{k}(\eta)=\cO\left(\left(\frac{\hbar}{\la\eta\ra}\right)^{\infty}\right)$
for $\eta$ outside the strip $\left\{ |\eta_{0}|\leq e^{m\delta}\hbar^{1-\delta},\;|\eta'|\leq2\eps\right\} $. 
\end{proof}
Our aim is to prove (\ref{eq:hyperb-dispers}). We consider
a sequence $\bal$ of length $n$, and an arbitrary $\Psi\in L^{2}(X)$.
We then apply the above decomposition to the case $k=\alpha_{0}$
and the state $\psi_{\alpha_{0}}=\Pi_{\alpha_{0}}\,\chi^{(n)}(P(\hbar))\,\Psi$.
By construction, this state satisfies the energy localization (\ref{eq:energy-local})
if we take $m\geq n+1$.

\subsection{Evolution of individual Lagrangian states}

Our strategy will consist in controlling the states $\Pi_{\bal}\cU_{\alpha_{0}}^{*}e_{\eta}$
individually. For each $|\eta|\leq2\eps$, the state \begin{equation}
\psi^{0}\defeq\cU_{\alpha_{0}}^{*}e_{\eta}\in L^{2}(X)\label{eq:initial-state}\end{equation}
is a Lagrangian state associated with a certain Lagrangian leaf $\Lambda^{0}$
which belongs to some unstable $\gamma_{1}$-cone in $\tilde{E}_{\alpha_{0}}$.
The operator $\Pi_{\bal}$ is a succession of evolutions along the
Schrödinger flow ($U$) and truncations by quasiprojectors $\Pi_{\alpha_{i}}$.
Each quasiprojector $\Pi_{k}=\Oph(\tilde{\pi}_{k})$ is a pseudodifferential
operator, which transforms a Lagrangian state associated with some
Lagrangian leaf $\Lambda$, into another Lagrangian state on the same
$\Lambda$, by modifying its symbol. This modification will generally
reduce the $L^{2}$ norm of the state. In turn, the propagator $U$
is a unitary Fourier Integral Operator associated with the map $g^{1}$,
which transforms a Lagrangian state on $\Lambda$ into a Lagrangian
state on $g(\Lambda)$, keeping the $L^{2}$-norm unchanged.

More precisely, the operator $\Pi_{\alpha_{i}}U$ acts as follows
on Lagrangian states.
\begin{prop}
\label{pro:1-step}Consider a Lagrangian leaf $\Lambda^{0}\subset\tilde{E}_{\alpha_{0}}$
in some unstable $\gamma_{1}$-cone, and a Lagrangian state $\psi^{0}\in L^{2}(X)$
localized on this leaf, of the form 
\[
\cU_{\alpha_{0}}\psi^{0}(y)=a^{0}(y)\, e^{iS^{0}(y)/\hbar},\qquad a^{0}\in C_{c}^{\infty}(D(\eps)).
\]
Then, the state $\psi^{1}=\Pi_{\alpha_{1}}U\psi^{0}$ is a Lagrangian
state associated with the leaf $\Lambda^{1}=g(\Lambda^{0})$. It can
be expressed (in the coordinates attached to $\tilde{E}_{\alpha_{1}}$)
as
\[
\cU_{\alpha_{1}}\psi^{1}(y)=a^{1}(y,\hbar)e^{iS^{1}(y)/\hbar},
\]
where $S^{1}$ is a generating function for $\Lambda^{1}$. The symbol
$a^{1}(y,\hbar)$ admits an expansion 
\begin{equation}
a^{1}(y,\hbar)=\sum_{j=0}^{L-1}\hbar^{j}a_{j}^{1}(y)+\hbar^{L}r_{L}(y,\hbar).\label{eq:expansion1}
\end{equation}
The inverse flow $g_{\rest\Lambda^{1}}^{-1}:\Lambda^{1}\to\Lambda^{0}$
can be expressed in the coordinates $y$ attached respectively to
$\tilde{E}_{\alpha_{1}}$ and $\tilde{E}_{\alpha_{0}}$, through a
map
\begin{equation}
y^{1}\in\pi\Lambda^{1}\subset D(2\eps)\mapsto y^{0}=\pi g^{-1}(y^{1},dS^{1}(y^{1}))\in D(2\eps).\label{eq:y1->y0}
\end{equation}
Then, the principal symbol $a_{0}^{1}(x)$ in (\ref{eq:expansion1})
reads\begin{equation}
a_{0}^{1}(y^{1})=e^{i\beta^{1}}\, a^{0}(y^{0})\,\left|\det\frac{\partial y^{0}}{\partial y^{1}}\right|^{1/2}\,\tilde{\pi}_{\alpha_{1}}(y^{1},dS^{1}(y^{1})),\label{eq:principal}
\end{equation}
with $\beta^{1}$ a constant phase. The higher-order symbols $a_{j}^{1}$
and the remainder $r_{L}$ satisfy the following bounds, for any $\ell\in\IN$:
\begin{align}
&\lVert a_j^1\rVert_{C^l}  \leq C_{l,j}\lVert a^0\rVert _{C^{l+2j}},\qquad0\leq j\leq L-1,\label{eq:higher-order}\\
\begin{split}
&\lVert r_l(\cdot,\hbar)\rVert _{C^l}  \leq C_{l,L}\lVert a^0\rVert _{C^{l+2L+d}},\\
&\hspace{0.2in}\text{$r_l=\cO\Biggl(\biggl(\frac{\hbar}{\hbar+\dist(\bullet,\pi\widetilde{E}_{\alpha_1}}\biggr)^{\infty}\Biggr)$ outside $\pi\widetilde{E}_{\alpha_1}$.}
\end{split}\label{eq:remainder}
\end{align}
The constants $C_{\ell,j}$ depend on the Lagrangian $\Lambda^{0}$.
\end{prop}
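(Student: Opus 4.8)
The plan is to combine the WKB/Fourier-integral-operator description of the one-step Schrödinger propagator with the pseudodifferential action of the quasiprojector, tracking amplitudes, half-densities and phases through one dynamical step. First I would conjugate everything by the FIOs $\cU_{\alpha_0},\cU_{\alpha_1}$, reducing the problem to the study of $\cU_{\alpha_1}\,\Pi_{\alpha_1}U\,\cU_{\alpha_0}^{*}$ acting on the flat WKB state $a^0(y)\,e^{iS^0(y)/\hbar}$ on $\IR^d$. Microlocally near $\tilde{E}_{\alpha_0}\times D(\eps,\eps)$ this composite operator is a product of a \emph{unitary} FIO quantizing the canonical transformation $\tilde g$ obtained from $g^1$ and the coordinate changes (\ref{eq:coord-change}) attached to $\tilde{E}_{\alpha_0},\tilde{E}_{\alpha_1}$, followed by a $\Psi$DO of symbol $\tilde\pi_{\alpha_1}$ read in the $(y,\eta)$ chart; outside these neighbourhoods all contributions are $\cO(\hbar^{\infty})$.

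The geometric input enters next. Because $\Lambda^0$ lies in an unstable $\gamma_1$-cone and the Anosov flow expands the unstable directions while contracting the stable ones, the image $\Lambda^1=\tilde g(\Lambda^0)$ again lies in such a cone and — crucially — projects diffeomorphically onto the $y$-base: this rules out caustics, so $\Lambda^1$ has a \emph{global} generating function $S^1$ on $\pi\Lambda^1\subset D(2\eps)$ and the inverse-flow map (\ref{eq:y1->y0}) $y^1\mapsto y^0$ is a genuine diffeomorphism onto its image. I then run the standard WKB construction for $U$: one seeks $\cU_{\alpha_1}U\psi^0(y)=b(y,\hbar)\,e^{iS^1(y)/\hbar}+\cO(\hbar^{\infty})$ with $b\sim\sum_j\hbar^j b_j$; $S^1$ solves the Hamilton--Jacobi equation, $b_0$ solves the homogeneous transport equation, whose solution along the flow lines is $e^{i\beta^1}\,a^0(y^0)\,|\det(\partial y^0/\partial y^1)|^{1/2}$ with $\beta^1$ a \emph{constant} phase (no Maslov jump, since no caustic forms), and each $b_j$, $j\geq1$, solves an inhomogeneous transport equation whose source costs two derivatives of $b_{j-1}$ — giving $\norm{b_j}_{C^\ell}\le C_{\ell,j}\norm{a^0}_{C^{\ell+2j}}$. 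The Duhamel remainder after truncating at order $L$ is estimated by the usual $L^2$/Sobolev bound, yielding a control of the type (\ref{eq:remainder}) with a loss of $2L+d$ derivatives.

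I then apply $\Pi_{\alpha_1}=\Oph(\tilde\pi_{\alpha_1})$ to the WKB state $b(y,\hbar)\,e^{iS^1(y)/\hbar}$. Stationary phase in the quantization integral — the phase being stationary exactly on $\Lambda^1$, where $\tilde\pi_{\alpha_1}$ gets evaluated — produces again a WKB state on the \emph{same} leaf $\Lambda^1$, with amplitude $c(y,\hbar)\sim\sum_j\hbar^j c_j$, principal term $c_0(y^1)=\tilde\pi_{\alpha_1}(y^1,dS^1(y^1))\,b_0(y^1)$, and higher terms obtained by applying to $b$ differential operators of order $2j$ with coefficients built from $\tilde\pi_{\alpha_1}$ and $S^1$; composing with the expansion of $b$ gives (\ref{eq:expansion1}), (\ref{eq:principal}) and (\ref{eq:higher-order}). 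The decay of $r_L$ away from $\pi\tilde{E}_{\alpha_1}$ is a non-stationary-phase estimate: off $\supp\tilde\pi_{\alpha_1}$ the integrand defining $\Pi_{\alpha_1}$ has no critical point, and repeated integration by parts gains the factor $(\hbar/(\hbar+\dist(\bullet,\pi\tilde{E}_{\alpha_1})))^{\infty}$. Collecting the $\cO(\hbar^{\infty})$'s from the FIO conjugation and the expansion truncations finishes the proof.

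The main obstacle is not any single estimate but the bookkeeping of the two things that make the statement clean: one must verify that the unstable-cone condition is genuinely preserved by $\tilde g$ and that the base projection of $\Lambda^1$ stays non-degenerate, so that a single global $S^1$ and a single constant phase $\beta^1$ suffice (this is precisely where $\eps$ must be taken small depending on $\gamma_1$, as in Definition \ref{def:gamma_1-cone}); and one must carry the $C^\ell$ seminorms through both the transport-equation recursion and the $\Psi$DO stationary-phase expansion with the sharp derivative counts $2j$ and $2L+d$, checking that the constants $C_{\ell,j},C_{\ell,L}$ depend only on the geometry of $\Lambda^0$ (the size of $d^2S^0$, the charts, the cutoffs) and are uniform in $\hbar$.
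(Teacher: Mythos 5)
Your proposal is correct and follows essentially the approach that the cited references [AnaNo07-2, Lemma 3.5] and [NoZw09, Prop.~4.1] use; the paper itself does not reprove this one-step lemma but defers to those sources. Conjugating by the $\cU_{\alpha_k}$ to reduce to the flat WKB picture, solving the Hamilton--Jacobi and transport equations for the unitary FIO $U$, applying the $\Psi$DO $\Pi_{\alpha_1}$ via stationary phase on the same leaf $\Lambda^1$, and tracking the $2j$ derivative cost at each order plus the $2L+d$ loss in the remainder and the non-stationary-phase decay off $\pi\tilde E_{\alpha_1}$ is exactly the mechanism behind (\ref{eq:expansion1})--(\ref{eq:remainder}), and you have correctly identified that the $\gamma_1$-cone invariance and the absence of caustics (hence the single global generating function and constant Maslov phase $\beta^1$) are the geometric ingredients that make the statement hold with uniform constants.
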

The proximity of $\Lambda^{0}$ from the unstable manifold $\Lambda_{\alpha_{0}}\defeq W^{u0}(\rho_{\alpha_{0}})$
has another consequence. The map $y^{1}\mapsto y^{0}$ in (\ref{eq:y1->y0})
, projection of $g_{\rest\Lambda^{1}}^{-1}$ to the coordinates $y$,
is close to the projection of $g_{\rest g(\Lambda_{\alpha_{0}})}^{-1}$.
For this reason, the Jacobian $\det\frac{\partial y^{0}}{\partial y^{1}}$
appearing in (\ref{eq:principal-n}) is close to the Jacobian of $g_{\rest g(\Lambda_{\alpha_{0}})}^{-1}$.
Using our coarse-grained Jacobians (\ref{eq:coarse-grained Jacobian}),
we find
\begin{equation}
\left|\det\frac{\partial y^{0}}{\partial y^{1}}\right|=J^{u}(\alpha_{0})^{-1}(1+\cO(\gamma_{1},\eps^{\gamma})),\label{eq:Jacobians}
\end{equation}
where $\gamma\in(0,1]$ depends on the Hölder regularity of the unstable foliation.

\subsection{$n$-steps evolution}

The above proposition describes the 1-step evolution $\Pi_{\alpha_{i}}U$.
We need to apply many ($n\sim\log(1/\hbar)$) similar steps. To control
these many steps uniformly w.r.to $n$, we first need to analyze the
evolution of the Lagrangian leaf $\Lambda^{0}$ through the classical
evolution corresponding to the operator $\Pi_{\bal}$: for $i=0,\ldots,n-1$,
the leaf $\Lambda^{i+1}$ is obtained by truncating $\Lambda^{i}$
on $\supp\tilde{\pi}_{\alpha_{i}}\subset\tilde{E}_{\alpha_{i}}$,
and then evolving this truncated leaf through $g^{1}$. We will assume
that the sequence $\bal$ is \emph{admissible}, in the sense that
$\Lambda^{i}$ is nonempty for all $i=1,\ldots,n$. Then, the Anosov
structure of the geodesic flow induces the following \emph{inclination
lemma} \cite{KatHas95}, which describes the leaves $\Lambda^{i}$
in the adapted coordinates $(y,\eta)$ on $\tilde{E}_{\alpha_{i}}$:
\begin{lem}
\label{lem:Inclination}Assume the Lagrangian leave $\Lambda^{0}\subset\tilde{E}_{\alpha_{0}}$
belongs to a certain unstable $\gamma_{1}$-cone, as defined in Def.
\ref{def:gamma_1-cone}. Then, the Lagrangians $\Lambda^{i}\subset\tilde{E}_{\alpha_{i}}$,
$i=1,\ldots,n$, also belong to the corresponding unstable $\gamma_{1}$-cones:\[
\Lambda^{i}\subset\left\{ (y,dS^{i}(y)),\:|y|\leq\eps\right\} ,\qquad\sup_{|y|\leq\eps}\left\Vert d^{2}S^{i}(y)\right\Vert \leq\gamma_{1}.\]
We also have a uniform control on the higher derivatives of the generating
functions $S^{i}$. For any $\ell>1$, there exists $\gamma_{\ell}>0$
such that, assuming $dS^{0}$ is in the $\gamma_{1}$-cone and satisfies
$\left\Vert dS^{0}\right\Vert _{C^{\ell}}\leq\gamma_{\ell}$, then
the evolved Lagrangians also satisfy\[
\left\Vert dS^{i}\right\Vert _{C^{\ell}}\leq\gamma_{\ell},\qquad i=1,\ldots,n.
\]
\end{lem}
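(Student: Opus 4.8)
The statement is the classical inclination ($\lambda$-) lemma of \cite{KatHas95}; the reason to reprove it here is that we need the constants $\gamma_\ell$ to be uniform in $n$, i.e.\ uniform over the whole sequence of truncation/evolution steps. The plan is to isolate a \emph{one-step graph transform}, prove that it preserves the relevant cone and the $C^\ell$ bounds, and then iterate. First I would observe that the truncation step (replacing $\Lambda^i$ by $\Lambda^i\cap\{|y|\le\eps\}$ in the chart attached to $\tilde E_{\alpha_i}$) plays no geometric role: it only shrinks the domain of the generating function $S^i$, and admissibility of $\bal$ together with the smallness of the diameters $\eps$ of the $\tilde E_k$ (so that $g^1$ of a local weak-unstable disk overshoots each neighbouring chart in the unstable direction) guarantees that, after applying $g^1$, the evolved leaf still projects onto a full polydisk $\{|y|\le\eps\}$ of $\tilde E_{\alpha_{i+1}}$. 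Thus everything reduces to controlling the map $S^i\mapsto S^{i+1}$ obtained by writing $g^1\bigl(\{\eta=dS^i(y)\}\bigr)$ as a graph $\{\eta=dS^{i+1}(y)\}$ in the adapted coordinates of \S\ref{sub:Decomposition-Lagrangian}.

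For the $C^1$ (i.e.\ $\ell=1$) assertion I would exploit the hyperbolicity encoded in those adapted charts: by construction $\{\eta=0\}$ is the local weak-unstable manifold through the base point $\rho_{\alpha_i}$, the plane field $\{\eta=0\}$ is $Dg^t$-covariant up to a uniformly bounded error, and the stable manifolds are uniformly transverse to the $\gamma_1$-cone. Since $\|Dg^1_{\rest E^s}\|\cdot\|(Dg^1_{\rest E^u})^{-1}\|<1$, for $\eps$ and $\gamma_1$ small enough the differential $Dg^1$ maps each tangent cone $\cC_{\gamma_1}$ strictly inside $\cC_{\gamma_1}$, contracting slopes by a fixed factor $\theta<1$; as $T\Lambda^i\subset\cC_{\gamma_1}$ this already shows $\Lambda^{i+1}$ is a graph in $\cC_{\gamma_1}$. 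Differentiating the graph-transform relation once more gives an affine recursion $d^2S^{i+1}=A_i\,(d^2S^i)+B_i$ with $\|A_i\|\le\theta<1$ and $\|B_i\|\le M_1$, where $M_1$ bounds the second derivatives of $g^1$ on $\cE_\eps$; choosing $\gamma_1\ge M_1/(1-\theta)$ makes $\{\|d^2S\|\le\gamma_1\}$ forward invariant, which is the first part of the lemma, with constants independent of $i$ hence of $n$.

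For the higher derivatives I would argue by induction on $\ell$, the case $\ell=2$ being the previous paragraph. Assume uniform bounds $\|dS^i\|_{C^{\ell-1}}\le\gamma_{\ell-1}$ for $i=0,\dots,n$. Differentiating the graph-transform identity $\ell$ times and isolating the top-order term yields an affine recursion $d^\ell S^{i+1}=A_i^{(\ell)}(d^\ell S^i)+B_i^{(\ell)}$, in which: (i) $A_i^{(\ell)}$ is a contraction, $\|A_i^{(\ell)}\|\le\theta_\ell<1$ with $\theta_\ell$ a fixed constant, since each additional derivative of $S^i\circ(\text{the inner unstable-type map})$ contributes one more inverse power of the unstable expansion; and (ii) $B_i^{(\ell)}$ is a universal polynomial expression in $dS^i,\dots,d^{\ell-1}S^i$ and in the derivatives of $g^1$ up to order $\ell$, hence $\|B_i^{(\ell)}\|\le M_\ell$ with $M_\ell$ depending only on $\gamma_1,\dots,\gamma_{\ell-1}$ and on the flow (and \emph{not} on $\gamma_\ell$). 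Setting $\gamma_\ell:=\gamma_{\ell-1}+M_\ell/(1-\theta_\ell)$, a one-line induction on $i$ gives $\|dS^i\|_{C^\ell}\le\gamma_\ell$ for all $i\le n$ whenever $\|dS^0\|_{C^\ell}\le\gamma_\ell$, uniformly in $n$ and $\hbar$.

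The main obstacle is establishing the uniform contraction $\sup_i\|A_i^{(\ell)}\|<1$ in the variable-curvature setting: pointwise the expansion/contraction rates fluctuate, so one cannot read this off a single application of $g^1$. The remedy is exactly that the adapted charts of \S\ref{sub:Decomposition-Lagrangian} already bring the hyperbolic splitting at the base point $\rho_k$ into normal form up to a uniformly small error; replacing, if necessary, $g^1$ by $g^{t_0}$ for a fixed $t_0$ determined once and for all by the Anosov constants, the one-step graph transform is then a genuine slope contraction, and all the derivative recursions inherit this contraction, so the $\gamma_\ell$ come out as fixed numbers. Note that only the smoothness of the individual leaves $\Lambda^i$ (which is that of the flow, $C^\infty$) is used here; the mere Hölder regularity of the unstable foliation intervenes nowhere in this lemma — it enters only later, in the comparison of Jacobians \eqref{eq:Jacobians}.
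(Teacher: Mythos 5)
The paper itself does not prove this lemma; it simply attributes it to the inclination ($\lambda$-)lemma of \cite{KatHas95}, with the uniform $C^\ell$ statement left to the reader. Your graph-transform argument is the standard proof of that result, so it is "the same route" as what the paper implicitly relies on, and the overall structure — one-step graph transform, cone invariance for $\ell=1$, top-order contraction plus lower-order polynomial inhomogeneity for the induction on $\ell$ — is correct and uniform in $n$ as required.

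Two small cautions. First, the one-step recursion for $d^2S$ is not literally affine: writing $Dg^1=\left(\begin{smallmatrix} a & b\\ c& d\end{smallmatrix}\right)$ in the adapted $(y,\eta)$ coordinates, the graph transform sends $L\mapsto (c+dL)(a+bL)^{-1}$, a fractional linear map whose linearization near $L=0$ is $\delta L\mapsto d\,\delta L\, a^{-1}+\cO(\eps,\gamma_1)$. Your "affine recursion with $\lVert A_i\rVert\le\theta<1$" is therefore only the linearized picture; the conclusion survives because the cone $\lVert L\rVert\le\gamma_1$ keeps you in the regime where the fractional-linear map is a contraction into itself, provided $\eps$ and $\gamma_1$ are small, and it is cleaner to invoke cone-field invariance directly rather than an affine model. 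Second, your remedy for the variable expansion rates — "replacing $g^1$ by $g^{t_0}$ for a fixed $t_0$" — sits awkwardly with the rest of the proof, which must iterate in unit time steps (that is the structure of $\Pi_{\bal}$). The correct fix, which you in fact mention, is that the adapted coordinates of \S\ref{sub:Decomposition-Lagrangian} already play the role of a Lyapunov/adapted metric, so that $Dg^1$ itself contracts the slope cone uniformly once $\eps$ is small; the $g^{t_0}$ detour should be dropped. Finally, the claim that the evolved leaf projects onto the full polydisk is more than you need and need not hold: the lemma only asserts $\Lambda^i\subset\{(y,dS^i(y))\}$, i.e.\ that the (possibly partial) leaf is a graph in the cone, and that is all the graph-transform argument produces or requires. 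Your closing remark that the Hölder regularity of the unstable foliation is irrelevant here and enters only in \eqref{eq:Jacobians} is a correct and useful observation.
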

The above Lemma shows that the evolution $\Lambda^{i}\mapsto\Lambda^{i+1}$
remains uniformly under control at long times. Putting together the
Lemma with Prop. \ref{pro:1-step}, we get the following
\begin{prop}
\label{pro:n-step}For $\hbar<\hbar_{0}$ and $n\leq C\log\hbar^{-1}$,
take any sequence $\bal=\alpha_{0}\cdots\alpha_{n}$ of length $n+1$.
Consider a Lagrangian leaf $\Lambda^{0}\subset\tilde{E}_{\alpha_{0}}\cap\cE_{\eta_{0}}$
in some unstable $\gamma_{1}$-cone, and an associated Lagrangian
state $\psi^{0}\in L^{2}(X)$ localized on this leaf, of the form
\[
\cU_{\alpha_{0}}\psi^{0}(y)=a^{0}(y)\, e^{iS^{0}(y)/\hbar},\qquad a^{0}\in C_{c}^{\infty}(D(\eps)).
\]
We are interested in the evolved state 
\[
\psi^{n}\defeq\Pi_{\alpha_{n}}U\Pi_{\alpha_{n-1}}U\cdots\Pi_{\alpha_{1}}U\psi^{0}.
\]
Then, 

i) If the manifold $\Lambda^{n}$ (obtained from $\Lambda^{0}$ from
the classical evolution) is empty, then $\left\Vert \psi^{n}\right\Vert _{L^{2}}=\cO(\hbar^{\infty})$.

ii) Otherwise, $\psi^{n}$ is a Lagrangian state associated with $\Lambda^{n}$.
It reads\[
\cU_{\alpha_{n}}\psi^{n}(y)=a^{n}(y,\hbar)\, e^{iS^{n}(y)/\hbar}+\cO_{\cS}(\hbar^{\infty}),\]
where the symbol $a^{n}(\bullet,\hbar)$ is supported in $D(\eps)$
and satisfies the bound
\[
\left\Vert a^{n}(\bullet,\hbar)\right\Vert _{C^{0}(D(\eps))} \leq C\, J(\alpha_{0}\cdots\alpha_{n-1})^{-1/2}\left\Vert a^{0}\right\Vert _{C^{0}}.
\]
As a consequence, we obtain the $L^{2}$ estimate 
\begin{equation}
\left\Vert \psi^{n}\right\Vert _{L^{2}(X)}\leq C\, J(\alpha_{0}\cdots\alpha_{n-1})^{-1/2}\left\Vert a^{0}\right\Vert _{C^{0}}.\label{eq:psi^n}\end{equation}
The constant $C$ is uniform when the function $S^{0}$ generating
$\Lambda^{0}$ remains in a bounded set in the $C^{\infty}$ topology. \end{prop}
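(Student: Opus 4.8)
The plan is to prove Proposition~\ref{pro:n-step} by induction on the number of steps $i=0,1,\dots,n$, iterating the one-step result Proposition~\ref{pro:1-step} while controlling all constants uniformly via the inclination Lemma~\ref{lem:Inclination}. The induction hypothesis at step $i$ should record three things: (a) $\psi^i$ is a Lagrangian state $\cU_{\alpha_i}\psi^i(y)=a^i(y,\hbar)\,e^{iS^i(y)/\hbar}+\cO_{\cS}(\hbar^\infty)$ carried by $\Lambda^i$; (b) the generating function $S^i$ lies in the $\gamma_1$-cone with uniformly bounded higher $C^\ell$-norms (this is exactly what Lemma~\ref{lem:Inclination} supplies, so one quotes it rather than reproving it); (c) the symbol obeys $\|a^i(\cdot,\hbar)\|_{C^\ell(D(\eps))}\le C_\ell\, J(\alpha_0\cdots\alpha_{i-1})^{-1/2}\|a^0\|_{C^{\ell+c(i)}}$ for a suitable loss $c(i)$ of derivatives. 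The $L^2$ estimate \eqref{eq:psi^n} then follows from (a) and the $C^0$-bound in (c) at $\ell=0$, since the state is microlocalized in the fixed polydisk $D(\eps)$, so its $L^2$-norm is comparable to $\|a^n\|_{C^0}$ up to $\cO(\hbar^\infty)$.

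First I would treat the base case $i=0$, which is trivial, and the degenerate case in part i): once $\Lambda^j$ becomes empty for some $j\le n$, the truncation by $\tilde\pi_{\alpha_j}$ applied to a Lagrangian state supported away from $\supp\tilde\pi_{\alpha_j}$ produces a state of norm $\cO(\hbar^\infty)$ by non-stationary phase, and unitarity of $U$ together with boundedness of the remaining $\Pi_{\alpha_\ell}$ propagates this to $\psi^n$. For the inductive step in part ii), I would apply Proposition~\ref{pro:1-step} to pass from $\psi^i$ to $\psi^{i+1}=\Pi_{\alpha_{i+1}}U\psi^i$. The key point is the principal-symbol transport formula \eqref{eq:principal}: at each step the symbol is composed with the inverse-flow map $y^{i+1}\mapsto y^i$, multiplied by $|\det\partial y^i/\partial y^{i+1}|^{1/2}$ and by $\tilde\pi_{\alpha_{i+1}}$. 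The Jacobian factor is controlled by \eqref{eq:Jacobians}: it equals $J^u(\alpha_i)^{-1}(1+\cO(\gamma_1,\eps^\gamma))$, and since $|\tilde\pi_{\alpha_{i+1}}|\le 1$, multiplying these across $i=0,\dots,n-1$ yields the product $\prod_i J^u(\alpha_i)^{-1/2}=J(\alpha_0\cdots\alpha_{n-1})^{-1/2}$ up to a factor $(1+\cO(\gamma_1,\eps^\gamma))^n$. Here I must be careful: that accumulated factor is dangerous because $n\sim\log(1/\hbar)$, so I would absorb it by choosing $\gamma_1$ and $\eps$ small enough (depending on the fixed constant $C$ in $n\le C\log\hbar^{-1}$) that $(1+\cO(\gamma_1,\eps^\gamma))^n$ stays bounded; this is the standard trick and is why the cone parameter must be fixed first.

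The genuinely delicate point — and the one I expect to be the main obstacle — is the uniform control of the \emph{higher-order} symbols $a^{i}_j$ and the remainders $r_L$ through $n\sim\log(1/\hbar)$ iterations. Each application of \eqref{eq:higher-order} costs two derivatives of the symbol per order in $\hbar$ and multiplies the $C^\ell$-norm by a constant $C_{\ell,j}$ that depends on the Lagrangian $\Lambda^i$; naively iterated $n$ times this would give a constant $C^n=\hbar^{-\text{const}}$, destroying the estimate. The resolution is that Lemma~\ref{lem:Inclination} makes the $C^\ell$-geometry of the leaves $\Lambda^i$ \emph{uniformly} bounded (by $\gamma_\ell$, independent of $i$ and $n$), so the constants $C_{\ell,j}$ in the one-step estimate are in fact $i$-independent; one then needs to check that the derivative loss accumulates only linearly, $c(i)=\cO(i)$, which is acceptable since we only ever evaluate at $\ell=0$ and $a^0$ is as smooth as we like. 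The remainder terms $\hbar^L r_L$ can be chosen with $L$ large enough that, even after $n\le C\log\hbar^{-1}$ steps multiply their size by a polynomially-bounded-in-$n$ factor, they remain $\cO(\hbar^\infty)$; the localization tail in \eqref{eq:remainder} is what lets one patch the pieces coming from different charts $\tilde E_{\alpha_i}$ without boundary errors. Once this bookkeeping is in place, reading off \eqref{eq:psi^n} is immediate, and the uniformity in $S^0$ over $C^\infty$-bounded sets follows because every constant introduced depended only on the cone parameters and on finitely many $C^\ell$-norms of $a^0$ and $S^0$.
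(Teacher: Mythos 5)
Your overall strategy is the right one, and it is the same as the paper's: iterate the one-step Proposition~\ref{pro:1-step}, use Lemma~\ref{lem:Inclination} to make the constants $C_{\ell,j}$ uniform in the step index, read off the principal-symbol transport formula with the Jacobian factors from \eqref{eq:Jacobians}, and then bookkeep the subprincipal symbols and remainders. Your identification of the two dangerous accumulations ($(1+\cO(\gamma_1,\eps^\gamma))^n$ from the Jacobians, and the constants/derivatives through $n\sim\log\hbar^{-1}$ iterations) is also correct and is exactly where the real content lies.

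However, your resolution of what you rightly call ``the genuinely delicate point'' is wrong, and the error would be fatal. You claim that the derivative loss accumulates linearly, $c(i)=\cO(i)$, and that this is ``acceptable since $a^0$ is as smooth as we like.'' It is not acceptable: with $n\sim C\log\hbar^{-1}$ steps, a linear loss would give a final bound in terms of $\lVert a^0\rVert_{C^{\cO(\log\hbar^{-1})}}$, and for a fixed $a^0\in C^\infty_c$ the norms $\lVert a^0\rVert_{C^m}$ are finite for each $m$ but are \emph{not} bounded as $m\to\infty$ (for a generic bump function they grow super-exponentially). This is incompatible with the proposition's claim $\lVert a^n\rVert_{C^0}\le C\,J^{-1/2}\lVert a^0\rVert_{C^0}$ with $C$ independent of $n$, and it would also break the application in Proposition~\ref{pro:hyperb-dispers} where $a^0=\chi$ is a fixed cutoff. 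The correct structural observation, which the paper makes via the path-counting picture of Figure~\ref{cap:recurrence}, is that the derivative loss is bounded \emph{independently of $n$}: only the ``oblique'' steps of the recurrence (passing from $a^{i-1}_{j'}$ to $a^i_j$ with $j>j'$) cost derivatives, a path from $a^0$ to $a_j^n$ contains at most $j$ such steps costing $2(j-j')$ derivatives each and hence at most $2j$ derivatives in total, and the long ``vertical'' transport-and-multiplication steps cost none. Thus $\lVert a_j^n\rVert_{C^\ell}\lesssim n^{\ell+3j}J^{-1/2}\lVert a^0\rVert_{C^{\ell+2j}}$ with a derivative loss of $\ell+2j$ (bounded by $2L+d$ for the remainder), not $\cO(n)$. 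The $n^{\ell+3j}$ polynomial factors are then harmless against the $\hbar^j$ prefactors. Without this observation the induction as you set it up does not close, so you should replace the linear-accumulation claim by this bounded-loss bookkeeping, tracking the number of oblique differentiations rather than the number of steps.
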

\begin{proof}
This proposition is proved in \cite[Lemma 3.5]{AnaNo07-2}, but we
will rather use the notations of a similar result valid in a more
general setup in \cite[Prop.4.1]{NoZw09}. The strategy consists in
a tedious but straightforward bookkeeping of the properties of the
symbols \[
a^{i}(y,\hbar)=\sum_{j=0}^{L-1}\hbar^{j}a_{j}^{i}(y)+r_{L}^{i}(y,\hbar)\]
associated with the intermediate states $\cU_{\alpha_{i}}\psi^{i}(y)$.
Namely, one manages to control the $C^{\ell}$ norms of the symbols
and of the remainder, using the equations (\ref{eq:principal-n},\ref{eq:higher-order},\ref{eq:remainder}).

The principal symbol $a_{0}^{n}$ is given by the explicit formula
\begin{equation}
a_{0}^{n}(y^{n})=e^{i\sum_{i=1}^{n}\beta^{n}}\, a^{0}(y^{0})\,\prod_{i=1}^{n}\left|\det\frac{\partial y^{i-1}}{\partial y^{i}}\right|^{1/2}\,\tilde{\pi}_{\alpha_{i}}(y^{i},dS^{i}(y^{i})),\label{eq:principal-n}
\end{equation}
where each $y^{i-1}=\pi g_{S^{i}}^{-1}(y^{i},dS^{i}(y^{i}))$ is the
coordinate of the $(i-1)$-th iterate of the point $(y^{0},dS^{0}(y^{0}))\in\Lambda^{0}$.
This formula shows that $a_{0}^{n}$ results from a transport of the
amplitude (or half-density) $a^{0}$ through the flow, and a multiplication
by successive factors $|\tilde{\pi}_{\alpha_{i}}|\leq1+\cO(\hbar)$.
From this expression and (\ref{eq:Jacobians}) we directly get the
$C^{0}$ bound
\begin{equation}
\left\Vert a_{0}^{n}\right\Vert _{C^{0}(D(\eps))} \leq C\, J(\alpha_{0}\cdots\alpha_{n-1})^{-1/2}\left\Vert a^{0}\right\Vert _{C^{0}}.\label{eq:a^n_0}
\end{equation}
\begin{figure}
\caption{Each symbol $a_{j}^{i}$ is linked to its direct {}``descendents''.
Vertical arrows represent operators of transport+multiplication, while
oblique arrows include a certain number of differentiations, as given
in (\ref{eq:higher-order},\ref{eq:remainder}).\label{cap:recurrence}}

\includegraphics[scale=0.7]{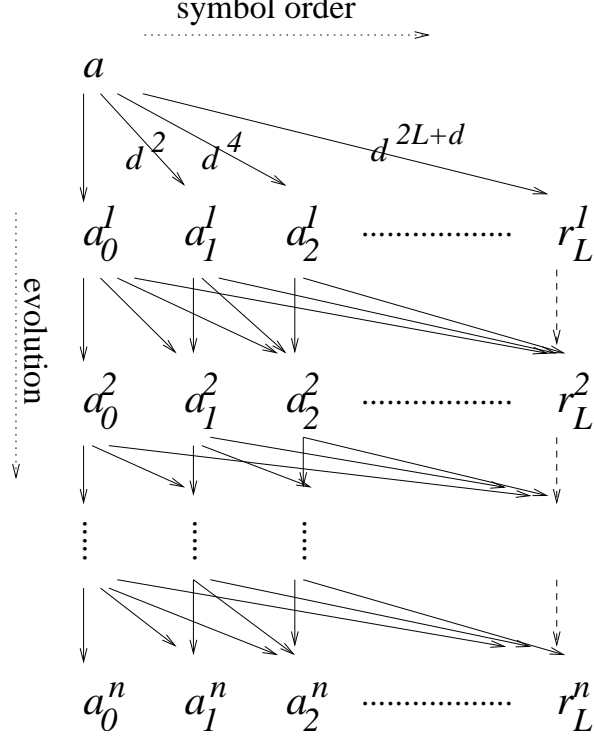}
\end{figure}

The derivatives $\frac{\partial^{\ell}a_{0}^{n}}{\partial(y^{n})^{\ell}}$
are computed by applying the Leibnitz rule to the product (\ref{eq:principal-n})
(which leads to $\cO(n^{\ell})$ terms), and then the chain rule $\frac{\partial f(y^{i})}{\partial y^{n}}=\frac{\partial f(y^{i})}{\partial y^{i}}\frac{\partial y^{i}}{\partial y^{n}}$.
Using the fact that the Jacobian matrices $\frac{\partial y^{i}}{\partial y^{n}}$
are uniformly bounded, one obtains the bounds \[
\left\Vert a_{0}^{n}\right\Vert _{C^{\ell}}\leq C_{\ell}\, n^{\ell}\, J(\alpha_{0}\cdots\alpha_{n-1})^{-1/2}\left\Vert a^{0}\right\Vert _{C^{\ell}},\quad\ell\geq0.\]
The higher-order symbol $a_{j}^{i}$, $1\leq j\leq L-1$, is obtained
by the same transport-and-multiplication of the symbol $a_{j}^{i-1}$,
but also by transporting and differentiating $2(j-j')$ times the
symbols $a_{j'}^{i-1}$, $j'\leq j$. This procedure is sketched in
Fig.\ref{cap:recurrence}. On this figure, each symbol $a_{j}^{n}$
results from the sum of $\cO(n^{j})$ paths starting from $a^{0}$,
each path consisting in a succession of {}``long'' vertical evolutions,
and $j$ {}``oblique'' evolutions, involving altogether $2j$ differentiations
performed at various stages. We have seen above that each differentiation
leads, through Leibnitz's rule, to a factor $\cO(n)$. Taking into
account the number of paths, we obtain the bounds\[
\left\Vert a_{j}^{n}\right\Vert _{C^{0}}\leq C_{j}\, n^{3j}\, J(\alpha_{0}\cdots\alpha_{n-1})^{-1/2}\left\Vert a^{0}\right\Vert _{C^{2j}},\quad1\leq j\leq L-1,\]
and $\ell$ differentiations of the symbol $a_{j}^{n}$ provide, for
the same reasons as above, an additional factor $n^{\ell}$:
\[
\left\Vert a_{j}^{n}\right\Vert _{C^{\ell}}\leq C_{j,\ell}\, n^{\ell+3j}\, J(\alpha_{0}\cdots\alpha_{n-1})^{-1/2}\left\Vert a^{0}\right\Vert _{C^{\ell+2j}},\quad1\leq j\leq L-1.
\]
At each stage, one also gets a remainder $r_{L}^{i}$, which results
from the symbols $a_{j}^{i-1}$ through transport, multiplication
and differentiation, as well as from the \emph{unitary} evolution
of the previous remainder $r_{L}^{i-1}$ (dashed vertical arrow).
Taking the above bounds into account, one easily obtains the $L^{2}$
bound 
\[
\left\Vert r_{L}^{n}\right\Vert _{L^{2}}\leq C_{L}\left\Vert a^{0}\right\Vert _{C^{2L+d}}.
\]
This last estimate shows that the full symbol $a^{n}(\bullet,\hbar)$
is dominated by the principal symbol $a_{0}^{n}$, so that the estimate
(\ref{eq:a^n_0}) also applies to $\left\Vert a^{n}(\bullet,\hbar)\right\Vert _{C^{0}}$,
and hence to $\left\Vert a^{n}(\bullet,\hbar)\right\Vert _{L^{2}}=\left\Vert \psi^{n}\right\Vert _{L^{2}}+\cO(\hbar^{\infty})$.
\end{proof}
We now conclude the proof of Prop. \ref{pro:hyperb-dispers}. We use
the decomposition (\ref{eq:decompo-sharp}) of the state $\psi_{\alpha_{0}}$,
and apply Prop. \ref{pro:n-step} to each state $\psi^{0}=\cU_{\alpha_{0}}^{*}e_{\eta}$,
$|\eta|\leq2\eps$. Notice that the manifolds $\Lambda_{\eta}$ remain
in a bounded cone in $C^{\infty}$: a generating function for $\Lambda_{\eta}$
is simply $S_{\eta}(y)=\la\eta,y\ra$. Therefore, the constant $C$
in the estimate (\ref{eq:psi^n}) is uniform w.r.to $\eta$. The triangle
inequality then implies the bound
\[
\left\Vert \Pi_{\alpha_{n}}U\Pi_{\alpha_{n-1}}\cdots\Pi_{\alpha_{1}}U\psi_{\alpha_{0}}\right\Vert \leq C\,\hbar^{-d/2}J(\alpha_{0}\cdots\alpha_{n-1})^{-1/2}\int\left|\hat{\psi}_{\alpha_{0}}(\eta)\right|\, d\eta+\cO_{L^{2}}(\hbar^{\infty}).
\]
The RHS contains the $L^{1}$ bound for the Fourier transform $\hat{\psi}_{\alpha_{0}}(\eta)$.
Since this function is $\cO(\hbar^{\infty})$ outside the set $\left\{ |\eta_{0}|\leq e^{m\delta}\hbar^{1-\delta},\,|\eta'|\leq2\eps\right\} $,
the Cauchy-Schwarz inequality leads to 
\[
\left\Vert \hat{\psi}_{\alpha_{0}}\right\Vert _{L^{1}}\leq\left(C\,\eps\, e^{m\delta}\hbar^{1-\delta}\right)^{1/2}\,\left\Vert \hat{\psi}_{\alpha_{0}}\right\Vert _{L^{2}}+\cO_{L^{2}}(\hbar^{\infty}).
\]
Since $\left\Vert \hat{\psi}_{\alpha_{0}}\right\Vert _{L^{2}}=\left\Vert \psi_{\alpha_{0}}\right\Vert _{L^{2}}\leq1+\cO(\hbar^{\infty})$,
we obtain (\ref{eq:hyperb-dispers}). \hfill$\square$

\section{Extension to Anosov toral diffeomorphisms\label{sec:Anosov-maps}}

In this section we will give the few modifications necessary to prove
Thm.\ref{thm:bound-general2}, \emph{ii) } dealing with Anosov toral
diffeomorphisms. Although the strategy of proof is exactly the same
as for the Laplacian eigenstates, the quantum setting is slightly
different, and not so well-known as the one used above. One advantage
of dealing with maps instead of flows is that we will not need any
sharp energy cutoff: the torus will take the place of the energy shell
$\cE$, so there won't be any need to localize ourselves on a submanifold.

Let us mention that the bound \ref{eq:bound-maps} is not a new result
in the case of \emph{linear} hyperbolic symplectomorphisms on the 2-dimensional torus: 
as explained
in $\S$\ref{sub:Counterexamples}, it is a consequence of Brooks's
more precise result (Theorem~\ref{thm:Brooks}). For linear symplectomorphisms in higher dimension, an improvement of the lower bound \eqref{eq:bound-maps} has been recently obtained by G.~Rivière \cite{Riv10}.

\subsection{Quantum mechanics on a torus phase space}

We briefly recall the properties of quantum mechanics associated with
the torus phase space. $\mathbb{T}^{2d}=\mathbb{R}^{2d}/\mathbb{Z}^{2d}$
is equipped with the symplectic form $\omega=\sum_{i=1}^{d}d\xi_{i}\wedge dx_{i}$.
Quantum states are defined as the distributions $\psi\in\cS'(\IR^{d})$
which are $\IZ^{d}$-periodic, and the $\hbar$-transforms of which
are also $\IZ^{d}$-periodic:
\begin{equation}
\forall n\in\IZ^{d},\forall x\in\IR^{d},\quad\psi(x+n)=\psi(x),\qquad\forall\xi\in\IR^{d},\quad\left(\cF_{\hbar}\psi\right)(\xi+n)=\left(\cF_{\hbar}\psi\right)(\xi).\label{eq:biperiodic}
\end{equation}
A simple calculation shows that such distributions can be nontrivial
iff
\[
\hbar=\hbar_{N}\defeq(2\pi N)^{-1}\quad\text{for some integer $N>0$.}
\]
Such values of $\hbar$ are called \emph{admissible}. From now on
we will only consider admissible values of $\hbar$. The distributions
(\ref{eq:biperiodic}) then form a $N^{d}$-dimensional subspace of
$\cS'(\R^{d})$, (denoted by $\hn$), which is spanned by the following
basis of {}``Dirac combs'':
\begin{equation}
e_{j}(x)=N^{-d/2}\sum_{\nu\in\IZ^{d}}\delta(x-\frac{j}{N}-\nu),\qquad j\in(\Z/N\Z)^{d}\simeq\left\{ 0,\ldots,N-1\right\} ^{d}.\label{eq:position-basis}
\end{equation}
It is natural to equip $\hn$ with the hermitian norm $\norm{\bullet}_{\hn}$
for which the basis $\left\{ e_{j},\: j\in(\Z/N\Z)^{d}\right\} $
is orthonormal. One can construct the space $\hn$ by {}``projecting''
states $\psi\in\cS(\IR^{d})$:
\[
\forall\psi\in\cS(\R^{d}),\quad\Pi_{N}\psi(x)\defeq\sum_{\mu,\nu\in\Z^{d}}\psi(x-\nu)\, e^{2i\pi\la\mu,x\ra}
\]
belongs to $\hn$, and the map $\Pi_{N}$ is surjective. In general,
there is no obvious link between the norms $\norm{\psi}_{L^{2}}$
and $\norm{\Pi_{N}\psi}_{\hn}$. Yet, imposing some localization for
$\psi$, we get the following relation:
\begin{lem}
\label{lem:projection}Assume that $\psi\in\cS(\IR^{d})$ is microlocalized
inside a set $E\subset\R^{2d}$ of diameter $<1/2$. Then, its projection
$\Pi_{N}\psi\in\hn$ satisfies 
\[
\norm{\Pi_{N}\psi}_{\hn}=\norm{\psi}_{L^{2}}+\cO(\hbar^{\infty}).
\]
\end{lem}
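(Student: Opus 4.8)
The plan is to reduce the statement to an almost–orthogonality estimate between $\psi$ and its phase–space translates by the integer lattice. Recall that $\hn$ is the space of common (generalized) eigenvectors of the Heisenberg translations $\widehat{T}_{n}$, $n=(n_{x},n_{\xi})\in\IZ^{2d}$, acting on $\cS'(\IR^{d})$ by translating the $x$–variable by $n_{x}$ and the conjugate ($\cF_{\hbar}$) variable by $n_{\xi}$; the admissibility condition $\hbar=(2\pi N)^{-1}$ is exactly what makes the $\widehat{T}_{n}$ commute on the relevant character, and with this normalization $\Pi_{N}=\sum_{n\in\IZ^{2d}}\widehat{T}_{n}$ (the series converging strongly on $\cS(\IR^{d})$, since $\widehat{T}_{n}$ is unitary on $L^2$ and maps $\cS$ to $\cS$). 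The first step is to establish the pairing identity
\[
\la\Pi_{N}\psi,\Pi_{N}\phi\ra_{\hn}=\sum_{n\in\IZ^{2d}}\la\psi,\widehat{T}_{n}\phi\ra_{L^{2}(\IR^{d})},\qquad\psi,\phi\in\cS(\IR^{d}),
\]
which follows from the definition of $\Pi_{N}$, the group law $\widehat{T}_{n}^{*}\widehat{T}_{m}=\widehat{T}_{m-n}$ (up to a phase that is trivial on $\hn$), and a Poisson–summation computation pairing the Dirac–comb basis $\{e_{j}\}$ against the translates. The right–hand side converges absolutely: for fixed Schwartz data $\la\psi,\widehat{T}_{(n_{x},n_{\xi})}\phi\ra$ decays rapidly in $|n_{x}|$ by Schwartz overlap, and for $n_{\xi}\neq0$ is $\cO\bigl((N|n_{\xi}|)^{-\infty}\bigr)$ by non–stationary phase. (Alternatively one may bypass the Heisenberg formalism and compute $\norm{\Pi_{N}\psi}_{\hn}^{2}$ directly by expanding $\Pi_{N}\psi$ in $\{e_{j}\}$ via Poisson summation.)

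Next I would put $\phi=\psi$ and isolate the diagonal term:
\[
\norm{\Pi_{N}\psi}_{\hn}^{2}=\norm{\psi}_{L^{2}}^{2}+\sum_{n\in\IZ^{2d}\setminus\{0\}}\la\psi,\widehat{T}_{n}\psi\ra,
\]
and it remains to see that the off–diagonal sum is $\cO(\hbar^{\infty})$. Here the hypothesis enters: since $\psi$ is microlocalized in $E$ with $\diam E<1/2$, choose $a\in C_{c}^{\infty}(\IR^{2d})$ with $a\equiv1$ near $E$, $\diam\supp a<1/2$, and $\bigl(\mathrm{Id}-\Oph(a)\bigr)\psi=\cO(\hbar^{\infty})$ (also $\norm{\psi}_{L^{2}}=\cO(1)$). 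Exact Egorov for translations, $\widehat{T}_{n}\Oph(a)\widehat{T}_{n}^{-1}=\Oph(a(\cdot-n))$, shows $\widehat{T}_{n}\psi$ is microlocalized in $E+n$. For $n\in\IZ^{2d}\setminus\{0\}$ one has $\dist(\supp a,\supp a+n)\ge|n|-\diam\supp a\ge|n|/2$, so the symbol calculus gives $\Oph(a)\,\Oph(a(\cdot-n))=\cO(\hbar^{\infty})$ with implied constants controlled by a negative power of the separation, hence $\lesssim\hbar^{M}(1+|n|)^{-2M}$ for every $M$. Writing $\la\psi,\widehat{T}_{n}\psi\ra=\la\Oph(a)\psi,\Oph(a(\cdot-n))\widehat{T}_{n}\psi\ra+\cO(\hbar^{\infty})$ and feeding in the quantitative form of the microlocalization, each term is $\cO\bigl(\hbar^{M}(1+|n|)^{-2M}\bigr)$ uniformly in $n$; summing over the lattice with $2M>2d$ yields $\sum_{n\neq0}\la\psi,\widehat{T}_{n}\psi\ra=\cO(\hbar^{\infty})$. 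Finally $\norm{\Pi_{N}\psi}_{\hn}^{2}=\norm{\psi}_{L^{2}}^{2}+\cO(\hbar^{\infty})$ together with $\norm{\psi}_{L^{2}}=\cO(1)$ gives the claim after taking square roots, using $\sqrt{a^{2}+\cO(\hbar^{\infty})}=a+\cO(\hbar^{\infty})$ for bounded $a\ge0$. \hfill$\square$

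The delicate point — and the one I expect to be the main obstacle — is not the disjointness itself but making the off–diagonal bound simultaneously termwise $\cO(\hbar^{\infty})$ and summable over $\IZ^{2d}$: one must use that the distance between $\mathrm{WF}_{\hbar}(\psi)\subset E$ and $\mathrm{WF}_{\hbar}(\widehat{T}_{n}\psi)= E+n$ grows linearly in $|n|$, and track how that separation enters the constants of the exotic calculus, so that a fixed power $M$ of $\hbar$ comes with enough polynomial decay in $|n|$. The only other genuine bookkeeping is justifying the translation–sum identity and the absolute convergence of its right–hand side; once those are in place, everything else is routine.
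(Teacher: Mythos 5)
The paper states this lemma without proof, so there is nothing to compare your argument against; I am evaluating it on its own.

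Your overall strategy — expanding $\norm{\Pi_{N}\psi}_{\hn}^{2}$ into a lattice sum $\sum_{n\in\IZ^{2d}}\la\psi,\widehat T_{n}\psi\ra$ (equivalently, Poisson summation over the $\{e_{j}\}$ basis), isolating the diagonal $n=0$ term $=\norm{\psi}_{L^{2}}^{2}$, and showing the rest vanishes because $E$ and $E+n$ are disjoint for $n\neq 0$ thanks to $\diam E<1/2$ — is exactly the right one, and the $\gcd$/phase bookkeeping you wave at is harmless. But there is a genuine gap precisely where you flag ``the delicate point,'' and your diagnosis of that point is slightly off. After you write
\[
\la\psi,\widehat T_{n}\psi\ra=\la\Oph(a)\psi,\Oph\bigl(a(\cdot-n)\bigr)\widehat T_{n}\psi\ra+\cO(\hbar^{\infty}),
\]
the first term indeed satisfies $\cO\bigl(\hbar^{M}(1+|n|)^{-2M}\bigr)$ by symbolic calculus on disjoint supports at distance $\gtrsim|n|$; the issue is the \emph{second} term. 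If ``microlocalized'' is taken to mean only $\norm{(\mathrm{Id}-\Oph(a))\psi}_{L^{2}}=\cO(\hbar^{\infty})$, the resulting error is $\cO(\hbar^{\infty})$ \emph{uniformly in $n$ but with no decay in $|n|$}, so summing it over $\IZ^{2d}\setminus\{0\}$ diverges; your subsequent assertion that ``each term is $\cO(\hbar^{M}(1+|n|)^{-2M})$'' is not justified by what precedes. Worse, $L^{2}$-microlocalization alone cannot be enough: $\Pi_{N}$ is not bounded $L^{2}\to\hn$ (it samples $\psi$ on the lattice $\frac1N\IZ^{d}$), so an $L^{2}$-tiny remainder concentrated near a lattice point can have $\hn$-norm that blows up with $N$.

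To close this, you need to invoke that $\psi$ lies in a family for which the microlocalization is quantitative — e.g.\ that $(\mathrm{Id}-\Oph(a))\psi$ is $\cO(\hbar^{\infty})$ in weighted Schwartz seminorms, or that $\norm{\Oph(b)\psi}_{L^{2}}\lesssim_{M}\hbar^{M}\bigl(1+\dist(\supp b,E)\bigr)^{-M}$ with constants depending only on seminorms of $b$ — which is what actually holds for the Lagrangian states and remainders to which the paper applies the lemma (cf.\ the decay built into the remainder estimate (\ref{eq:remainder})). Alternatively, one can interpolate: use the per-term $\cO(\hbar^{\infty})$ bound (uniform in $n$) for the polynomially many $n$ with $|n|\leq\hbar^{-K}$, and the Schwartz/nonstationary-phase decay $\cO_{\hbar}\bigl(|n|^{-M}\bigr)$ (whose constants may be $\hbar^{-C_{M}}$) for $|n|>\hbar^{-K}$; choosing $K$ and $M$ appropriately gives $\cO(\hbar^{\infty})$ for the full off-diagonal sum. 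Either route works, but as written your proof asserts the crucial joint $(\hbar,n)$ bound without providing it, and your explanation of the obstacle (tracking how the separation enters the calculus constants) addresses only the $\Oph(a)\cdots\Oph(a(\cdot-n))$ piece, not the microlocalization remainder, which is where the real difficulty lies.
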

Let us now describe observables on $\T^{2d}$. Any smooth functon
on $\mathbb{T}^{2d}$ is also a $\mathbb{Z}^{2d}$-periodic function
on $\IR^{2d}$. It is natural to introduce symbol classes \[
S_{\nu}^{k}(\mathbb{T}^{2d})=\left\{ f(\hbar)\in C^{\infty}(\T^{2d}),\;\left|\partial_{x}^{\alpha}\partial_{\xi}^{\beta}f(\hbar)\right|\leq C_{\alpha,\beta}\hbar^{-k-\nu|\alpha+\beta|}\right\} ,\quad\nu\in[0,1/2),\; k\in\IR\]
(due to periodicity, we cannot have growth or decay in the variable
$\xi$ as in the classes (\ref{eq:S_nu})).

Observables $f=(f_{\hbar})\in S_{\nu}^{k}(\T^{2d})$ can be Weyl-quantized
as operators $\Oph(f)$ acting on $\cS(\IR^{d})$, but also on $\cS'(\IR^{d})$
by duality. We already know that any observable $f\in S_{\nu}^{0}(\T^{2d})$
satisfies \[
\norm{\Oph(f)}_{L^{2}\to L^{2}}\leq\norm{f}_{\infty}+\cO(\hbar^{1-2\nu}).\]

\begin{prop}
\cite{BDB96}Take $f\in C^{\infty}(\T^{2d})$. For any admissible
$\hbar=\hbar_{N}$, the operator $\Oph(f)$ leaves invariant the subspace
$\hn\subset\cS'(\IR^{d})$. Let us call $\OpN(f)$ its restriction
on $\hn$. These two operators satisfy

\begin{equation}
\norm{\OpN(f)}_{\hn\circlearrowleft}\leq\norm{\Oph(f)}_{L^{2}\circlearrowleft}.\label{eq:hn-L2}\end{equation}

\end{prop}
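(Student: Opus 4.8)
The plan is to realize $\hn$ as the fiber over the trivial character in the Floquet--Bloch (Zak) decomposition of $L^{2}(\IR^{d})$ under the integer lattice of phase--space translations, and to use that $\Oph(f)$, for $f$ periodic, is \emph{decomposable} in that decomposition. For $v\in\IR^{2d}$ write $T_{v}$ for the Heisenberg--Weyl (phase--space) translation operator, which acts on $\cS'(\IR^{d})$ and satisfies the \emph{exact} Weyl covariance $T_{v}\,\Oph(g)\,T_{v}^{-1}=\Oph\bigl(g(\cdot-v)\bigr)$. Since $\hbar=\hbar_{N}=(2\pi N)^{-1}$ is admissible, the operators $\{T_{p}:p\in\IZ^{2d}\}$ commute pairwise — their commutator is the scalar $e^{\,\mi\,\omega(p,p')/\hbar}=e^{\,2\pi\mi N\,\omega(p,p')}=1$ because $N\,\omega(p,p')\in\IZ$ — so they generate a unitary action of $\IZ^{2d}$ on $L^{2}(\IR^{d})$, and comparison with \eqref{eq:biperiodic} shows that $\hn$ is exactly the subspace of vectors fixed by every $T_{p}$, $p\in\IZ^{2d}$ (a direct check; cf.\ \cite{BDB96}). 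As $f$ is $\IZ^{2d}$--periodic, $f(\cdot-p)=f$ for $p\in\IZ^{2d}$, hence $T_{p}\,\Oph(f)\,T_{p}^{-1}=\Oph(f)$: the operator $\Oph(f)$ (acting on $\cS'$ by duality) commutes with the whole $\IZ^{2d}$--action and therefore preserves each of its isotypic components, in particular $\hn$ — which is precisely the invariance part of the statement, and defines $\OpN(f)=\Oph(f)\!\restriction_{\hn}$ on the $N^{d}$--dimensional space $\hn$.

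For the norm inequality, decompose $L^{2}(\IR^{d})$ over the characters $\theta\in\widehat{\IZ^{2d}}\simeq\IT^{2d}$ of this lattice action: $L^{2}(\IR^{d})\cong\int^{\oplus}_{\IT^{2d}}\cH(\theta)\,d\theta$, where $\cH(\theta)=\{\psi:\ T_{p}\psi=e^{\mi\la\theta,p\ra}\psi\ \forall\,p\in\IZ^{2d}\}$ is $N^{d}$--dimensional, $\cH(0)=\hn$ (with its norm $\norm{\bullet}_{\hn}$), and $\norm{\psi}^{2}_{L^{2}}=\int_{\IT^{2d}}\norm{\psi(\theta)}^{2}_{\cH(\theta)}\,d\theta$ (Zak--Plancherel). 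Because $\Oph(f)$ commutes with the $\IZ^{2d}$--action it is decomposable, $\Oph(f)=\int^{\oplus}A(\theta)\,d\theta$, so $\norm{\Oph(f)}_{L^{2}\circlearrowleft}=\operatorname{ess\,sup}_{\theta}\norm{A(\theta)}$ and $A(0)=\OpN(f)$. Conjugating by $T_{v}$ for a representative $v$ of $\theta$ and using the covariance identifies $A(\theta)$ with $\OpN\bigl(f(\cdot-v)\bigr)$ on $\hn$; since $f$ is smooth and periodic, $v\mapsto f(\cdot-v)$ is continuous (indeed real--analytic) into $C^{\infty}(\IT^{2d})$ and $\OpN$ depends continuously on its symbol, so in the natural trivialization $\theta\mapsto\norm{A(\theta)}$ is a \emph{continuous} function on $\IT^{2d}$. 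A continuous function has $\operatorname{ess\,sup}=\sup$, hence $\norm{\OpN(f)}_{\hn\circlearrowleft}=\norm{A(0)}\le\sup_{\theta}\norm{A(\theta)}=\norm{\Oph(f)}_{L^{2}\circlearrowleft}$, which is \eqref{eq:hn-L2}.

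The one genuinely delicate point — and the place where smoothness of $f$ is used — is precisely that $\hn$ sits over the \emph{measure--zero} fiber $\theta=0$, so one must rule out a drop of the fiber norm there; the continuity-in-$\theta$ argument does exactly this. If one prefers to avoid the general Zak machinery, the same conclusion follows from an explicit wave--packet argument: for $\psi\in\hn$ and a smooth density $\phi$ on $\IT^{2d}$ the average $\Psi_{\phi}:=\int_{\IT^{2d}}\phi(v)\,T_{v}\psi\,dv$ is a genuine element of $\cS(\IR^{d})$, with $\norm{\Psi_{\phi}}^{2}_{L^{2}}=\norm{\phi}^{2}_{L^{2}(\IT^{2d})}\norm{\psi}^{2}_{\hn}$ (a direct computation on the Dirac combs \eqref{eq:position-basis}, i.e.\ Zak--Plancherel on a single fiber); using $\Oph(f)\,T_{v}=T_{v}\,\Oph\bigl(f(\cdot-v)\bigr)$ and letting $\phi=\phi_{\varepsilon}$ concentrate near $v=0$ with $\norm{\phi_{\varepsilon}}_{L^{2}(\IT^{2d})}=1$, one gets $\Oph(f)\Psi_{\phi_{\varepsilon}}=\Psi^{(\varepsilon)}+o(1)$ in $L^{2}$, where $\Psi^{(\varepsilon)}$ is the analogous average of $\OpN(f)\psi\in\hn$; hence $\norm{\Oph(f)\Psi_{\phi_{\varepsilon}}}_{L^{2}}\to\norm{\OpN(f)\psi}_{\hn}$, and comparison with $\norm{\Oph(f)\Psi_{\phi_{\varepsilon}}}_{L^{2}}\le\norm{\Oph(f)}_{L^{2}\circlearrowleft}\,\norm{\Psi_{\phi_{\varepsilon}}}_{L^{2}}$ together with $\varepsilon\to0$ yields \eqref{eq:hn-L2} again. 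Either way the argument is soft; the result is due to \cite{BDB96}.
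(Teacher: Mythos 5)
Your proof is correct in its essentials, and the approach — the Floquet--Bloch (Zak) decomposition of $L^{2}(\IR^{d})$ under the commuting lattice $\{T_{p}:p\in\IZ^{2d}\}$ at admissible $\hbar$, identification of $\hn$ with the fiber over the trivial character, and the continuity-in-$\theta$ argument to upgrade an $\operatorname{ess\,sup}$ bound to one valid at the specific (measure-zero) fiber $\theta=0$ — is a standard and rigorous route to this inequality. The paper itself does not reprove the proposition but merely cites \cite{BDB96}, so there is no internal proof to compare with; for what it's worth, Bouzouina--De~Bi\`evre argue along quite similar lines (the torus Hilbert space appears there too as a joint eigenspace of the commuting lattice translations), although some presentations of this result in the literature instead use coherent-state/Bargmann methods on the torus. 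Your version correctly isolates the genuinely delicate point, namely that the $\operatorname{ess\,sup}$ over $\theta$ does not \emph{a priori} control the fiber norm at a single $\theta$, and the continuity argument via Weyl covariance does close that gap cleanly (here one should note that the representative $v$ of a character $\theta$ is only defined modulo $\frac{1}{N}\IZ^{2d}$, but the resulting conjugate operators $\OpN\bigl(f(\cdot+v)\bigr)$ are unitarily equivalent on $\hn$ under $T_{q}$, $q\in\frac{1}{N}\IZ^{2d}$, so $\lVert A(\theta)\rVert$ is well-defined and continuous as claimed). The alternative wave-packet computation is also sound, provided $\phi_{\varepsilon}$ is supported in a fundamental domain so that $v\mapsto T_{v}\psi$ is single-valued on $\supp\phi_{\varepsilon}$, which you implicitly arrange by concentrating near $v=0$. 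In short: correct proof, standard machinery, and an accurate diagnosis of where the smoothness of $f$ (hence continuity in $\theta$) is actually used.
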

This property allows to carry the pseudodifferential calculus on $\IR^{2d}$
down to the torus. For instance, for any two observables $f,g\in S_{0}^{0}(\T^{2d})$,
the product $\OpN(f)\OpN(g)$ is the restriction of $\Oph(f)\Oph(g)=\Oph(f\sharp g)$.
One can check that $f\sharp g$ is a periodic function, and that each
term in the expansion $f\sharp g=\sum_{j=0}^{L-1}\hbar^{j}(f\sharp g)_{j}+\hbar^{L}R_{L}(f,g,\hbar)$
(including the remainder) is also periodic. Hence, the estimates on
$\norm{\Oph((f\sharp g)_{j})}_{L^{2}}$ and $\norm{\Oph(R_{L}(f,g,\hbar))}_{L^{2}}$
can be directly translated to estimates of their restrictions on $\hn$.

\subsection{Quantum maps on the torus}

We now give a brief overview of what we mean by a quantum propagator
associated with a smooth symplectic diffeomorphism $\kappa:\T^{2d}\circlearrowleft$.
We will not try to provide a general recipe to {}``quantize'' all
possible $\kappa$, but only give some relevant examples. 
\begin{enumerate}
\item consider the flow $g^{t}$ generated by some Hamilton function $p\in C^{\infty}(\T^{2d})$.
If we quantize $p$ into $P(\hbar)=\Oph(p)$, then the quantum propagator
quantizing $g^{t}$ is naturally $U^{t}=e^{-itP(\hbar)/\hbar}$. Since
$P(\hbar)$ leaves $\hn$ invariant, so does the propagator. Hence,
the map $\kappa\defeq g^{1}$ is quantized by 
\[
U_{N}(\kappa)=e^{-iP(\hbar)/\hbar}\restriction\hn=\exp\left(-i2\pi N\OpN(p)\right),
\]
which is unitary on $\hn$. The Egorov property (\ref{eq:Egorov})
can be directly translated to the torus setting:
\begin{equation}
U_{N}(\kappa)^{-1}\OpN(f)U_{N}(\kappa)=\OpN(f\circ\kappa)+\cO_{\hn\to\hn}(\hbar).\label{eq:Egorov-torus}
\end{equation}

\item If $\kappa$ is a linear symplectomorphism of $\T^{2d}$ associated
with the symplectic matrix $S_{\kappa}=\left(\begin{array}{cc}
A & B\\
C & D\end{array}\right)\in Sp(2d,\Z)$, then it can be quantized as a metaplectic transformation $U_{\hbar}(\kappa)$,
which acts unitarily over $L^{2}(\IR^{d})$. Provided $S_{\kappa}$
satisfies some {}``checkerboard conditions''\cite{BDB96}, the extension
of $U_{\hbar}(\kappa)$ to $\cS'(\R^{d})$ leaves $\hn$ invariant;
its restriction $U_{N}(\kappa)=U_{\hbar}(\kappa)\restriction\hn$
is unitary on $\hn$. If $S_{\kappa}$ has no eigenvalues on the unit
circle (meaning that the matrix is hyperbolic), the map $\kappa$
is Anosov. Such a map is often called a {}``generalized quantum cat
map'', by reference to {}``Arnold's cat map''. It satisfies an
exact Egorov property: $U_{N}(\kappa)^{-1}\OpN(f)U_{N}(\kappa)=\OpN(f\circ\kappa)$.
\item Let us combine these two types of maps, namely a linear symplectomorphism
$\kappa_{0}$ (satisfying the checkerboard condition), and the flow
$g^{t}$ generated by some $p\in C^{\infty}(\T^{2d})$, to get 
\begin{equation}
\kappa=g^{1}\circ\kappa_{0}.\label{eq:map-admissible}
\end{equation}
If $S_{\kappa_{0}}$ is hyperbolic and $p$ is small enough in the
$C^{2}$ topology, then it is known that $\kappa$ still has the Anosov
property (and it is topologically conjugate with $\kappa_{0}$). The
quantum propagator can be defined as \[
U_{N}(\kappa)\defeq\exp\left(-i2\pi N\OpN(p)\right)\circ U_{N}(\kappa_{0}).\]
It obviously satisfies the Egorov property (\ref{eq:Egorov-torus}).
\end{enumerate}
The long-time Egorov theorem (Prop. \ref{sub:Egorov-Ehrenfest}) can
also be brought to the torus framework:
\begin{prop}
Choose $\eps>0$ small and $\nu\in(\frac{1-\eps}{2},\frac{1}{2})$.
Take $f\in S_{0}^{0}(\T^{2d})$. Then, for any admissible $\hbar=\hbar_{N}$
and any time $n=n(\hbar)$ in the range $|n|\leq T_{\eps,\hbar}$,
we have 
\begin{multline}\label{eq:Egorov-long-torus}
U_N^{-n}\OpN(f)U_N^n=\OpN(\tilde f_n)+\cO(\hbar^{\infty}),\\
\text{with $\tilde f_n\in S_{\nu}^0(\IT^{2d}),\tilde f_n-f\circ\kappa^n\in S_{\nu}^{-(1+\eps)/2}(\IT^{2d})$.}
\end{multline}
\end{prop}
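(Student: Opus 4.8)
The plan is to deduce the torus statement from the corresponding statement on $\R^d$ already available in the paper (Prop.~\ref{sub:Egorov-Ehrenfest}, equation~\eqref{eq:Egorov-long}), using the transfer mechanism encoded in $\Pi_N$ and the inequality \eqref{eq:hn-L2}. First I would lift the data to $\R^d$: given $f\in S_0^0(\T^{2d})$, view it as a $\Z^{2d}$-periodic symbol on $\R^{2d}$. Its classical evolution under $\kappa^n$ — where $\kappa$ is one of the admissible maps \eqref{eq:map-admissible}, i.e.\ $\kappa = g^1\circ\kappa_0$ with $\kappa_0$ linear hyperbolic and $p$ small in $C^2$ — is again a $\Z^{2d}$-periodic function on $\R^{2d}$, since $\kappa$ descends from a map commuting with the $\Z^{2d}$-action. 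The Anosov property of $\kappa$ gives the derivative bounds $\|\partial^\alpha \kappa^n(\rho)\| \le C_\alpha e^{\lambda_\eps|\alpha|\,|n|}$ uniformly in $\rho$ (here there is no energy shell to worry about, so the estimate holds globally on $\T^{2d}$, which is why no sharp energy cutoff is needed). Consequently, exactly as in \S\ref{sub:Egorov-Ehrenfest}, for $|n|\le T_{\eps,\hbar}$ and $\nu\in(\tfrac{1-\eps}{2},\tfrac12)$ the family $f\circ\kappa^n$ lies in a bounded set of $S_\nu^0(\T^{2d})$.

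Next I would run the Duhamel/Egorov argument of the proof of \eqref{eq:Egorov-long} directly with the torus propagator $U_N(\kappa)^n$ in place of $U_\hbar^t$. Concretely, for $\kappa_0$ linear one has the \emph{exact} Egorov relation $U_N(\kappa_0)^{-1}\OpN(g)U_N(\kappa_0)=\OpN(g\circ\kappa_0)$, so the only source of error is the Hamiltonian factor $\exp(-i2\pi N\,\OpN(p))$. Writing $U_N(\kappa)^{-n}\OpN(f)U_N(\kappa)^n-\OpN(f\circ\kappa^n)$ as a telescoping sum over the $n$ one-step conjugations and, within each step, applying the standard integral identity
\[
U_N^{-1}\OpN(g)U_N-\OpN(g\circ\kappa)=\int_0^1 ds\; (\text{conjugation of }\Diff g_s),
\]
where $\Diff g_s = \tfrac{i}{\hbar}[\,\OpN(p),\OpN(g_s)\,]-\OpN(\{p,g_s\})$, one sees that each $\Diff g_s$ lies in $\Psi_\nu^{-(1+\eps)/2}$ with seminorms controlled by those of $g_s = f\circ\kappa^s$, hence uniformly in $s\in[0,n]$ and in $n\le T_{\eps,\hbar}$. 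The key point is that all the $\R^d$-estimates (Calderón–Vaillancourt, composition in $S_\nu^k$, the commutator gain of one power of $\hbar^{1-2\nu}$) pass to $\hn$ unchanged by \eqref{eq:hn-L2} and by the remark that $f\sharp g$ and all terms of its expansion are periodic. Summing $n\sim|\log\hbar|$ contributions of size $O(\hbar^{(1+\eps)/2})$ still yields $o(1)$, and in fact iterating the symbol expansion order by order (as in \cite{BouzRob02}) produces a genuine symbol $\tilde f_n\in S_\nu^0(\T^{2d})$ with $\tilde f_n - f\circ\kappa^n\in S_\nu^{-(1+\eps)/2}(\T^{2d})$ and an $\cO(\hbar^\infty)$ operator remainder; this is the content of \eqref{eq:Egorov-long-torus}.

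The main obstacle — really the only nontrivial point beyond bookkeeping — is checking that the symbol calculus genuinely descends to $\hn$ with \emph{periodic} symbols at every order, including the $\hbar$-dependent singular classes $S_\nu^k$: one must verify that conjugation by $U_N(\kappa_0)$ preserves periodicity of symbols (immediate, since $\kappa_0\in Sp(2d,\Z)$), that the Moyal product of periodic symbols is periodic with periodic expansion terms and remainder (stated in the excerpt just above), and that the "checkerboard condition" ensuring $U_\hbar(\kappa_0)$ preserves $\hn$ is stable under composition with $\exp(-i2\pi N\,\OpN(p))$ (automatic, since the latter already preserves $\hn$). Once these structural facts are in place, the estimate is line-by-line identical to the proof of \eqref{eq:Egorov-long}, with the technical simplification that $T^{2d}$ itself plays the role of $\cE$, so the sharp energy cutoffs $\chi^{(m)}$ and the class $S_{\cE,1-\delta}$ disappear entirely.
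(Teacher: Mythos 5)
The paper does not give an explicit proof of this proposition; it simply asserts that the manifold Egorov argument ``can also be brought to the torus framework.'' Your proposal spells out exactly the intended translation: lift $f$ to a periodic symbol on $\IR^{2d}$, use the global (energy-shell-free) derivative bounds on $\kappa^n$ to keep $f\circ\kappa^n$ in $S_\nu^0(\T^{2d})$ for $|n|\le T_{\eps,\hbar}$, telescope the one-step Egorov error, handle the linear part $\kappa_0$ exactly and the Hamiltonian part $g^1$ by Duhamel, and descend everything to $\hn$ via the periodicity of the Moyal expansion and the inequality $\norm{\OpN(\cdot)}_{\hn}\le\norm{\Oph(\cdot)}_{L^2}$. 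This is essentially the same approach the paper implicitly has in mind, and your identification of the structural checkpoints (periodicity at every order, $\hn$-invariance) is exactly right.

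One small imprecision worth flagging: in the displayed one-step Duhamel identity the left-hand side uses the full propagator $U_N=U_N(\kappa)=e^{-i2\pi N\OpN(p)}U_N(\kappa_0)$, while the right-hand side should be the $U_N(\kappa_0)$-conjugate of the integral $\int_0^1 ds\,\tilde U^{-s}\Diff g_{1-s}\tilde U^{s}$ where $\tilde U^s=e^{-is2\pi N\OpN(p)}$ and $g_s=g\circ g^s$ is the \emph{Hamiltonian} (not $\kappa$-) evolution; the $\kappa_0$ conjugation must also be applied to the error term, not only to the principal term $\OpN(g\circ g^1)$, and the symbol being evolved in the Duhamel integrand is the Hamiltonian flow of $g$, not $f\circ\kappa^s$. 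This does not affect the norm estimates (the extra conjugation is unitary and preserves periodicity since $\kappa_0\in Sp(2d,\Z)$), but stating it precisely avoids conflating the continuous parameter $s\in[0,1]$ inside one step with the discrete iterate index.
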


\subsection{Quantum partitions on the torus}

Through the quantization $f\mapsto\OpN(f)$, we can associate to any
semiclassical sequence of normalized states $(u_{N}\in\hn)_{N\to\infty}$
one or several semiclassical measures, as in §\ref{sub:Semiclassical-measures}.
Starting from an Anosov map $\kappa$ of the form (\ref{eq:map-admissible}),
we consider sequences $(\psi_{N}\in\hn)_{N\to\infty}$ where each
$\psi_{N}$ is an eigenstate of $U_{N}(\kappa)$. After possibly extracting
a subsequence, this sequence admits a single semiclassical measure
$\mu_{sc}$, which is a probability $\kappa$-invariant measure on
$\T^{2d}$. To analyze the KS entropy of this measure (defined as
in §\ref{sub:KSentropy} after replacing $g^{1}$ by $\kappa$), we
set up a partition $\cP=\bigsqcup_{k=1}^{K}E_{k}$ of $\T^{2d}$ of
diameter $\epsilon$, such that $\mu_{sc}(\partial\cP)=0$. The partition
$\cP$ is smoothed into $\cP_{sm}=\left\{ \pi_{k}\right\} $, where
$\pi_{k}\in C^{\infty}(\T^{2d},[0,1])$, each $\pi_{k}$ is supported
near $E_{k}$, and $\sum_{k}\pi_{k}=1$. The quantum partition is
defined similarly as in $\S$\ref{sub:quantum-partition}, except
that $\Pi_{k}=\OpN(\tilde{\pi}_{k})$, $\tilde{\pi}_{k}\in S(\T^{2d})$
satisfy 
\begin{equation}
\sum_{k=1}^{K}\Pi_{k}^{2}=Id_{\hn}+\cO(\hbar^{\infty}).\label{eq:Quantum Partition-torus}
\end{equation}
The refined quasiprojectors $\Pi_{\bal}$ are defined similarly as
in (\ref{eq:quantum-refined}), after setting $U=U_{N}(\kappa)$.
The symbolic measures $\mu_{N},\tilde{\mu}_{N}$ associated with $\psi_{N}$
are defined as in (\ref{eq:mu^u-definition},\ref{eq:tilde-mu^u}).

\subsection{Hyperbolic dispersive estimate}

One needs to adapt the proof of the hyperbolic dispersion estimate
(\ref{eq:hyperb-dispers-torus}) to the torus setting. We can prove
the following
\begin{prop}
\label{pro:hyperb-dispers-torus}Fix a constant $C_{1}\gg1$. $ $Then
there exists $C>0$ such that for any $N\geq1$, any $0\leq n\leq C_{1}\log N$
and any sequence $\bal$ of length $n$, the following estimate holds:
\begin{equation}
\left\Vert \Pi_{\bal}\right\Vert \leq C\, N^{\frac{d}{2}}\, J_{n}^{u}(\bal)^{-1/2}.\label{eq:hyperb-dispers-torus}
\end{equation}
\end{prop}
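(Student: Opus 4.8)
The plan is to reduce Proposition~\ref{pro:hyperb-dispers-torus} to the $L^2(\IR^d)$ analysis already carried out for the geodesic flow in \S\ref{sec:Hyperbolic dispersive estimate}. Two ingredients make this reduction possible: that $\Pi_k=\OpN(\tilde\pi_k)$ and $U_N(\kappa)$ are restrictions to $\hn$ of operators acting on $\cS'(\IR^d)$ (respectively $\Oph(\tilde\pi_k)$, and a metaplectic operator composed with $e^{-i2\pi N\OpN(p)}$, which is unitary on $L^2(\IR^d)$); and that, by Lemma~\ref{lem:projection}, a state in $\hn$ microlocalized in a set of diameter $<1/2$ is, up to $\cO(\hbar^\infty)$ and after choosing a representative in a fundamental domain, an $L^2(\IR^d)$ state with the same norm. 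First I would fix an arbitrary normalized $\Psi\in\hn$ and a sequence $\bal=\alpha_0\cdots\alpha_{n-1}$ with $n\le C_1\log N$. Since the leftmost factor $U_N(\kappa)^{-n+1}$ of $\Pi_{\bal}$ is unitary on $\hn$, it suffices to bound $\norm{\Pi_{\alpha_{n-1}}U_N\cdots U_N\Pi_{\alpha_0}\Psi}_{\hn}$. The state $\psi_{\alpha_0}\defeq\Pi_{\alpha_0}\Psi$ is microlocalized in $\tilde E_{\alpha_0}$, a set of diameter $\lesssim\eps<1/2$; I lift it to $\psi_{\alpha_0}^{\IR}\in\cS(\IR^d)$ with $\norm{\psi_{\alpha_0}^{\IR}}_{L^2}=\norm{\psi_{\alpha_0}}_{\hn}+\cO(\hbar^\infty)\le1+\cO(\hbar^\infty)$.

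Next I would decompose $\psi_{\alpha_0}^{\IR}$ into adapted Lagrangian states as in \S\ref{sub:Decomposition-Lagrangian}, using an FIO $\cU_{\alpha_0}$ intertwining with a chart on $\tilde E_{\alpha_0}$ adapted to the stable/unstable foliation of $\kappa$. For a map there is no distinguished flow (energy) direction, so all $d$ coordinates play the same role and \emph{no sharp energy cutoff is needed}; the decomposition runs over the full ball $\set{|\eta|\le 2\eps}$:
\[
\psi_{\alpha_0}^{\IR}=(2\pi\hbar)^{-d/2}\int_{|\eta|\le 2\eps}\cU_{\alpha_0}^*e_\eta\,\widehat\psi_{\alpha_0}(\eta)\,d\eta+\cO_{L^2}(\hbar^\infty),\qquad \norm{\widehat\psi_{\alpha_0}}_{L^2}=\norm{\psi_{\alpha_0}^{\IR}}_{L^2}\le 1+\cO(\hbar^\infty).
\]
Each $\cU_{\alpha_0}^*e_\eta$ is a Lagrangian state supported on a leaf in an unstable $\gamma_1$-cone, with generating function $S_\eta(y)=\la\eta,y\ra$ staying in a fixed bounded subset of $C^\infty$. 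I would then propagate each such state through $\Pi_{\alpha_{n-1}}U_N\cdots\Pi_{\alpha_1}U_N$, passing between $\hn$ and $L^2(\IR^d)$ at each step (at every stage the intermediate state is microlocalized in the small set $\tilde E_{\alpha_i}$, so $\kappa$ has a smooth local lift there and Lemma~\ref{lem:projection} applies at $\cO(\hbar^\infty)$ cost). The inclination Lemma~\ref{lem:Inclination} and Propositions~\ref{pro:1-step}--\ref{pro:n-step} then apply with only cosmetic changes: the evolved leaves remain in the unstable $\gamma_1$-cones, their amplitudes are controlled in every $C^\ell$ norm uniformly for $n\le C_1\log\hbar^{-1}$ (the only $n$-dependence being polynomial $n^{O(1)}$ factors, harmless at logarithmic times), and the transported amplitude picks up the coarse-grained Jacobian of $\kappa$, yielding $\norm{\Pi_{\alpha_{n-1}}U_N\cdots\Pi_{\alpha_1}U_N\,\cU_{\alpha_0}^*e_\eta}_{L^2}\le C\,J_n^u(\bal)^{-1/2}$ uniformly in $|\eta|\le 2\eps$.

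Finally I would recombine by the triangle inequality and Cauchy--Schwarz. Since $\widehat\psi_{\alpha_0}=\cO(\hbar^\infty)$ outside $\set{|\eta|\le 2\eps}$ and $\norm{\widehat\psi_{\alpha_0}}_{L^1}\le(C\eps^d)^{1/2}\norm{\widehat\psi_{\alpha_0}}_{L^2}$, this gives
\[
\norm{\Pi_{\alpha_{n-1}}U_N\cdots\Pi_{\alpha_0}\Psi}_{\hn}\le C\,(2\pi\hbar)^{-d/2}\,J_n^u(\bal)^{-1/2}\,\norm{\widehat\psi_{\alpha_0}}_{L^1}+\cO(\hbar^\infty)\le C'\,N^{d/2}\,J_n^u(\bal)^{-1/2},
\]
where I used $\hbar^{-d/2}=(2\pi N)^{d/2}$ and, in the last step, Lemma~\ref{lem:projection} to descend from $L^2(\IR^d)$ back to $\hn$ for the final state, which is microlocalized in $\tilde E_{\alpha_{n-1}}$. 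As $\Psi$ is arbitrary normalized, this is the asserted operator-norm bound. The step I expect to be the main obstacle is the bookkeeping of the $\hn$/$L^2(\IR^d)$ passage: one must verify that the product $\Pi_{\bal}$ on $\hn$, applied to a state supported near a single point, coincides up to $\cO(\hbar^\infty)$ with the corresponding product of $L^2(\IR^d)$ operators applied to a fundamental-domain representative, with a coherent choice of local lifts of $\kappa$ and of the adapted charts along the orbit; once this is in place the dynamical core of the argument is the one already proved for the flow (and the version of Proposition~\ref{pro:n-step} borrowed from \cite{NoZw09} was in fact stated in a setting general enough to cover it).
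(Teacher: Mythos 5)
Your proof reaches the right bound and relies on the same dynamical core as the paper (the inclination lemma and the $n$-step propagation of Lagrangian states borrowed from \cite{NoZw09}), but the decomposition step you use is genuinely different from the paper's. You first lift $\psi_{\alpha_0}=\Pi_{\alpha_0}\Psi$ to a representative in $\cS(\IR^d)$ and then run the same \emph{continuous} Fourier decomposition as in the flow case of \S\ref{sub:Decomposition-Lagrangian}; the factor $N^{d/2}$ then arises from the $\hbar^{-d/2}$ Fourier prefactor together with the Cauchy--Schwarz bound $\lVert\hat\psi_{\alpha_0}\rVert_{L^1}\lesssim\eps^{d/2}\lVert\hat\psi_{\alpha_0}\rVert_{L^2}$. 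The paper instead stays inside $\hn$ and expands $\psi=\Pi_k\Psi$ in a \emph{discrete} orthonormal basis of $\hn$ adapted to the local unstable geometry, namely $\{f_j=U_N(\tilde\kappa_k)e_j,\;j\in(\IZ/N\IZ)^d\}$, where $\tilde\kappa_k\in Sp(2d,\IZ)$ is an integer symplectomorphism (satisfying the checkerboard condition) chosen so that the associated torus-Lagrangians fall in the unstable $\gamma_1$-cone inside $\tilde E_k$. After localizing to $\tilde f_j=\OpN(\pi_k^\sharp)f_j$, a dedicated lemma identifies each $\tilde f_j$, up to $\cO(\hbar^\infty)$, with the $\Pi_N$-projection of finitely many $L^2(\IR^d)$ Lagrangian states supported near a single fundamental-domain copy of $\tilde E_k$; the $L^2$ propagation of \S\ref{sec:Hyperbolic dispersive estimate} is then applied to those, and the $N^{d/2}$ comes from Cauchy--Schwarz over the $N^d$ discrete coefficients $\psi_j$. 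Both routes give the same power of $N$, but the paper's discrete expansion has the concrete advantage that it makes precise exactly the $\hn$/$L^2(\IR^d)$ bookkeeping you flag as the main obstacle: the integer $\tilde\kappa_k$ yields an honest orthonormal basis of $\hn$, and the lemma on the $\tilde f_{j,n}^{\IR}$ does the back-and-forth passage explicitly, including the fact that the evolved Lagrangian pieces stay concentrated near a single copy of the relevant $\tilde E_{\alpha_i}$, so Lemma~\ref{lem:projection} applies at each stage. Pursuing your continuous route to completion would require reconstructing essentially the same facts (for instance verifying that your lift satisfies $\Pi_N\psi_{\alpha_0}^{\IR}=\psi_{\alpha_0}+\cO(\hbar^\infty)$ and tracking the single-copy microlocalization through the orbit), so the two approaches converge on the same technical content that the paper packages in its auxiliary lemma.
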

Here $J_{n}^{u}(\bal)=\prod_{j=0}^{n-1}J^{u}(\alpha_{j})$ is the
coarse-grained unstable Jacobian of the map $\kappa$. Notice that
the power of $N\sim\hbar^{-1}$ is $d/2$ instead of $(d-1+\delta)$
in (\ref{eq:hyperb-dispers}).

\subsubsection*{Proof of the proposition.}

The proof of this estimates proceeds along the same lines as in $\S$\ref{sec:Hyperbolic dispersive estimate},
that is by explicitly computing the action of the operator $\Pi_{\bal}$
on an arbitrary normalized state $\Psi\in\hn$. To do this, we first
expand each localized piece $\psi=\Pi_{k}\Psi$, $k=1,\ldots,K$,
into an well-chosen orthonormal basis $\left\{ f_{j},\: j\in(\Z/N\Z)^{d}\right\} $
obtained from the original basis (\ref{eq:position-basis}) through
a well-chosen quantized linear symplectomorphism $U(\tilde{\kappa}_{k})$.
Let us recall a few facts about linear symplectomorphisms of the torus.
Any $\tilde{\kappa}\in Sp(2d,\IR)$ acting on $\R^{2d}$ maps a {}``position
Lagrangian'' $\Lambda_{x_{0}}^{\IR}=\left\{ x=x_{0},\:\xi\in\IR\right\} ,\, x_{0}\in\R^{d}$
into another {}``linear'' Lagrangian%
\footnote{We use the representation $S_{\tilde{\kappa}}=\left(\begin{array}{cc}
A & B\\
C & D\end{array}\right)$ and assume for simplicity that $B$ is nonsingular.%
}\begin{equation}
\tilde{\Lambda}_{x_{0}}^{\IR}=\tilde{\kappa}(\Lambda_{x_{0}}^{\IR})=\left\{ \xi=DB^{-1}x+(C-DB^{-1}A)x_{0},\: x\in\IR^{d}\right\} .\label{eq:Lagran-R}\end{equation}
Each position Lagrangian $\Lambda_{x_{0}}^{\R}$ is associated with
a {}``position eigenstate'' $e_{x_{0}}^{\R}(x)=(2\pi\hbar)^{d/2}\,\delta(x-x_{0})$.
The metaplectic operator $U_{\hbar}(\tilde{\kappa})$ maps this position
state into the Lagrangian state 
\begin{equation}\label{eq:f_x_0}
f_{x_0}^{\IR}\defeq U_{\hbar}(\tilde\kappa)e_{x_0}^{\IR},\qquad
f_{x_0}^{\IR}(x)=\frac{1}{\sqrt{\det(B)}} e^{(i/2\hbar)(\la DB^{-1}x,x\ra-2\la x,B^{-1}x_0\ra+\la B^{-1}Ax_0,x_0\ra)},
\end{equation}
which is associated with the Lagrangian $\tilde{\Lambda}_{x_{0}}^{\IR}$.
If $\tilde{\kappa}$ has integer coefficients and satisfies the {}``checkerboard
condition'', then this construction can be brought down to the torus.
Each basis state $e_{j}$ in (\ref{eq:position-basis}) is associated
with the Lagrangian $\Lambda_{j/N}^{\T}$, projection on $\T^{2d}$
of $\Lambda_{j/N}^{\IR}$. The state $f_{j}\defeq U_{N}(\tilde{\kappa})e_{j}\in\hn$
is a {}``Lagrangian state on the torus'' associated with the projected
Lagrangian $\tilde{\Lambda}_{j/N}^{\T}=\tilde{\kappa}(\Lambda_{j/N}^{\T})$. 

For each partiton component $E_{k}$, we select an appropriate linear
transformation $\tilde{\kappa}_{k}$. In each neighbourhood $\tilde{E}_{k}$
of $\supp\pi_{k}$ (assumed to have diameter $\leq\epsilon$), we
use an adapted coordinate chart $\left\{ (y,\eta)\right\} $ as in
$\S$\ref{sub:Decomposition-Lagrangian}. Using these coordinates,
we consider the family of $\gamma_{1}$-cones defined in Def. \ref{def:gamma_1-cone}.
\begin{lem}
Provided the diameter of $\tilde{E}_{k}$ is small enough, we can
choose the automorphism $\tilde{\kappa}_{k}$ such that each connected
component of $\tilde{\Lambda}_{x_{0}}^{\IT}\cap\tilde{E}_{k}$ belongs
to the unstable $\gamma_{1}$-cone. \end{lem}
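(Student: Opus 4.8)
The plan is to reduce the statement to a purely linear-algebraic fact about symplectic matrices, using the adapted coordinate chart $\{(y,\eta)\}$ on $\widetilde{E}_k$. Recall from Definition~\ref{def:gamma_1-cone} that, once the diameter of $\widetilde{E}_k$ is small enough, the unstable $\gamma_1$-cone in these coordinates is the set of Lagrangian graphs $\{(y,dS(y))\}$ with $\|d^2 S\|\le\gamma_1$; it contains all local unstable manifolds $W^{u0}_\epsilon(\rho)$ through points $\rho\in\widetilde{E}_k$, and the local stable manifolds are uniformly transverse to it. The "linear" Lagrangians $\widetilde\Lambda^{\IT}_{x_0}=\widetilde\kappa_k(\Lambda^{\IT}_{x_0})$ are flat affine subspaces of $\T^{2d}$; in the chart $\{(y,\eta)\}$, which is smooth on the small polydisk $D(\epsilon,\epsilon)$, such a flat Lagrangian appears as a graph $\{(y,\eta)=(y,My+b)\}$ up to a correction whose second derivative is $\cO(\epsilon)$ (coming from the nonlinearity of the chart). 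Hence it belongs to the $\gamma_1$-cone as soon as the \emph{slope matrix} $M$ of $\widetilde\Lambda^{\IT}_{x_0}$, read in the chart coordinates, is small, say $\|M\|\le\gamma_1/2$, and $\epsilon$ is chosen accordingly small afterwards.

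So the whole problem is: can we pick an integer symplectic matrix $\widetilde\kappa_k$ (satisfying the checkerboard condition) so that the image of the "position" Lagrangian foliation $\{\Lambda^{\IR}_{x_0}\}$ has slope matrix close to the unstable subspace of the chart? First I would compute, from \eqref{eq:Lagran-R}, the slope of $\widetilde\Lambda^{\IR}_{x_0}$ in the \emph{ambient} $(x,\xi)$ coordinates: it is $D_kB_k^{-1}$ where $S_{\widetilde\kappa_k}=\begin{pmatrix}A_k&B_k\\C_k&D_k\end{pmatrix}$ (and $B_k$ nonsingular is exactly the hypothesis used in the footnote defining $f^{\IR}_{x_0}$). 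Conjugating by the derivative of the coordinate change $(x,\xi)\mapsto(y,\eta)$ at the centre point $\rho_k\in E_k$ — a fixed linear symplectic isomorphism — turns this ambient slope into the chart slope $M$. In the chart, the unstable direction at $\rho_k$ is by construction $\{\eta=0\}$ (item 3 of the list defining the chart, where $\{\eta=0\}$ is the local unstable manifold of $\rho_k$), i.e. slope $0$. So I need $\widetilde\kappa_k$ with $D_kB_k^{-1}$, viewed in the chart, within $\gamma_1/2$ of $0$: concretely, $\widetilde\kappa_k$ should send position Lagrangians close to the unstable Lagrangian of $\rho_k$.

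Such integer symplectic matrices exist by a density argument: the set $Sp(2d,\IR)$ acts transitively on the Lagrangian Grassmannian, the subset of matrices satisfying the checkerboard condition and having $B$ nonsingular is open, and $Sp(2d,\Z)$ is dense in $Sp(2d,\IR)$ (for $d\ge1$, $Sp(2d,\Z)$ is a lattice, hence Zariski-dense, and one can perturb within the checkerboard-admissible open set while keeping integrality by suitable scaling — in practice one exhibits explicit $\widetilde\kappa_k$ built from elementary symplectic shears $\begin{pmatrix}I&0\\C&I\end{pmatrix}$, $C\in M_d(\Z)$ symmetric, which already realize arbitrary integer slopes and are checkerboard-admissible). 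Choosing $\widetilde\kappa_k$ so that its image slope approximates the chart-unstable direction of $\rho_k$ to within $\gamma_1/2$, and then shrinking the diameter of $\widetilde E_k$ below the resulting $\epsilon_{\gamma_1}$ so the chart-nonlinearity contributes at most $\gamma_1/2$ more, each connected component of $\widetilde\Lambda^{\IT}_{x_0}\cap\widetilde E_k$ is a graph over $y$ with $\|d^2S\|\le\gamma_1$, i.e. lies in the unstable $\gamma_1$-cone. Since $E_k$ meets $\widetilde E_k$ in diameter $<\epsilon<1/2$, components are genuinely graphs and not wrapped pieces of the torus, so "connected component" is the right formulation.

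The main obstacle is the arithmetic/combinatorial one: simultaneously (i) matching the slope to a prescribed (irrational, in general) unstable direction only \emph{approximately} with \emph{integer} entries, and (ii) keeping the checkerboard condition of \cite{BDB96}. The approximation is harmless because we only need to land in a fixed open cone of width $\gamma_1$, not hit the unstable direction exactly; and the checkerboard condition cuts out an open subset that the elementary shears $\begin{pmatrix}I&0\\C&I\end{pmatrix}$ (with $C$ integral symmetric) populate, so one can always enlarge $C$ if needed while staying admissible. I would therefore phrase the proof as: (1) reduce to the slope estimate in the chart; (2) express the slope via $D_kB_k^{-1}$ conjugated by $d(\text{chart})|_{\rho_k}$; (3) invoke density of checkerboard-admissible integer symplectic matrices to realize a slope within $\gamma_1/2$ of the chart-unstable direction; (4) shrink $\mathrm{diam}\,\widetilde E_k$ to absorb the chart nonlinearity and conclude. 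The only genuinely technical point is step (3), and it is standard.
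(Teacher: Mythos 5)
Your route is the same as the paper's terse proof --- shrink $\tilde E_k$ so the unstable leaves are nearly flat, approximate them by a linear Lagrangian, and approximate that by a {}``rational'' Lagrangian $\tilde\kappa_k(\Lambda_0^{\R})$ with $\tilde\kappa_k\in Sp(2d,\Z)$ --- and the reduction to a slope estimate in the adapted chart, absorbing the chart nonlinearity by further shrinking $\eps$, is a clean way to present it.

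Two of the facts you quote in step~(3) are false, however. $Sp(2d,\Z)$ is a \emph{lattice}, i.e.\ a discrete subgroup of $Sp(2d,\R)$: it is certainly not dense in the Euclidean topology, and Zariski density does not help you land in a prescribed metric neighbourhood. And the shears $\left(\begin{array}{cc}I&0\\C&I\end{array}\right)$ have $B=0$, so they map position Lagrangians to position Lagrangians and realize no nonvertical slope at all, let alone {}``arbitrary integer slopes''. What you actually need is the weaker (and true) statement that the $Sp(2d,\Z)$-\emph{orbit of the position Lagrangian} $\Lambda_0^{\R}$ is dense in the Lagrangian Grassmannian. For instance, $\left(\begin{array}{cc}I&0\\C&I\end{array}\right)\left(\begin{array}{cc}0&-I\\I&0\end{array}\right)=\left(\begin{array}{cc}0&-I\\I&-C\end{array}\right)$ has $DB^{-1}=C$ and so realizes every integer symmetric slope; for a dense family of rational symmetric slopes $P/q$ with $\gcd(q,\det P)=1$, the pair $(B,D)=(qI,P)$ satisfies $B^{\top}D=D^{\top}B$ and the usual coprimality (Smith normal form) criterion, hence extends to an element of $Sp(2d,\Z)$. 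The checkerboard condition of \cite{BDB96} is a congruence condition mod~$2$ and can be arranged by adjusting the parities of $q$, $P$ (or by left-multiplying by a principal congruence subgroup element). With these repairs your step~(3) is correct and your proof matches the one the paper sketches.
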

\begin{proof}
Indeed, the pieces of unstable manifolds inside $\tilde{E}_{k}$ are
then {}``almost flat'' and {}``almost parallel'', so they can
be approached by a family of (local) linear Lagrangians. Besides,
any linear Lagrangian $\tilde{\Lambda}_{0}^{\T}$ can be approached
by a {}``rational'' Lagrangian $\tilde{\kappa}(\Lambda_{0}^{\R})$,
$\tilde{\kappa}\in Sp(2d,\IZ)$.
\end{proof}
For each $k=1,\ldots,K$ we expand $\psi=\Pi_{k}\Psi$ in the o.n.
basis $\left\{ f_{j}=U_{N}(\tilde{\kappa}_{k})e_{j},\; j\in(\Z/N\Z)^{d}\right\} $.
Using cutoffs $\bbbone_{\tilde{E}_{k}}\succ\pi_{k}^{\sharp}\succ\pi_{k}$
we can localize the Lagrangian states $f_{j}$ inside $\tilde{E}_{k}$:
\[
\psi=\sum_{j\in(\Z/N\Z)^{d}}\psi_{j}\, f_{j}=\sum_{j\in(\Z/N\Z)^{d}}\psi_{j}\,\tilde{f}_{j}+\cO(\hbar^{\infty}),\qquad\tilde{f}_{j}\defeq\OpN(\pi_{k}^{\sharp})f_{j}.\]
We can thus proceed as in §\ref{sec:Hyperbolic dispersive estimate},
namely compute separately each $\Pi_{\bal}\tilde{f}_{j}$. To be able
to use $\S$\ref{sec:Hyperbolic dispersive estimate}, we will switch
back from states in $\hn$ to states in $\cS(\IR^{d})$. 
\begin{lem}
Each localized Lagrangian state $\tilde{f}_{j}=\OpN(\pi_{k}^{\sharp})f_{j}\in\hn$
is equal (up to $\cO_{\hn}(\hbar^{\infty})$) to the projection on
$\hn$ of finitely many Lagrangian states $\tilde{f}_{j,n}^{\IR}\in\cS(\IR^{d})$
microlocalized inside a single representative $\tilde{E}_{k}^{\R}$
of $\tilde{E}_{k}$ in $\R^{2d}$.
\end{lem}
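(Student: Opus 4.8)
The plan is to exploit the fact that a "Dirac-comb"–based state on the torus is, by construction, a periodization of a state on $\IR^{d}$, and to undo the periodization in a controlled way once we know the state is microlocalized on a small set. First I would recall the formula $f_{j}=U_{N}(\tilde\kappa_{k})e_{j}$, where $e_{j}$ is the position Dirac comb \eqref{eq:position-basis}. Each $e_{j}$ is the $\Pi_{N}$-projection of the $\IR^{d}$-state $e_{j/N}^{\IR}=(2\pi\hbar)^{d/2}\delta(\cdot-j/N)$; since $U_{\hbar}(\tilde\kappa_{k})$ commutes with $\Pi_{N}$ (this is exactly the checkerboard compatibility recalled in \S\ref{sub:Quantum maps on the torus}, i.e. the property that $U_{\hbar}(\tilde\kappa_{k})$ preserves $\hn$), we get $f_{j}=\Pi_{N}\bigl(U_{\hbar}(\tilde\kappa_{k})e_{j/N}^{\IR}\bigr)=\Pi_{N}(f_{j/N}^{\IR})$, with $f_{j/N}^{\IR}$ the explicit linear Lagrangian state \eqref{eq:f_x_0}. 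Likewise, since $\OpN(\pi_{k}^{\sharp})$ is the restriction to $\hn$ of $\Oph(\pi_{k}^{\sharp})$ (the proposition of \cite{BDB96} recalled above) and the latter commutes with $\Pi_{N}$ on $\cS'$, we have $\tilde f_{j}=\OpN(\pi_{k}^{\sharp})f_{j}=\Pi_{N}\bigl(\Oph(\pi_{k}^{\sharp})f_{j/N}^{\IR}\bigr)$.

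The next step is to decompose $\Oph(\pi_{k}^{\sharp})f_{j/N}^{\IR}$ into finitely many pieces, each microlocalized in a single fundamental domain. Here I would use that $\pi_{k}^{\sharp}$, viewed as a $\IZ^{2d}$-periodic symbol, equals a finite sum $\sum_{\ell}\pi_{k,\ell}^{\sharp}$ of smooth compactly supported symbols, where $\pi_{k,\ell}^{\sharp}$ is supported in a single translate $\tilde E_{k}^{\IR}+\ell$ of a representative $\tilde E_{k}^{\IR}\subset\IR^{2d}$ of $\tilde E_{k}$ of diameter $\le\epsilon<1/2$ (there are only $\cO(1)$ nonzero terms, since $\pi_{k}^{\sharp}$ is supported near the compact set $E_{k}$ modulo $\IZ^{2d}$). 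Set $\tilde f_{j,\ell}^{\IR}\defeq\Oph(\pi_{k,\ell}^{\sharp})f_{j/N}^{\IR}$. Each such state is a linear Lagrangian state multiplied by a smooth compactly supported cutoff, hence lies in $\cS(\IR^{d})$ and is microlocalized (in the standard $\hbar$-semiclassical sense: $\Oph(a)\tilde f_{j,\ell}^{\IR}=\cO(\hbar^{\infty})$ for $a$ vanishing on $\tilde E_{k}^{\IR}+\ell$) inside the single representative $\tilde E_{k}^{\IR}+\ell$ of $\tilde E_{k}$ of diameter $<1/2$; after an integer translation (which is a symmetry both of $\IR^{2d}$ and of the construction) we may take $\ell=0$. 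By linearity of $\Pi_{N}$ and $\Oph$, $\tilde f_{j}=\sum_{\ell}\Pi_{N}(\tilde f_{j,\ell}^{\IR})+\cO_{\hn}(\hbar^{\infty})$, which is exactly the claimed statement: $\tilde f_{j}$ is, up to $\cO_{\hn}(\hbar^{\infty})$, the $\hn$-projection of finitely many Lagrangian states $\tilde f_{j,\ell}^{\IR}\in\cS(\IR^{d})$, each microlocalized inside a single representative of $\tilde E_{k}$.

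Two routine points remain to be checked. First, that the $\cO(\hbar^{\infty})$ in "$\Oph(\pi_{k}^{\sharp})f_{j/N}^{\IR}=\sum_{\ell}\tilde f_{j,\ell}^{\IR}$" is genuinely $\cO_{L^{2}}(\hbar^{\infty})$ and survives the projection $\Pi_{N}$: the first follows from the fact that $f_{j/N}^{\IR}$ is an $L^{2}$-normalized (up to $\cO(\hbar^\infty)$, after the cutoff near $\tilde E_k^{\IR}$) Lagrangian state and $\pi_{k}^{\sharp}-\sum_{\ell}\pi_{k,\ell}^{\sharp}$ is either zero (if the partition of $\pi_k^\sharp$ is exact) or can be taken supported outside a neighbourhood of the microsupport of $f_{j/N}^{\IR}$, and the second follows from $\norm{\Pi_{N}\phi}_{\hn}\le\norm{\phi}_{L^{2}}$ which is a consequence of $\norm{\OpN(\mathbf 1)}_{\hn}\le\norm{\Oph(\mathbf 1)}_{L^{2}}$ together with Lemma~\ref{lem:projection} applied to the $\cO(\hbar^\infty)$-remainder. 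Second, that "microlocalized inside $\tilde E_{k}^{\IR}$" is meant in the sense needed to feed \S\ref{sec:Hyperbolic dispersive estimate}, i.e. that $\tilde f_{j,\ell}^{\IR}$ is a Lagrangian state attached to a leaf lying in an unstable $\gamma_1$-cone; this is precisely the content of the preceding lemma, which guarantees that $\tilde\kappa_{k}$ was chosen so that $\tilde\kappa_{k}(\Lambda_{x_{0}}^{\IR})\cap\tilde E_{k}^{\IR}$ has each component in the $\gamma_{1}$-cone, and the cutoff $\pi_{k,\ell}^{\sharp}$ only restricts and multiplies the symbol without moving the leaf. The main (very mild) obstacle is purely bookkeeping: making sure the $\cO(\hbar^{\infty})$ errors introduced at the torus level, at the cutoff level, and under $\Pi_{N}$ all combine to a single $\cO_{\hn}(\hbar^{\infty})$, uniformly in $j\in(\Z/N\Z)^{d}$ — this uniformity is automatic because $f_{j/N}^{\IR}$ is, up to an $x$-dependent phase and an integer translation, independent of $j$.
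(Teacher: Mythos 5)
There is a genuine gap, and it is at the heart of the lemma: the finiteness. You claim that $\pi_{k}^{\sharp}$, viewed as a $\IZ^{2d}$-periodic function on $\IR^{2d}$, ``equals a finite sum $\sum_{\ell}\pi_{k,\ell}^{\sharp}$ of smooth compactly supported symbols.'' This is false: a nonzero $\IZ^{2d}$-periodic function supported (modulo $\IZ^{2d}$) in $\widetilde{E}_{k}$ is supported on the \emph{infinite} disjoint union $\bigsqcup_{\ell\in\IZ^{2d}}(\widetilde{E}_{k}^{\IR}+\ell)$, so the decomposition $\pi_{k}^{\sharp}=\sum_{\ell}\pi_{k,\ell}^{\sharp}$ necessarily runs over infinitely many $\ell$. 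Consequently, the decomposition $\Oph(\pi_{k}^{\sharp})f_{j/N}^{\IR}=\sum_{\ell}\Oph(\pi_{k,\ell}^{\sharp})f_{j/N}^{\IR}$ has infinitely many terms of size $\asymp 1$: the state $f_{j/N}^{\IR}$ is a linear Lagrangian state attached to the \emph{unbounded} plane $\widetilde{\Lambda}_{j/N}^{\IR}\subset\IR^{2d}$, which meets infinitely many of the translates $\widetilde{E}_{k}^{\IR}+\ell$. Your argument therefore does not produce ``finitely many'' Lagrangian states, which is exactly what the lemma asserts. Relatedly, the sentence ``$\pi_{k}^{\sharp}-\sum_{\ell}\pi_{k,\ell}^{\sharp}$ can be taken supported outside a neighbourhood of the microsupport of $f_{j/N}^{\IR}$'' cannot be made correct, because that microsupport is the entire plane $\widetilde{\Lambda}_{j/N}^{\IR}$.

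The structural issue behind this is that you try to lift $\tilde{f}_{j}$ from $\hn$ to a state on $\IR^{d}$ \emph{all at once}, via $\tilde{f}_{j}=\Pi_{N}\bigl(\Oph(\pi_{k}^{\sharp})f_{j/N}^{\IR}\bigr)$, and then decompose the lift. But $\Oph(\pi_{k}^{\sharp})f_{j/N}^{\IR}$ (and already $f_{j/N}^{\IR}$) is only a tempered distribution without decay, so it lies outside the domain $\cS(\IR^{d})$ on which $\Pi_{N}$ is defined; the identity you write is not justified by the commutation facts you invoke, which are stated for $\cS(\IR^{d})$. The paper's proof goes in the other order: it first observes that $\tilde{f}_{j}$, a bona fide element of $\hn$, is microlocalized on $\widetilde{\Lambda}_{j/N}^{\T}\cap\widetilde{E}_{k}$, which is a \emph{finite} collection of leaves because $\widetilde{\Lambda}_{j/N}^{\T}$ is a compact rational Lagrangian torus (this is where the integer matrix $\tilde\kappa_{k}$ earns its keep, and it is the step your proof misses). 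It then lifts \emph{each individual leaf} to $\widetilde{E}_{k}^{\R}$ as $\widetilde{\Lambda}_{j/N+n}^{\IR}\cap\widetilde{E}_{k}^{\R}$, producing a genuine state $\tilde{f}_{j,n}^{\IR}=\Oph(\pi_{k}^{\sharp\R})f_{j/N+n}^{\IR}\in\cS(\IR^{d})$ (here $\pi_{k}^{\sharp\R}\in C_{c}^{\infty}(\IR^{2d})$, so $\Oph(\pi_{k}^{\sharp\R})$ regularizes), and verifies $\tilde{f}_{j,n}=\Pi_{N}\tilde{f}_{j,n}^{\R}+\cO_{\hn}(\hbar^{\infty})$. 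If you want to run an argument closer in spirit to yours, you would need to first isolate the finitely many relevant shifts $n\in\IZ^{d}$ by compactness of $\widetilde{\Lambda}_{j/N}^{\T}$, and only then apply $\Pi_{N}$ to the finitely many compactly microlocalized states; the ``integer translation to reduce to $\ell=0$'' step you sketch is where these finitely many lifts would appear, but at present it is invoked before the finiteness has been established.
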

\begin{proof}
The localized Lagrangian state $\tilde{f}_{j}=\OpN(\pi_{k}^{\sharp})f_{j}$
is microlocalized on $\tilde{\Lambda}_{j/N}^{\IT}\cap\tilde{E}_{k}$,
which is a finite collection of Lagrangian leaves. Each of these leaves
is the projection on the torus of a leaf of the form $\tilde{\Lambda}_{j/N+n}^{\IR}\cap\tilde{E}_{k}^{\R}$,
where $\tilde{\Lambda}_{\bullet}^{\IR}$ is given in (\ref{eq:Lagran-R}),
$n\in\IZ^{d}$ and $\tilde{E}_{k}^{\R}$ is a certain (arbitrary)
representative of $\tilde{E}_{k}$ in $\R^{2d}$. Accordingly, each
$\tilde{f}_{j}$ can be split into a finite linear combination of
Lagrangian states $\tilde{f}_{j,n}\in\hn$ supported on these individual
leaves. If we call $\pi_{k}^{\sharp\R}\in C_{c}^{\infty}(\IR^{2d})$
the component of $\pi_{k}^{\sharp}$ supported on the representative
$\tilde{E}_{k}^{\R}$, then the Lagrangian state $\tilde{f}_{j,n}^{\IR}=\Oph(\pi_{k}^{\sharp\R})f_{j/N+n}^{\IR}$
is microlocalized inside $\tilde{E}_{k}^{\R}$. We claim that \[
\tilde{f}_{j,n}=\Pi_{N}\tilde{f}_{j,n}^{\R}+\cO_{\hn}(\hbar^{\infty}).\]
We will only give the proof in the case where $f_{j}$ is a momentum
state $\la e_{l},f_{j}\ra=N^{-d/2}\, e^{2i\pi\la l,j\ra/N}$ associated
with the momentum Lagrangian $\left\{ \xi=\xi_{j}=\frac{j}{N}\right\} $,
and the cutoff is the operator $\OpN(\chi_{1})\OpN(\chi_{2})$, where
$\chi_{1}(x)$ (resp. $\chi_{2}(\xi)$) is obtained by periodizing
$\chi_{1}^{\R}\in C_{c}^{\infty}((0,1))$ (resp. $\chi_{2}^{\R}\in C_{c}^{\infty}((0,1))$).
In that case, the Lagrangian state $\tilde{f}_{j}=\OpN(\chi_{1})\OpN(\chi_{2})f_{j}$
admits the components 
$$\la e_{l},\tilde{f}_{j}\ra=N^{-d/2}\,\chi_{1}(\frac{l}{N})\,\chi_{2}(\frac{j}{N})\, e^{2i\pi\la l,j\ra/N}.
$$
On the other hand, the state $\tilde{f}_{j}^{\R}=\Oph(\chi_{1}^{\R})\Oph(\chi_{2}^{\R})f_{j}^{\R}$
can be expressed as $\tilde{f}_{j}^{\R}(x)=\chi_{1}^{\R}(x)\,\chi_{2}^{\R}(\xi_{j})\, e^{i\la\xi_{j},x\ra/\hbar}$.
The projection on $\hn$ of that state gives 
\begin{align*}
\Pi_{N}\tilde{f}_{j}^{\R}(x) & =\chi_{2}^{\R}(\xi_{j})\sum_{\nu,\mu}\chi_{1}^{\R}(x-\nu)\, e^{2i\pi N\la\xi_{j}+\mu,x\ra}\\
 & =\chi_{2}^{\R}(\xi_{j})\chi_{1}^{\R}(\left[x\right])\, e^{2i\pi N\la\xi_{j},x\ra}\,\delta_{\Z^{d}}(Nx)\\
 & =N^{-d/2}\chi_{2}^{\R}(\frac{j}{N})\sum_{l\in(\Z/N\Z)^{d}}e_{l}(x)\,\chi_{1}(\frac{l}{N})\, e^{2i\pi\la j,l\ra}=\tilde{f}_{j}(x).
\end{align*}

\end{proof}
This Lemma allows us to use our control of the evolution of the Lagrangian
states $\tilde{f}^{\R}\defeq\tilde{f}_{j,n}^{\IR}$ through the sequence
of operators $U_{\hbar}(\kappa)$ and $\Pi_{k}=\Oph(\tilde{\pi}_{k})$.
We have to be a little careful when applying Props. \ref{pro:1-step}
and \ref{pro:n-step}, because $\tilde{\pi}_{\alpha_{j}}$ are now
periodic symbols. Still, the Lagrangian state $\tilde{f}^{\IR}$ is
localized inside a single copy $\tilde{E}_{\alpha_{0}}^{\R}$ (of
diameter $\leq\eps$), so its image $U_{\hbar}(\kappa)\tilde{f}^{\R}$
is a Lagrangian state microlocalized in a set of diameter $\cO(\eps)$,
which can intersect at most one copy $\tilde{E}_{\alpha_{1}}^{\R}+n_{1}$.
As a result, the state $\Pi_{\alpha_{1}}U_{\hbar}(\kappa)\tilde{f}^{\R}$
is also localized in this single copy, and is of the form of the state
$\psi_{1}$ in Prop. \ref{pro:1-step}. Importantly, the estimates
we have on the remainder $r_{L}(\bullet,\hbar)$ in the expansion
$ $(\ref{eq:expansion1}) show that $e^{iS_{1}/\hbar}r_{L}$ is microlocalized
in the single copy $\tilde{E}_{\alpha_{1}}+n_{1}$, and decays fast
away from it. As a result, Lemma \ref{lem:projection} implies that
the projection on $\hn$ of that remainder has a norm comparable with
$\norm{r_{L}}_{L^{2}(\R^{d})}$.

Since the initial Lagrangian piece $\tilde{\Lambda}$ lies in some
$\gamma_{1}$-unstable cone, we can iterate the evolution as in Prop.
\ref{pro:n-step}. At each step we get a Lagrangian state and some
remainder $e^{iS_{t}(y)/\hbar}r_{L}^{t}(y,\hbar)$, which can be projected
to $\hn$ with a control on its norm. When we act on this remainder
through operators $\Pi_{\alpha_{j}}U_{N}(\kappa)$, its norm can increase
at most by a factor $(1+\cO(\hbar^{\infty}))$. Finally, we obtain
a Lagrangian state microlocalized in some $\tilde{E}_{\alpha_{n}}^{\R}+n_{n}$
(and a sum of remainders). The projection in $\hn$ of that state
satisfies the same bound as \ref{eq:psi^n}: 
\[
\left\Vert \Pi_{\bal}\tilde{f}_{j}\right\Vert _{L^{2}(X)}\leq C\, J^{u}(\bal)^{-1/2}.
\]
Summing over all the states $\tilde{f}_{j}$ and taking into account
$\norm{\psi}_{\hn}=\sqrt{\sum_{i\in(\Z/N\Z)^{d}}|\psi_{i}|^{2}}\leq1$,
we obtain (invoking Cauchy-Schwarz) 
\[
\norm{\Pi_{\bal}\Psi}\leq\sum_{j\in(\Z/N\Z)^{d}}|\psi_{j}|C\, J^{u}(\bal)^{-1/2}\leq C\, N^{\frac{d}{2}}\, J^{u}(\bal)^{-1/2}.
\]
\hfill$\square$

\subsection{EUP and subadditivity}

The Hyperbolic dispersive estimate can now be injected in some form
of Entropic Uncertainty Principle as in §\ref{sub:application-of-EUP}.
Since we don't need any energy cutoff, the setting we need is actually
simpler than Prop. \ref{pro:EUP,microlocal}: we can take $S_{c_{1}}=S_{c_{2}}=Id_{\hn}$.
In the application, we also take $\vareps=\hbar^{L}$ for any large
$L>0$. We obtain a lower bound of the form (\ref{eq:applied-EUP})
on the pressures associated with the symbolic measures $\mu_{\hbar},\:\tilde{\mu}_{\hbar}$
and the Ehrenfest time $n=\left\lfloor 2T_{\epsilon,\hbar}\right\rfloor $,
with a constant $C_{cone}(\hbar)=C\hbar^{-d/2}$. Using the Egorov
theorem up to $T_{\eps,\hbar}$, one shows that the quantum pressures
satisfy an approximate subadditivity property similar with (\ref{eq:pressure-approx-subadd}),
which allows to prove \[
\frac{p_{0}^{n_{0}-1}(\mu_{sc},v,\cP)}{n_{o}}\geq-\frac{d\lambda_{\max}}{2}+\cO(\eps).\]
The rest of the proof is unchanged.

\end{document}